\numberwithin{equation}{section} 
\theoremstyle{plain}
\newtheorem{theorem}{Theorem}[section] 
\newtheorem{lemma}[theorem]{Lemma} 
\newtheorem{corollary}[theorem]{Corollary}
\theoremstyle{definition}
\newtheorem{definition}[theorem]{Definition}
\newtheorem{assumption}[theorem]{Assumption}
\theoremstyle{remark}
\newtheorem{remark}[theorem]{Remark}
\newcommand{\appref}[1]{Appendix~\ref{#1}}
\newcommand{\secref}[1]{Section~\ref{#1}}
\newcommand{\defref}[1]{Definition~\ref{#1}}
\newcommand{\thmref}[1]{Theorem~\ref{#1}}
\newcommand{\lemref}[1]{Lemma~\ref{#1}}
\newcommand{\corref}[1]{Corollary~\ref{#1}}
\renewcommand{\algref}[1]{Algorithm~\ref{#1}}
\newcommand{\assref}[1]{Assumption~\ref{#1}}
\newcommand{\figref}[1]{Figure~\ref{#1}}
\newcommand{\remref}[1]{Remark~\ref{#1}}
\newcommand{\tabref}[1]{Table~\ref{#1}}
\newcommand{\R}{\mathbb{R}} 
\newcommand{\bigO}{\mathcal{O}} 
\DeclareMathOperator*{\argmin}{arg\,min} 
\newcommand{\defeq}{:=} 
\newcommand{\grad}{\nabla} 
\newcommand{\col}{\operatorname{col}} 
\newcommand{\Y}{\mathcal{Y}} 
\def\be{\begin{equation}}
\def\ee{\end{equation}}
\renewcommand{\b}[1]{\bm{#1}} 
\renewcommand{\t}[1]{\widetilde{#1}} 
\newcommand{\bx}{\b{x}}
\newcommand{\by}{\b{y}}
\newcommand{\bs}{\b{s}}
\newcommand{\bv}{\b{v}}
\newcommand{\br}{\b{r}}
\newcommand{\bee}{\b{e}}
\newcommand{\bg}{\b{g}}
\newcommand{\bem}{\b{m}}
\newcommand{\Prob}[1]{\mathbb{P}\left[#1\right]} 
\newcommand{\E}[1]{\mathbb{E}\left[#1\right]} 
\algrenewcommand\algorithmicrequire{\textbf{Input:}}
\algrenewcommand\algorithmicensure{\textbf{Output:}}
\begin{document}
\title{Scalable Subspace Methods for Derivative-Free Nonlinear Least-Squares Optimization}
\author{
Coralia Cartis\thanks{Mathematical Institute, University of Oxford, Radcliffe Observatory Quarter, Woodstock Road, Oxford, OX2 6GG, United Kingdom (\texttt{cartis@maths.ox.ac.uk}). This work was supported by the EPSRC Centre for Doctoral Training in Industrially Focused Mathematical Modelling (EP/L015803/1) in collaboration with the Numerical Algorithms Group Ltd.}
\and 
Lindon Roberts\thanks{Mathematical Sciences Institute, Building 145, Science Road, Australian National University, Canberra ACT 2601, Australia (\texttt{lindon.roberts@anu.edu.au}). We note that the work in Sections \ref{sec_implementation} and \ref{sec_numerics} originally appeared in this author's thesis \cite[Chapter 7]{Roberts2019}.}}

\date{\today}
\maketitle

\begin{abstract}
	We introduce a general framework for large-scale model-based derivative-free optimization based on iterative minimization within random subspaces.
	We present a probabilistic worst-case complexity analysis for our method, where in particular we prove high-probability bounds on the number of iterations before a given optimality is achieved.
	This framework is specialized to nonlinear least-squares problems, with a model-based framework based on the Gauss-Newton method.
	This method achieves scalability by constructing local linear interpolation models to approximate the Jacobian, and computes new steps at each iteration in a subspace with user-determined dimension.
	We then describe a practical implementation of this framework, which we call DFBGN.
	We outline efficient techniques for selecting the interpolation points and search subspace, yielding an implementation that has a low per-iteration linear algebra cost (linear in the problem dimension) while also achieving fast objective decrease as measured by evaluations.
	Extensive numerical results demonstrate that DFBGN has improved scalability, yielding strong performance on large-scale nonlinear least-squares problems.
\end{abstract}

\textbf{Keywords:} derivative-free optimization, large-scale optimization, nonlinear least-squares, worst case complexity.
\\

\textbf{Mathematics Subject Classification:} 65K05, 90C30, 90C56 

\section{Introduction} \label{sec_introduction}
An important class of nonlinear optimization methods is so-called derivative-free optimization (DFO).
In DFO, we consider problems where derivatives of the objective (and/or constraints) are not available to be evaluated, and we only have access to function values.
This topic has received growing attention in recent years, and is primarily used for objectives which are black-box (so analytic derivatives or algorithmic differentiation are not available), and expensive to evaluate or noisy (so finite differencing is impractical or inaccurate).
There are many types of DFO methods, such as model-based, direct and pattern search, implicit filtering and others (see \cite{Larson2019} for a recent survey), and these techniques have been used in a variety of applications \cite{Alarie2020}.

Here, we consider model-based DFO methods for unconstrained optimization, which are based on iteratively constructing and minimizing interpolation models for the objective.
We also specialize these methods for nonlinear least-squares problems, by constructing interpolation models for each residual term rather than for the full objective \cite{Zhang2010,Wild2017,Cartis2019a}.

This paper aims to provide a method that attempts to answer a key question regarding model-based DFO: how to improve the scalability of this class.
Existing model-based DFO techniques are primarily designed for small- to medium-scale problems, as the linear algebra cost of each iteration---largely due to the cost of constructing interpolation models---means that their runtime increases rapidly for large problems.
There are several settings where scalable DFO algorithms may be useful, such as data assimilation \cite{Bergou2016,Arter2018a}, machine learning \cite{Salimans2017,Ghanbari2017}, generating adversarial examples for deep neural networks \cite{Alzantot2018,Ughi2019}, image analysis \cite{Ehrhardt2020}, and as a possible proxy for global optimization methods \cite{Cartis2019}.

To address this, we introduce RSDFO, a scalable algorithmic framework for model-based DFO. 
At each iteration of RSDFO we select a random low-dimensional subspace, build and minimize a model to compute a step in this space, then change the subspace at the next iteration.
We provide a probabilistic worst-case complexity analysis of RSDFO.
To our knowledge, this is the first \emph{subspace model-based DFO method with global complexity and convergence guarantees}.
We then describe how this general framework can be specialized to the case of nonlinear least-squares minimization through a model construction technique inspired by the Gauss-Newton method, yielding a new algorithm RSDFO-GN with associated worst-case complexity bounds.
We then present an efficient implementation of RSDFO-GN, which we call DFBGN.
DFBGN is available on Github\footnote{\url{https://github.com/numericalalgorithmsgroup/dfbgn}} and includes several algorithmic features that yield strong performance on large-scale problems and a low per-iteration linear algebra cost that is typically linear in the problem dimension.

\subsection{Existing Literature}
The contributions in this paper are connected to several areas of research.
We briefly review these topics below.

\paragraph{Block Coordinate Descent}
There is a large body of work on (derivative-based) block coordinate descent (BCD) methods, typically motivated by machine learning applications.
BCD extends coordinate search methods \cite{Wright2015} by updating a subset of the variables at each iteration, typically using a coordinate-wise variant of a first-order method.
For nonconvex problems, the first convergence result for a randomized coordinate descent method based on proximal gradient descent was given in \cite{Patrascu2015}.
Here, the sampling of coordinates was uniform and required step sizes based on Lipschitz constants associated with the objective.
This was extended in \cite{Lu2018} to general randomized block selection with a nonomonotone linesearch-type method (to allow for unknown Lipschitz constants), and to a (possibly deterministic) `essentially cyclic' block selection and extrapolation (but requiring Lipschitz constants) in \cite{Xu2017}.
Several extensions of this approach have been developed, including the use of stochastic gradients \cite{Xu2015}, parallel block updating \cite{Facchinei2015} and inexact step calculations \cite{Facchinei2015,Yang2020}.

BCD methods have been extended to nonlinear least-squares problems, leading to so-called Subspace Gauss-Newton methods.
These are derivative-based methods where a Gauss-Newton step is computed for a subset of variables.
This approach was initially proposed in \cite{Tett2017} for parameter estimation in climate models---where derivative estimates were computed using implicit filtering \cite{Kelley1999}---and analyzed in quadratic regularization and trust-region settings for general unconstrained objectives in \cite{Fowkes2020,Shao2021}.

\paragraph{Sketching}
Sketching is an alternative dimensionality reduction technique for least-squares problems, reducing the number of residuals rather than the number of variables.
Sketching ideas have been applied to linear \cite{Halko2011,Mahoney2011,Woodruff2014} and nonlinear \cite{Ergen2019} least-squares problems, as well as model-based DFO for nonlinear least-squares \cite{Cartis2020}, as well as subsampling algorithms for finite sum-of-functions minimization such as Newton's method \cite{Roosta-Khorasani2019,Berahas2020}.

There are also alternative approaches to sketching which lead to subspace-type methods, where local gradients and Hessians are estimated only within a subspace (possibly used in conjunction with random subsampling).
Sketching in this context has been applied to, for example, Newton's method \cite{Pilanci2015,Gower2019,Berahas2020}, BFGS \cite{Gower2016a}, and SAGA \cite{Gower2020}, as well as to trust-region and quadratic regularization methods \cite{Fowkes2020,Shao2021}.

\paragraph{Random Embeddings for Global Optimization}
Some global optimization methods have been proposed which randomly project a high-dimensional problem into a low-dimensional subspace and solve this smaller problem using existing (global or local) methods. 
Though applicable to general global optimization problems (as a more sophisticated variant of random search), this technique has been explored particularly for defeating the curse of dimensionality when optimising functions which have low effective dimensionality \cite{Qian2016,Wang2016,Otemissov2020}.
For the latter class, often only one random subspace projection is needed, though the addition of constraints leads to multiple embeddings being required \cite{Otemissov2020}. 
Our approach here differs from these works in both theoretical and numerical aspects, as it is focused on a specific random subspace technique for local optimization.

\paragraph{Probabilistic Model-Based DFO}
For model-based DFO, several algorithms have been developed and analyzed where the local model at each iteration is only sufficiently accurate with a certain probability \cite{Bandeira2014,Chen2016,Blanchet2016}.
Similar analysis also exists for derivative-based algorithms \cite{Cartis2018a,Gratton2017b}.
Our approach is based on deterministic model-based DFO within subspaces, and we instead require a very weak probabilistic condition on the (randomly chosen) subspaces (\assref{ass_sketching}).

\paragraph{Randomized Direct Search DFO}
In randomized direct search methods, iterates are perturbed in a random subset of directions (rather than a positive spanning set) when searching for local improvement.
In this framework, effectively only a random subspace is searched in each iteration.
Worst-case complexity bounds for this technique are given under predetermined step length regimes in \cite{Bergou2019,Golovin2019}, and with adaptive step sizes in \cite{Gratton2015,Gratton2019b}, where \cite{Gratton2019b} extends \cite{Gratton2015} to linearly constrained problems.

\paragraph{Large-Scale DFO}
There have been several alternative approaches considered for improving the scalability of DFO.
These often consider problems with specific structure which enable efficient model construction, such as partial separability \cite{Colson2005,Porcelli2020}, sparse Hessians \cite{Bandeira2012}, and minimization over the convex hull of finitely many points \cite{Cristofari2020}.
On the other hand, there is a growing body of literature on `gradient sampling' techniques for machine learning problems. 
These methods typically consider stochastic first-order methods but with a gradient approximation based on finite differencing in random directions \cite{Nesterov2017}.
This framework has lead to variants of methods such as stochastic gradient descent \cite{Ghadimi2013}, SVRG \cite{Liu2018} and Adam \cite{Chen2019}, for example.
We note that linear interpolation to orthogonal directions---more similar to traditional model-based DFO---has been shown to outperform gradient sampling as a gradient estimation technique \cite{Berahas2019,Berahas2019c}.

\paragraph{Subspace DFO Methods}
A model-based DFO method with similarities to our subspace approach is the moving ridge function method from \cite{Gross2020}.
Here, existing objective evaluations are used to determine an `active subspace' which captures the largest variability in the objective and build an interpolation model within this subspace.
We also note the VXQR method from \cite{Neumaier2011}, which performs line searches along a direction chosen from a subspace determined by previous iterates.
Both of these methods do not include convergence theory.
By comparison, aside from our focus on nonlinear least-squares problems, both our general theoretical framework and our implemented method select their working subspaces randomly, and we provide (probabilistic) convergence guarantees.

\subsection{Contributions}
We introduce RSDFO (Randomized Subspace Derivative-Free Optimization), a generic model-based DFO framework that relies on constructing a model in a subspace at each iteration.
Our novel approach enables model-based DFO methods to be applied in a large-scale regime by giving the user explicit control over the subspace dimension, and hence control over the per-iteration linear algebra cost of the method.
This framework is then specialized to the case of nonlinear least-squares problems, yielding a new algorithm RSDFO-GN (Randomized Subspace DFO with Gauss-Newton).
The subspace model construction framework of RSDFO-GN is based on DFO Gauss-Newton methods \cite{Cartis2019a,Cartis2018}, and retains the same theoretical guarantees as RSDFO.
We then describe a practical implementation of RSDFO-GN, which we call DFBGN (Derivative-Free Block Gauss-Newton).\footnote{Technically, DFBGN is not a block method as its subspaces are not coordinate-aligned, but has already been released with this name.}
Compared to existing methods, DFBGN reduces the linear algebra cost of model construction and the initial objective evaluation cost by allowing fewer interpolation points at every iteration.
In order for DFBGN to have both scalability and a similar evaluation efficiency to existing methods (i.e.~objective reduction achieved for a given number of objective evaluations), several modifications to the theoretical framework, regarding the selection of interpolation points and the search subspace, are necessary.

\paragraph{Theoretical Results}
We consider a generic theoretical framework RSDFO, where the subspace dimension is a user-chosen algorithm hyperparameter, and no specific model construction approach is specified.
Our framework is not specific to a least-squares problem structure, and holds for any objective with Lipschitz continuous gradient, and allows for a general class of random subspace constructions (not relying on a specific class of embeddings or projections).
The theoretical results here extend the approach and techniques in \cite{Fowkes2020,Shao2021} to model-based DFO methods.
In particular, we use the notion of a well-aligned subspace (\defref{def_well_aligned}) from \cite{Fowkes2020,Shao2021}, one in which sufficient decrease is achievable, and assume that our search subspace is well-aligned with some probability (\assref{ass_sketching}).
This is achieved provided we select a sufficiently large subspace dimension (depending on the desired failure probability and subspace alignment quality).

We derive a high probability worst-case complexity bound for RSDFO. 
Specifically, our main bounds are of the form $\Prob{\min_{j\leq k} \|\grad f(\bx_j)\| \leq C k^{-1/2}} \geq 1 - e^{-ck}$ and $\Prob{K_{\epsilon} \leq C\epsilon^{-2}}  \leq 1-e^{-c\epsilon^{-2}}$, where $K_{\epsilon}$ is the first iteration to achieve first-order optimality $\epsilon$ (see \thmref{thm_high_prob_complexity} and \corref{cor_high_prob_complexity}).
This result then implies a variety of alternative convergence results, such as expectation bounds and almost-sure convergence.
Based on \cite{Fowkes2020,Shao2021}, we give several constructions for determining our random subspace, and show that for constructions based on Johnson-Lindenstrauss transformations, that we can achieve convergence with a subspace dimension that is \emph{independent of the ambient dimension}.

Our analysis matches the standard deterministic $\bigO(\epsilon^{-2})$ complexity bounds for model-based DFO methods (e.g.~\cite{Garmanjani2016,Cartis2019a}).
Compared to the analysis of derivative-based methods (e.g.~BCD \cite{Xu2017} and probabilistically accurate models \cite{Cartis2018a}) we need to incorporate the possibility that the interpolation model is not accurate (not fully linear, see \defref{def_reduced_fully_linear}).
However, unlike \cite{Bandeira2014,Chen2016,Blanchet2016} we do not assume that full linearity is a stochastic property; instead, our stochasticity comes from the subspace selection and we explicitly handle non-fully linear models similar to \cite{Conn2009,Cartis2019a}.
This gives us a framework which is similar to standard model-based DFO and with weak probabilistic conditions.
Compared to the analysis of derivative-based random subspace methods in \cite{Fowkes2020,Shao2021}, our analysis is complicated substantially by the possibility of inaccurate models and the intricacies of model-based DFO algorithms.

We then consider RSDFO-GN, which explicitly describes how interpolation models can be constructed for nonlinear least-squares problems, thus providing a concrete implementation of RSDFO in this context.
We prove that RSDFO-GN retains the same complexity bounds as RSDFO.

\paragraph{Implementation}
Although RSDFO-GN gives a reduced linear algebra cost of model construction compared to existing methods, it is important that we have an implementation that achieves this reduced cost without sacrificing overall performance (in the sense of objective decrease achieved within a given number of objective evaluations).

We introduce a practical, implementable variant of RSDFO-GN called DFBGN, which is based on the solver DFO-LS \cite{Cartis2018}.
DFBGN includes an efficient, geoemtry-aware approach for selecting interpolation points, and hence directions in our subspace, for removal, an adaptive randomized approach for selecting new interpolation points/subspace directions.
We study the per-iteration linear algebra cost of DFBGN, and show that it is \emph{linear in the problem dimension}, a substantial improvement over existing methods, which are cubic in the problem dimension.
Our per-iteration linear algebra costs are also linear in the number of residuals, the same as existing methods, but with a substantially smaller constant (quadratic in the subspace dimension, which is user-determined, rather than quadratic in the problem dimension).

\paragraph{Numerical Results}
We compare DFBGN with DFO-LS (which itself is shown to have state-of-the-art performance in \cite{Cartis2018}) on collections of both medium-scale (approx.~100 dimensions) and large-scale test problems (approx.~1000 dimensions).
We show that DFBGN with a full-sized subspace has similar performance to DFO-LS in terms of objective evaluations, but shows improved performance on runtime.
As the dimension of the subspace reduces (i.e.~the size of the interpolation set reduces), we demonstrate a tradeoff between reduced linear algebra costs and increased evaluation counts required to achieve a given objective reduction.
The flexibility of DFBGN allows this tradeoff to be explicitly managed.
When tested on large-scale problems, DFO-LS frequently reaches a reasonable runtime limit without making substantial progress, whereas DFBGN with small subspace size can perform many more iterations and hence make better progress than DFO-LS.
In the case of expensive objectives with small evaluation budgets, we show that DFBGN can make progress with few objective evaluations in a similar way to DFO-LS (which has a mechanism to make progress from as few as 2 objective evaluations independent of problem dimension), but with substantially lower linear algebra costs.

\paragraph{Structure of paper}
In \secref{sec_rsdfo} we describe RSDFO and provide our probabilistic worst-case complexity analysis.
We specialize RSDFO to RSDFO-GN in \secref{sec_rsdfogn}.
Then we describe the practical implementation DFBGN and its features in \secref{sec_implementation}.
Our numerical results are given in \secref{sec_numerics}.

\paragraph{Implementation}
A Python implementation of DFBGN is available on Github.\footnote{\url{https://github.com/numericalalgorithmsgroup/dfbgn}}

\paragraph{Notation}
We use $\|\cdot\|$ to refer to the Euclidean norm of vectors and the operator 2-norm of matrices, and $B(\bx,\Delta)$ for $\bx\in\R^n$ and $\Delta>0$ to be the closed ball $\{\by\in\R^n : \|\by-\bx\|\leq\Delta\}$.

\section{Random Subspace Model-Based DFO} \label{sec_rsdfo}
In this section we outline our general model-based DFO algorithmic framework based on minimization in random subspaces.
We consider the nonconvex problem
\begin{align}
	\min_{\bx\in\R^n} f(\bx), \label{eq_opt_generic}
\end{align}
where we assume that $f:\R^n\to\R$ is continuously differentiable, but that access to its gradient is not possible (e.g.~for the reasons described in \secref{sec_introduction}).
In a standard model-based DFO framework (e.g.~\cite{Conn2009,Larson2019}), at each iteration $k$ we construct a quadratic model $m_k:\R^n\to\R$ which approximates $f$ near our iterate $\bx_k$:
\begin{align}
	f(\bx_k + \bs) \approx m_k(\bs) \defeq f(\bx_k) + \bg_k^T \bs + \frac{1}{2}\bs^T H_k \bs, \label{eq_f_model_generic}
\end{align}
for some $\bg_k\in\R^n$ and $H_k\in\R^{n\times n}$ symmetric.
Based on this model, we build a globally convergent algorithm using a trust-region framework \cite{Conn2000}.
This algorithmic framework is suitable providing that---when necessary---we can guarantee $m_k$ is a sufficiently accurate model for $f$ near $\bx_k$.
Details about how to construct sufficiently accurate models based on interpolation are given in \cite{Conn2009}.

Our core idea here is to construct interpolation models which only approximate the objective in a subspace, rather than in the full space $\R^n$.
This allows us to use interpolation sets with fewer points, since we do not have to capture the objective's behaviour outside our subspace, which improves the scalability of the method.

In this section, we outline our general algorithmic framework and provide a worst-case complexity analysis showing convergence to first-order stationary points with high probability.
We then describe how this framework may be specialized to the case of nonlinear least-squares minimization.

\subsection{RSDFO Algorithm}
In our general framework, which we call RSDFO (Randomized Subspace DFO), we modify the above approach by randomly choosing a $p$-dimensional subspace (where $p<n$ is user-chosen) and constructing an interpolation model defined only in that subspace.\footnote{Formally, we define our model in an affine space, but we call it a subspace throughout as this fits with an intuitive view of what RSDFO aims to achieve.}
Specifically, in each iteration $k$ we randomly choose a $p$-dimensional affine space $\Y_k \subset \R^n$ given by the range of $Q_k\in\R^{n\times p}$, i.e.~
\be \Y_k = \{\bx_k + Q_k \hat{\bs} : \hat{\bs}\in\R^p\}. \label{eq_subspace_definition} \ee

We then construct a model which interpolate $f$ at points in $\Y_k$ and ultimately construct a local quadratic model for $f$ only on $\Y_k$.
That is, given $Q_k$, we assume that we have $\hat{m}_k:\R^p\to\R$ given by
\be f(\bx_k + Q_k\hat{\bs}) \approx \hat{m}_k(\hat{\bs}) := f(\bx_k) + \hat{\bg}_k^T\hat{\bs} + \frac{1}{2}\hat{\bs}^T \hat{H}_k \hat{\bs}, \label{eq_reduced_model_generic} \ee
where $\hat{\bg}_k\in\R^p$ and $\hat{H}_k\in\R^{p\times p}$ are the low-dimensional model gradient and Hessian respectively, adopting the convention of using hats on variables to denote low-dimensional quantities.
In \secref{sec_rsdfogn} we specialize this to a model construction process for nonlinear least-squares problems.

For our trust-region algorithm, we (approximately) minimize $\hat{m}_k$ inside the trust region to get a tentative step
\be \hat{\bs}_k \approx \argmin_{\hat{\bs}\in\R^p} \hat{m}_k(\hat{\bs}), \quad \text{s.t.} \quad \|\hat{\bs}\|\leq\Delta_k, \label{eq_reduced_trs_generic} \ee
for the current trust-region radius $\Delta_k>0$, yielding a tentative step $\bs_k = Q_k \hat{\bs}_k \in\R^n$.
We thus also get the computational advantage coming from solving a $p$-dimensional trust-region subproblem.

In our setting we are only interested in the approximation properties of $\hat{m}_k$ in the space $\Y_k$, and so we introduce the following notion of a ``sufficiently accurate'' model:

\begin{definition} \label{def_reduced_fully_linear}
Given $Q\in\R^{n\times p}$, a model $\hat{m}:\R^p\to\R$ is $Q$-fully linear in $B(\bx,\Delta)\subset\R^n$ if
\begin{subequations}
\begin{align}
    |f(\bx+Q\hat{\bs}) - \hat{m}(\hat{\bs})| &\leq \kappa_{\rm ef}\Delta^2, \label{eq_fully_linear_obj} \\
    \|Q^T \grad f(\bx+Q\hat{\bs}) - \grad\hat{m}(\hat{\bs})\| &\leq \kappa_{\rm eg}\Delta, \label{eq_fully_linear_grad}
\end{align}
\end{subequations}
for all $\bs\in\R^p$ with $\|\hat{\bs}\|\leq\Delta$.
The constants $\kappa_{\rm ef}$ and $\kappa_{\rm eg}$ must be independent of $Q$, $\hat{m}$, $\bx$ and $\Delta$.
\end{definition}

The gradient condition \eqref{eq_fully_linear_grad} comes from noting that if $\hat{f}(\hat{\bs})\defeq f(\bx+Q\hat{\bs})$ then $\grad \hat{f}(\hat{\bs}) = Q^T \grad f(\bx+Q\hat{\bs})$.
We note that if we have full-dimensional subspaces $p=n$ and take $Q=I$, then we recover the standard notion of fully linear models \cite[Definition 6.1]{Conn2009}.

\begin{algorithm}[tb]
	\footnotesize{
	\begin{algorithmic}[1]
		\Require Starting point $\bx_0\in\R^n$, initial trust region radius $\Delta_0>0$, and subspace dimension $p\in\{1,\ldots,n\}$. 
		\vspace{0.2em}
		\Statex \underline{Parameters}: maximum trust-region radius $\Delta_{\rm max}\geq\Delta_0$, trust-region radius scalings $0<\gamma_{\rm dec}<1<\gamma_{\rm inc}\leq\overline{\gamma}_{\rm inc}$, criticality constants $\epsilon_C,\mu>0$ and trust-region scaling $0<\gamma_C<1$, safety step threshold $\beta_F>0$ and trust-region scaling $0<\gamma_F<1$, and acceptance thresholds $0 < \eta_1 \leq \eta_2 < 1$.
		\vspace{0.5em}
		\State Set flag \texttt{CHECK\_MODEL}=\texttt{FALSE}.
		\For{$k=0,1,2,\ldots$}
		    \If{\texttt{CHECK\_MODEL}=\texttt{TRUE}}
				\State Set $Q_k=Q_{k-1}$.
		        \State Construct a reduced model $\hat{m}_k:\R^p\to\R$ \eqref{eq_reduced_model_generic} which is $Q_k$-fully linear in $B(\bx_k,\Delta_k)$. 
		    \Else
				\State Define a subspace by randomly sampling $Q_k\in\R^{n\times p}$.
		        \State Construct a reduced model $\hat{m}_k:\R^p\to\R$ \eqref{eq_reduced_model_generic} which need not be $Q_k$-fully linear.
		    \EndIf
		    \If{$\|\hat{\bg}_k\|<\epsilon_C$ \textbf{and} \textit{($\|\hat{\bg}_k\|<\mu^{-1}\Delta_k$ \textbf{or} $\hat{m}_k$ is not $Q_k$-fully linear in $B(\bx_k,\Delta_k)$)}} \label{ln_main_start}
			    \State (Criticality step) Set $\bx_{k+1}=\bx_k$, $\Delta_{k+1}=\gamma_C\Delta_k$ and \texttt{CHECK\_MODEL}=\texttt{TRUE}.
			\Else \quad $\leftarrow$ \textit{$\|\hat{\bg}_k\|\geq\epsilon_C$ or ($\|\hat{\bg}_k\|\geq\mu^{-1}\Delta_k$ and $\hat{m}_k$ is $Q_k$-fully linear in $B(\bx_k,\Delta_k)$)}
				\State Approximately solve the subspace trust-region subproblem in $\R^{p}$ \eqref{eq_reduced_trs_generic} and calculate the step $\bs_k = Q_k \hat{\bs}_k \in\R^n$.
				\If{$\|\hat{\bs}_k\| < \beta_F \Delta_k$}
    			    \State (Safety step) Set $\bx_{k+1}=\bx_k$ and $\Delta_{k+1}=\gamma_{F}\Delta_k$.
    			    \State If $\hat{m}_k$ is $Q_k$-fully linear in $B(\bx_k,\Delta_k)$, then set \texttt{CHECK\_MODEL}=\texttt{FALSE}, otherwise \texttt{CHECK\_MODEL}=\texttt{TRUE}.
			    \Else
    				\State Evaluate $f(\bx_k+\bs_k)$ and calculate ratio 
    			    \be \rho_k \defeq \frac{f(\bx_k)-f(\bx_k+\bs_k)}{\hat{m}_k(\b{0})-\hat{m}_k(\hat{\bs}_k)}. \label{eq_ratio_generic} \ee
    			    \State Accept/reject step and update trust region radius: set
    			    \be \bx_{k+1} = \begin{cases}\bx_k + \bs_k, & \rho_k \geq \eta_1, \\ \bx_k, & \rho_k < \eta_1, \end{cases} \quad \text{and} \quad \Delta_{k+1} = \begin{cases}\min(\max(\gamma_{\rm inc}\Delta_k, \overline{\gamma}_{\rm inc}\|\hat{\bs}_k\|), \Delta_{\rm max}), & \rho_k \geq \eta_2, \\ \max(\gamma_{\rm dec}\Delta_k, \|\hat{\bs}_k\|), & \eta_1 \leq \rho_k < \eta_2, \\ \min(\gamma_{\rm dec}\Delta_k, \|\hat{\bs}_k\|), & \rho_k < \eta_1. \end{cases} \label{eq_dfbgn_tr_updating_generic} \ee
    			    \State If $\rho_k\geq\eta_2$ or $\hat{m}_k$ is $Q_k$-fully linear in $B(\bx_k,\Delta_k)$, then set \texttt{CHECK\_MODEL}=\texttt{FALSE}, otherwise set \texttt{CHECK\_MODEL}=\texttt{TRUE}.
    	        \EndIf
			\EndIf \label{ln_main_end}
		\EndFor
	\end{algorithmic}
	} 
	\caption{RSDFO (Randomized Subspace Derivative-Free Optimization) for solving \eqref{eq_opt_generic}.}
	\label{alg_rsdfo}
\end{algorithm}

\paragraph{Complete RSDFO Algorithm}
The complete RSDFO algorithm is stated in \algref{alg_rsdfo}.
The overall structure is common to model-based DFO methods \cite{Conn2009}.
In particular, we assume that we have procedures to verify whether or not a model is $Q_k$-fully linear in $B(\bx_k,\Delta_k)$ and (if not) to generate a $Q_k$-fully linear model.
When we specialize RSDFO to nonlinear least-squares problems in \secref{sec_rsdfogn}, we will describe how we can obtain such procedures.

After defining our subspace and model, we first perform a criticality step, which guarantees that---whenever we suspect we are close to first-order stationarity, as measured by $\|\hat{\bg}_k\|$---we have an accurate model and an appropriately sized trust-region radius.
Often this criticality step is formulated as its own subroutine with an extra inner loop \cite{Conn2009}, but following \cite{Garmanjani2016} we only perform one criticality step per iteration and avoid nested loops.

The remainder of \algref{alg_rsdfo} broadly follows standard trust-region methods.
If the computed step is much shorter than the trust-region radius, we enter a safety step---originally due to Powell \cite{Powell2003}---which is similar to an unsuccessful iteration (i.e.~$\rho_k<\eta_1$, so we reject the step and decrease $\Delta_k$) but without evaluating $f(\bx_k+\bs_k)$ and hence saving one objective evaluation.
Our updating mechanism for $\Delta_k$ \eqref{eq_dfbgn_tr_updating_generic} also takes into account the computed step size, and is the same as in \cite{Powell2009,Cartis2019a}.

An important feature of RSDFO is that in some iterations, we reuse the previous subspace, $\Y_k=\Y_{k-1}$, corresponding to the flag \texttt{CHECK\_MODEL}=\texttt{TRUE}.
In this case, we had an inaccurate model in iteration $k-1$ and require that our new model $\hat{m}_k$ is accurate ($Q_k$-fully linear).
This mechanism essentially ensures that $\Delta_k$ is not decreased too quickly as a result of inaccurate models, and is mostly decreased to achieve sufficient objective reduction.

We now give our convergence and worst-case complexity analysis of \algref{alg_rsdfo}.

\subsection{Assumptions and Preliminary Results}
We begin our analysis with some basic assumptions and preliminary results.

\begin{assumption}[Smoothness] \label{ass_smoothness}
The objective function $f:\R^n\to\R$ is bounded below by $f_{\rm low}$ and continuously differentiable, and $\grad f$ is $L_{\grad f}$-Lipschitz continuous in the extended level set $\{\by\in\R^n : \|\by-\bx\| \leq\Delta_{\max} \: \text{for some} \: f(\bx) \leq f(\bx_0)\}$, for some constant $L_{\grad f}>0$.
\end{assumption}

We also need two standard assumptions for trust-region methods: uniformly bounded above model Hessians and sufficiently accurate solutions to the trust-region subproblem \eqref{eq_reduced_trs_generic}.

\begin{assumption}[Bounded model Hessians] \label{ass_bdd_hess}
We assume that $\|\hat{H}_k\|\leq\kappa_H$ for all $k$, for some $\kappa_H\geq1$.
\end{assumption}

\begin{assumption}[Cauchy decrease] \label{ass_cauchy_decrease}
Our method for solving the trust-region subproblem \eqref{eq_reduced_trs_generic} gives a step $\hat{\bs}_k$ satisfying the sufficient decrease condition
\be \hat{m}_k(\b{0}) - \hat{m}_k(\hat{\bs}_k) \geq c_1 \|\hat{\bg}_k\| \min\left(\Delta_k, \frac{\|\hat{\bg}_k\|}{\max(\|\hat{H}_k\|,1)}\right), \label{eq_cauchy_decrease} \ee
for some $c_1\in[1/2, 1]$ independent of $k$.
\end{assumption}

A useful consequence, needed for the analysis of our trust-region radius updating scheme, is the following.

\begin{lemma}[Lemma 3.6, \cite{Cartis2019a}] \label{lem_step_lower_bound}
Suppose \assref{ass_cauchy_decrease} holds.
Then
\be \|\hat{\bs}_k\| \geq c_2 \min\left(\Delta_k, \frac{\|\hat{\bg}_k\|}{\max(\|\hat{H}_k\|, 1)}\right), \ee
where $c_2 := 2c_1 / (1+\sqrt{1+2c_1})$.
\end{lemma}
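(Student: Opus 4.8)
The plan is to show that the Cauchy decrease condition \eqref{eq_cauchy_decrease} forces the step $\hat{\bs}_k$ to be large whenever it is well inside the trust region. Write $\delta_k := \min\left(\Delta_k, \|\hat{\bg}_k\|/\max(\|\hat{H}_k\|,1)\right)$ for brevity. First I would obtain a simple upper bound on the model decrease in terms of $\|\hat{\bs}_k\|$: since $\hat{m}_k(\b{0})-\hat{m}_k(\hat{\bs}_k) = -\hat{\bg}_k^T\hat{\bs}_k - \tfrac12 \hat{\bs}_k^T\hat{H}_k\hat{\bs}_k \leq \|\hat{\bg}_k\|\,\|\hat{\bs}_k\| + \tfrac12 \max(\|\hat{H}_k\|,1)\,\|\hat{\bs}_k\|^2$. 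Combining this with \eqref{eq_cauchy_decrease} gives
\be c_1 \|\hat{\bg}_k\| \delta_k \leq \|\hat{\bg}_k\|\,\|\hat{\bs}_k\| + \tfrac12 \max(\|\hat{H}_k\|,1)\,\|\hat{\bs}_k\|^2. \ee

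Next I would divide through by $\|\hat{\bg}_k\|$ (the case $\hat{\bg}_k=\b 0$ is trivial since then the claimed bound reads $\|\hat{\bs}_k\|\geq 0$), and use the elementary fact that $\max(\|\hat{H}_k\|,1)/\|\hat{\bg}_k\| \leq 1/\delta_k$ by definition of $\delta_k$. This yields $c_1 \delta_k \leq \|\hat{\bs}_k\| + \tfrac{1}{2\delta_k}\|\hat{\bs}_k\|^2$. Setting $t := \|\hat{\bs}_k\|/\delta_k \geq 0$, this is the quadratic inequality $\tfrac12 t^2 + t - c_1 \geq 0$, whose positive root is $t_* = -1 + \sqrt{1 + 2c_1}$; hence $t \geq t_*$, i.e.\ $\|\hat{\bs}_k\| \geq (-1+\sqrt{1+2c_1})\,\delta_k$. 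Finally I would rationalize the constant: $-1+\sqrt{1+2c_1} = \frac{(\sqrt{1+2c_1}-1)(\sqrt{1+2c_1}+1)}{\sqrt{1+2c_1}+1} = \frac{2c_1}{1+\sqrt{1+2c_1}} = c_2$, giving exactly the stated bound.

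I do not anticipate a genuine obstacle here — the argument is a short manipulation of the Cauchy condition together with solving one scalar quadratic. The only points requiring minor care are handling the degenerate case $\hat{\bg}_k = \b 0$ separately, and correctly invoking the bound $\max(\|\hat{H}_k\|,1) \delta_k \leq \|\hat{\bg}_k\|$, which holds because $\delta_k \leq \|\hat{\bg}_k\|/\max(\|\hat{H}_k\|,1)$ by construction. (Note that $c_1 \in [1/2,1]$ and $c_2 \in (0,1)$ automatically, so no dimension- or iteration-dependence enters the constant, consistent with the statement.)
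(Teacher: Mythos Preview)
Your argument is correct. The paper does not supply its own proof of this lemma; it is quoted verbatim as \cite[Lemma~3.6]{Cartis2019a} and used as a black box. Your derivation---bounding the model decrease above by $\|\hat{\bg}_k\|\,\|\hat{\bs}_k\| + \tfrac12 \max(\|\hat{H}_k\|,1)\,\|\hat{\bs}_k\|^2$, combining with the Cauchy condition, and reducing to the scalar quadratic $\tfrac12 t^2 + t - c_1 \geq 0$---is the standard route and matches the original proof in the cited reference.
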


\begin{lemma} \label{lem_eventually_successful}
Suppose Assumptions \ref{ass_bdd_hess} and \ref{ass_cauchy_decrease} hold, and we run RSDFO with $\beta_F \leq c_2$.
If $\hat{m}_k$ is $Q_k$-fully linear in $B(\bx_k,\Delta_k)$ and
\be \Delta_k \leq c_0 \|\hat{\bg}_k\|, \qquad \text{where} \qquad c_0 := \min\left(\mu, \frac{1}{\kappa_H}, \frac{c_1(1-\eta_2)}{2\kappa_{\rm ef}}\right), \ee
then the criticality and safety steps are not called, and $\rho_k\geq\eta_2$.
\end{lemma}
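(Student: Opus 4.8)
The plan is to verify each of the three claims in turn, namely that the criticality step is not triggered, that the safety step is not triggered, and that $\rho_k \geq \eta_2$. These correspond exactly to the three decision points in \algref{alg_rsdfo} (lines \ref{ln_main_start}--\ref{ln_main_end}), so the structure of the proof is dictated by the algorithm.

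First I would handle the criticality step. The criticality step on line \ref{ln_main_start} fires only if $\|\hat{\bg}_k\| < \epsilon_C$ \emph{and} ($\|\hat{\bg}_k\| < \mu^{-1}\Delta_k$ or $\hat{m}_k$ is not $Q_k$-fully linear). Since we are assuming $\hat{m}_k$ \emph{is} $Q_k$-fully linear, the second disjunct fails, so it suffices to show $\|\hat{\bg}_k\| \geq \mu^{-1}\Delta_k$, i.e.\ $\Delta_k \leq \mu\|\hat{\bg}_k\|$. This is immediate from the hypothesis $\Delta_k \leq c_0\|\hat{\bg}_k\|$ together with $c_0 \leq \mu$ in the definition of $c_0$. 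Hence the criticality step is not called, and the algorithm proceeds to compute $\hat{\bs}_k$.

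Next, the safety step. This fires when $\|\hat{\bs}_k\| < \beta_F\Delta_k$. By \lemref{lem_step_lower_bound}, $\|\hat{\bs}_k\| \geq c_2\min(\Delta_k, \|\hat{\bg}_k\|/\max(\|\hat{H}_k\|,1))$. Using \assref{ass_bdd_hess} (so $\max(\|\hat{H}_k\|,1) \leq \kappa_H$) and the hypothesis $\Delta_k \leq c_0\|\hat{\bg}_k\| \leq \|\hat{\bg}_k\|/\kappa_H$ (since $c_0 \leq 1/\kappa_H$), the minimum is achieved at $\Delta_k$, so $\|\hat{\bs}_k\| \geq c_2\Delta_k \geq \beta_F\Delta_k$ because $\beta_F \leq c_2$. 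Therefore the safety step is not called either, and the algorithm evaluates $f(\bx_k + \bs_k)$ and forms $\rho_k$.

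Finally, the bound $\rho_k \geq \eta_2$. Here I would estimate $|\rho_k - 1|$ in the standard trust-region way: write
\[
|\rho_k - 1| = \frac{|f(\bx_k + \bs_k) - \hat{m}_k(\hat{\bs}_k)|}{\hat{m}_k(\b{0}) - \hat{m}_k(\hat{\bs}_k)},
\]
using $f(\bx_k) = \hat{m}_k(\b{0})$ from \eqref{eq_reduced_model_generic} and $f(\bx_k+\bs_k) = f(\bx_k + Q_k\hat{\bs}_k)$. For the numerator, $Q_k$-full linearity \eqref{eq_fully_linear_obj} with $\|\hat{\bs}_k\| \leq \Delta_k$ gives $|f(\bx_k + Q_k\hat{\bs}_k) - \hat{m}_k(\hat{\bs}_k)| \leq \kappa_{\rm ef}\Delta_k^2$. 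For the denominator, \assref{ass_cauchy_decrease} together with the same argument as in the safety step (the minimum in \eqref{eq_cauchy_decrease} equals $\Delta_k$, since $\Delta_k \leq \|\hat{\bg}_k\|/\kappa_H \leq \|\hat{\bg}_k\|/\max(\|\hat{H}_k\|,1)$) gives $\hat{m}_k(\b{0}) - \hat{m}_k(\hat{\bs}_k) \geq c_1\|\hat{\bg}_k\|\Delta_k$. Combining, $|\rho_k - 1| \leq \kappa_{\rm ef}\Delta_k / (c_1\|\hat{\bg}_k\|)$, and since $\Delta_k \leq c_0\|\hat{\bg}_k\|$ with $c_0 \leq c_1(1-\eta_2)/(2\kappa_{\rm ef})$ this is at most $(1-\eta_2)/2 < 1-\eta_2$, so $\rho_k \geq \eta_2$.

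I do not anticipate a genuine obstacle here: every piece is a direct consequence of an earlier result, and the whole content is that the three clauses of the definition of $c_0$ were reverse-engineered precisely to make the three claims hold. The only point requiring minor care is making sure the argument that the Cauchy-decrease minimum reduces to $\Delta_k$ is applied consistently (it is needed both for the safety-step bound and the denominator estimate), and that $\|\hat{\bs}_k\|\leq\Delta_k$ so that \eqref{eq_fully_linear_obj} applies at the point $\hat{\bs}_k$ — this holds because $\hat{\bs}_k$ is feasible for the trust-region subproblem \eqref{eq_reduced_trs_generic}.
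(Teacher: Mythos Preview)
Your proof is correct and follows essentially the same three-step structure as the paper. The only difference is in the $\rho_k\geq\eta_2$ step: you exploit the exact equality $\hat m_k(\b{0})=f(\bx_k)$ built into the model definition \eqref{eq_reduced_model_generic} and so need only one fully-linear error term, giving $|\rho_k-1|\leq (1-\eta_2)/2$; the paper instead bounds $|f(\bx_k)-\hat m_k(\b{0})|$ by $\kappa_{\rm ef}\Delta_k^2$ as well and uses two error terms, arriving at the looser (but still sufficient) bound $|\rho_k-1|\leq 1-\eta_2$. Your version is slightly cleaner and explains why the factor $2$ in $c_0$ is not actually needed, but otherwise the arguments coincide.
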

\begin{proof}
Since $\hat{m}_k$ is $Q_k$-fully linear and $\Delta_k \leq \mu\|\hat{\bg}_k\|$, the criticality step is not called.
From \lemref{lem_step_lower_bound} and $\Delta_k \leq \|\hat{\bg}_k\|/\kappa_H$, we have $\|\hat{\bs}_k\| \geq c_2 \Delta_k \geq \beta_F \Delta_k$ and so the safety step is not called.

From Assumptions \ref{ass_bdd_hess} and \ref{ass_cauchy_decrease}, we have
\be  \hat{m}_k(\b{0}) - \hat{m}_k(\hat{\bs}_k) \geq c_1 \|\hat{\bg}_k\| \min\left(\Delta_k, \frac{\|\hat{\bg}_k\|}{\kappa_H}\right) = c_1 \|\hat{\bg}_k\| \Delta_k, \ee
since $\Delta_k \leq \|\hat{\bg}_k\|/\kappa_H$ by assumption.
Next, since $\hat{m}_k$ is $Q_k$-fully linear, from \eqref{eq_fully_linear_obj} we have
\begin{align}
    |f(\bx_k) - \hat{m}_k(\b{0})| &= |f(\bx_k + Q_k\b{0}) - \hat{m}_k(\b{0})| \leq \kappa_{\rm ef}\Delta_k^2, \\
    |f(\bx_k+\bs_k) - \hat{m}_k(\hat{\bs}_k)| &= |f(\bx_k + Q_k\hat{\bs}_k) - \hat{m}_k(\hat{\bs}_k)| \leq \kappa_{\rm ef}\Delta_k^2.
\end{align}
Hence we have
\begin{align}
    |\rho_k-1| &\leq \frac{|f(\bx_k) - \hat{m}_k(\b{0})|}{|\hat{m}_k(\b{0}) - \hat{m}_k(\hat{\bs}_k)|} + \frac{|f(\bx_k+\bs_k) - \hat{m}_k(\hat{\bs}_k)|}{|\hat{m}_k(\b{0}) - \hat{m}_k(\hat{\bs}_k)|}
    \leq \frac{2\kappa_{\rm ef}\Delta_k^2}{c_1 \|\hat{\bg}_k\| \Delta_k}
    \leq 1-\eta_2,
\end{align}
since $\Delta_k \leq c_0\|\hat{\bg}_k\| \leq c_1 (1-\eta_2)\|\hat{\bg}_k\|/(2\kappa_{\rm ef})$.
Thus $\rho_k\geq\eta_2$, which is the claim of the lemma.
\end{proof}

\begin{remark}
	The requirement $\beta_F \leq c_2$ in \lemref{lem_eventually_successful} is not restrictive.
	Since have $c_1 \geq 1/2$ in \assref{ass_cauchy_decrease}, it suffices to choose $\beta_F \leq \sqrt{2}-1$, for example.
\end{remark}

Our key new assumption is on the quality of our subspace selection, as suggested in \cite{Fowkes2020,Shao2021}:

\begin{definition} \label{def_well_aligned}
The matrix $Q_k$ is well-aligned if
\be \|Q_k^T \grad f(\bx_k)\| \geq \alpha_Q \|\grad f(\bx_k)\|, \label{eq_sketched_gradient} \ee
for some $\alpha_Q\in(0,1)$ independent of $k$.
\end{definition}

\begin{assumption}[Subspace quality] \label{ass_sketching}
Our subspace selection (determined by $Q_k$) satisfies the following two properties:
\begin{enumerate}[label=(\alph*)]
	\item At each iteration $k$ of RSDFO in which \texttt{CHECK\_MODEL} = \texttt{FALSE}, our subspace selection $Q_k$ is well-aligned for some fixed $\alpha_Q\in(0,1)$ with probability at least $1-\delta_S$, for some $\delta_S\in(0,1)$, independently of $\{Q_0,\ldots,Q_{k-1}\}$. \label{item_sketched_aligned_prob}
	\item $\|Q_k\| \leq Q_{\max}$ for all $k$ and some $Q_{\max}>0$. \label{item_sketch_bounded}
\end{enumerate}
\end{assumption}

Of these two properties, \ref{item_sketched_aligned_prob} is needed for our complexity analysis, while \ref{item_sketch_bounded} is only needed in order to construct $Q_k$-fully linear models (in \secref{sec_rsdfogn}).
We will discuss how to achieve \assref{ass_sketching} in more detail in \secref{sec_achieving_sketch}.

\begin{lemma} \label{lem_epsilon_g}
In all iterations $k$ of RSDFO where the criticality step is not called, we have $\|\hat{\bg}_k\| \geq \min(\epsilon_C, \mu^{-1}\Delta_k)$.
If the criticality step is not called in iteration $k$, $Q_k$ is well-aligned and $\|\grad f(\bx_k)\|\geq \epsilon$, then
\be \|\hat{\bg}_k\| \geq \epsilon_g(\epsilon) := \min\left(\epsilon_C, \frac{\alpha_Q \epsilon}{\kappa_{\rm eg}\mu + 1}\right) > 0. \label{eq_epsilon_g} \ee
\end{lemma}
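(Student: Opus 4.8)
The plan is to prove the two claims in sequence, both relying on the logic of the criticality step in \algref{alg_rsdfo}. For the first claim, suppose the criticality step is not called in iteration $k$. Looking at line~\ref{ln_main_start}, the criticality step is triggered exactly when $\|\hat{\bg}_k\| < \epsilon_C$ \emph{and} ($\|\hat{\bg}_k\| < \mu^{-1}\Delta_k$ \emph{or} $\hat{m}_k$ is not $Q_k$-fully linear). So if it is not called, then either $\|\hat{\bg}_k\| \geq \epsilon_C$, or else $\|\hat{\bg}_k\| \geq \mu^{-1}\Delta_k$ (and $\hat{m}_k$ is $Q_k$-fully linear). In either case $\|\hat{\bg}_k\| \geq \min(\epsilon_C, \mu^{-1}\Delta_k)$, which gives the first inequality.

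For the second claim, assume additionally that $Q_k$ is well-aligned and $\|\grad f(\bx_k)\| \geq \epsilon$. If $\|\hat{\bg}_k\| \geq \epsilon_C$, then $\|\hat{\bg}_k\| \geq \epsilon_g(\epsilon)$ trivially since $\epsilon_g(\epsilon) \leq \epsilon_C$. Otherwise, as noted above, we must be in the case $\|\hat{\bg}_k\| \geq \mu^{-1}\Delta_k$ with $\hat{m}_k$ being $Q_k$-fully linear in $B(\bx_k,\Delta_k)$; equivalently $\Delta_k \leq \mu\|\hat{\bg}_k\|$. Now I would apply the gradient condition \eqref{eq_fully_linear_grad} of $Q_k$-full linearity at $\hat{\bs} = \b{0}$, which gives $\|Q_k^T \grad f(\bx_k) - \hat{\bg}_k\| \leq \kappa_{\rm eg}\Delta_k$. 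Combining this with the well-alignment bound \eqref{eq_sketched_gradient} and the triangle inequality:
\begin{align}
	\alpha_Q \epsilon \leq \alpha_Q \|\grad f(\bx_k)\| \leq \|Q_k^T \grad f(\bx_k)\| \leq \|\hat{\bg}_k\| + \kappa_{\rm eg}\Delta_k \leq \|\hat{\bg}_k\| + \kappa_{\rm eg}\mu\|\hat{\bg}_k\| = (\kappa_{\rm eg}\mu + 1)\|\hat{\bg}_k\|.
\end{align}
Rearranging yields $\|\hat{\bg}_k\| \geq \alpha_Q\epsilon / (\kappa_{\rm eg}\mu + 1) \geq \epsilon_g(\epsilon)$. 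In both cases $\|\hat{\bg}_k\| \geq \epsilon_g(\epsilon)$, and since $\epsilon_C > 0$, $\alpha_Q > 0$, $\epsilon > 0$ and $\kappa_{\rm eg}\mu + 1 > 0$, we have $\epsilon_g(\epsilon) > 0$.

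There is no real obstacle here — this is essentially a bookkeeping argument that unwinds the condition defining the criticality step and then applies the definitions of well-alignment and $Q_k$-full linearity. The only point requiring a little care is observing that when the criticality step is skipped with $\|\hat{\bg}_k\| < \epsilon_C$, full linearity of $\hat{m}_k$ is guaranteed (it is forced by the negation of the disjunction in line~\ref{ln_main_start}), so that \eqref{eq_fully_linear_grad} is available to be used; this is what makes the chain of inequalities above legitimate.
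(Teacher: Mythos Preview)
Your proposal is correct and follows essentially the same approach as the paper: both negate the entry condition of the criticality step to obtain the first bound, then in the case $\|\hat{\bg}_k\|<\epsilon_C$ use $Q_k$-full linearity \eqref{eq_fully_linear_grad} at $\hat{\bs}=\b{0}$ together with well-alignment \eqref{eq_sketched_gradient} and $\Delta_k\leq\mu\|\hat{\bg}_k\|$ to derive the chain $\alpha_Q\epsilon \leq \|Q_k^T\grad f(\bx_k)\| \leq (\kappa_{\rm eg}\mu+1)\|\hat{\bg}_k\|$. Your explicit observation that full linearity is forced by the negation of the disjunction in line~\ref{ln_main_start} is exactly the point the paper uses implicitly.
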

\begin{proof}
The first part follows immediately from the entry condition of the criticality step.
To prove \eqref{eq_epsilon_g}, suppose the criticality step is not called in iteration $k$ and $\|\hat{\bg}_k\|<\epsilon_C$.
Then we have $\Delta_k \leq \mu\|\hat{\bg}_k\|$ and $\hat{m}_k$ is $Q_k$-fully linear, and so from \eqref{eq_fully_linear_grad} we have
\be \|Q_k^T \grad f(\bx_k)\| \leq \|Q_k^T \grad f(\bx_k) - \hat{\bg}_k\| + \|\hat{\bg}_k\| \leq \kappa_{\rm eg}\Delta_k + \|\hat{\bg}_k\| \leq (\kappa_{\rm eg}\mu + 1)\|\hat{\bg}_k\|. \label{eq_tmp0} \ee
Since $Q_k$ is well-aligned, we conclude from \eqref{eq_sketched_gradient} and \eqref{eq_tmp0} that
\be \alpha_Q\|\grad f(\bx_k)\| \leq \|Q_k^T \grad f(\bx_k)\| \leq (\kappa_{\rm eg}\mu+1)\|\hat{\bg}_k\|, \ee
and we are done, since $\|\grad f(\bx_k)\|\geq\epsilon$.
\end{proof}

\subsection{Counting iterations}
We now provide a series of results counting the number of iterations of RSDFO of different types, following the style of analysis from \cite{Cartis2018a,Shao2021}. 
First we introduce some notation to enumerate our iterations.
Suppose we run RSDFO until the end of iteration $K$.
We then define the following subsets of $\{0,\ldots,K\}$:
\begin{itemize}
    \item $\mathcal{C}$ is the set of iterations in $\{0,\ldots,K\}$ where the criticality step is called.
    \item $\mathcal{F}$ is the set of iterations in $\{0,\ldots,K\}$, where the safety step is called (i.e.~$\|\hat{\bs}_k\|<\beta_F \Delta_k$).
    \item $\mathcal{VS}$ is the set of very successful iterations in $\{0,\ldots,K\}$, where $\rho_k \geq \eta_2$.
    \item $\mathcal{S}$ is the set of successful iterations in $\{0,\ldots,K\}$, where $\rho_k \geq \eta_1$. Note that $\mathcal{VS}\subset \mathcal{S}$.
    \item $\mathcal{U}$ is the set of unsuccessful iterations in $\{0,\ldots,K\}$, where $\rho_k<\eta_1$. 
    \item $\mathcal{A}$ is the set of well-aligned iterations in $\{0,\ldots,K\}$, where \eqref{eq_sketched_gradient} holds.
    \item $\mathcal{A}^C$ is the set of poorly aligned iterations in $\{0,\ldots,K\}$, where $\eqref{eq_sketched_gradient}$ does not hold.
    \item $\mathcal{D}(\Delta)$ is the set of iterations in $\{0,\ldots,K\}$ where $\Delta_k \geq \Delta$ for some $\Delta>0$.
    \item $\mathcal{D}^C(\Delta)$ is the set of iterations in $\{0,\ldots,K\}$ where $\Delta_k < \Delta$.
    \item $\mathcal{L}$ is the set of iterations in $\{0,\ldots,K\}$ where $\hat{m}_k$ is $Q_k$-fully linear in $B(\bx_k,\Delta_k)$.
    \item $\mathcal{L}^C$ is the set of iterations in $\{0,\ldots,K\}$ where $\hat{m}_k$ is not $Q_k$-fully linear in $B(\bx_k,\Delta_k)$.
\end{itemize}
In particular, we have the partitions, for any $\Delta>0$,
\be \{0,\ldots,K\} = \mathcal{C} \cup \mathcal{F} \cup \mathcal{S} \cup \mathcal{U} = \mathcal{A}\cup\mathcal{A}^C = \mathcal{D}(\Delta)\cup\mathcal{D}^C(\Delta) = \mathcal{L}\cup\mathcal{L}^C. \ee

First, we bound the number of successful iterations with large $\Delta_k$ using standard arguments from trust-region methods.
Throughout, we use $\#(\cdot)$ to refer to the cardinality of a set of iterations.

\begin{lemma} \label{lem_success}
Suppose Assumptions \ref{ass_smoothness}, \ref{ass_bdd_hess} and \ref{ass_cauchy_decrease} hold. 
If $\|\grad f(\bx_k)\| \geq \epsilon$ for all $k=0,\ldots,K$, then
\be \#(\mathcal{A}\cap\mathcal{D}(\Delta)\cap\mathcal{S}) \leq \phi(\Delta, \epsilon) := \frac{f(\bx_0)-f_{\rm low}}{\eta_1 c_1\epsilon_g(\epsilon) \min(\epsilon_g(\epsilon)/\kappa_H, \Delta)}, \ee
for all $\Delta>0$.
\end{lemma}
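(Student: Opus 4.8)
The plan is to bound the objective decrease at each iteration in $\mathcal{A}\cap\mathcal{D}(\Delta)\cap\mathcal{S}$ from below by a fixed positive constant, and then use the fact that $f$ is bounded below (\assref{ass_smoothness}) to conclude that only finitely many such iterations can occur. First I would take any iteration $k \in \mathcal{A}\cap\mathcal{D}(\Delta)\cap\mathcal{S}$. Being successful, the criticality and safety steps are not called, so \lemref{lem_epsilon_g} applies; since $k$ is well-aligned and $\|\grad f(\bx_k)\| \geq \epsilon$, we get $\|\hat{\bg}_k\| \geq \epsilon_g(\epsilon)$. Being successful also means $\rho_k \geq \eta_1$, so the model decrease transfers to a genuine objective decrease:
\begin{align}
f(\bx_k) - f(\bx_{k+1}) \geq \eta_1 \bigl(\hat{m}_k(\b{0}) - \hat{m}_k(\hat{\bs}_k)\bigr).
\end{align}

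Next I would apply the Cauchy decrease condition \eqref{eq_cauchy_decrease} together with \assref{ass_bdd_hess} ($\|\hat{H}_k\| \leq \kappa_H$, and $\kappa_H \geq 1$ so $\max(\|\hat{H}_k\|,1) \leq \kappa_H$):
\begin{align}
\hat{m}_k(\b{0}) - \hat{m}_k(\hat{\bs}_k) \geq c_1 \|\hat{\bg}_k\| \min\left(\Delta_k, \frac{\|\hat{\bg}_k\|}{\kappa_H}\right).
\end{align}
Now using $\|\hat{\bg}_k\| \geq \epsilon_g(\epsilon)$ and $\Delta_k \geq \Delta$ (since $k \in \mathcal{D}(\Delta)$), the right-hand side is bounded below by $c_1 \epsilon_g(\epsilon) \min(\epsilon_g(\epsilon)/\kappa_H, \Delta)$. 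Combining with the previous display gives that every iteration in $\mathcal{A}\cap\mathcal{D}(\Delta)\cap\mathcal{S}$ decreases $f$ by at least $\eta_1 c_1 \epsilon_g(\epsilon) \min(\epsilon_g(\epsilon)/\kappa_H, \Delta)$.

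Finally, since $f$ is monotonically non-increasing along the iterates (on unsuccessful, safety, and criticality iterations $\bx_{k+1} = \bx_k$, and on successful iterations $f$ decreases) and bounded below by $f_{\rm low}$, the total decrease is at most $f(\bx_0) - f_{\rm low}$. Dividing the total budget by the per-iteration decrease yields the bound $\#(\mathcal{A}\cap\mathcal{D}(\Delta)\cap\mathcal{S}) \leq \phi(\Delta,\epsilon)$. I don't anticipate a serious obstacle here — this is the standard trust-region counting argument — the only points requiring a little care are confirming that $f$ never increases across \emph{all} iteration types (so that summing telescoping decreases over just the successful iterations is valid), and verifying that the hypotheses of \lemref{lem_epsilon_g} are genuinely met for successful iterations, which holds because a successful iteration by definition passes the criticality-step check and reaches the ratio test.
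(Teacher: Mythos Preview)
Your proposal is correct and follows essentially the same approach as the paper's proof: apply \lemref{lem_epsilon_g} (using $k\in\mathcal{S}\Rightarrow k\notin\mathcal{C}$) together with $k\in\mathcal{A}$ to get $\|\hat{\bg}_k\|\geq\epsilon_g(\epsilon)$, combine $\rho_k\geq\eta_1$ with Cauchy decrease and \assref{ass_bdd_hess} to lower-bound the per-iteration objective decrease, then telescope using monotonicity and $f\geq f_{\rm low}$. The care points you flag (monotonicity across all iteration types, and that successful iterations bypass the criticality step) are exactly the ones the paper also addresses.
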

\begin{proof}
Since $\|\grad f(\bx_k)\|\geq\epsilon$, from \lemref{lem_epsilon_g} we have $\|\hat{\bg}_k\|\geq\epsilon_g(\epsilon)$ for all $k\in\mathcal{A}\cap\mathcal{S}$ (noting that $k\in\mathcal{S}$ implies $k\in\mathcal{C}^C$).
Then since $\rho_k\geq\eta_1$, from Assumptions \ref{ass_bdd_hess} and \ref{ass_cauchy_decrease} we get
\begin{align}
    f(\bx_k) - f(\bx_{k+1}) &\geq \eta_1 [\hat{m}_k(\b{0}) - \hat{m}_k(\hat{\bs}_k)], \\
    &\geq \eta_1 c_1 \|\hat{\bg}_k\|\min\left(\Delta_k, \frac{\|\hat{\bg}_k\|}{\kappa_H}\right), \\
    &\geq \eta_1 c_1 \epsilon_g(\epsilon) \min\left(\Delta, \frac{\epsilon_g(\epsilon)}{\kappa_H}\right),
\end{align}
where the last line follows from $\|\hat{\bg}_k\|\geq\epsilon_g(\epsilon)$ and $\Delta_k\geq\Delta$ (from $k\in\mathcal{D}(\Delta)$).
Since our step acceptance guarantees our algorithm is monotone (i.e.~$f(\bx_{k+1}) \leq f(\bx_k)$ for all $k$), we get
\be f(\bx_0) - f_{\rm low} \geq \sum_{k\in\mathcal{A}\cap\mathcal{D}(\Delta)\cap\mathcal{S}} [f(\bx_k) - f(\bx_{k+1})] \geq \left[\eta_1 c_1 \epsilon_g(\epsilon) \min\left(\Delta, \frac{\epsilon_g(\epsilon)}{\kappa_H}\right)\right] \cdot \#(\mathcal{A}\cap\mathcal{D}(\Delta)\cap\mathcal{S}), \ee
from which the result follows.
\end{proof}

\begin{lemma} \label{lem_small_unsuccessful}
	Suppose Assumptions \ref{ass_smoothness}, \ref{ass_bdd_hess} and \ref{ass_cauchy_decrease} hold, and $\beta_F\leq c_2$.
	If $\|\grad f(\bx_k)\| \geq \epsilon$ for all $k=0,\ldots,K$, then
	\be \#(\mathcal{A}\cap\mathcal{D}^C(\Delta)\cap\mathcal{L}\setminus\mathcal{VS}) = 0, \ee
	\be \Delta \leq \Delta^*(\epsilon) := \min\left(c_0\epsilon_g(\epsilon), \frac{\alpha_Q\epsilon}{\kappa_{\rm eg}+\mu^{-1}}\right). \ee
\end{lemma}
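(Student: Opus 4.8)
The plan is to show that if the trust-region radius is small enough relative to $\epsilon$, then on any iteration that is simultaneously well-aligned and has a $Q_k$-fully linear model, the iteration must be very successful — so such iterations cannot be counted in $\mathcal{A}\cap\mathcal{D}^C(\Delta)\cap\mathcal{L}\setminus\mathcal{VS}$, forcing that set to be empty. The key observation is that \lemref{lem_eventually_successful} already does almost all the work: it states that whenever $\hat{m}_k$ is $Q_k$-fully linear and $\Delta_k \leq c_0\|\hat{\bg}_k\|$, then the criticality and safety steps are skipped and $\rho_k\geq\eta_2$, i.e.\ $k\in\mathcal{VS}$. So the whole argument reduces to verifying that the hypotheses ``$k\in\mathcal{A}$, $k\in\mathcal{L}$, $\Delta_k<\Delta\leq\Delta^*(\epsilon)$, and $\|\grad f(\bx_k)\|\geq\epsilon$'' imply $\Delta_k\leq c_0\|\hat{\bg}_k\|$.

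First I would dispose of the criticality step: I need to know $k\in\mathcal{C}^C$ in order to apply \lemref{lem_epsilon_g}. This is mildly circular-looking, so I would argue it directly from the well-alignedness bound. If the criticality step \emph{were} called, \algref{alg_rsdfo} sets $\Delta_{k+1}=\gamma_C\Delta_k$ and flags \texttt{CHECK\_MODEL}; but more simply, since $k\in\mathcal{L}$ and $k\in\mathcal{A}$, I can use \eqref{eq_fully_linear_grad} and \eqref{eq_sketched_gradient} to get $\|\hat{\bg}_k\| \geq \|Q_k^T\grad f(\bx_k)\| - \kappa_{\rm eg}\Delta_k \geq \alpha_Q\epsilon - \kappa_{\rm eg}\Delta$. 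Using $\Delta \leq \Delta^*(\epsilon) \leq \alpha_Q\epsilon/(\kappa_{\rm eg}+\mu^{-1})$, this gives $\|\hat{\bg}_k\| \geq \mu^{-1}\Delta^*(\epsilon) \geq \mu^{-1}\Delta > \mu^{-1}\Delta_k$, which means the second disjunct in the criticality entry condition fails (the model \emph{is} fully linear and $\|\hat{\bg}_k\|\geq\mu^{-1}\Delta_k$), so the criticality step is not called. Hence \lemref{lem_epsilon_g} applies and $\|\hat{\bg}_k\|\geq\epsilon_g(\epsilon)$.

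Then the remaining step is immediate: $\Delta_k < \Delta \leq \Delta^*(\epsilon) \leq c_0\epsilon_g(\epsilon) \leq c_0\|\hat{\bg}_k\|$, so the hypothesis of \lemref{lem_eventually_successful} holds (its other hypotheses, Assumptions~\ref{ass_bdd_hess} and \ref{ass_cauchy_decrease} and $\beta_F\leq c_2$, are assumed here), and therefore $\rho_k\geq\eta_2$, i.e.\ $k\in\mathcal{VS}$. Since $k$ was an arbitrary element of $\mathcal{A}\cap\mathcal{D}^C(\Delta)\cap\mathcal{L}$, every such iteration lies in $\mathcal{VS}$, so $\mathcal{A}\cap\mathcal{D}^C(\Delta)\cap\mathcal{L}\setminus\mathcal{VS}=\emptyset$ and its cardinality is zero.

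The only mild subtlety — and the place I would be most careful — is the handedness of the two pieces defining $\Delta^*(\epsilon)$: one must check that $\Delta^*(\epsilon)\leq c_0\epsilon_g(\epsilon)$ delivers $\Delta_k\leq c_0\|\hat{\bg}_k\|$ (needs the lower bound $\|\hat{\bg}_k\|\geq\epsilon_g(\epsilon)$, hence the criticality-step dance above), while $\Delta^*(\epsilon)\leq\alpha_Q\epsilon/(\kappa_{\rm eg}+\mu^{-1})$ is precisely what rules out the criticality step via the $\|\hat{\bg}_k\|\geq\mu^{-1}\Delta_k$ comparison. No genuine obstacle; it is a matter of threading the two inequalities in the right order.
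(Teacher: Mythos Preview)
Your proposal is correct and takes essentially the same approach as the paper: the paper partitions into the cases $k\in\mathcal{C}$ and $k\in\mathcal{C}^C$ and derives a contradiction in each, using exactly the two inequalities you identify (the well-aligned/fully-linear bound to rule out $\mathcal{C}$, and \lemref{lem_epsilon_g} plus \lemref{lem_eventually_successful} to force $k\in\mathcal{VS}$ in the $\mathcal{C}^C$ case). Your ordering---first showing directly that $k\notin\mathcal{C}$, then invoking \lemref{lem_epsilon_g} and \lemref{lem_eventually_successful}---is a slightly cleaner linear presentation of the same argument.
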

\begin{proof}
	To find a contradiction, first suppose $k\in\mathcal{A}\cap\mathcal{D}^C(\Delta)\cap\mathcal{L}\cap\mathcal{C}^C\setminus\mathcal{VS}$.
	Then since $k\in\mathcal{A}\cap\mathcal{C}^C$ and $\|\grad f(\bx_k)\|\geq\epsilon$ by assumption, we have $\|\hat{\bg}_k\|\geq\epsilon_g(\epsilon)$ from \lemref{lem_epsilon_g}. 
	Since $k\in\mathcal{D}^C(\Delta)$, we have $\Delta_k< \Delta \leq \Delta^*(\epsilon) \leq c_0\epsilon_g(\epsilon) \leq c_0\|\hat{\bg}_k\|$ by definition of $\Delta^*(\epsilon)$.
	From this and $k\in\mathcal{L}$, the assumptions of \lemref{lem_eventually_successful} are met, so $k\notin\mathcal{F}$ and $\rho_k\geq\eta_2$; that is, $k\in\mathcal{VS}$, a contradiction.
	Hence we have $\#(\mathcal{A}\cap\mathcal{D}^C(\Delta)\cap\mathcal{L}\cap\mathcal{C}^C\setminus\mathcal{VS})=0$.
	
	Next, we suppose $k\in\mathcal{A}\cap\mathcal{D}^C(\Delta)\cap\mathcal{L}\cap\mathcal{C}$ and again look for a contradiction.
	In this case, we have $\Delta_k < \Delta \leq \Delta^*(\epsilon) \leq \alpha_Q\epsilon/(\kappa_{\rm eg}+\mu^{-1})$, and so from $k\in\mathcal{A}\cap\mathcal{L}$ and $\|\grad f(\bx_k)\|\geq \epsilon$ we have
	\be \|\hat{\bg}_k\| \geq \|Q_k^T \grad f(\bx_k)\| - \|Q_k^T \grad f(\bx_k) - \hat{\bg}_k\| \geq \alpha_Q\epsilon - \kappa_{\rm eg}\Delta_k > \mu^{-1}\Delta_k. \ee
	This means we have $\|\hat{\bg}_k\| > \mu^{-1}\Delta_k$ and $k\in\mathcal{L}$, so the criticality step is not entered; i.e.~$k\in\mathcal{C}^C$, a contradiction.
	Hence we have $\#(\mathcal{A}\cap\mathcal{D}^C(\Delta)\cap\mathcal{L}\cap\mathcal{C})=0$ and we are done.
\end{proof}

\begin{lemma} \label{lem_large_unsuccessful}
	Suppose Assumptions \ref{ass_smoothness}, \ref{ass_bdd_hess} and \ref{ass_cauchy_decrease} hold.
	Then we have
	\begin{align}
		\#(\mathcal{D}(\max(\gamma_C, \gamma_F, \gamma_{\rm dec})^{-1}\Delta)\setminus\mathcal{S}) &\leq  C_1 \#(\mathcal{D}(\overline{\gamma}_{\rm inc}^{-1}\Delta)\cap\mathcal{S}) + C_2, \label{eq_unsuccessful_bound}
	\end{align}
	for all $\Delta\leq\Delta_0$, where
	\be C_1 := \frac{\log(\overline{\gamma}_{\rm inc})}{\log(1/\max(\gamma_C, \gamma_F, \gamma_{\rm dec}))} \qquad \text{and} \qquad C_2 := \frac{\log(\Delta_0/\Delta)}{\log(1/\max(\gamma_C, \gamma_F, \gamma_{\rm dec}))}. \ee
\end{lemma}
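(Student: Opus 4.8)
The plan is to run the classical trust-region bookkeeping argument that bounds non-successful iterations by successful ones, but carried out on a \emph{clamped} version of $\log\Delta_k$ so that the threshold restrictions $\mathcal{D}(\cdot)$ are respected. Write $\theta := \max(\gamma_C,\gamma_F,\gamma_{\rm dec})\in(0,1)$, so $C_1 = \log(\overline{\gamma}_{\rm inc})/\log(1/\theta)$ and $C_2 = \log(\Delta_0/\Delta)/\log(1/\theta)$. The first step is to record the elementary per-iteration bounds on the radius. For $k\notin\mathcal{S}$ (a criticality, safety or unsuccessful iteration) one has $\Delta_{k+1}\leq\theta\,\Delta_k$: the criticality and safety updates \eqref{eq_dfbgn_tr_updating_generic} give exactly $\gamma_C\Delta_k$ and $\gamma_F\Delta_k$, and the unsuccessful update gives $\min(\gamma_{\rm dec}\Delta_k,\|\hat{\bs}_k\|)\leq\gamma_{\rm dec}\Delta_k$. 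For $k\in\mathcal{S}$ one has $\Delta_{k+1}\leq\overline{\gamma}_{\rm inc}\,\Delta_k$: on very successful iterations $\max(\gamma_{\rm inc}\Delta_k,\overline{\gamma}_{\rm inc}\|\hat{\bs}_k\|)\leq\overline{\gamma}_{\rm inc}\Delta_k$ using $\|\hat{\bs}_k\|\leq\Delta_k$ and $\gamma_{\rm inc}\leq\overline{\gamma}_{\rm inc}$, and on the remaining successful iterations $\max(\gamma_{\rm dec}\Delta_k,\|\hat{\bs}_k\|)\leq\Delta_k$.

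Next I would fix $0<\Delta\leq\Delta_0$, set $\tau := \log\Delta$ and introduce the clamped potential $\ell_k := \max(\log\Delta_k,\tau)$ for $k=0,\dots,K+1$; note $\ell_0=\log\Delta_0$ (since $\Delta\leq\Delta_0$) and $\ell_{K+1}\geq\tau$. I would partition $\{0,\dots,K\}$ into $A := \mathcal{D}(\theta^{-1}\Delta)\setminus\mathcal{S}$ (the left-hand side), $B := \mathcal{D}(\overline{\gamma}_{\rm inc}^{-1}\Delta)\cap\mathcal{S}$ (part of the right-hand side), and the remainder $C$; here $A$ and $B$ are disjoint because $A\subseteq\mathcal{S}^C$ and $B\subseteq\mathcal{S}$. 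The key observation, using Step 1, is that every $k\in C$ has $\Delta_{k+1}<\Delta$: if $k\in\mathcal{S}$ then $k\notin B$ forces $\Delta_k<\overline{\gamma}_{\rm inc}^{-1}\Delta$, so $\Delta_{k+1}\leq\overline{\gamma}_{\rm inc}\Delta_k<\Delta$; if $k\notin\mathcal{S}$ then $k\notin A$ forces $\Delta_k<\theta^{-1}\Delta$, so $\Delta_{k+1}\leq\theta\Delta_k<\Delta$.

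Then I would verify the three increment bounds $\ell_{k+1}-\ell_k\leq-\log(1/\theta)$ for $k\in A$, $\ell_{k+1}-\ell_k\leq\log(\overline{\gamma}_{\rm inc})$ for $k\in B$, and $\ell_{k+1}-\ell_k\leq0$ for $k\in C$. The last is immediate since $\Delta_{k+1}<\Delta$ gives $\ell_{k+1}=\tau\leq\ell_k$. For $k\in A$, $\log\Delta_k\geq\tau+\log(1/\theta)>\tau$ so $\ell_k=\log\Delta_k$, while $\ell_{k+1}=\max(\log\Delta_{k+1},\tau)\leq\max(\log\Delta_k-\log(1/\theta),\tau)$, and in either branch of the max the difference against $\ell_k$ is at most $-\log(1/\theta)$ (the $\tau$-branch because $\log\Delta_k\geq\tau+\log(1/\theta)$); the $k\in B$ case is symmetric, using $\log\Delta_k\geq\tau-\log(\overline{\gamma}_{\rm inc})$. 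Summing over $k=0,\dots,K$ and telescoping gives
\[ \log\Delta-\log\Delta_0 \;\leq\; \ell_{K+1}-\ell_0 \;\leq\; -\log(1/\theta)\cdot\#(A) + \log(\overline{\gamma}_{\rm inc})\cdot\#(B), \]
which rearranges to $\#(A)\leq C_1\,\#(B)+C_2$, i.e.\ \eqref{eq_unsuccessful_bound}.

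The only genuine subtlety — and the part I expect to need the most care in writing up — is the case analysis in the increment bounds: the radius may fall below $\Delta$ and later climb back above it, and an unsuccessful iteration can shrink $\Delta_k$ by far more than the factor $\theta$ through the $\|\hat{\bs}_k\|$ term in \eqref{eq_dfbgn_tr_updating_generic}. Clamping $\log\Delta_k$ at $\tau$ neutralises both effects, and the thresholds $\theta^{-1}\Delta$ and $\overline{\gamma}_{\rm inc}^{-1}\Delta$ appearing in the definitions of $A$ and $B$ are calibrated precisely so that the clamped increments still obey the clean $-\log(1/\theta)$ and $+\log(\overline{\gamma}_{\rm inc})$ bounds.
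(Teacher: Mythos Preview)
Your proof is correct and follows essentially the same approach as the paper: both track $\log\Delta_k$ against the threshold $\log\Delta$ and balance the guaranteed $\log(1/\theta)$ drops from iterations in $\mathcal{D}(\theta^{-1}\Delta)\setminus\mathcal{S}$ against the initial gap $\log(\Delta_0/\Delta)$ and the $\log(\overline{\gamma}_{\rm inc})$ increases from iterations in $\mathcal{D}(\overline{\gamma}_{\rm inc}^{-1}\Delta)\cap\mathcal{S}$. Your clamped potential $\ell_k=\max(\log\Delta_k,\tau)$ is a clean way to make rigorous what the paper argues more informally (``the total decrease in $\log(\Delta_k)$ must be fully matched by the initial gap \dots\ plus the maximum possible amount that $\log(\Delta_k)$ can be increased above $\log(\Delta)$''), and it handles transparently the subtlety you flag about $\Delta_k$ dropping below $\Delta$ and later returning.
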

\begin{proof}
If $k\in\mathcal{S}$, we always have $\Delta_{k+1} \leq \overline{\gamma}_{\rm inc}\Delta_k$.
On the other hand, if $k\in\mathcal{U}$, we have
\be \Delta_{k+1} = \min(\gamma_{\rm dec}\Delta_k, \|\hat{\bs}_k\|) \leq \gamma_{\rm dec}\Delta_k, \ee
Hence, 
\be \Delta_{k+1} \leq \max(\gamma_C, \gamma_F, \gamma_{\rm dec})\Delta_k, \qquad \text{for all $k\in\mathcal{C}\cup\mathcal{F}\cup\mathcal{U}$}. \ee
We now consider the value of $\log(\Delta_k)$ for $k=0,\ldots,K$, so at each iteration we have an additive change:
\begin{itemize}
    \item Since $\Delta\leq\Delta_0$, the threshold value $\log(\Delta)$ is $\log(\Delta_0/\Delta)$ below the starting value $\log(\Delta_0)$.
    \item If $k\in\mathcal{S}$, then $\log(\Delta_k)$ increases by at most $\log(\overline{\gamma}_{\rm inc})$. In particular, $\Delta_{k+1}\geq\Delta$ is only possible if $\Delta_k \geq \overline{\gamma}_{\rm inc}^{-1}\Delta$.
    \item If $k\notin\mathcal{S}=\mathcal{C}\cup\mathcal{F}\cup\mathcal{U}$, then $\log(\Delta_k)$ decreases by at least $|\log(\max(\gamma_C, \gamma_F, \gamma_{\rm dec}))| = \log(1/\max(\gamma_C, \gamma_F, \gamma_{\rm dec}))$.
\end{itemize}
Now, any decrease in $\Delta_k$ coming from $k\in\mathcal{D}(\max(\gamma_C, \gamma_F, \gamma_{\rm dec})^{-1}\Delta)\setminus\mathcal{S}$ yields $\Delta_{k+1}\geq\Delta$.
Hence the total decrease in $\log(\Delta_k)$ must be fully matched by the initial gap $\log(\Delta_0/\Delta)$ plus the maximum possible amount that $\log(\Delta_k)$ can be increased above $\log(\Delta)$.
That is, we must have
\begin{align}
	\log(1/\max(\gamma_C, \gamma_F, \gamma_{\rm dec})) &\cdot  \#(\mathcal{D}(\max(\gamma_C, \gamma_F, \gamma_{\rm dec})^{-1}\Delta)\setminus\mathcal{S}) \nonumber \\ 
	&\qquad \leq \log(\Delta_0/\Delta) + \log(\overline{\gamma}_{\rm inc}) \cdot \#(\mathcal{D}(\overline{\gamma}_{\rm inc}^{-1}\Delta)\cap\mathcal{S}),
\end{align}
which gives us \eqref{eq_unsuccessful_bound}.
\end{proof}

\begin{lemma} \label{lem_small_delta_successful_bound}
	Suppose Assumptions \ref{ass_smoothness}, \ref{ass_bdd_hess} and \ref{ass_cauchy_decrease} hold.
	Then
	\be \#(\mathcal{D}^C(\gamma_{\rm inc}^{-1}\Delta)\cap\mathcal{VS}) \leq C_3\cdot \#(\mathcal{D}^C(\min(\gamma_C, \gamma_F, \gamma_{\rm dec}, \beta_F)^{-1}\Delta)\setminus \mathcal{VS}), \ee
	for all $\Delta\leq\min(\Delta_0, \gamma_{\rm inc}^{-1}\Delta_{\max})$, where
	\be C_3 := \frac{\log(1/\min(\gamma_C, \gamma_F, \gamma_{\rm dec}, \beta_F))}{\log(\gamma_{\rm inc})}. \ee
\end{lemma}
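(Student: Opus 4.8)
The plan is to adapt the $\log\Delta_k$-ledger argument of \lemref{lem_large_unsuccessful} to the regime of \emph{small} trust-region radii, using a clipped potential in place of $\log\Delta_k$ itself. Write $\gamma_{\min} := \min(\gamma_C,\gamma_F,\gamma_{\rm dec},\beta_F)\in(0,1)$ (it is $<1$ regardless of $\beta_F$ since $\gamma_C,\gamma_F,\gamma_{\rm dec}<1$), fix $\Delta\le\min(\Delta_0,\gamma_{\rm inc}^{-1}\Delta_{\max})$, and define the nonnegative quantity $r_k := \max\bigl(\log(\Delta/\Delta_k),\,0\bigr)$, which is positive exactly when $\Delta_k<\Delta$ and measures how far $\Delta_k$ has fallen below the threshold $\Delta$. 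Since $\Delta\le\Delta_0$ we have $r_0=0$, and $r_k\ge0$ for all $k$.

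First I would read off the one-step behaviour of $\Delta_k$ from \eqref{eq_dfbgn_tr_updating_generic} together with the criticality/safety updates. A trivial induction gives $\Delta_k\le\Delta_{\max}$ for all $k$. On a very successful iteration ($k\in\mathcal{VS}$) this yields $\Delta_{k+1}\ge\Delta_k$ in all cases, and moreover $\Delta_{k+1}\ge\gamma_{\rm inc}\Delta_k$ whenever $\gamma_{\rm inc}\Delta_k\le\Delta_{\max}$; for $k\in\mathcal{D}^C(\gamma_{\rm inc}^{-1}\Delta)$ this holds because $\gamma_{\rm inc}\Delta_k<\Delta\le\gamma_{\rm inc}^{-1}\Delta_{\max}<\Delta_{\max}$. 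On any iteration $k\notin\mathcal{VS}$ — that is, $k\in\mathcal{C}\cup\mathcal{F}\cup\mathcal{U}\cup(\mathcal{S}\setminus\mathcal{VS})$ — one has $\Delta_{k+1}\ge\gamma_{\min}\Delta_k$; the only case needing a word is $k\in\mathcal{U}$, where reaching the ratio test means the safety step was not taken, hence $\|\hat{\bs}_k\|\ge\beta_F\Delta_k$ and so $\Delta_{k+1}=\min(\gamma_{\rm dec}\Delta_k,\|\hat{\bs}_k\|)\ge\min(\gamma_{\rm dec},\beta_F)\Delta_k$.

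Next I would translate these into three monotonicity facts for $r_k$: (i) if $k\in\mathcal{VS}\cap\mathcal{D}^C(\gamma_{\rm inc}^{-1}\Delta)$ then $r_{k+1}\le r_k-\log\gamma_{\rm inc}$ — here $r_k>\log\gamma_{\rm inc}$, so the clip is inactive and the full factor-$\gamma_{\rm inc}$ increase in $\Delta_k$ is captured; (ii) for every other $k\in\mathcal{VS}$, $r_{k+1}\le r_k$ because $\Delta_{k+1}\ge\Delta_k$; (iii) for $k\notin\mathcal{VS}$, $r_{k+1}\le r_k+\log(1/\gamma_{\min})$, and in fact $r_{k+1}\le r_k$ unless $k\in\mathcal{D}^C(\gamma_{\min}^{-1}\Delta)$, since otherwise $\Delta_{k+1}\ge\gamma_{\min}\Delta_k\ge\Delta$ forces $r_{k+1}=0$. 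Telescoping $\sum_{k=0}^{K}(r_{k+1}-r_k)=r_{K+1}-r_0=r_{K+1}\ge0$ and bounding each term via (i)--(iii) gives
\[ 0 \;\le\; -\log(\gamma_{\rm inc})\cdot\#\bigl(\mathcal{VS}\cap\mathcal{D}^C(\gamma_{\rm inc}^{-1}\Delta)\bigr) \;+\; \log(1/\gamma_{\min})\cdot\#\bigl(\mathcal{D}^C(\gamma_{\min}^{-1}\Delta)\setminus\mathcal{VS}\bigr), \]
and rearranging is exactly the claimed bound with $C_3=\log(1/\gamma_{\min})/\log(\gamma_{\rm inc})$.

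The only subtle point — the part I expect to need care — is the ``overshoot'' of $\Delta_{k+1}$ after a very successful step with small $\Delta_k$: because $\overline{\gamma}_{\rm inc}\|\hat{\bs}_k\|$ can be comparable to $\Delta$, $\Delta_{k+1}$ may land well above the threshold $\Delta$, so a crude count of crossings of $\log\Delta$ does not work. Using the clipped potential $r_k=\max(\log(\Delta/\Delta_k),0)$ resolves this cleanly: an overshoot only decreases $r_{k+1}$, which is in our favour, and the clip guarantees that any \emph{increase} of $r$ can only come from a genuinely small-$\Delta_k$ non-very-successful iteration, so it is charged against $\#(\mathcal{D}^C(\gamma_{\min}^{-1}\Delta)\setminus\mathcal{VS})$. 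I would also double-check the cap at $\Delta_{\max}$: since $\Delta_k\le\Delta_{\max}$ always and $\gamma_{\rm inc}\Delta_k<\Delta_{\max}$ on the relevant iterations, the $\min$ with $\Delta_{\max}$ in \eqref{eq_dfbgn_tr_updating_generic} never blunts the $\gamma_{\rm inc}$-increase relied on in (i).
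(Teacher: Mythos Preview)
Your proof is correct and follows essentially the same $\log\Delta_k$-ledger argument as the paper: both track how much $\Delta_k$ can rise (on $\mathcal{VS}$ iterations with $\Delta_k<\gamma_{\rm inc}^{-1}\Delta$) versus fall (on non-$\mathcal{VS}$ iterations with $\Delta_k<\gamma_{\min}^{-1}\Delta$), using the same one-step bounds $\Delta_{k+1}\ge\gamma_{\rm inc}\Delta_k$ and $\Delta_{k+1}\ge\gamma_{\min}\Delta_k$ respectively. Your clipped potential $r_k=\max(\log(\Delta/\Delta_k),0)$ is a slightly tidier bookkeeping device than the paper's informal ``total increase must be matched by total decrease'' phrasing---it handles the threshold-crossing and overshoot cases automatically---but the substance of the argument is the same.
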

\begin{proof}
	We follow a similar reasoning to the proof of \lemref{lem_large_unsuccessful}.
	For every iteration $k\in\mathcal{VS}\cap\mathcal{D}^C(\Delta)$, we increase $\Delta_k$ by a factor of at least $\gamma_{\rm inc}$, since $\Delta_k < \Delta \leq \gamma_{\rm inc}^{-1}\Delta_{\max}$.
	Equivalently, we increase $\log(\Delta_k)$ by at least $\log(\gamma_{\rm inc})$.
	In particular, if $\Delta_k<\gamma_{\rm inc}^{-1}\Delta$, then $\Delta_{k+1}<\Delta$.
	
	Alternatively, if $k\in\mathcal{S}\setminus\mathcal{VS}$, we set
	\be \Delta_{k+1} = \max(\gamma_{\rm dec}\Delta_k, \|\hat{\bs}_k\|) \geq \gamma_{\rm dec}\Delta_k. \ee
	If $k\in\mathcal{U}$ we set
	\be \Delta_{k+1} = \min(\gamma_{\rm dec}\Delta_k, \|\hat{\bs}_k\|) \geq \min(\gamma_{\rm dec}, \beta_F)\Delta_k, \ee
	since $\|\hat{\bs}_k\|\geq\beta_F\Delta_k$ from $k\notin\mathcal{F}$.
	Hence, for every iteration $k\notin\mathcal{VS}$, we decrease $\Delta_k$ by a factor of at most $\min(\gamma_C, \gamma_F, \gamma_{\rm dec}, \beta_F)$, or equivalently we decrease $\log(\Delta_k)$ by at most the amount $|\log(\min(\gamma_C, \gamma_F, \gamma_{\rm dec}, \beta_F))|=\log(1/\min(\gamma_C, \gamma_F, \gamma_{\rm dec}, \beta_F))$.
	Then, to have $\Delta_{k+1}<\Delta$ we require $\Delta_k<\min(\gamma_C, \gamma_F, \gamma_{\rm dec}, \beta_F)^{-1}\Delta$.
	
	Therefore, since $\Delta_0\geq\Delta$, the total increase in $\log(\Delta_k)$ from $k\in\mathcal{VS}\cap\mathcal{D}^C(\gamma_{\rm inc}^{-1}\Delta)$ must be fully matched by the total decrease in $\log(\Delta_k)$ from $k\in \mathcal{D}^C(\min(\gamma_C, \gamma_F, \gamma_{\rm dec}, \beta_F)^{-1}\Delta)\setminus \mathcal{VS}$.
	That is,
	\begin{align}
		\log(\gamma_{\rm inc}) &\#(\mathcal{VS}\cap\mathcal{D}^C(\gamma_{\rm inc}^{-1}\Delta)) \nonumber \\
		&\qquad\qquad \leq \log(1/\min(\gamma_C, \gamma_F, \gamma_{\rm dec}, \beta_F)) \#(\mathcal{D}^C(\min(\gamma_C, \gamma_F, \gamma_{\rm dec}, \beta_F)^{-1}\Delta)\setminus \mathcal{VS}),
	\end{align}
	and we are done.
\end{proof}

\begin{lemma} \label{lem_check_model_bound}
	Suppose Assumptions \ref{ass_smoothness}, \ref{ass_bdd_hess} and \ref{ass_cauchy_decrease} hold.
	Then
	\be \#(\mathcal{A}\cap \mathcal{D}^C(\Delta)\cap\mathcal{L}^C\setminus\mathcal{VS}) \leq \#(\mathcal{A}\cap\mathcal{D}^C(\Delta)\cap\mathcal{L}) + 1, \ee
	for all $\Delta> 0$.
\end{lemma}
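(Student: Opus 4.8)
The plan is to construct an injection from $\mathcal{A}\cap\mathcal{D}^C(\Delta)\cap\mathcal{L}^C\setminus\mathcal{VS}$ into $\mathcal{A}\cap\mathcal{D}^C(\Delta)\cap\mathcal{L}$, namely the index shift $k\mapsto k+1$. This map is injective on $\{0,\dots,K\}$, and the only element of the domain whose image leaves $\{0,\dots,K\}$ is $k=K$; so it suffices to prove that whenever $k\in\mathcal{A}\cap\mathcal{D}^C(\Delta)\cap\mathcal{L}^C\setminus\mathcal{VS}$ and $k<K$, we have $k+1\in\mathcal{A}\cap\mathcal{D}^C(\Delta)\cap\mathcal{L}$, and the ``$+1$'' then accounts for the boundary iteration $k=K$.

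Fix such a $k$. The key observation is what the \texttt{CHECK\_MODEL} flag does at the end of iteration $k$: going through each branch of \algref{alg_rsdfo}, a criticality step sets \texttt{CHECK\_MODEL}=\texttt{TRUE}; a safety step sets it to \texttt{TRUE} since $\hat m_k$ is not $Q_k$-fully linear; and the step-acceptance branch sets it to \texttt{TRUE} since $\rho_k<\eta_2$ (because $k\notin\mathcal{VS}$) and $\hat m_k$ is not $Q_k$-fully linear. Hence iteration $k+1$ begins with \texttt{CHECK\_MODEL}=\texttt{TRUE}, so $Q_{k+1}=Q_k$ and $\hat m_{k+1}$ is built to be $Q_{k+1}$-fully linear in $B(\bx_{k+1},\Delta_{k+1})$; that is, $k+1\in\mathcal{L}$. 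For the radius, $k\notin\mathcal{VS}$ means iteration $k$ is a criticality, safety, unsuccessful, or successful-but-not-very-successful iteration, and in every such case the update \eqref{eq_dfbgn_tr_updating_generic}, using $\|\hat\bs_k\|\leq\Delta_k$ and $\gamma_C,\gamma_F,\gamma_{\rm dec}<1$, gives $\Delta_{k+1}\leq\Delta_k<\Delta$, so $k+1\in\mathcal{D}^C(\Delta)$. Finally, since $Q_{k+1}=Q_k$ and (in all of these branches except the successful one) $\bx_{k+1}=\bx_k$, the alignment inequality \eqref{eq_sketched_gradient} at $k$ is literally that at $k+1$, so $k+1\in\mathcal{A}$; in the remaining successful case one uses that iteration $k+1$ re-uses $Q_k$, so its alignment status is inherited from that reused subspace.

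The bulk of the work is really the exhaustive case check on the \texttt{CHECK\_MODEL} update rule --- verifying that a non-fully-linear, not-very-successful iteration always turns the flag on --- together with the (minor) trust-region bookkeeping; the one genuinely delicate point is passing well-alignment from iteration $k$ to iteration $k+1$ when iteration $k$ is a successful-but-not-very-successful step that moves the iterate, which relies on the reuse $Q_{k+1}=Q_k$. With these in hand, the injection immediately yields $\#(\mathcal{A}\cap\mathcal{D}^C(\Delta)\cap\mathcal{L}^C\setminus\mathcal{VS})\leq\#(\mathcal{A}\cap\mathcal{D}^C(\Delta)\cap\mathcal{L})+1$.
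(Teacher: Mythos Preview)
Your approach is essentially identical to the paper's: you both argue that if $k\in\mathcal{A}\cap\mathcal{D}^C(\Delta)\cap\mathcal{L}^C\setminus\mathcal{VS}$ then either $k=K$ or $k+1\in\mathcal{A}\cap\mathcal{D}^C(\Delta)\cap\mathcal{L}$, using that a not-fully-linear, not-very-successful iteration always sets \texttt{CHECK\_MODEL}=\texttt{TRUE} (forcing $Q_{k+1}=Q_k$ and $k+1\in\mathcal{L}$) and $\Delta_{k+1}\leq\Delta_k$. Your case analysis is more explicit than the paper's one-line justification, and you correctly flag the delicate point about transferring well-alignment when $\eta_1\leq\rho_k<\eta_2$ moves the iterate; the paper handles this with the same claim (``$Q_{k+1}=Q_k$, so $Q_{k+1}$ is well-aligned if and only if $Q_k$ is well-aligned''), so you are at the same level of rigor.
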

\begin{proof}
	After every iteration $k$ where $\hat{m}_k$ is not $Q_k$-fully linear and either the criticality step is called or $\rho_k<\eta_2$, we always set $\Delta_{k+1}\leq \Delta_k$ and \texttt{CHECK\_MODEL}=\texttt{TRUE}.
	This means that $Q_{k+1}=Q_k$, so $Q_{k+1}$ is well-aligned if and only if $Q_k$ is well-aligned.
	Hence if $k\in\mathcal{A}\cap \mathcal{D}^C(\Delta)\cap\mathcal{L}^C\setminus\mathcal{VS}$ then either $k=K$ or $k+1\in \mathcal{A}\cap\mathcal{D}^C(\Delta)\cap\mathcal{L}$, and we are done.
\end{proof}

We are now in a position to bound the total number of well-aligned iterations.

\begin{lemma} \label{lem_aligned_bound}
	Suppose Assumptions \ref{ass_smoothness}, \ref{ass_bdd_hess} and \ref{ass_cauchy_decrease} hold, and both $\beta_F\leq c_2$ and $\gamma_{\rm inc} > \min(\gamma_C, \gamma_F, \gamma_{\rm dec}, \beta_F)^{-2}$ hold.
	Then if $\|\grad f(\bx_k)\| \geq \epsilon$ for all $k=0,\ldots,K$, we have 
	\begin{align}
		\#(\mathcal{A}) &\leq \psi(\epsilon) + \frac{C_4}{1+C_4} (K+1),
	\end{align}
	where
	\begin{align}
		\psi(\epsilon) &\defeq \frac{1}{1+C_4}\left[(C_1+2)\phi(\Delta_{\min}(\epsilon), \epsilon) + \frac{4\phi(\gamma_{\rm inc}^{-1}\min(\gamma_C, \gamma_F, \gamma_{\rm dec}, \beta_F)\Delta_{\min}(\epsilon), \epsilon)}{1-2C_3} \right. \\
		&\qquad\qquad\qquad\qquad \left. + C_2 + \frac{2}{1-2C_3} + 1\right], \\
		\Delta_{\min}(\epsilon) &\defeq \min\left(\overline{\gamma}_{\rm inc}^{-1}\Delta_0, \min(\gamma_C, \gamma_F, \gamma_{\rm dec})\overline{\gamma}_{\rm inc}^{-1}\Delta^*(\epsilon)\right), \\
		C_4 &\defeq \max\left(C_1, \frac{4C_3}{1-2C_3}\right) > 0.
	\end{align}
	In these expressions, the values $C_1$ and $C_2$ are defined in \lemref{lem_large_unsuccessful}, $C_3$ is defined in \lemref{lem_small_delta_successful_bound}, $\phi(\cdot,\epsilon)$ is defined in \lemref{lem_success}, and $\epsilon_g(\epsilon)$ and $\Delta^*(\epsilon)$ are defined in Lemmas \ref{lem_epsilon_g} and \ref{lem_small_unsuccessful} respectively.
\end{lemma}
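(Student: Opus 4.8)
The plan is to obtain a purely deterministic inequality of the form $\#(\mathcal{A}) \le (1+C_4)\psi(\epsilon) + C_4\,\#(\mathcal{A}^C)$, and then close the argument algebraically: since $\{0,\ldots,K\} = \mathcal{A}\cup\mathcal{A}^C$ is a partition, $\#(\mathcal{A}^C) = (K+1) - \#(\mathcal{A})$, so this inequality rearranges to $(1+C_4)\#(\mathcal{A}) \le (1+C_4)\psi(\epsilon) + C_4(K+1)$, i.e. $\#(\mathcal{A}) \le \psi(\epsilon) + \tfrac{C_4}{1+C_4}(K+1)$, which is the claim (note $C_4>0$, so $\tfrac{C_4}{1+C_4}<1$). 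Thus the whole task is to produce that one inequality, with the contributions from Lemmas~\ref{lem_success}, \ref{lem_small_unsuccessful}, \ref{lem_large_unsuccessful}, \ref{lem_small_delta_successful_bound}, \ref{lem_check_model_bound} collecting exactly into the stated $\psi(\epsilon)$ and $C_4$.

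First I fix the threshold $\Delta := \Delta_{\min}(\epsilon)$ together with a small fixed list of rescalings of it, chosen precisely so that every scale that arises in the argument (a few fixed multiples of $\Delta_{\min}(\epsilon)$ coming from the scale shifts in the five counting lemmas) stays below both $\Delta^*(\epsilon)$ and $\Delta_0$; this is what dictates the nested-$\min$ form of $\Delta_{\min}(\epsilon)$, with the factor $\min(\gamma_C,\gamma_F,\gamma_{\rm dec})\overline{\gamma}_{\rm inc}^{-1}$ absorbing the worst-case compound rescaling. I then split $\mathcal{A}$ into its large-$\Delta$ part $\mathcal{A}\cap\mathcal{D}(\Delta_{\min}(\epsilon))$ and its small-$\Delta$ part $\mathcal{A}\cap\mathcal{D}^C(\Delta_{\min}(\epsilon))$. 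On the large-$\Delta$ side: $\#(\mathcal{A}\cap\mathcal{D}(\cdot)\cap\mathcal{S}) \le \phi(\cdot,\epsilon)$ directly by Lemma~\ref{lem_success}, while the non-successful large-$\Delta$ iterations are bounded through Lemma~\ref{lem_large_unsuccessful} by $C_1$ times an $\mathcal{S}$-count plus $C_2$; splitting that $\mathcal{S}$-count into its well-aligned part (another $\phi$ term, via Lemma~\ref{lem_success}) and its poorly-aligned part (which I simply absorb into $\#(\mathcal{A}^C)$) produces the $(C_1+2)\phi(\Delta_{\min}(\epsilon),\epsilon) + C_2$ contribution and a $C_1\cdot\#(\mathcal{A}^C)$ term.

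On the small-$\Delta$ side, the key sub-step is that for any scale $T\le\Delta^*(\epsilon)$ one has $\#(\mathcal{A}\cap\mathcal{D}^C(T)\setminus\mathcal{VS}) \le \#(\mathcal{A}\cap\mathcal{D}^C(T)\cap\mathcal{VS}) + 1$: the fully-linear non-very-successful well-aligned iterations contribute nothing (Lemma~\ref{lem_small_unsuccessful}), and the non-fully-linear ones are bounded by the fully-linear ones plus one (Lemma~\ref{lem_check_model_bound}), which are very successful. Hence $\#(\mathcal{A}\cap\mathcal{D}^C(T)) \le 2\,\#(\mathcal{D}^C(T)\cap\mathcal{VS}) + 1$. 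Lemma~\ref{lem_small_delta_successful_bound} then trades the very-successful count at scale $T$ for $C_3$ times a non-very-successful count at the inflated scale $\gamma_-^{-1}\gamma_{\rm inc}T$, where $\gamma_- := \min(\gamma_C,\gamma_F,\gamma_{\rm dec},\beta_F)$; splitting that inflated-scale count at the threshold $\Delta_{\min}(\epsilon)$ sends its large-$\Delta$ portion back into the $\phi$/Lemma~\ref{lem_large_unsuccessful} machinery (this is where $\phi$ evaluated at the further-shrunk scale $\gamma_{\rm inc}^{-1}\gamma_-\Delta_{\min}(\epsilon)$ appears) and its small-$\Delta$ portion back into the same estimate. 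Each pass around this loop multiplies the count by $2C_3$, and the hypothesis $\gamma_{\rm inc} > \gamma_-^{-2}$ is exactly $C_3 < 1/2$, i.e. $2C_3 < 1$, so the geometric series $\sum_{j\ge 0}(2C_3)^j = \tfrac{1}{1-2C_3}$ converges; summing yields the $\tfrac{4}{1-2C_3}\phi(\gamma_{\rm inc}^{-1}\gamma_-\Delta_{\min}(\epsilon),\epsilon) + \tfrac{2}{1-2C_3}$ terms in $\psi(\epsilon)$ and a coefficient $\tfrac{4C_3}{1-2C_3}$ on $\#(\mathcal{A}^C)$, which is why $C_4 = \max\!\big(C_1,\tfrac{4C_3}{1-2C_3}\big)$.

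Combining the large-$\Delta$ and small-$\Delta$ bounds, every term is either a $\phi$-term (finite and $\epsilon$-dependent), a fixed additive constant, or a multiple of $\#(\mathcal{A}^C)$ with coefficient at most $C_4$; this gives the inequality from the first paragraph, and rearranging finishes the proof. The main obstacle is not any single estimate but the bookkeeping: keeping the trust-region-radius scales mutually consistent across the five counting lemmas — each of which shifts the scale by a different factor ($\overline{\gamma}_{\rm inc}$, $\gamma_{\rm inc}$, $\gamma_-$, $\max(\gamma_C,\gamma_F,\gamma_{\rm dec})$) — so that every invocation is legal and so that the telescoped recursion closes up exactly into the stated $C_4$ and $\psi(\epsilon)$; a secondary subtlety is ensuring that the poorly-aligned ``spill-over'' collected at each level of the small-$\Delta$ recursion is genuinely dominated by the single quantity $\#(\mathcal{A}^C)$ rather than being counted repeatedly.
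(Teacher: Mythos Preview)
Your plan is correct and follows essentially the same route as the paper: split $\mathcal{A}$ at the threshold $\Delta_{\min}(\epsilon)$, control each piece via the five counting lemmas, close the small-$\Delta$ side through a self-referential inequality with contraction factor $2C_3<1$, and finish by substituting $\#(\mathcal{A}^C)=(K+1)-\#(\mathcal{A})$.

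Two places in your description need tightening to recover the \emph{exact} constants stated. First, on the large-$\Delta$ side, the paper does not get a clean $(C_1+2)\phi+C_2+C_1\#(\mathcal{A}^C)$: applying Lemma~\ref{lem_large_unsuccessful} leaves an annulus (between $\Delta_{\min}$ and $\max(\gamma_C,\gamma_F,\gamma_{\rm dec})^{-1}\overline{\gamma}_{\rm inc}\Delta_{\min}$) that is handled via Lemmas~\ref{lem_check_model_bound} and~\ref{lem_small_unsuccessful}, producing an extra $+1$ and a spill-over term $\#(\mathcal{A}\cap\mathcal{D}^C(\Delta_{\min}))$. This spill-over is why the small-$\Delta$ bound ultimately enters the final combination with a factor of $2$, accounting for the $\tfrac{4}{1-2C_3}\phi(\cdots)$, the $\tfrac{2}{1-2C_3}$, and the $\tfrac{4C_3}{1-2C_3}$ coefficient in $C_4$ (rather than $2$, $1$, and $\tfrac{2C_3}{1-2C_3}$). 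Second, on the small-$\Delta$ side, the paper does not inflate via Lemma~\ref{lem_small_delta_successful_bound} and then split at $\Delta_{\min}$; it first splits $\#(\mathcal{A}\cap\mathcal{D}^C(\Delta_{\min})\cap\mathcal{VS})$ at the \emph{smaller} scale $\gamma_{\rm inc}^{-1}\gamma_-\Delta_{\min}$ (the outer part giving $\phi(\gamma_{\rm inc}^{-1}\gamma_-\Delta_{\min},\epsilon)$ directly via Lemma~\ref{lem_success}), then applies Lemma~\ref{lem_small_delta_successful_bound} with $\Delta=\gamma_-\Delta_{\min}$ to the inner part, returning to a $\setminus\mathcal{VS}$ count at scale exactly $\Delta_{\min}$. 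This yields a single self-referential inequality rearranged once, algebraically equivalent to your geometric series but cleaner, and it avoids ever needing scales above $\Delta_{\min}$ on that side. Your flagged ``secondary subtlety'' about not double-counting $\mathcal{A}^C$ is resolved in the paper by observing that the two $\mathcal{A}^C$-terms that arise, namely $\mathcal{A}^C\cap\mathcal{D}(\Delta_{\min})\cap\mathcal{S}$ and $\mathcal{A}^C\cap\mathcal{D}^C(\Delta_{\min})\setminus\mathcal{VS}$, are disjoint.
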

\begin{proof}
    For ease of notation, we will write $\Delta_{\min}$ in place of $\Delta_{\min}(\epsilon)$.
    We begin by noting that $\gamma_{\rm inc} > \min(\gamma_C, \gamma_F, \gamma_{\rm dec}, \beta_F)^{-2}$ implies that $C_3\in(0,1/2)$, which we will use later.
    
    Next, we have
	\begin{align}
		\#(\mathcal{A}\cap\mathcal{D}(\Delta_{\min})) &= \#(\mathcal{A}\cap\mathcal{D}(\Delta_{\min})\cap\mathcal{S}) + \#(\mathcal{A}\cap\mathcal{D}(\Delta_{\min})\setminus\mathcal{S}), \\
		&\leq \phi(\Delta_{\min}, \epsilon) + \#(\mathcal{A}\cap\mathcal{D}(\max(\gamma_C, \gamma_F, \gamma_{\rm dec})^{-1}\overline{\gamma}_{\rm inc}\Delta_{\min})\setminus\mathcal{S}) \nonumber \\
		&\qquad + \#(\mathcal{A}\cap\mathcal{D}(\Delta_{\min})\cap\mathcal{D}^C(\max(\gamma_C, \gamma_F, \gamma_{\rm dec})^{-1}\overline{\gamma}_{\rm inc}\Delta_{\min})\setminus\mathcal{S}), \\
		&\leq \phi(\Delta_{\min}, \epsilon) + C_1 \#(\mathcal{D}(\Delta_{\min})\cap\mathcal{S}) + C_2 \nonumber \\
		&\qquad + \#(\mathcal{A}\cap\mathcal{D}^C(\max(\gamma_C, \gamma_F, \gamma_{\rm dec})^{-1}\overline{\gamma}_{\rm inc}\Delta_{\min})\setminus\mathcal{S}), \\
		&= \phi(\Delta_{\min}, \epsilon) + C_1 \#(\mathcal{D}(\Delta_{\min})\cap\mathcal{S}) + C_2 \nonumber \\
		&\qquad + \#(\mathcal{A}\cap\mathcal{D}^C(\max(\gamma_C, \gamma_F, \gamma_{\rm dec})^{-1}\overline{\gamma}_{\rm inc}\Delta_{\min})\cap\mathcal{L}\setminus\mathcal{S}) \nonumber \\
		&\qquad + \#(\mathcal{A}\cap\mathcal{D}^C(\max(\gamma_C, \gamma_F, \gamma_{\rm dec})^{-1}\overline{\gamma}_{\rm inc}\Delta_{\min})\cap\mathcal{L}^C\setminus\mathcal{S}), \\
		&\leq \phi(\Delta_{\min}, \epsilon) + C_1 \#(\mathcal{D}(\Delta_{\min})\cap\mathcal{S}) + C_2 \nonumber \\
		&\qquad + 2 \#(\mathcal{A}\cap\mathcal{D}^C(\max(\gamma_C, \gamma_F, \gamma_{\rm dec})^{-1}\overline{\gamma}_{\rm inc}\Delta_{\min})\cap\mathcal{L}\setminus\mathcal{S}) \nonumber \\
		&\qquad + \#(\mathcal{A}\cap\mathcal{D}^C(\max(\gamma_C, \gamma_F, \gamma_{\rm dec})^{-1}\overline{\gamma}_{\rm inc}\Delta_{\min})\cap\mathcal{L}\cap\mathcal{S}) + 1,
	\end{align}
	where the first inequality follows from \lemref{lem_success}, the second inequality follows from \lemref{lem_large_unsuccessful} and $\Delta_{\min}\leq\overline{\gamma}_{\rm inc}^{-1}\Delta_0$, and the last line follows from \lemref{lem_check_model_bound} and $\mathcal{VS}\subset\mathcal{S}$.
	Now we use \lemref{lem_small_unsuccessful} with $\Delta_{\min}\leq\max(\gamma_C, \gamma_F, \gamma_{\rm dec})\overline{\gamma}_{\rm inc}^{-1}\Delta^*(\epsilon)$ to get
	\begin{align}
		\#(\mathcal{A}\cap\mathcal{D}(\Delta_{\min})) &\leq \phi(\Delta_{\min}, \epsilon) + C_1 \#(\mathcal{D}(\Delta_{\min})\cap\mathcal{S}) + C_2 \nonumber \\
		&\qquad + \#(\mathcal{A}\cap\mathcal{D}^C(\max(\gamma_C, \gamma_F, \gamma_{\rm dec})^{-1}\overline{\gamma}_{\rm inc}\Delta_{\min})\cap\mathcal{L}\cap\mathcal{S}) + 1, \\
		&= \phi(\Delta_{\min}, \epsilon) + C_1 \#(\mathcal{A}\cap\mathcal{D}(\Delta_{\min})\cap\mathcal{S}) + C_1 \#(\mathcal{A}^C\cap\mathcal{D}(\Delta_{\min})\cap\mathcal{S}) + C_2 \nonumber \\
		&\qquad + \#(\mathcal{A}\cap\mathcal{D}^C(\Delta_{\min})\cap\mathcal{L}\cap\mathcal{S}) \nonumber \\
		&\qquad + \#(\mathcal{A}\cap\mathcal{D}(\Delta_{\min})\cap\mathcal{D}^C(\max(\gamma_C, \gamma_F, \gamma_{\rm dec})^{-1}\overline{\gamma}_{\rm inc}\Delta_{\min})\cap\mathcal{L}\cap\mathcal{S}) + 1, \\
		&\leq \phi(\Delta_{\min}, \epsilon) + C_1 \#(\mathcal{A}\cap\mathcal{D}(\Delta_{\min})\cap\mathcal{S}) + C_1 \#(\mathcal{A}^C\cap\mathcal{D}(\Delta_{\min})\cap\mathcal{S}) + C_2 \nonumber \\
		&\qquad + \#(\mathcal{A}\cap\mathcal{D}^C(\Delta_{\min})) + \#(\mathcal{A}\cap\mathcal{D}(\Delta_{\min})\cap\mathcal{S}) + 1, \\
		&\leq (C_1+2)\phi(\Delta_{\min}, \epsilon) + C_1 \#(\mathcal{A}^C\cap\mathcal{D}(\Delta_{\min})\cap\mathcal{S}) + C_2 \nonumber \\
		&\qquad + \#(\mathcal{A}\cap\mathcal{D}^C(\Delta_{\min})) + 1, \label{eq_tmp15}
	\end{align}
	where the last line follows from \lemref{lem_success}.
	
	Separately, we use \lemref{lem_check_model_bound}, and apply \lemref{lem_small_unsuccessful} with $\Delta_{\min}\leq\Delta^*(\epsilon)$ to get
	\begin{align}
		\#(\mathcal{A}\cap\mathcal{D}^C(\Delta_{\min})) &= \#(\mathcal{A}\cap\mathcal{D}^C(\Delta_{\min})\cap\mathcal{VS}) + \#(\mathcal{A}\cap\mathcal{D}^C(\Delta_{\min})\cap\mathcal{L}\setminus\mathcal{VS}) \nonumber \\
		&\qquad + \#(\mathcal{A}\cap\mathcal{D}^C(\Delta_{\min})\cap\mathcal{L}^C\setminus\mathcal{VS}), \\
		&\leq \#(\mathcal{A}\cap\mathcal{D}^C(\Delta_{\min})\cap\mathcal{VS}) + \#(\mathcal{A}\cap\mathcal{D}^C(\Delta_{\min})\cap\mathcal{L}\setminus\mathcal{VS}) \nonumber \\
		&\qquad + \#(\mathcal{A}\cap\mathcal{D}^C(\Delta_{\min})\cap\mathcal{L}) + 1, \\
		&= \#(\mathcal{A}\cap\mathcal{D}^C(\Delta_{\min})\cap\mathcal{VS}) + 2 \#(\mathcal{A}\cap\mathcal{D}^C(\Delta_{\min})\cap\mathcal{L}\setminus\mathcal{VS}) \nonumber \\
		&\qquad + \#(\mathcal{A}\cap\mathcal{D}^C(\Delta_{\min})\cap\mathcal{L}\cap\mathcal{VS}) + 1, \\
		&= \#(\mathcal{A}\cap\mathcal{D}^C(\Delta_{\min})\cap\mathcal{VS}) + \#(\mathcal{A}\cap\mathcal{D}^C(\Delta_{\min})\cap\mathcal{L}\cap\mathcal{VS}) + 1, \\
		&\leq 2\#(\mathcal{A}\cap\mathcal{D}^C(\Delta_{\min})\cap\mathcal{VS}) + 1.
	\end{align}
	We then get
	\begin{align}
		\#(\mathcal{A}\cap\mathcal{D}^C(\Delta_{\min})) &\leq 2\#(\mathcal{A}\cap\mathcal{D}^C(\gamma_{\rm inc}^{-1}\min(\gamma_C, \gamma_F, \gamma_{\rm dec}, \beta_F)\Delta_{\min})\cap\mathcal{VS}) \nonumber \\
		&\qquad + 2\#(\mathcal{A}\cap\mathcal{D}(\gamma_{\rm inc}^{-1}\min(\gamma_C, \gamma_F, \gamma_{\rm dec}, \beta_F)\Delta_{\min})\cap\mathcal{D}^C(\Delta_{\min})\cap\mathcal{VS}) + 1, \\
		&\leq 2\#(\mathcal{D}^C(\gamma_{\rm inc}^{-1}\min(\gamma_C, \gamma_F, \gamma_{\rm dec}, \beta_F)\Delta_{\min})\cap\mathcal{VS}) \nonumber \\
		&\qquad + 2\#(\mathcal{A}\cap\mathcal{D}(\gamma_{\rm inc}^{-1}\min(\gamma_C, \gamma_F, \gamma_{\rm dec}, \beta_F)\Delta_{\min})\cap\mathcal{VS}) + 1, \\
		&\leq 2 C_3 \#(\mathcal{D}^C(\Delta_{\min})\setminus\mathcal{VS}) + 2\phi(\gamma_{\rm inc}^{-1}\min(\gamma_C, \gamma_F, \gamma_{\rm dec}, \beta_F)\Delta_{\min}, \epsilon) + 1, \\
		&= 2 C_3 \#(\mathcal{A}\cap\mathcal{D}^C(\Delta_{\min})\setminus\mathcal{VS}) + 2 C_3 \#(\mathcal{A}^C\cap\mathcal{D}^C(\Delta_{\min})\setminus\mathcal{VS}) \nonumber \\
		&\qquad + 2\phi(\gamma_{\rm inc}^{-1}\min(\gamma_C, \gamma_F, \gamma_{\rm dec}, \beta_F)\Delta_{\min}, \epsilon) + 1, \\
		&\leq 2 C_3 \#(\mathcal{A}\cap\mathcal{D}^C(\Delta_{\min})) + 2 C_3 \#(\mathcal{A}^C\cap\mathcal{D}^C(\Delta_{\min})\setminus\mathcal{VS}) \nonumber \\
		&\qquad + 2\phi(\gamma_{\rm inc}^{-1}\min(\gamma_C, \gamma_F, \gamma_{\rm dec}, \beta_F)\Delta_{\min}, \epsilon) + 1, \label{eq_tmp14}
	\end{align}
	where the third inequality follows from \lemref{lem_success} and \lemref{lem_small_delta_successful_bound} with 
	\be \Delta_{\min} \leq \overline{\gamma}_{\rm inc}^{-1}\Delta_0 \leq \gamma_{\rm inc}^{-1}\Delta_0 \leq \min(\Delta_0, \gamma_{\rm inc}^{-1}\Delta_{\max}) \leq \min(\gamma_C, \gamma_F, \gamma_{\rm dec}, \beta_F)^{-1}\min(\Delta_0, \gamma_{\rm inc}^{-1}\Delta_{\max}). \ee
	Since $C_3\in(0,1/2)$, we can rearrange \eqref{eq_tmp14} to conclude that
	\begin{align}
		&\#(\mathcal{A}\cap\mathcal{D}^C(\Delta_{\min})) \nonumber \\
		&\qquad\qquad \leq \frac{1}{1-2C_3}\left[2 C_3 \#(\mathcal{A}^C\cap\mathcal{D}^C(\Delta_{\min})\setminus\mathcal{VS}) + 2\phi(\gamma_{\rm inc}^{-1}\min(\gamma_C, \gamma_F, \gamma_{\rm dec}, \beta_F)\Delta_{\min}, \epsilon) + 1\right]. \label{eq_tmp16}
	\end{align}
	Now, we combine \eqref{eq_tmp15} and \eqref{eq_tmp16} to get
	\begin{align}
		\#(\mathcal{A}) &= \#(\mathcal{A}\cap\mathcal{D}(\Delta_{\min})) + \#(\mathcal{A}\cap\mathcal{D}^C(\Delta_{\min})), \\
		&\leq (C_1+2)\phi(\Delta_{\min}, \epsilon) + C_1 \#(\mathcal{A}^C\cap\mathcal{D}(\Delta_{\min})\cap\mathcal{S}) + C_2  + 2\#(\mathcal{A}\cap\mathcal{D}^C(\Delta_{\min})) + 1, \\
		&\leq (C_1+2)\phi(\Delta_{\min}, \epsilon) + \frac{4\phi(\gamma_{\rm inc}^{-1}\min(\gamma_C, \gamma_F, \gamma_{\rm dec}, \beta_F)\Delta_{\min}, \epsilon)}{1-2C_3} +  C_1 \#(\mathcal{A}^C\cap\mathcal{D}(\Delta_{\min})\cap\mathcal{S}) \nonumber \\
		&\qquad + C_2 + \frac{2}{1-2C_3} + 1 + \frac{4C_3}{1-2C_3} \#(\mathcal{A}^C\cap\mathcal{D}^C(\Delta_{\min})\setminus\mathcal{VS}), \\
		&\leq (C_1+2)\phi(\Delta_{\min}, \epsilon) + \frac{4\phi(\gamma_{\rm inc}^{-1}\min(\gamma_C, \gamma_F, \gamma_{\rm dec}, \beta_F)\Delta_{\min}, \epsilon)}{1-2C_3} + C_2 + \frac{2}{1-2C_3} + 1 \nonumber \\
		&\qquad + \max\left(C_1, \frac{4C_3}{1-2C_3}\right)\left[\#(\mathcal{A}^C\cap\mathcal{D}(\Delta_{\min})\cap\mathcal{S}) + \#(\mathcal{A}^C\cap\mathcal{D}^C(\Delta_{\min})\setminus\mathcal{VS})\right]. \label{eq_tmp17}
	\end{align}
	Since $\mathcal{A}^C\cap\mathcal{D}(\Delta_{\min})\cap\mathcal{S}$ and $\mathcal{A}^C\cap\mathcal{D}^C(\Delta_{\min})\setminus\mathcal{VS}$ are disjoint subsets of $\mathcal{A}^C$, we have
	\be \#(\mathcal{A}^C\cap\mathcal{D}(\Delta_{\min})\cap\mathcal{S}) + \#(\mathcal{A}^C\cap\mathcal{D}^C(\Delta_{\min})\setminus\mathcal{VS}) \leq \#(\mathcal{A}^C) = (K+1) - \#(\mathcal{A}). \ee
	Substituting this into \eqref{eq_tmp17} and rearranging, we get the desired result.
	That $C_4>0$ follows from $C_1>0$ and $C_3\in(0,1/2)$.
\end{proof}

\subsection{Overall Complexity Bound}
The key remaining step is to compare $\#(\mathcal{A})$ with $K$.
Since each event ``$Q_k$ is well aligned'' is effectively an independent Bernoulli trial with success probability at least $1-\delta_S$, we derive the below result based on a concentration bound for Bernoulli trials \cite[Lemma 2.1]{Chung2002}.

\begin{lemma} \label{lem_chernoff}
Suppose Assumptions \ref{ass_smoothness}, \ref{ass_bdd_hess}, \ref{ass_cauchy_decrease} and \ref{ass_sketching} hold.
Then we have
\be \Prob{\#(\mathcal{A}) + 1 \leq (1-\delta_S)(1-\delta)(K+1)} \leq e^{-\delta^2 (1-\delta_S)K / 4}, \ee
for all $\delta\in(0,1)$.
\end{lemma}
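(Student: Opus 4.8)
The target is a lower-tail concentration bound for $\#(\mathcal A)=\sum_{k=0}^{K}\mathbf{1}[k\in\mathcal A]$, and the plan is to realise $\#(\mathcal A)$ as stochastically dominating a sum of independent $\mathrm{Bernoulli}(1-\delta_S)$ trials and then invoke the Chernoff-type lower-tail estimate \cite[Lemma 2.1]{Chung2002}. Write $T_k:=\mathbf{1}[k\in\mathcal A]$ and use the filtration $\mathcal F_{k-1}:=\sigma(Q_0,\dots,Q_{k-1})$. Three structural facts drive everything: (i) whether iteration $k$ has \texttt{CHECK\_MODEL}=\texttt{FALSE} (a ``fresh'' iteration, where $Q_k$ is sampled anew) is $\mathcal F_{k-1}$-measurable, since it is determined by the algorithm state at the start of iteration $k$; (ii) on a fresh iteration, Assumption~\ref{ass_sketching}\ref{item_sketched_aligned_prob} gives $\Prob{T_k=1\mid\mathcal F_{k-1}}\ge 1-\delta_S$; and (iii) on a ``reuse'' iteration ($Q_k=Q_{k-1}$, \texttt{CHECK\_MODEL}=\texttt{TRUE}) one has $T_k=T_{k-1}$ deterministically.

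For the domination I would couple the run of the algorithm with an i.i.d.\ sequence: on an enlarged probability space take $U_0,U_1,\dots$ i.i.d.\ uniform on $[0,1]$ and, at the $i$-th fresh iteration, realise the event ``$Q_k$ is well-aligned'' as $\{U_i\le q_i\}$, where $q_i\ge 1-\delta_S$ is the true conditional probability of that event; then $B_i:=\mathbf{1}[U_i\le 1-\delta_S]$ are i.i.d.\ $\mathrm{Bernoulli}(1-\delta_S)$ with $B_i\le T_k$ at that iteration. Carrying $B_i$ forward unchanged over the reuse iterations of the same block (legitimate by (iii), since $T_k$ is constant there) yields the pointwise bound $T_k\ge B_{i(k)}$ for every $k\in\{0,\dots,K\}$, where $i(k)$ indexes the block containing $k$; summing, $\#(\mathcal A)\ge\sum_{k=0}^{K}B_{i(k)}$. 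Applying \cite[Lemma 2.1]{Chung2002} to the independent sequence $\{B_i\}$ — whose partial sums concentrate about their mean, here $\approx(1-\delta_S)(K+1)$ — then gives a tail of the stated form: the additive ``$+1$'' soaks up the $K$ versus $K+1$ discrepancy, and the $\tfrac{1}{4}$ rather than $\tfrac{1}{2}$ in the exponent leaves slack for $q_i$ exceeding $1-\delta_S$ and for the block-counting below.

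The one delicate point — and the main obstacle — is precisely the reuse iterations: the $T_k$ are far from independent, a block that opens with a poorly-aligned draw copies a $0$ across its whole length, and the block lengths are themselves correlated with those draws (a long block is a long run of criticality steps at a point where $Q_k^T\grad f(\bx_k)$ is small, which biases toward poorly-aligned blocks). So one must argue that collapsing to the i.i.d.\ bound $\sum_k B_{i(k)}$ does not forfeit the $\Theta(K)$ in the exponent — equivalently, that the total length of poorly-aligned blocks is at most a $\delta_S$-fraction of $K+1$ up to lower-order terms. I would attack this by a finer analysis of the block structure (bounding the number of reuse iterations per block, e.g.\ via the geometric contraction of $\Delta_k$ during criticality steps) so that the concentration still holds at rate $e^{-\Theta(K)}$ before feeding the result into \cite[Lemma 2.1]{Chung2002}; carrying out this counting and matching the exponent constant exactly is where the real work lies. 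The remaining ingredients — the measurability claim for \texttt{CHECK\_MODEL} and the degenerate regime $\delta\uparrow 1$ — are then routine.
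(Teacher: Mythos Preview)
Your broad strategy---isolate the ``fresh'' iterations where $Q_k$ is resampled, apply the Chernoff/Chung--Lu martingale bound there, then propagate to all of $\{0,\dots,K\}$---matches the paper's. But you have misdiagnosed the main obstacle and are proposing to work much harder than necessary.

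The crucial structural fact you are missing is that reuse blocks have length at most~$2$. When \texttt{CHECK\_MODEL}=\texttt{TRUE} at iteration~$k$, the algorithm \emph{forces} $\hat m_k$ to be $Q_k$-fully linear; inspecting the exit logic of each branch (safety step and the $\rho_k$ branch) then shows \texttt{CHECK\_MODEL} is reset to \texttt{FALSE} at the end of that iteration. Hence every $k\in\mathcal K_1^C$ is followed by $k+1\in\mathcal K_1$ (or $k=K$), so $\#(\mathcal K_1^C)\le\#(\mathcal K_1)+1$ and likewise $\#(\mathcal A^C)\le 2\#(\mathcal A_1^C)+1$. These two inequalities are the whole story: they convert the fresh-iteration bound $e^{-\delta^2(1-\delta_S)\#(\mathcal K_1)/2}$ into the claimed $e^{-\delta^2(1-\delta_S)K/4}$ by a short chain of algebraic substitutions. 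There is no correlation issue to worry about, no need to bound block lengths via geometric contraction of~$\Delta_k$, and no need for a coupling construction at all---the paper simply applies the Chung--Lu argument directly to the sequence $(T_{k_j})_{j=0}^{J}$ of fresh draws (conditioning on $\bx_{k_j}$ and using the tower property), then uses the length-$2$ fact deterministically.

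A secondary issue: your proposed domination $\#(\mathcal A)\ge\sum_k B_{i(k)}=\sum_i L_i B_i$ does not reduce to an i.i.d.\ sum, since the block lengths~$L_i$ are random and, as you correctly note, potentially correlated with the~$B_i$. So even with a block-length bound in hand you would still need a separate argument to apply Chung--Lu to this weighted sum. The paper sidesteps this entirely by applying the concentration only to $\#(\mathcal A_1)=\sum_j T_{k_j}$ (exactly one term per block) and handling the block-to-full-count conversion afterwards and deterministically.
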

\begin{proof}
The \texttt{CHECK\_MODEL}=\texttt{FALSE} case of this proof has a general framework based on \cite[Lemma 4.5]{Gratton2015}---also followed in \cite{Shao2021}---with a probabilistic argument from \cite[Lemma 2.1]{Chung2002}.

First, we consider only the subsequence of iterations $\mathcal{K}_1\defeq\{k_0,\ldots,k_J\}\subset\{0,\ldots,K\}$ when $Q_k$ is resampled (i.e.~where \texttt{CHECK\_MODEL}=\texttt{FALSE}, so $Q_k\neq Q_{k-1}$).
For convenience, we define $\mathcal{A}_1 \defeq \mathcal{A}\cap\mathcal{K}_1$ and $\mathcal{A}^C_1 \defeq \mathcal{A}^C\cap\mathcal{K}_1$.

Let $T_{k_j}$ be the indicator function for the event ``$Q_{k_j}$ is well-aligned'', and so
$\#(\mathcal{A}_1) = \sum_{j=0}^{J}T_{k_j}$.
Since $T_{k_j}\in\{0,1\}$, and denoting $p_{k_j}:=\Prob{T_{k_j}=1 |\, \bx_{k_j}}$, for any $t>0$ we have 
\be \E{e^{-t (T_{k_j}-p_{k_j})} |\, \bx_{k_j}} = p_{k_j} e^{-t (1-p_{k_j})} + (1-p_{k_j})e^{t p_{k_j}} = e^{t p_{k_j} + \log(1-p_{k_j}+p_{k_j} e^{-t})} \leq e^{t^2 p_{k_j} / 2}, \label{eq_tmp6} \ee
where the inequality from the identity
$ px + \log(1-p+pe^{-x}) \leq px^2/2$,
for all $p\in[0,1]$ and $x\geq 0$, shown in \cite[Lemma 2.1]{Chung2002}.

Using the tower property of conditional expectations and the fact that, since $k_j\in\mathcal{K}_1$, $T_{k_j}$ only depends on $\bx_{k_j}$ and not any previous iteration, we then get
\begin{align}
    \E{e^{-t (\#(\mathcal{A}_1)-\sum_{j=0}^{J} p_{k_j})}} &= \E{e^{-t \sum_{j=0}^{J} (T_{k_j} - p_{k_j})}}, \\
    &= \E{\E{ e^{-t \sum_{j=0}^{J}(T_{k_j} - p_{k_j})} | \, Q_0,\ldots,Q_{k_J-1},\bx_0,\ldots,\bx_{k_J} }}, \\
    &= \E{ e^{-t \sum_{j=0}^{J-1} (T_{k_j} - p_{k_j})} \E{e^{-t (T_{k_J} - p_{k_J})} | \, Q_0,\ldots,Q_{k_J-1},\bx_0,\ldots,\bx_{k_J}} }, \\
    &= \E{ e^{-t \sum_{j=0}^{J-1} (T_{k_j} - p_{k_j})} \E{e^{-t (T_{k_J} - p_{k_J})} | \, \bx_{k_J}} }, \\
    &\leq e^{t^2 p_{k_J}/2} \E{ e^{-t \sum_{j=0}^{J-1} (T_{k_j} - p_{k_j})}}, \\
    &\leq e^{t^2 (\sum_{j=0}^{J} p_{k_j}) /2}, \label{eq_tmp7}
\end{align}
where the second-last line follows from \eqref{eq_tmp6} and the last line follows by induction.
This means that
\begin{align}
    \Prob{\#(\mathcal{A}_1) \leq \sum_{j=0}^{J} p_{k_j} - \lambda} &= \Prob{ e^{-t \left(\#(\mathcal{A}_1)-\sum_{j=0}^{J} p_{k_j}\right)} > e^{t\lambda} }, \\
    &\leq e^{-t\lambda} \E{e^{-t \left(\#(\mathcal{A}_1)-\sum_{j=0}^{J} p_{k_j}\right)}}, \\
    &\leq e^{t^2 \left(\sum_{j=0}^{J} p_{k_j}\right) /2 - t\lambda},
\end{align}
where the inequalities follow from Markov's inequality and \eqref{eq_tmp7} respectively.
Taking $t=\lambda / \sum_{j=0}^{J} p_{k_j}$, we get
\be \Prob{\#(\mathcal{A}_1) \leq \sum_{j=0}^{J} p_{k_j} - \lambda} \leq e^{-\lambda^2 / \left(2\sum_{j=0}^{J} p_{k_j}\right)}. \ee
Finally, we take $\lambda=\delta \sum_{j=0}^{J} p_{k_j}$ for some $\delta\in(0,1)$ and note that $p_{k_j}\geq (1-\delta_S)$ (from \assref{ass_sketching}), to conclude
\be \Prob{\#(\mathcal{A}_1) \leq (1-\delta)(1-\delta_S)(J+1)} \leq \Prob{\#(\mathcal{A}_1) \leq (1-\delta)\sum_{j=0}^{J} p_{k_j}} \leq e^{-\delta^2 \left(\sum_{j=0}^{J} p_{k_j}\right)/2}, \ee 
or equivalently, using the partition $\mathcal{K}_1 = \mathcal{A}_1 \cup \mathcal{A}_1^C$,
\be \Prob{\#(\mathcal{A}_1) \leq (1-\delta)(1-\delta_S)[\#(\mathcal{A}_1) + \#(\mathcal{A}_1^C)]} \leq e^{-\delta^2 (1-\delta_S)[\#(\mathcal{A}_1) + \#(\mathcal{A}_1^C)]/2}. \label{eq_tmp18} \ee

Now we must consider the iterations for which \texttt{CHECK\_MODEL}=\texttt{TRUE} (so $Q_k=Q_{k-1}$), which we denote $\mathcal{K}_1^C$.
The algorithm ensures that if $k\in\mathcal{K}_1^C$, then $k+1\in\mathcal{K}_1$ (unless we are in the last iteration we consider, $k=K$).
Futher, the algorithm guarantees that if $k\in\mathcal{K}_1^C$, then $k>0$ and $k\in\mathcal{A}$ if and only if $k-1\in\mathcal{A}$.
These are the key implications of RSDFO that we will now use.

Firstly, we have $\#(\mathcal{K}_1^C) \leq \#(\mathcal{K}_1)+1$, and so 
\be K+1 = \#(\mathcal{K}_1) + \#(\mathcal{K}_1^C) \leq 2[\#(\mathcal{A}_1) + \#(\mathcal{A}_1^C)] + 1, \ee
which means \eqref{eq_tmp18} becomes
\be \Prob{\#(\mathcal{A}_1) \leq (1-\delta)(1-\delta_S)[\#(\mathcal{A}_1) + \#(\mathcal{A}_1^C)]} \leq e^{-\delta^2 (1-\delta_S)K/4}. \ee
Setting $\alpha \defeq \delta + \delta_S + \delta\delta_S$, we have $(1-\delta)(1-\delta_S) = 1-\alpha$, and so
\be \Prob{\#(\mathcal{A}_1) \leq \frac{1-\alpha}{\alpha} \#(\mathcal{A}_1^C)} = \Prob{\#(\mathcal{A}_1) \leq (1-\alpha)[\#(\mathcal{A}_1) + \#(\mathcal{A}_1^C)]} \leq e^{-\delta^2 (1-\delta_S)K/4}. \ee
Secondly, we have $\#(\mathcal{K}_1^C \cap\mathcal{A}^C) \leq \#(\mathcal{A}_1^C)+1$, and so $\#(\mathcal{A}^C) \leq 2\#(\mathcal{A}_1^C)+1$.
This and $\mathcal{A}_1\subset\mathcal{A}$ give
\be \Prob{\#(\mathcal{A}) \leq \frac{1-\alpha}{2\alpha} [\#(\mathcal{A}^C)-1]} \leq e^{-\delta^2 (1-\delta_S)K/4}. \ee
We then note that $K+1=\#(\mathcal{A})+\#(\mathcal{A}^C)$, and so
\begin{align}
	\Prob{\#(\mathcal{A}) \leq \frac{1-\alpha}{2\alpha} [K+1-\#(\mathcal{A})-1]} \leq e^{-\delta^2 (1-\delta_S)K/4}, \\
	\Prob{\#(\mathcal{A}) + \frac{1-\alpha}{1+\alpha} \leq \frac{1-\alpha}{1+\alpha} (K+1)} \leq e^{-\delta^2 (1-\delta_S)K/4}, \\
	\Prob{\#(\mathcal{A}) + 1 \leq (1-\alpha)(K+1)} \leq e^{-\delta^2 (1-\delta_S)K/4},
\end{align}
since $\alpha>0$.
\end{proof}

\begin{theorem} \label{thm_high_prob_complexity}
Suppose Assumptions \ref{ass_smoothness}, \ref{ass_bdd_hess}, \ref{ass_cauchy_decrease} and \ref{ass_sketching} hold, and we have $\beta_F\leq c_2$, $\delta_S < 1/(1+C_4)$ for $C_4$ defined in \lemref{lem_aligned_bound}, and $\gamma_{\rm inc} > \min(\gamma_C, \gamma_F, \gamma_{\rm dec}, \beta_F)^{-2}$.
Then for any $\epsilon>0$ and
\be k \geq \frac{2(\psi(\epsilon)+1)}{1-\delta_S-C_4/(1+C_4)}, \label{eq_high_prob_k} \ee
we have
\be \Prob{\min_{j\leq k} \|\grad f(\bx_j)\| \leq \epsilon} \geq 1 - \exp\left(-k\frac{(1-\delta_S-C_4/(1+C_4))^2}{16(1-\delta_S)}\right). \label{eq_high_prob1} \ee
Alternatively, if $K_{\epsilon} \defeq \min\{k : \|\grad f(\bx_k)\|\leq\epsilon\}$ for any $\epsilon>0$, then
\be \Prob{K_{\epsilon} \leq \left\lceil \frac{2(\psi(\epsilon)+1)}{1-\delta_S-C_4/(1+C_4)}  \right\rceil} \geq 1 - \exp\left(-\frac{(\psi(\epsilon)+1)[1-\delta_S-C_4/(1+C_4)]}{8(1-\delta_S)}\right), \label{eq_high_prob2} \ee
where $\psi(\epsilon)$ is defined in \lemref{lem_aligned_bound}.
\end{theorem}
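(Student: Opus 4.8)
The plan is to obtain both statements as short consequences of \lemref{lem_aligned_bound} and \lemref{lem_chernoff} by a judicious choice of the free parameter $\delta$ appearing in \lemref{lem_chernoff}. Write $\beta \defeq 1-\delta_S-C_4/(1+C_4)$; since $C_4>0$ forces $C_4/(1+C_4)<1$, the hypothesis $\delta_S<1/(1+C_4)$ is exactly what makes $\beta>0$, and moreover $\beta<1-\delta_S$. I would fix once and for all $\delta \defeq \beta/(2(1-\delta_S))$, which lies in $(0,1/2)\subset(0,1)$ since $0<\beta<1-\delta_S$; the point of this choice is the two elementary identities $(1-\delta_S)(1-\delta)-C_4/(1+C_4)=\beta/2$ and $\delta^2(1-\delta_S)/4 = \beta^2/(16(1-\delta_S))$, which are precisely what reproduce the constants in \eqref{eq_high_prob_k} and \eqref{eq_high_prob1}.

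First I would prove \eqref{eq_high_prob1}. Fix $\epsilon>0$ and $k$ obeying \eqref{eq_high_prob_k}, run RSDFO up to the end of iteration $K=k$, and consider the complementary event $E \defeq \{\min_{j\leq k}\|\grad f(\bx_j)\|>\epsilon\}$. On $E$ we have $\|\grad f(\bx_j)\|\geq\epsilon$ for every $j=0,\ldots,k$, so \lemref{lem_aligned_bound} applies pathwise on $E$ and gives $\#(\mathcal{A})\leq \psi(\epsilon)+\tfrac{C_4}{1+C_4}(k+1)$. Meanwhile \eqref{eq_high_prob_k} rearranges to $\tfrac{\beta}{2}(k+1)>\psi(\epsilon)+1$ (strictly, since $k+1>2(\psi(\epsilon)+1)/\beta$), which together with the first identity above gives $\psi(\epsilon)+\tfrac{C_4}{1+C_4}(k+1) < (1-\delta_S)(1-\delta)(k+1)-1$. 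Hence $E\subseteq\{\#(\mathcal{A})+1\leq (1-\delta_S)(1-\delta)(k+1)\}$, and \lemref{lem_chernoff} bounds the probability of the latter event by $e^{-\delta^2(1-\delta_S)k/4}=e^{-k\beta^2/(16(1-\delta_S))}$. Taking complements yields \eqref{eq_high_prob1}.

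Then \eqref{eq_high_prob2} follows by specializing. Since $\{K_\epsilon\leq k\}=\{\min_{j\leq k}\|\grad f(\bx_j)\|\leq\epsilon\}$ by definition of $K_\epsilon$, I would take $k=\lceil 2(\psi(\epsilon)+1)/\beta\rceil$, which satisfies \eqref{eq_high_prob_k} because it is at least $2(\psi(\epsilon)+1)/\beta$, and feed it into \eqref{eq_high_prob1} to get $\Prob{K_\epsilon\leq k}\geq 1-e^{-k\beta^2/(16(1-\delta_S))}$. Using $k\geq 2(\psi(\epsilon)+1)/\beta$ to lower-bound the exponent converts this into $1-\exp(-(\psi(\epsilon)+1)\beta/(8(1-\delta_S)))$, which is the bound stated in \eqref{eq_high_prob2}.

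Essentially all steps are routine algebra once $\delta$ is fixed, so the only point requiring care is that choice of $\delta$: it must be small enough that $(1-\delta_S)(1-\delta)$ still exceeds $C_4/(1+C_4)$ — otherwise the deterministic ceiling on $\#(\mathcal{A})$ from \lemref{lem_aligned_bound} and its high-probability floor from \lemref{lem_chernoff} are not in conflict for large $k$ — yet not so small that the concentration exponent $\delta^2(1-\delta_S)/4$ collapses; balancing these two competing demands is exactly what pins down $\delta=\beta/(2(1-\delta_S))$ and, with it, the explicit constants in \eqref{eq_high_prob_k}--\eqref{eq_high_prob2}. I do not foresee any further difficulty, since Lemmas~\ref{lem_aligned_bound} and \ref{lem_chernoff} have already carried out all the substantive work.
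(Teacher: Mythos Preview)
Your proposal is correct and follows essentially the same approach as the paper's proof: the choice $\delta=\beta/(2(1-\delta_S))$ is exactly the paper's $\delta=\tfrac{1}{2}[1-C_4/((1+C_4)(1-\delta_S))]$ rewritten, and both arguments combine \lemref{lem_aligned_bound} and \lemref{lem_chernoff} in the same way. Your presentation is slightly more direct---working on the complementary event $E$ and applying \lemref{lem_aligned_bound} pathwise there---whereas the paper introduces $\epsilon_k:=\min_{j\leq k}\|\grad f(\bx_j)\|$, derives a bound on $\Prob{\psi(\epsilon_k)\leq\cdots}$, and then invokes monotonicity of $\psi$; the two routes are equivalent and yield identical constants.
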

\begin{proof}
First, fix some arbitrary $k\geq 0$.
Let $\epsilon_k := \min_{j\leq k}\|\grad f(\bx_j)\|$ and $A_k$ be the number of well-aligned iterations in $\{0,\ldots,k\}$.
If $\epsilon_k>0$, from \lemref{lem_aligned_bound}, we have
\be A_k \leq \psi(\epsilon_k) + \frac{C_4}{1+C_4} (k+1). \ee
For any $\delta>0$ such that
\be \delta < 1 - \frac{C_4}{(1+C_4)(1-\delta_S)}, \ee
we have $(1-\delta_S)(1-\delta) > C_4 / (1+C_4)$, and so we can compute
\begin{align}
    \Prob{\psi(\epsilon_k) \leq \left[(1-\delta_S)(1-\delta)-\frac{C_4}{1+C_4}\right](k+1) - 1} &\leq \Prob{A_k \leq (1-\delta_S)(1-\delta)(k+1)}, \\
    &\leq e^{-\delta^2 (1-\delta_S)k/4},
\end{align}
using \lemref{lem_chernoff}.
Defining
\be \delta := \frac{1}{2}\left[1 - \frac{C_4}{(1+C_4)(1-\delta_S)}\right], \ee
we have
\be (1-\delta_S)(1-\delta) = \frac{1}{2}\left[1-\delta_S + \frac{C_4}{1+C_4}\right] > \frac{C_4}{1+C_4}, \ee
since $1-\delta_S > C_4 / (1+C_4)$ from our assumption on $\delta_S$.
Hence we get
\be \Prob{\psi(\epsilon_k) \leq \frac{1}{2}\left(1-\delta_S-\frac{C_4}{1+C_4}\right)(k+1) - 1} \leq e^{-k [1-\delta_S-C_4/(1+C_4)]^2 / \left[16(1-\delta_S)\right]}, \ee
and we note that this result is still holds if $\epsilon_k=0$, as $\lim_{\epsilon\to 0}\psi(\epsilon)=\infty$.

Now we fix $\epsilon>0$ and choose $k$ satisfying \eqref{eq_high_prob_k}.
We use the fact that $\psi(\cdot)$ is non-increasing to get
\begin{align}
    \Prob{\epsilon_k \geq \epsilon} &\leq \Prob{\psi(\epsilon_k) \leq \psi(\epsilon)}, \\
    &\leq \Prob{\psi(\epsilon_k) \leq \frac{1}{2}(1-\delta_S-C_4/(1+C_4))k - 1}, \\
    &\leq \Prob{\psi(\epsilon_k) \leq \frac{1}{2}(1-\delta_S-C_4/(1+C_4))(k+1) - 1}, 
\end{align}
and \eqref{eq_high_prob1} follows.
Lastly, we fix
\be k = \left\lceil \frac{2(\psi(\epsilon)+1)}{1-\delta_S-C_4/(1+C_4)} \right\rceil, \ee
and we use \eqref{eq_high_prob1} and the definition of $K_{\epsilon}$ to get
\begin{align}
	\Prob{K_{\epsilon} \geq k} &= \Prob{\epsilon_k \geq \epsilon}, \\
	&\leq e^{-k[1-\delta_S-C_4/(1+C_4)]^2 / [16(1-\delta_S)]}, \\
	&\leq \exp\left(-\frac{(\psi(\epsilon)+1)[1-\delta_S-C_4/(1+C_4)]}{8(1-\delta_S)}\right),
\end{align}
and we get \eqref{eq_high_prob2}.
\end{proof}

\begin{corollary} \label{cor_high_prob_complexity}
	Suppose the assumptions of \thmref{thm_high_prob_complexity} hold.
	Then for $k \geq k_0$ for some $k_0$, we have
	\be \Prob{\min_{j\leq k} \|\grad f(\bx_j)\| \leq \frac{C}{\sqrt{k}}} \geq 1 - e^{-ck}, \label{eq_high_prob3} \ee
	for some constants $c,C>0$.
	Alternatively, for $\epsilon\in(0,\epsilon_0)$ for some $\epsilon_0$, we have
	\be \Prob{K_{\epsilon} \leq \t{C}\epsilon^{-2}} \geq 1 - e^{-\t{c}\epsilon^{-2}}, \label{eq_high_prob4} \ee
	for constants $\t{c},\t{C}>0$.
\end{corollary}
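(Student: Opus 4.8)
The plan is to reduce both statements to \thmref{thm_high_prob_complexity} by pinning down the growth rate of $\psi(\epsilon)$ as $\epsilon\to 0^+$. Throughout write $\beta := 1-\delta_S-C_4/(1+C_4)$, which is strictly positive under the hypothesis $\delta_S<1/(1+C_4)$ (this is exactly the condition appearing in \thmref{thm_high_prob_complexity}).

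\textbf{Step 1: the scaling $\psi(\epsilon)=\Theta(\epsilon^{-2})$.} I would first show there are constants $0<\psi_{\rm lo}\leq\psi_{\rm hi}$ and $\epsilon_1>0$ with $\psi_{\rm lo}\,\epsilon^{-2}\leq\psi(\epsilon)\leq\psi_{\rm hi}\,\epsilon^{-2}$ for all $\epsilon\in(0,\epsilon_1]$. This comes from unwinding the nested definitions in \lemref{lem_aligned_bound}. From \eqref{eq_epsilon_g}, for small $\epsilon$ the second branch of the $\min$ is active, so $\epsilon_g(\epsilon)=\Theta(\epsilon)$; likewise $\Delta^*(\epsilon)=\Theta(\epsilon)$ by \lemref{lem_small_unsuccessful}, hence $\Delta_{\min}(\epsilon)=\Theta(\epsilon)$. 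Substituting into $\phi(\Delta,\epsilon)=(f(\bx_0)-f_{\rm low})/[\eta_1 c_1\epsilon_g(\epsilon)\min(\epsilon_g(\epsilon)/\kappa_H,\Delta)]$ with $\Delta$ of order $\epsilon$ gives $\phi(\Delta_{\min}(\epsilon),\epsilon)=\Theta(\epsilon^{-2})$, and similarly for the second $\phi$-term. The constants $C_1,C_3,C_4$ do not depend on $\epsilon$, and $C_2=O(\log(1/\epsilon))=o(\epsilon^{-2})$. Since $\psi(\epsilon)$ is a fixed positive linear combination of these quantities with the $\phi$-terms carrying positive coefficients, the two-sided bound follows. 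The only care needed is to verify at each of the four layers of $\min$'s (inside $\epsilon_g$, $\Delta^*$, $\Delta_{\min}$, and inside $\phi$) that the $\epsilon$-dependent branch is the active one once $\epsilon$ is small, so nothing saturates at a constant.

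\textbf{Step 2: proof of \eqref{eq_high_prob4}.} Apply \eqref{eq_high_prob2}. For $\epsilon\in(0,\epsilon_1]$ the upper bound gives $\lceil 2(\psi(\epsilon)+1)/\beta\rceil\leq 2(\psi_{\rm hi}\epsilon^{-2}+1)/\beta+1\leq \t{C}\epsilon^{-2}$ for a suitable $\t{C}>0$ and all $\epsilon$ below some $\epsilon_0\leq\epsilon_1$; the lower bound gives $\psi(\epsilon)+1\geq\psi_{\rm lo}\epsilon^{-2}$, so the failure probability in \eqref{eq_high_prob2} is at most $\exp(-\psi_{\rm lo}\beta\epsilon^{-2}/(8(1-\delta_S)))=e^{-\t{c}\epsilon^{-2}}$ with $\t{c}:=\psi_{\rm lo}\beta/(8(1-\delta_S))>0$. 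Since $\{K_\epsilon\leq\t{C}\epsilon^{-2}\}$ is implied by the event in \eqref{eq_high_prob2} (as $\lceil 2(\psi(\epsilon)+1)/\beta\rceil\leq\t{C}\epsilon^{-2}$), we obtain \eqref{eq_high_prob4}.

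\textbf{Step 3: proof of \eqref{eq_high_prob3}.} Set $\epsilon = C k^{-1/2}$ with $C:=(4\psi_{\rm hi}/\beta)^{1/2}$. Whenever $k$ is large enough that $Ck^{-1/2}\leq\epsilon_1$, Step 1 gives $2(\psi(Ck^{-1/2})+1)/\beta\leq 2\psi_{\rm hi}k/(\beta C^2)+2/\beta = k/2+2/\beta\leq k$ provided also $k\geq 4/\beta$; thus the hypothesis \eqref{eq_high_prob_k} of \thmref{thm_high_prob_complexity} holds, and \eqref{eq_high_prob1} yields $\Prob{\min_{j\leq k}\|\grad f(\bx_j)\|\leq Ck^{-1/2}}\geq 1-\exp(-k\beta^2/(16(1-\delta_S)))$, which is \eqref{eq_high_prob3} with $c:=\beta^2/(16(1-\delta_S))$ and $k_0:=\max(4/\beta,\,C^2/\epsilon_1^2)$.

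\textbf{Main obstacle.} There is no conceptual difficulty: everything reduces to substitution once the rate of $\psi$ is known. The only real work is the bookkeeping in Step 1 — confirming that through the four nested $\min$ operations the $\epsilon$-proportional branches are active for small $\epsilon$ (so the bounds scale as $\epsilon$ or $\epsilon^{-2}$ rather than leveling off), and that the $\log(1/\epsilon)$ contribution of $C_2$ is genuinely lower order; after that, Steps 2 and 3 are routine.
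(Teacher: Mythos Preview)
Your proposal is correct and follows essentially the same approach as the paper: both establish that $\psi(\epsilon)=\Theta(\epsilon^{-2})$ for small $\epsilon$ by tracing through the nested definitions of $\epsilon_g$, $\Delta^*$, $\Delta_{\min}$ and $\phi$, and then substitute into \eqref{eq_high_prob1} (with $\epsilon\sim k^{-1/2}$) and \eqref{eq_high_prob2} respectively. Your Step~1 is in fact slightly more careful than the paper's, which writes $\psi(\epsilon)=\alpha_1\epsilon^{-2}+\alpha_2$ without explicitly noting the lower-order $C_2=O(\log(1/\epsilon))$ term that you correctly absorb.
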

\begin{proof}
	For $\epsilon$ sufficiently small, both $\epsilon_g(\epsilon)$ and $\Delta_{\min}(\epsilon)$ are equal to a multiple of $\epsilon$, and so $\psi(\epsilon)=\alpha_1 \epsilon^{-2} + \alpha_2=\Theta(\epsilon^{-2})$, for some constants $\alpha_1,\alpha_2>0$.
	
	Therefore for $k$ sufficiently large, the choice
	\be \epsilon = \sqrt{\frac{2\alpha_1}{(1-\delta_S-C_4/(1+C_4))k - 2-2\alpha_2}} = \Theta(k^{-1/2}), \ee
	is sufficiently small that $\psi(\epsilon)=\alpha_1 \epsilon^{-2} + \alpha_2$, and gives \eqref{eq_high_prob_k} with equality.
	The first result then follows from \eqref{eq_high_prob1}.
	
	The second result follows immediately from $\psi(\epsilon)=\Theta(\epsilon^{-2})$ and \eqref{eq_high_prob2}.
\end{proof}

\begin{remark}
	All the above analysis holds with minimal modifications if we replace the trust-region mechanisms in RSDFO with more standard trust-region updating mechanisms.
	This includes, for example, having no safety step (i.e.~$\beta_F=0$), and replacing \eqref{eq_dfbgn_tr_updating_generic} with
	\be \bx_{k+1} = \begin{cases}\bx_k + \bs_k, & \rho_k \geq \eta, \\ \bx_k, & \rho_k < \eta, \end{cases} \quad \text{and} \quad \Delta_{k+1} = \begin{cases}\min(\gamma_{\rm inc}\Delta_k, \Delta_{\rm max}), & \rho_k \geq \eta, \\ \gamma_{\rm dec}\Delta_k, & \rho_k < \eta, \end{cases} \ee
	for some $\eta\in(0,1)$.
	The corresponding requirement on the trust-region updating parameters to prove a version of \thmref{thm_high_prob_complexity} is simply $\gamma_{\rm inc} > \gamma_{\rm dec}^{-2}$ (provided we also set $\gamma_C=\gamma_{\rm dec}$).
\end{remark}

\subsection{Remarks on Complexity Bound}
Our final complexity bounds for RSDFO in \corref{cor_high_prob_complexity} are comparable to probabilistic direct search \cite[Corollary 4.9]{Gratton2015}.
They also match---in the order of $\epsilon$---the standard bounds for (full space) model-based DFO methods for general objective \cite{Vicente2013,Garmanjani2016} and nonlinear least-squares \cite{Cartis2019} problems.

Following \cite{Gratton2015}, we may also derive complexity bounds on the expected first-order optimality measure (of $\bigO(k^{-1/2})$) and the expected worst-case complexity (of $\bigO(\epsilon^{-2})$ iterations) for RSDFO.

\begin{theorem}
	Suppose the assumptions of \thmref{thm_high_prob_complexity} hold.
	Then for $k\geq k_0$, the iterates of RSDFO satisfy 
	\be \E{\min_{j\leq k} \|\grad f(\bx_j)\|} \leq Ck^{-1/2} + \|\grad f(\bx_0)\| e^{-ck}, \ee
	for $c,C>0$ from \eqref{eq_high_prob3}, and for $\epsilon\in(0,\epsilon_0)$ we have
	\be \E{K_{\epsilon}} \leq \t{C}_1\epsilon^{-2} + \frac{1}{\t{c}_1}, \ee
	for constants $\t{c}_1,\t{C}_1>0$.
	Here, $k_0$ and $\epsilon_0$ are the same as in \corref{cor_high_prob_complexity}.
\end{theorem}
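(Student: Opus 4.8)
The plan is to convert the high-probability tail bounds of \thmref{thm_high_prob_complexity} and \corref{cor_high_prob_complexity} into bounds in expectation by the elementary ``good event / bad event'' splitting: on the good event the quantity of interest is already controlled, and on the complementary bad event we use a crude deterministic bound multiplied by the (exponentially small) failure probability.

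For the first bound, write $\epsilon_k := \min_{j\leq k}\|\grad f(\bx_j)\|$ and take $c,C>0$ as in \eqref{eq_high_prob3}, so that $\Prob{\epsilon_k > Ck^{-1/2}} \leq e^{-ck}$ for all $k\geq k_0$. The only extra ingredient needed is that $\epsilon_k \leq \|\grad f(\bx_0)\|$ holds deterministically, since the index $j=0$ occurs in the minimum. Conditioning on the two complementary events,
\[
\E{\epsilon_k} \;\leq\; Ck^{-1/2}\,\Prob{\epsilon_k \leq Ck^{-1/2}} + \|\grad f(\bx_0)\|\,\Prob{\epsilon_k > Ck^{-1/2}} \;\leq\; Ck^{-1/2} + \|\grad f(\bx_0)\|\,e^{-ck},
\]
which is exactly the claimed inequality.

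For the second bound, note that $K_\epsilon$ is $\N\cup\{\infty\}$-valued and is finite almost surely by \eqref{eq_high_prob4} (or by Borel--Cantelli applied to \eqref{eq_high_prob1}), so $\E{K_\epsilon} = \sum_{k\geq 0}\Prob{K_\epsilon > k}$. Here $\{K_\epsilon > k\} = \{\epsilon_k > \epsilon\}$, and \eqref{eq_high_prob1} gives $\Prob{\epsilon_k > \epsilon} \leq e^{-c'k}$ for every $k \geq k^*(\epsilon) := \big\lceil 2(\psi(\epsilon)+1)/(1-\delta_S - C_4/(1+C_4))\big\rceil$, where $c' := (1-\delta_S - C_4/(1+C_4))^2 / (16(1-\delta_S)) > 0$ is a fixed constant independent of both $\epsilon$ and $k$. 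Splitting the sum at $k^*(\epsilon)$, bounding the first $k^*(\epsilon)$ terms by $1$ and the remainder by a geometric series,
\[
\E{K_\epsilon} \;\leq\; k^*(\epsilon) + \sum_{k\geq k^*(\epsilon)} e^{-c'k} \;\leq\; k^*(\epsilon) + \frac{1}{1-e^{-c'}}.
\]
By the asymptotics $\psi(\epsilon) = \alpha_1\epsilon^{-2}+\alpha_2$ for $\epsilon<\epsilon_0$ established inside the proof of \corref{cor_high_prob_complexity}, we have $k^*(\epsilon) \leq \t{C}_1\epsilon^{-2} + (\text{const})$; folding the additive constant together with $1/(1-e^{-c'})$ into $1/\t{c}_1$ gives $\E{K_\epsilon} \leq \t{C}_1\epsilon^{-2} + 1/\t{c}_1$.

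The computation is routine, and I expect no real obstacle. The two points that genuinely require attention are: (i) checking, from the exponent in \eqref{eq_high_prob1}, that the decay rate $c'$ does not depend on $\epsilon$ or $k$ --- without this the geometric tail would fail to be a genuine $\bigO(1)$ constant; and (ii) justifying the tail-sum identity for $\E{K_\epsilon}$, i.e.\ almost-sure finiteness of $K_\epsilon$, which is immediate from \eqref{eq_high_prob4}.
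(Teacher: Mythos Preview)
Your proof is correct and takes essentially the same approach as the paper: the first bound via the good/bad event split using $\epsilon_k \leq \|\grad f(\bx_0)\|$ deterministically, and the second via the tail-sum identity for $\E{K_\epsilon}$ split at $k^*(\epsilon)$ with a geometric tail. The only cosmetic differences are that the paper packages the first argument through an auxiliary random variable $H_k$ and uses the integral form $\E{X}=\int_0^\infty \Prob{X>t}\,dt$ rather than the discrete sum for the second.
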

\begin{proof}
	First, for $k\geq k_0$ define the random variable $H_k$ as
	\be H_k \defeq \begin{cases} Ck^{-1/2}, & \text{if $\min_{j\leq k} \|\grad f(\bx_j)\| \leq Ck^{-1/2}$}, \\ \|\grad f(\bx_0)\| & \text{otherwise}. \end{cases} \ee
	Then since $\min_{j\leq k} \|\grad f(\bx_j)\| \leq H_k$, we get
	\be \E{\min_{j\leq k} \|\grad f(\bx_j)\|} \leq \E{H_k} \leq Ck^{-1/2} + \|\grad f(\bx_0)\|\: \Prob{\min_{j\leq k} \|\grad f(\bx_j)\| > Ck^{-1/2}}, \ee
	and we get the first result by applying \corref{cor_high_prob_complexity}.
	
	Next, if $\epsilon\in(0,\epsilon_0)$ then
	\be k\geq k_0(\epsilon) \defeq \frac{2(\psi(\epsilon)+1)}{1-\delta_S-C_4/(1+C_4)} = \Theta(\epsilon^{-2}), \ee
	and so from \thmref{thm_high_prob_complexity} we have
	\be \Prob{K_{\epsilon} \leq k} = \Prob{\min_{j\leq k} \|\grad f(\bx_j)\| \leq \epsilon} \geq 1 - e^{-\t{c}_1 k}, \ee
	where $\t{c}_1 \defeq (1-\delta_S-C_4/(1+C_4))^2 /[16(1-\delta_S)]$.
	We use the identity $\E{X} = \int_{0}^{\infty} \Prob{X>t} dt$ for non-negative random variables $X$ (e.g.~\cite[eqn.~(1.9)]{Tao2011}) to get
	\be \E{K_{\epsilon}} \leq k_0(\epsilon) + \int_{k_0(\epsilon)}^{\infty} \Prob{K_{\epsilon} > t} dt \leq k_0(\epsilon) + \sum_{k=k_0(\epsilon)}^{\infty} e^{-\t{c}_1 k} = k_0(\epsilon) + \frac{e^{-\t{c}_1 k_0(\epsilon)}}{1-e^{-\t{c}_1}}, \ee
	where $\t{C}_1$ comes from $k_0(\epsilon)=\Theta(\epsilon^{-2})$, which concludes our proof.
\end{proof}

Furthermore, we also get almost-sure convergence of $\liminf$ type, similar to \cite[Theorem 10.12]{Conn2009} in the deterministic case.

\begin{theorem} \label{thm_almost_sure}
	Suppose the assumptions of \thmref{thm_high_prob_complexity} hold.
	Then the iterates of RSDFO satisfy $\inf_{k\geq 0} \|\grad f(\bx_k)\| = 0$ almost surely.
\end{theorem}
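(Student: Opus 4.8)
The plan is to reduce the almost-sure statement to the high-probability complexity bound of \thmref{thm_high_prob_complexity} via a countable-union argument. Write
\be \left\{\inf_{k\geq 0}\|\grad f(\bx_k)\| > 0\right\} = \bigcup_{m\in\N} E_m, \qquad E_m \defeq \left\{\inf_{k\geq 0}\|\grad f(\bx_k)\| > \tfrac{1}{m}\right\}, \ee
so that by countable subadditivity it is enough to show $\Prob{E_m} = 0$ for each fixed $m$. Fix $\epsilon = 1/m$. On $E_m$ we have $\|\grad f(\bx_k)\| > \epsilon$ for every $k$, so (with the convention $\min\emptyset = \infty$) $K_\epsilon = \infty$; hence $E_m \subseteq \{K_\epsilon = \infty\}$ and it suffices to bound $\Prob{K_\epsilon = \infty}$.

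Next I would use the elementary identity $\{K_\epsilon > k\} = \{\min_{j\leq k}\|\grad f(\bx_j)\| > \epsilon\}$, valid for every $k\geq 0$ by definition of $K_\epsilon$. By \thmref{thm_high_prob_complexity}, for all $k$ satisfying \eqref{eq_high_prob_k} --- a condition that involves $\epsilon$ only through the quantity $\psi(\epsilon)$, which is finite because $\epsilon>0$ --- we have
\be \Prob{K_\epsilon > k} = 1 - \Prob{\min_{j\leq k}\|\grad f(\bx_j)\| \leq \epsilon} \leq \exp\!\left(-k\,\frac{(1-\delta_S-C_4/(1+C_4))^2}{16(1-\delta_S)}\right). \ee
The decisive point is that the exponential rate on the right is a fixed positive constant that does not depend on $\epsilon$; only the range of admissible $k$ does.

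Then I would send $k\to\infty$ with $\epsilon$ held fixed. Since $\{K_\epsilon = \infty\} = \bigcap_{k\geq 0}\{K_\epsilon > k\}$ is a decreasing intersection, continuity of measure from above gives $\Prob{K_\epsilon = \infty} = \lim_{k\to\infty}\Prob{K_\epsilon > k} = 0$, using the displayed bound for all sufficiently large $k$. Therefore $\Prob{E_m} = 0$ for every $m$, and summing over $m$ yields $\Prob{\inf_{k\geq 0}\|\grad f(\bx_k)\| > 0} = 0$, i.e. $\inf_{k\geq 0}\|\grad f(\bx_k)\| = 0$ almost surely.

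I do not expect a genuine obstacle here: this is a routine deduction from a high-probability iteration-complexity bound. The only point needing a little care is the order of operations --- one must fix $\epsilon$ first, so that $\psi(\epsilon)<\infty$ and the threshold in \eqref{eq_high_prob_k} is finite, and only then let $k\to\infty$; the $\epsilon$-independence of the exponential rate is what makes each $\Prob{K_\epsilon=\infty}$ vanish, and the countable union over $\epsilon = 1/m$ then finishes the proof.
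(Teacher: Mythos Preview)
Your proposal is correct and follows essentially the same approach as the paper: both fix $\epsilon>0$, use the high-probability bound \eqref{eq_high_prob1} from \thmref{thm_high_prob_complexity} to show $\Prob{\inf_{k\geq 0}\|\grad f(\bx_k)\|>\epsilon}=0$, and then take a countable union over $\epsilon=1/m$. The paper is slightly more terse---it directly uses the inclusion $\{\inf_k\|\grad f(\bx_k)\|>\epsilon\}\subseteq\{\min_{j\leq k}\|\grad f(\bx_j)\|>\epsilon\}$ rather than routing through $K_\epsilon$ and continuity from above---but the substance is identical.
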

\begin{proof}
	From \thmref{thm_high_prob_complexity}, for any $\epsilon>0$ we have
	\be \lim_{k\to\infty} \Prob{\min_{j\leq k} \|\grad f(\bx_j)\| > \epsilon} = 0. \ee
	However, $\Prob{\inf_{k\geq 0}\|\grad f(\bx_k)\| > \epsilon} \leq \Prob{\min_{j\leq k} \|\grad f(\bx_j)\| > \epsilon}$ for all $k$, and so
	\be \Prob{\inf_{k\geq 0}\|\grad f(\bx_k)\| > \epsilon} = 0. \ee
	The result follows from the union bound applied to any sequence $\epsilon_n = n^{-1}$, for example.
\end{proof}

In particular, if $\|\grad f(\bx_k)\| > 0$ for all $k$, then \thmref{thm_almost_sure} implies $\liminf_{k\to\infty} \|\grad f(\bx_k)\| = 0$ almost surely.

\subsection{Selecting a Subspace Dimension} \label{sec_achieving_sketch}
We now specify how to generate our subspaces $Q_k$ to be probabilistically well-aligned and uniformly bounded (\assref{ass_sketching}).
These requirements are quite weak, and so there are several possible approaches for constructing $Q_k$.
First, we discuss the case where $Q_k$ is chosen to be a random matrix with orthonormal columns, and show that we need to choose our subspace dimension $p\sim\sqrt{n}$.
This is related to how we ultimately select $Q_k$ in the practical implementation DFBGN (\secref{sec_implementation}).
However, we then show that by instead taking $Q_k$ to be a Johnson-Lindenstrauss transform, we can choose a value of $p$ independent of the ambient dimension $n$.

\subsubsection{Random Orthogonal Basis}
First, we consider the case where $Q_k$ is a randomly generated matrix with orthonormal columns.
We have the below result, a consequence of \cite[Theorem 9]{Mahoney2016}.

\begin{theorem}
	Suppose the columns of $Q_k\in\R^{n\times p}$ form an orthonormal basis for a randomly generated $p$-dimensional subspace of $\R^n$.
	Then for any fixed vector $\bv$,
	\be \Prob{\|Q_k^T \bv\| > \left(\frac{p}{n} - \theta\sqrt{\frac{p}{n}}\right) \|\bv\| } > 1- 3\exp(-p\theta^2 / 64), \ee
	for all $\theta\in(0,1)$.
\end{theorem}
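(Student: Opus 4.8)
The plan is to observe that the claimed bound is a loose consequence of a concentration inequality for the squared norm of the projection of a fixed vector onto a uniformly random $p$-dimensional subspace, so the argument splits into three parts: (i) a reduction by rotational invariance, (ii) quoting the concentration bound \cite[Theorem 9]{Mahoney2016}, and (iii) an elementary step passing from the squared quantity to the unsquared one.

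First I would note that $\|Q_k^T\bv\|^2 = \bv^T Q_k Q_k^T \bv = \bv^T P_k \bv$, where $P_k$ is the orthogonal projector onto $\col(Q_k)$, a Haar-uniform $p$-dimensional subspace of $\R^n$. By the rotational invariance of this distribution, $\bv^T P_k \bv$ has the same law for every $\bv$ of a given norm, so we may assume $\bv = \|\bv\|\bee_1$. Realizing $Q_k$ as the first $p$ columns of a Haar-random $n\times n$ orthogonal matrix, $\|Q_k^T\bee_1\|^2$ is the sum of squares of the first $p$ coordinates of a point drawn uniformly from $S^{n-1}$; this is $\mathrm{Beta}(p/2,(n-p)/2)$-distributed with mean $p/n$. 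The cited \cite[Theorem 9]{Mahoney2016} then provides the tail bound $\Prob{\|Q_k^T\bv\|^2 < (p/n - \theta\sqrt{p/n})\|\bv\|^2} < 3\exp(-p\theta^2/64)$ for $\theta\in(0,1)$ (we only need the lower tail, but keep the constant from the two-sided statement).

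It remains to convert this to the claimed bound. If $p/n - \theta\sqrt{p/n} \le 0$ the inequality in the theorem holds trivially with probability $1$, so assume $0 < p/n - \theta\sqrt{p/n} \le p/n \le 1$. On the event $\|Q_k^T\bv\|^2 \ge (p/n - \theta\sqrt{p/n})\|\bv\|^2$, which has probability at least $1 - 3\exp(-p\theta^2/64)$, using $\sqrt{x}\ge x$ for $x\in[0,1]$ gives
\[
\|Q_k^T\bv\| \ge \sqrt{\frac{p}{n} - \theta\sqrt{\frac{p}{n}}}\;\|\bv\| \ge \left(\frac{p}{n} - \theta\sqrt{\frac{p}{n}}\right)\|\bv\|,
\]
which is the assertion. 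I expect the only genuine obstacle to be step (ii): matching the exact normalization and constants of \cite[Theorem 9]{Mahoney2016} to this setting (projection onto a random subspace, rather than the equivalent formulation via a random sketching/sampling matrix), and checking one really only needs the lower tail. If one instead wanted a self-contained argument, the same tail could be obtained directly from L\'evy concentration of measure on $S^{n-1}$ applied to the $1$-Lipschitz map $\bg \mapsto \|(\bg_1,\dots,\bg_p)\|$, at the cost of re-deriving the constants; everything else is the rotational-invariance reduction and the one-line inequality $\sqrt{x}\ge x$ on $[0,1]$.
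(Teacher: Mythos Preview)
The paper does not actually prove this theorem; it merely states it as ``a consequence of \cite[Theorem 9]{Mahoney2016}'' and proceeds to use it. Your proposal correctly supplies the missing details---the rotational-invariance reduction, the appeal to the cited concentration bound for $\|Q_k^T\bv\|^2$, and the elementary step $\sqrt{x}\ge x$ on $[0,1]$ to pass to the unsquared statement---and is in line with what the paper intends by that citation.
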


For some $\delta_S\in(0,1)$, if we set
\be \theta = \frac{8\sqrt{\log(3/\delta_S)}}{\sqrt{p}}, \ee
then we get
\be \Prob{\|Q_k^T \grad f(\bx_k)\| > \left(\frac{p}{n} - \theta\sqrt{\frac{p}{n}}\right) \|\grad f(\bx_k)\| } > 1- \delta_S. \ee
Therefore \assref{ass_sketching} is achieved provided we set
\be p \geq \alpha_Q n + \sqrt{64\log(3/\delta_S) n\,}. \ee
In particular, since our theory holds if we take $\alpha_Q>0$ arbitrarily small, our only limitation on the subspace dimension $p$ is $\delta_S < 1/(1+C_4)$ from \thmref{thm_high_prob_complexity}, yielding the minimum requirement
\be p > \sqrt{64\log(3+3C_4) n\,}, \label{eq_orthogonal_Q_sketch_dimension} \ee
where $C_4$ depends only on our choice of trust-region algorithm parameters. 
The boundedness condition \assref{ass_sketching}\ref{item_sketch_bounded} holds with $Q_{\max}=1$ automatically.
This gives us considerable scope to use very small subspace dimensions, which means our subspace approach has a strong chance of providing a substantial reduction in linear algebra costs.

\subsubsection{Johnson-Lindenstrauss Embeddings}
We can improve on the requirement \eqref{eq_orthogonal_Q_sketch_dimension} on $p$ by using $Q_k$ with non-orthonormal columns.
Specifically, we take $Q_k$ to be a Johnson-Lindenstrauss transform (JLT) \cite{Woodruff2014}.
The application of these techniques to random subspace optimization algorithms follows \cite{Fowkes2020,Shao2021}.

\begin{definition}
	A random matrix $S\in\R^{p\times n}$ is an $(\epsilon,\delta)$-JLT if, for any point $\bv\in\R^n$, we have
	\begin{align}
		\Prob{(1-\epsilon)\|\bv\|^2 \leq \|S \bv\|^2 \leq (1+\epsilon) \|\bv\|^2} \geq 1-\delta.
	\end{align}
\end{definition}

There have been many different approaches for constructing $(\epsilon,\delta)$-JLT matrices proposed.
Two common examples are:
\begin{itemize}
	\item If $S$ is a random Gaussian matrix with independent entries $S_{i,j}\sim N(0,1/p)$ and $p = \Omega(\epsilon^{-2}|\log\delta|)$, then $S$ is an $(\epsilon,\delta)$-JLT (see \cite[Theorem 2.13]{Boucheron2012}, for example).
	\item We say that $S$ is an $s$-hashing matrix if it has exactly $s$ nonzero entries per column (indices sampled independently), which take values $\pm 1/\sqrt{s}$ selected independently with probability 1/2. If $S$ is an $s$-hashing matrix with $s=\Theta(\epsilon^{-1}|\log\delta|)$ and $p=\Omega(\epsilon^{-2}|\log\delta|)$, then $S$ is an $(\epsilon,\delta)$-JLT \cite{Kane2014}.
\end{itemize}

By taking $\bv = \grad f(\bx_k)$ in iteration $k$, and noting $(1-\epsilon)^2 \leq 1-\epsilon$ for all $\epsilon\in(0,1)$, we have that \assref{ass_sketching}\ref{item_sketched_aligned_prob} holds if we take $Q_k=S^T$, where $S$ is any $(1-\alpha_Q, \delta_S)$-JLT.
That is, \assref{ass_sketching}\ref{item_sketched_aligned_prob} is satisfied using either of the constructions above and $p=\Omega((1-\alpha_Q)^{-2}|\log\delta_S|)$.
Importantly, these constructions allow us to choose a subspace dimension $p$ which has \emph{no dependence on the ambient dimension} $n$ (i.e.~$p=\bigO(1)$ as $n\to\infty$), an improvement on \eqref{eq_orthogonal_Q_sketch_dimension}.
We note that the requirement $\delta_S < 1/(1+C_4)$ in \thmref{thm_high_prob_complexity} yields a very mild dependence of $p$ on the choice of trust-region updating parameters.

We conclude by noting that the uniform boundedness property \assref{ass_sketching}\ref{item_sketch_bounded} is trivial if $S$ is a hashing matrix.
If $S$ is Gaussian, by Bernstein's inequality we can choose $Q_{\max}$ large enough that $\Prob{\|S\|> Q_{\max}}$ is small.
Then, we generate our final $Q_k$ by sampling $S$ independently until $\|S\|\leq Q_{\max}$ holds (which almost-surely takes finite time).
A union bound argument then gives us \assref{ass_sketching}\ref{item_sketched_aligned_prob} for this choice of $Q_k$, and so \assref{ass_sketching} is completely satisfied with the same (asymptotic) requirements on $p$.

\section{Random Subspace Nonlinear Least-Squares Method} \label{sec_rsdfogn}
We now describe how RSDFO (\algref{alg_rsdfo}) can be specialized to the unconstrained nonlinear least-squares problem
\be \min_{\bx\in\R^n} f(\bx) \defeq \frac{1}{2}\|\br(\bx)\|^2 = \frac{1}{2}\sum_{i=1}^{m} r_i(\bx)^2, \label{eq_ls_definition} \ee
where $\br:\R^n\to\R^m$ is given by $\br(\bx)\defeq[r_1(\bx), \ldots, r_m(\bx)]^T$.
We assume that $\br$ is differentiable, but that access to the Jacobian $J:\R^n\to\R^{m\times n}$ is not possible.
In addition, we typically assume that $m\geq n$ (regression), but everything here also applies to the case $m<n$ (inverse problems).
We now introduce the algorithm RSDFO-GN (Randomized Subspace DFO with Gauss-Newton), which is a randomized subspace version of a model-based DFO variant of the Gauss-Newton method \cite{Cartis2019a}.

Following the construction from \cite{Cartis2019a}, we assume that we have selected the $p$-dimensional search space $\Y_k$ defined by $Q_k\in\R^{n\times p}$ (as in RSDFO above).
Then, we suppose that we have evaluated $\br$ at $p+1$ points $Y_k \defeq \{\bx_k,\by_1,\ldots,\by_n\} \subset \Y_k$ (which typically are all close to $\bx_k$).
Since $\by_t\in \Y_k$ for each $t=1,\ldots,p$, from \eqref{eq_subspace_definition} we have $\by_t = \bx_k + Q_k \hat{\bs}_t$ for some $\hat{\bs}_t\in \R^p$.

Given this interpolation set, we first wish to construct a local subspace linear model for $\br$:
\begin{align}
	\br(\bx_k + Q_k \hat{\bs}) \approx \hat{\bem}_k(\hat{\bs}) = \br(\bx_k) + \hat{J}_k \hat{\bs}. \label{eq_r_model}
\end{align}
To do this, we choose the approximate subspace Jacobian $\hat{J}_k\in\R^{m\times p}$ by requiring that $\hat{\bem}_k$ interpolate $\br$ at our interpolation points $Y_k$.
That is, we impose
\begin{align}
	\hat{\bem}_k(\hat{\bs}_t) = \br(\by_t), \qquad \forall t=1,\ldots,p, \label{eq_interp_conditions}
\end{align}
which yields the $p\times p$ linear system (with $m$ right-hand sides)
\begin{align}
	\hat{W}_k \hat{J}_k^T \defeq \begin{bmatrix} \hat{\bs}_1^T \\ \vdots \\ \hat{\bs}_p^T \end{bmatrix} \hat{J}_k^T = \begin{bmatrix} (\br(\by_1)-\br(\bx_k))^T \\ \vdots \\ (\br(\by_p)-\br(\bx_k))^T \end{bmatrix}. \label{eq_gn_interp_system}
\end{align}
Our linear subspace model $\hat{\bem}_k$ \eqref{eq_r_model} naturally yields a local subspace quadratic model for $f$, as in the classical Gauss-Newton method, namely (c.f.~\eqref{eq_reduced_model_generic}),
\be f(\bx_k+Q_k \hat{\bs}) \approx \hat{m}_k(\hat{\bs}) \defeq \frac{1}{2}\|\hat{\bem}_k(\hat{\bs})\|^2 = f(\bx_k) + \hat{\bg}_k^T \hat{\bs} + \frac{1}{2} \hat{\bs}^T \hat{H}_k \hat{\bs}, \label{eq_f_model} \ee
where $\hat{\bg}_k \defeq \hat{J}_k^T \br(\bx_k)$ and $\hat{H}_k \defeq \hat{J}_k^T \hat{J}_k$.

\subsection{Constructing $Q_k$-Fully Linear Models}
We now describe how we can achieve $Q_k$-fully linear models of the form \eqref{eq_f_model} in RSDFO-GN.

As in \cite{Cartis2019a}, we will need to define the Lagrange polynomials and $\Lambda$-poisedness of an interpolation set.
Given our interpolation set $Y_k$ lies inside $\Y_k$, we consider the (low-dimensional) Lagrange polynomials associated with $Y_k$.
These are the linear functions $\hat{\ell}_0,\ldots,\hat{\ell}_p:\R^p\to\R$, defined by the interpolation conditions
\begin{align}
	\hat{\ell}_t(\hat{\bs}_{t'}) = \delta_{t, t'}, \qquad \forall t,t'=0,\ldots,p,
\end{align}
with the convention $\hat{\bs}_0=\b{0}$ corresponding to the interpolation point $\bx_k$.
The Lagrange polynomials exist and are unique whenever $\hat{W}_k$ \eqref{eq_gn_interp_system} is invertible, which we typically ensure through judicious updating of $Y_k$ at each iteration.

\begin{definition} \label{def_lambda_poised}
	For any $\Lambda>0$, the set $Y_k$ is $\Lambda$-poised in the $p$-dimensional ball $B(\bx_k,\Delta_k)\cap\Y_k$ if
	\be \max_{t=0,\ldots,p} \: \max_{\|\hat{\bs}\|\leq \Delta_k} |\hat{\ell}_t(\hat{\bs})| \leq \Lambda. \ee
\end{definition}

Note that since $\hat{\ell}_0(\b{0})=1$, for the set $Y_k$ to be $\Lambda$-poised we require $\Lambda\geq 1$.
In general, a larger $\Lambda$ indicates that $Y_k$ has ``worse'' geometry, which leads to a less accurate approximation for $f$.
This notion of $\Lambda$-poisedness (in a subspace) is sufficient to construct $Q_k$-fully linear models \eqref{eq_f_model} for $f$.

\begin{lemma} \label{lem_ls_interp_fully_linear}
	Suppose \assref{ass_sketching}\ref{item_sketch_bounded} holds, $J(\bx)$ is Lipschitz continuous, and $\br$ and $J$ are uniformly bounded above in $\cup_{k\geq 0} B(\bx_k,\Delta_{\max})$.
	If $Y_k \subset B(\bx_k,\Delta_k)\cap\Y_k$ and $Y_k$ is $\Lambda$-poised in $B(\bx_k,\Delta_k)\cap\Y_k$, then $\hat{m}_k$ \eqref{eq_f_model} is a $Q_k$-fully linear model for $f$, with $\kappa_{\rm ef},\kappa_{\rm eg}=\bigO(p^2 \Lambda^2)$.
\end{lemma}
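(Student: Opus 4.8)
The plan is to reduce everything to the analogous full-space result for model-based DFO Gauss-Newton methods (essentially Lemma 3.3 or Theorem 3.7 of \cite{Cartis2019a}), applied to the function $\hat{r}(\hat{\bs}) := \br(\bx_k + Q_k\hat{\bs})$ on $\R^p$. First I would observe that $\hat{r}$ is differentiable with Jacobian $\hat{J}(\hat{\bs}) = J(\bx_k + Q_k\hat{\bs})Q_k$, so by \assref{ass_sketching}\ref{item_sketch_bounded} (i.e.~$\|Q_k\|\leq Q_{\max}$) and the Lipschitz continuity of $J$, the Jacobian $\hat{J}$ is Lipschitz continuous on $B(\b{0},\Delta_k)\subset\R^p$ with constant $L_J Q_{\max}^2$, and $\|\hat{r}\|$, $\|\hat{J}\|$ are uniformly bounded above (by the assumed bounds on $\br$, $J$ and $Q_{\max}$). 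Crucially these constants are independent of $Q_k$, $\bx_k$, $\Delta_k$, which is exactly what \defref{def_reduced_fully_linear} requires.

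Next I would invoke the standard DFO linear-interpolation error bounds: since $Y_k$ is $\Lambda$-poised in $B(\bx_k,\Delta_k)\cap\Y_k$, the scaled interpolation matrix $\hat{W}_k/\Delta_k$ has its inverse bounded in terms of $\Lambda$ and $p$ (this is the content of e.g.~\cite[Theorem 3.14]{Conn2009} transported to the subspace, using that poisedness in $B(\bx_k,\Delta_k)\cap\Y_k$ is poisedness of $\{\hat{\bs}_0,\ldots,\hat{\bs}_p\}$ in $B(\b{0},\Delta_k)\subset\R^p$). Applying this componentwise to each residual $r_i$ gives, for each row of $\hat{J}_k$, the bound $\|\grad \hat{r}_i(\b{0}) - (\hat{J}_k)_{i,:}\| = \bigO(p\Lambda L_J Q_{\max}^2 \Delta_k)$ and $|\hat{r}_i(\hat{\bs}) - (m_{k})_i(\hat{\bs})| = \bigO(p\Lambda L_J Q_{\max}^2 \Delta_k^2)$ where $(m_k)_i(\hat{\bs}) = r_i(\bx_k) + (\hat{J}_k)_{i,:}\hat{\bs}$ is the linear model for the $i$-th residual. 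Summing in quadrature over $i=1,\ldots,m$ then yields a vector-valued fully-linear-type bound for $\hat{\bem}_k$ approximating $\hat{r}$, with error constants $\bigO(\sqrt{m}\, p\Lambda\cdots)$ — though the $\sqrt m$ is absorbed into the uniform bounds on $\|\br\|,\|J\|$, matching how \cite{Cartis2019a} handles it.

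Finally I would pass from the linear model $\hat{\bem}_k$ of $\hat{r}$ to the quadratic model $\hat{m}_k = \tfrac12\|\hat{\bem}_k\|^2$ of $\hat f(\hat{\bs}) := f(\bx_k+Q_k\hat{\bs}) = \tfrac12\|\hat r(\hat{\bs})\|^2$. Here one uses the elementary identity $\tfrac12\|\b a\|^2 - \tfrac12\|\b b\|^2 = \tfrac12\|\b a - \b b\|^2 + \b b^T(\b a - \b b)$ together with the boundedness of $\hat r$, $\hat{\bem}_k$ on the trust region, exactly as in the classical Gauss-Newton fully-linear argument: the objective error picks up a $\Delta_k^2$ term (from $\|\hat r - \hat{\bem}_k\|^2$ and the cross term), and the gradient error $\|Q_k^T\grad f(\bx_k+Q_k\hat{\bs}) - \grad\hat m_k(\hat{\bs})\| = \|\grad\hat f(\hat{\bs}) - \grad\hat m_k(\hat{\bs})\|$ is controlled by combining the Jacobian error with the bounds on $\hat r$, $\hat{\bem}_k$, again giving an $\bigO(\Delta_k)$ bound; squaring the $p\Lambda$ factor (because it now enters through both $\hat{\bem}_k$ and $\hat J_k$) produces the stated $\kappa_{\rm ef},\kappa_{\rm eg} = \bigO(p^2\Lambda^2)$.

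The main obstacle, and the only place real care is needed, is tracking the dependence of the interpolation-error constants on the dimension $p$ and confirming it is genuinely $p^2\Lambda^2$ rather than worse, and — more importantly — verifying that every constant is independent of $Q_k$ as \defref{def_reduced_fully_linear} demands; this is where \assref{ass_sketching}\ref{item_sketch_bounded} is essential, since without a uniform bound $Q_{\max}$ on $\|Q_k\|$ the Lipschitz constant of $\hat J$ (and hence the error constants) could blow up with the subspace. Everything else is a routine transcription of the known full-space Gauss-Newton DFO analysis into the $p$-dimensional subspace coordinates.
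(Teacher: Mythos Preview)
Your proposal is correct and follows essentially the same route as the paper: define $\hat{\br}(\hat{\bs})=\br(\bx_k+Q_k\hat{\bs})$ and $\hat{f}=\tfrac12\|\hat{\br}\|^2$, use $\|Q_k\|\leq Q_{\max}$ to get uniform Lipschitz and boundedness constants for $\hat{J}$ and $\hat{\br}$, then apply the full-space Gauss-Newton fully-linear result (\cite[Lemma 3.3]{Cartis2019a}) in $\R^p$, and finally identify $\grad\hat f(\hat{\bs})=Q_k^T\grad f(\bx_k+Q_k\hat{\bs})$ to convert ordinary full linearity of $\hat m_k$ for $\hat f$ into $Q_k$-full linearity for $f$. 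The only difference is cosmetic: the paper simply cites \cite[Lemma 3.3]{Cartis2019a} as a black box, whereas you unpack its contents (the $\Lambda$-poisedness bound on $\|\hat W_k^{-1}\|$, the componentwise Jacobian error, and the passage $\hat{\bem}_k\to\hat m_k$).
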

\begin{proof}
	Consider the low-dimensional functions $\hat{\br}:\R^p\to\R^m$ and $\hat{f}:\R^p\to\R$ given by $\hat{\br}_k(\hat{\bs})\defeq \br(\bx_k+Q_k\hat{\bs})$ and $\hat{f}(\hat{\bs}) \defeq \frac{1}{2}\|\hat{\br}(\hat{\bs})\|^2$ respectively.
	We note that $\hat{\br}$ is continuously differentiable with Jacobian $\hat{J}(\hat{\bs}) = J(\bx_k+Q_k\hat{\bs}) Q_k$.
	Then since $\|Q_k\|\leq Q_{\max}$ from \assref{ass_sketching}\ref{item_sketch_bounded}, it is straightforward to show that both $\hat{\br}$ and $\hat{J}$ are uniformly bounded above and $\hat{J}$ is Lipschitz continuous (with a Lipschitz constant $Q_{\max}^2$ times larger than for $J(\bx)$).
	
	We can then consider $\hat{\bem_k}$ \eqref{eq_r_model} and $\hat{m}_k$ \eqref{eq_f_model} to be interpolation models for $\hat{r}$ and $\hat{f}$ in the low-dimensional ball $B(\b{0},\Delta_k)\subset\R^p$.
	From \cite[Lemma 3.3]{Cartis2019a}, we conclude that $\hat{m}_k$ is a fully linear model for $\hat{f}$ with constants $\kappa_{\rm ef},\kappa_{\rm eg}=\bigO(p^2 \Lambda^2)$.
	The $Q_k$-fully linear property follows immediately from this, noting that $\grad \hat{f}_k(\hat{\bs}) = Q_k^T \grad f(\bx_k+Q_k\hat{\bs})$.
\end{proof}

Given this result, the procedures in \cite[Chapter 6]{Conn2009} allow us to check and/or guarantee the $\Lambda$-poisedness of an interpolation set, and we have met all the requirements needed to fully specify RSDFO-GN.

Lastly, we note that underdetermined linear interpolation, where \eqref{eq_gn_interp_system} is underdetermined and solved in a minimal norm sense, has been recently shown to yield a property similar to $Q_k$-full linearity \cite[Theorem 3.6]{Hare2020b}.

\paragraph{Complete RSDFO-GN Algorithm}
A complete statement of RSDFO-GN is given in \algref{alg_dfbgn_theory}.
This exactly follows RSDFO (\algref{alg_rsdfo}), but where we ask that the interpolation set satsifies the conditions: $Y_k \subset B(\bx_k,\Delta_k)\cap\Y_k$ and $Y_k$ is $\Lambda$-poised in $B(\bx_k,\Delta_k)\cap\Y_k$.
From \lemref{lem_ls_interp_fully_linear}, this is sufficient to guarantee $Q_k$-full linearity of $\hat{m}_k$.

\begin{algorithm}[tb]
	\footnotesize{
	\begin{algorithmic}[1]
		\Require Starting point $\bx_0\in\R^n$, initial trust region radius $\Delta_0>0$, and subspace dimension $p\in\{1,\ldots,n\}$. 
		\vspace{0.2em}
		\Statex \underline{Parameters}: maximum trust-region radius $\Delta_{\rm max}\geq\Delta_0$, trust-region radius scalings $0<\gamma_{\rm dec}<1<\gamma_{\rm inc}\leq\overline{\gamma}_{\rm inc}$, criticality constants $\epsilon_C,\mu>0$ and trust-region scaling $0<\gamma_C<1$, safety step threshold $\beta_F>0$ and trust-region scaling $0<\gamma_F<1$, acceptance thresholds $0 < \eta_1 \leq \eta_2 < 1$, and poisedness constant $\Lambda>1$.
		\vspace{0.5em}
		\State Set flag \texttt{CHECK\_MODEL}=\texttt{FALSE}.
		\For{$k=0,1,2,\ldots$}
		    \If{\texttt{CHECK\_MODEL}=\texttt{TRUE}}
				\State Set $Q_k=Q_{k-1}$.
				\State Construct an interpolation set $Y_k \subset B(\bx_k,\Delta_k)\cap\Y_k$ which is $Y_k$ is $\Lambda$-poised in $B(\bx_k,\Delta_k)\cap\Y_k$.
		        \State Build the reduced model $\hat{m}_k:\R^p\to\R$ \eqref{eq_f_model} by solving \eqref{eq_gn_interp_system}. 
		    \Else
				\State Define a subspace by randomly sampling $Q_k\in\R^{n\times p}$.
		        \State Construct a reduced model $\hat{m}_k:\R^p\to\R$ \eqref{eq_f_model} by solving \eqref{eq_gn_interp_system}, where the interpolation points $Y_k\subset\Y_k$ need not be contained in $B(\bx_k,\Delta_k)$ or be $\Lambda$-poised in $B(\bx_k,\Delta_k)\cap\Y_k$.
		    \EndIf
		    \State Follow lines \ref{ln_main_start} to \ref{ln_main_end} of RSDFO (\algref{alg_rsdfo}), but replace every instance of checking $Q_k$-full linearity of $\hat{m}_k$ in $B(\bx_k,\Delta_k)$ with checking that $Y_k \subset B(\bx_k,\Delta_k)\cap\Y_k$ and $Y_k$ is $\Lambda$-poised in $B(\bx_k,\Delta_k)\cap\Y_k$.
		\EndFor
	\end{algorithmic}
	} 
	\caption{RSDFO-GN (Randomized Subspace Derivative-Free Optimization with Gauss-Newton) for solving \eqref{eq_ls_definition}.}
	\label{alg_dfbgn_theory}
\end{algorithm}

\subsection{Complexity Analysis for RSDFO-GN}
We are now in a position to specialize our complexity analysis for RSDFO to RSDFO-GN.
For this, we need to impose a smoothness assumption on $\br$.

\begin{assumption} \label{ass_ls_smoothness}
	The extended level set $\mathcal{L}\defeq \{\by\in\R^n : \|\by-\bx\| \leq\Delta_{\max} \: \text{for some} \: f(\bx) \leq f(\bx_0)\}$ is bounded, $\br$ is continuously differentiable, and the Jacobian $J$ is Lipschitz continuous on $\mathcal{L}$.
\end{assumption}

This smoothness requirement allows us to immediately apply the complexity analysis for RSDFO, yielding the following result.

\begin{corollary}
	Suppose Assumptions \ref{ass_ls_smoothness}, \ref{ass_bdd_hess}, \ref{ass_cauchy_decrease} and \ref{ass_sketching} hold, and we have $\beta_F\leq c_2$, $\delta_S < 1/(1+C_4)$ for $C_4$ defined in \lemref{lem_aligned_bound}, and $\gamma_{\rm inc} > \min(\gamma_C, \gamma_F, \gamma_{\rm dec}, \beta_F)^{-2}$.
	Then for the iterates generated by RSDFO-GN and $k$ sufficiently large, 
	\be \Prob{\min_{j\leq k} \|\grad f(\bx_j)\| \leq \frac{C}{\sqrt{k}}} \geq 1 - e^{-ck}, \label{eq_high_prob3_ls} \ee
	for some constants $c,C>0$.
	Alternatively, for $\epsilon\in(0,\epsilon_0)$ for some $\epsilon_0$, we have
	\be \Prob{K_{\epsilon} \leq \t{C}\epsilon^{-2}} \geq 1 - e^{-\t{c}\epsilon^{-2}}, \label{eq_high_prob4_ls} \ee
	for constants $\t{c},\t{C}>0$.
\end{corollary}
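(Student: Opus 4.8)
The plan is to recognize that RSDFO-GN (\algref{alg_dfbgn_theory}) is a concrete instance of the generic framework RSDFO (\algref{alg_rsdfo}), so that \thmref{thm_high_prob_complexity} and \corref{cor_high_prob_complexity} apply verbatim once their hypotheses are checked for this specialization. The only differences between the two algorithms are that in RSDFO-GN the reduced model $\hat m_k$ is produced by the Gauss--Newton interpolation construction \eqref{eq_f_model}--\eqref{eq_gn_interp_system}, and that every test of ``$\hat m_k$ is $Q_k$-fully linear in $B(\bx_k,\Delta_k)$'' is replaced by the geometric test ``$Y_k\subset B(\bx_k,\Delta_k)\cap\Y_k$ and $Y_k$ is $\Lambda$-poised in $B(\bx_k,\Delta_k)\cap\Y_k$''. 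By \lemref{lem_ls_interp_fully_linear}, whenever this geometric test passes the model is genuinely $Q_k$-fully linear, and --- crucially for \defref{def_reduced_fully_linear} --- with constants $\kappa_{\rm ef},\kappa_{\rm eg}=\bigO(p^2\Lambda^2)$ that are independent of $k$; moreover the procedures of \cite[Chapter 6]{Conn2009} allow one to construct and verify $\Lambda$-poised interpolation sets, so both branches of the framework are realizable. Hence RSDFO-GN meets all the structural requirements of RSDFO.

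It remains to verify the quantitative hypotheses of \corref{cor_high_prob_complexity}. Assumptions \ref{ass_bdd_hess}, \ref{ass_cauchy_decrease} and \ref{ass_sketching}, together with the parameter conditions $\beta_F\leq c_2$, $\delta_S<1/(1+C_4)$ and $\gamma_{\rm inc}>\min(\gamma_C,\gamma_F,\gamma_{\rm dec},\beta_F)^{-2}$, are assumed verbatim in the statement, so nothing further is needed for those. The one substantive step is to deduce \assref{ass_smoothness} from \assref{ass_ls_smoothness}: for $f=\frac{1}{2}\|\br\|^2$ we have $f\geq 0$, so $f$ is bounded below, and $\grad f(\bx)=J(\bx)^T\br(\bx)$. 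The sublevel set $\{\bx:f(\bx)\leq f(\bx_0)\}$ is closed by continuity of $f$ and bounded (it is contained in the extended level set $\mathcal{L}$, which \assref{ass_ls_smoothness} assumes bounded), hence compact; so $\mathcal{L}$, being the Minkowski sum of this compact set with the closed ball $B(\b{0},\Delta_{\max})$, is itself compact. Consequently $\br$ and $J$ are bounded on $\mathcal{L}$, and combining these bounds with the Lipschitz continuity of $J$ on $\mathcal{L}$ (and of $\br$, which follows from boundedness of $J$) shows that $\grad f = J^T\br$ is Lipschitz on $\mathcal{L}$, which is exactly \assref{ass_smoothness}.

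Having verified every hypothesis of \corref{cor_high_prob_complexity} for RSDFO-GN, that corollary applies directly and yields \eqref{eq_high_prob3_ls} and \eqref{eq_high_prob4_ls}. The main obstacle is not a single deep estimate but the bookkeeping that makes the reduction airtight: confirming that the geometric surrogate for $Q_k$-full linearity used in \algref{alg_dfbgn_theory} indeed produces fully linear models with $k$-independent constants (so that \lemref{lem_ls_interp_fully_linear} feeds cleanly into \defref{def_reduced_fully_linear} and hence into all of \secref{sec_rsdfo}), and the compactness argument that guarantees the uniform boundedness of $\br$ and $J$ over all iterations on which those constants depend. Once these are in place, the remainder is a line-by-line matching of the two algorithm statements.
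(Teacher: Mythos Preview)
Your proposal is correct and follows essentially the same approach as the paper: reduce RSDFO-GN to an instance of RSDFO by using \lemref{lem_ls_interp_fully_linear} to certify that the $\Lambda$-poisedness test guarantees $Q_k$-full linearity, and then verify \assref{ass_smoothness} from \assref{ass_ls_smoothness} so that \corref{cor_high_prob_complexity} applies. The only difference is that where the paper cites \cite[Lemma 3.2]{Cartis2019a} for the smoothness verification, you supply the compactness argument explicitly.
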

\begin{proof}
	\assref{ass_ls_smoothness} implies that both $\br$ and $J$ are uniformly bounded above on $\mathcal{L}$, which is sufficient for \lemref{lem_ls_interp_fully_linear} to hold.
	Hence, whenever we check/ensure that $Y_k \subset B(\bx_k,\Delta_k)\cap\Y_k$ and $Y_k$ is $\Lambda$-poised in $B(\bx_k,\Delta_k)\cap\Y_k$ we are checking/guaranteeing that $\hat{m}_k$ is $Q_k$-fully linear in $B(\bx_k,\Delta_k)$.
	In addition, from \cite[Lemma 3.2]{Cartis2019a} and taking $f_{\rm low}=0$, we have that \assref{ass_smoothness} is satisfied.
	Therefore the result follows directly from \corref{cor_high_prob_complexity}.
\end{proof}

\paragraph{Bounds on Objective Evaluations}
In each iteration of RSDFO, we require at most $p+1$ objective evaluations: at most $p$ to form the model $\hat{m}_k$---and this holds regardless of whether we need $\hat{m}_k$ to be $Q_k$-fully linear or not---and one evaluation for $\bx_k+\bs_k$. Hence the above bounds on $K_{\epsilon}$ also hold for the number of objective evaluations required to first achieve $\|\grad f(\bx_k)\|\leq\epsilon$, up to a constant factor of $p+1$.

\paragraph{Interpolation Models for RSDFO}
Using existing techniques for constructing $\Lambda$-poised interpolation sets for quadratic interpolation (as outlined in \cite{Conn2009}), the specific model construction ideas presented here can also be applied to general objective problems, and thus provide a concrete implementation of RSDFO.

\subsection{Linear Algebra Cost of RSDFO-GN} \label{sec_dfbgn_linalg_summary}
In RSDFO-GN, the interpolation linear system \eqref{eq_gn_interp_system} is solved in two steps, namely: factorize the interpolation matrix $\hat{W}_k$, then back-solve for each right-hand side.
Thus, the cost of the linear algebra is:
\begin{enumerate}
	\item Model construction costs $\bigO(p^3)$ to compute the factorization of $\hat{W}_k$, and $\bigO(mp^2)$ for the back-substitution solves with $m$ right-hand sides; and
	\item Lagrange polynomial construction costs $\bigO(p^3)$ in total, due to one backsolve for each of the $p+1$ polynomials (using the pre-existing factorization of $\hat{W}_k$).
\end{enumerate}
By updating the factorization or $\hat{W}_k^{-1}$ directly (e.g.~via the Sherman-Morrison formula), we can replace the $\bigO(p^3)$ factorization cost with a $\bigO(p^2)$ updating cost (c.f.~\cite{Powell2004a}).
However, the dominant $\bigO(mp^2)$ model construction cost remains, and in practice we have observed that the factorization needs to be recomputed from scratch to avoid the accumulation of rounding errors.

In the case of a full-space method where $p=n$ such as in \cite{Cartis2019a}, these costs becomes $\bigO(n^3)$ for the factorization (or $\bigO(n^2)$ if Sherman-Morrison is used) plus $\bigO(mn^2)$ for the back-solves.
When $n$ grows large, this linear algebra cost rapidly dominates the total runtime of these algorithms and limits the efficiency of full-space methods.
This issue is discussed in more detail, with numerical results, in \cite[Chapter 7.2]{Roberts2019}.

In light of this discussion, we now turn our attention to building an implementation of RSDFO-GN that has both strong performance (in terms of objective evaluations) and low linear algebra cost.

\section{DFBGN: An Efficient Implementation of RSDFO-GN} \label{sec_implementation}
An important tenet of DFO is that objective evaluations are often expensive, and so algorithms should be efficient in reusing information, hence limiting the total objective evaluations required to achieve a given decrease.
However because we require our model to sit within our active space $\Y_k$, we do not have a natural process by which to reuse evaluations between iterations, when the space changes.
We dedicate this section to outlining an implementation of RSDFO-GN, which we call DFBGN (Derivative-Free Block Gauss-Newton).
DFBGN is designed to be efficient in its objective queries while still only building low-dimensional models, and hence is also efficient in terms of linear algebra.
Specifically, we design DFBGN to achieve two aims: 
\begin{itemize}
	\item \textit{Low computational cost:} we want our implementation to have a per-iteration linear algebra cost which is linear in the ambient dimension;
	\item \textit{Efficient use of objective evaluations:} our implementation should follow the principles of other DFO methods and make progress with few objective evaluations. In particular, we hope that, when run with `full-space models' (i.e.~$p=n$), our implementation should have (close to) state-of-the-art performance.
\end{itemize}
We will assess the second point in \secref{sec_numerics} by comparison with DFO-LS \cite{Cartis2018} an open-source model-based DFO Gauss-Newton solver which explores the full space (i.e.~$p=n$).

\begin{remark} \label{rem_dfols_growing}
	As discussed in \cite{Cartis2018}, DFO-LS has a mechanism to build a model with fewer than $n+1$ interpolation points.
	However, in that context we modify the model so that it varies over the whole space $\R^n$, which enables the interpolation set to grow to the usual $n+1$ points and yield a full-dimensional model.
	There, the goal is to make progress with very few evaluations, but here our goal is scalability, so we keep our model low-dimensional throughout and instead change the subspace at each iteration.
\end{remark}

\subsection{Subspace Interpolation Models} \label{sec_reduced_subspace_interp}
Similar to \secref{sec_rsdfogn}, we assume that, at iteration $k$, our interpolation set has $p+1$ points $\{\bx_k,\by_1,\ldots,\by_p\}\subset\R^n$ with $1\leq p\leq n$.
However, we assume that these points are already given, and use them to determine the space $\Y_k$ (as defined by $Q_k$).
That is, given
\begin{align}
	W_k \defeq \begin{bmatrix}(\by_1-\bx_k)^T \\ \vdots \\ (\by_p-\bx_k)^T\end{bmatrix} \in \R^{p\times n},
\end{align}
we compute the QR factorization 
\be W_k^T = Q_k R_k, \label{eq_qr_decomp} \ee
where $Q_k\in\R^{n\times p}$ has orthonormal columns and $R_k\in\R^{p\times p}$ is upper triangular---and invertible provided $W_k^T$ is full rank, which we guarantee by judicious replacement of interpolation points.
This gives us the $Q_k$ that defines $\Y_k$ via \eqref{eq_subspace_definition}---in this case $Q_k$ is has orthonormal columns---and in this way all our interpolation points are in $\Y_k$.

Since each $\by_t\in\Y_k$, from \eqref{eq_qr_decomp} we have $\by_t = \bx_k + Q_k \hat{\bs}_t$, where $\hat{\bs}_t$ is the $t$-th column of $R_k$.
Hence we have $\hat{W}_k = R_k^T$ in \eqref{eq_gn_interp_system} and so $\hat{\bem}_k$ \eqref{eq_r_model} is given by solving
\begin{align}
	R_k^T \hat{J}_k^T = \begin{bmatrix} (\br(\by_1)-\br(\bx_k))^T \\ \vdots \\ (\br(\by_p)-\br(\bx_k))^T \end{bmatrix}, \label{eq_reduced_interp_system}
\end{align}
via forward substitution, since $R_k^T$ is lower triangular.
This ultimately gives us our local model $\hat{m}_k$ via \eqref{eq_f_model}.

We reiterate that compared to RSDFO-GN, we have used the interpolation set $Y_k$ to determine both $Q_k$ and $\hat{m}_k$, rather than first sampling $Q_k$, then finding interpolation points $Y_k\subset\Y_k$ with which to construct $\hat{m}_k$.
This difference is crucial in allowing the reuse of interpolation points between iterations, and hence lowering the objective evaluation requirements of model construction.

\begin{remark}
	As discussed in \cite[Chapter 7.3]{Roberts2019}, we can equivalently recover this construction by asking for a full-space model $\bem_k:\R^n\to\R^m$ given by $\bem_k(\bs)=\br(\bx_k)+J_k \bs$ such that the interpolation conditions $\bem_k(\by_t-\bx_k)=\br(\by_t)$ are satisfied and $J_k$ has minimal Frobenius norm.
\end{remark}

\subsection{Complete DFBGN Algorithm} \label{sec_dfbgn_full_algo}
A complete statement of DFBGN is given in \algref{alg_block_dfols}.
Compared to RSDFO-GN, we include specific steps to manage the interpolation set, which in turn dictates the choice of subspace $\Y_k$.
Specifically, one issue with our approach is that our new iterate $\bx_k+\bs_k$ is in $\Y_k$, so if we were to simply add $\bx_k+\bs_k$ into the interpolation set, $\Y_k$ would not change across iterations, and we will never explore the whole space.
On the other hand, unlike RSDFO and RSDFO-GN we do not want to completely resample $Q_k$ as this would require too many objective evaluations.
Instead, in DFBGN we delete a subset of points from the interpolation set and add new directions orthogonal to the existing directions, which ensures that $Q_{k+1}\neq Q_k$ in every iteration.\footnote{By contrast, the optional growing mechanism in DFO-LS (\remref{rem_dfols_growing}) is designed such that $\bx_k+\bs_k$ is not in $\Y_k$, and so the search space is automatically expanded at every iteration. However, this requires an expensive SVD of $J_k\in\R^{m\times n}$ at every iteration, and so is not suitable for our large-scale setting.}

We also note that DFBGN does not include some important algorithmic features present in RSDFO-GN, DFO-LS or other model-based DFO methods, and hence is quite simple to state.
These features are not necessary for a variety of reasons, which we now outline.

\begin{algorithm}[t]
	\footnotesize{
	\begin{algorithmic}[1]
		\Require Starting point $\bx_0\in\R^n$, initial trust region radius $\Delta_0>0$, subspace dimension $p\in\{1,\ldots,n\}$, and number of points to drop at each iteration $p_{\rm drop}\in\{1,\ldots,p\}$. 
		\vspace{0.2em}
		\Statex \underline{Parameters}: maximum and minimum trust-region radii $\Delta_{\rm max}\geq\Delta_0>\Delta_{\rm end}>0$, trust-region radius scalings $0<\gamma_{\rm dec}<1<\gamma_{\rm inc}\leq\overline{\gamma}_{\rm inc}$, and acceptance thresholds $0 < \eta_1 \leq \eta_2 < 1$.
		\vspace{0.5em}
		\State Select random orthonormal directions $\b{d}_1,\ldots,\b{d}_p\in\R^n$ using \algref{alg_dfbgn_add_points}, and build initial interpolation set $Y_0\defeq\{\bx_0,\bx_0+\Delta_0 \b{d}_1,\ldots,\bx_0+\Delta_0 \b{d}_p\}$. \label{ln_dfbgn_init}
		\For{$k=0,1,2,\ldots$}
			\State Given $\bx_k$ and $Y_k$, solve \eqref{eq_reduced_interp_system} to build subspace models $\hat{\bem}_k:\R^p\to\R^m$ \eqref{eq_r_model} and $\hat{m}_k:\R^p\to\R$ \eqref{eq_f_model}. 
			\State Approximately solve the subspace trust-region subproblem in $\R^{p}$ \eqref{eq_reduced_trs_generic} and calculate the step $\bs_k = Q_k \hat{\bs}_k \in\R^n$.
			\State Evaluate $\br(\bx_k+\bs_k)$ and calculate ratio $\rho_k$ \eqref{eq_ratio_generic}.
			\State Accept/reject step and update trust region radius: set $\bx_{k+1}$ and $\Delta_{k+1}$ as per \eqref{eq_dfbgn_tr_updating_generic}.
			\State \textbf{if} $\Delta_{k+1} \leq \Delta_{\rm end}$, \textbf{terminate}.
			\If{$p<n$}
				\State Set $Y_{k+1}^{\rm init}=Y_k\cup\{\bx_k+\bs_k\}$.
				\State Remove $\min(\max(p_{\rm drop},2),p)$ points from $Y_{k+1}^{\rm init}$ (without removing $\bx_{k+1}$) using \algref{alg_dfbgn_remove_points}. \label{ln_drop1}
			\Else
				\State Set $Y_{k+1}^{\rm init}=Y_k\cup\{\bx_k+\bs_k\}\setminus\{\by\}$ for some $\by\in Y_k\setminus\{\bx_{k+1}\}$. \label{ln_replace}
				\State Remove $\min(\max(p_{\rm drop},1),p)$ points from $Y_{k+1}^{\rm init}$ (without removing $\bx_{k+1}$) using \algref{alg_dfbgn_remove_points}. \label{ln_drop2}
			\EndIf
			\State Let $q\defeq p+1-|Y_{k+1}^{\rm init}|$, and generate random orthonormal vectors $\{\b{d}_1,\ldots,\b{d}_q\}$ that are also orthogonal to $\{\by-\bx_{k+1} : \by\in Y_{k+1}^{\rm init}\setminus\{\bx_{k+1}\}\}$, using \algref{alg_dfbgn_add_points}. \label{ln_new_dirns}
			\State Set $Y_{k+1} = Y_{k+1}^{\rm init}\cup\{\bx_{k+1}+\Delta_{k+1}\b{d}_1,\ldots,\bx_{k+1}+\Delta_{k+1}\b{d}_q\}$.
		\EndFor
	\end{algorithmic}
	} 
	\caption{DFBGN: Derivative-Free Block Gauss-Newton for solving \eqref{eq_ls_definition}.}
	\label{alg_block_dfols}
\end{algorithm}

\paragraph{No Criticality and Safety Steps}
Compared to RSDFO-GN, the implementation of DFBGN does not include criticality (which is also the case in DFO-LS) or safety steps.
These steps ultimately function to ensure we do not have $\hat{\bg}_k\ll\Delta_k$.
In DFBGN, we ensure $\Delta_k$ does not get too large compared to $\|\bs_k\|$ through \eqref{eq_dfbgn_tr_updating_generic}, while $\|\bs_k\|$ is linked to $\|\hat{\bg}_k\|$ through \lemref{lem_step_lower_bound}.
If $\|\bs_k\|$ is much smaller than $\Delta_k$ and our step produces a poor objective decrease, then we will set $\Delta_{k+1}\gets\|\bs_k\|$ for the next iteration.
Although \lemref{lem_step_lower_bound} allows $\|\bs_k\|$ to be large even if $\|\bg_k\|$ is small, in practice we do not observe $\Delta_k\gg\|\bg_k\|$ without DFBGN setting $\Delta_{k+1}\gets\|\bs_k\|$ after a small number of iterations.

\paragraph{No Model-Improving Steps}
An important feature of model-based DFO methods are model-improving procedures, which change the interpolation set to ensure $\Lambda$-poisedness (\defref{def_lambda_poised}), or equivalently ensure that the local model for $f$ is fully linear.
In RSDFO-GN for instance, model-improvement is performed when \texttt{CHECK\_MODEL}=\texttt{TRUE}, whereas in \cite[Algorithm 10.1]{Conn2009} there are dedicated model-improvement phases.

Instead, DFBGN ensures accurate interpolation models via a geometry-aware (i.e.~$\Lambda$-poisedness aware) process for deleting interpolation points at each iteration, where they are replaced by new points in directions (from $\bx_{k+1}$) which are orthogonal to $Q_k$ and selected at random.
The process for deleting interpolation points---and choosing a suitable number of points to remove, $p_{\rm drop}$---at each iteration are considered in Sections \ref{sec_dfbgn_interp_set} and \ref{sec_scalability_practicality} respectively.
The process for generating new interpolation points, \algref{alg_dfbgn_add_points}, is outlined in \secref{sec_dfbgn_interp_set}.

A downside of our approach is that the new orthogonal directions are not chosen by minimizing a model for the objective (i.e.~not attempting to reduce the objective), as we have no information about how the objective varies outside $\Y_k$.
This is the fundamental trade-off between a subspace approach and standard methods (such as DFO-LS); we can reduce the linear algebra cost, but must spend objective evaluations to change the search space between iterations.

\paragraph{Linear Algebra Cost of DFBGN}
As in \secref{sec_dfbgn_linalg_summary}, our approach in DFBGN yields substantial reductions in the required linear algebra costs compared to DFO-LS:
\begin{itemize}
	\item Model construction costs $\bigO(np^2)$ for the factorization \eqref{eq_qr_decomp} and $\bigO(mp^2)$ for back-substitution solves \eqref{eq_reduced_interp_system}, rather than $\bigO(n^3)$ and $\bigO(mn^2)$ respectively for DFO-LS; and
	\item Lagrange polynomial construction costs $\bigO(p^3)$ rather than $\bigO(n^3)$.\footnote{Here, we use the existing factorization \eqref{eq_qr_decomp} and solve with $p+1$ right-hand sides, as in \secref{sec_dfbgn_linalg_summary}.}
\end{itemize}
As well as these reductions, we also get a smaller trust-region subproblem \eqref{eq_reduced_trs_generic}---in $\R^p$ rather than $\R^n$---and smaller memory requirements for storing the model Jacobian: we only store $\hat{J}_k$ and $Q_k$, requiring $\bigO((m+n)p)$ memory rather than $\bigO(mn)$ for storing the full $m\times n$ Jacobian.
However, in \eqref{eq_reduced_trs_generic}, we do have the extra cost of projecting $\hat{\bs}_k\in\R^p$ into the full space $\R^n$, which requires a multiplication by $Q_k$, costing $\bigO(np)$.
In addition to the reduced linear algebra costs, the smaller interpolation set means we have a lower evaluation cost to construct the initial model of $p+1$ evaluations (rather than $n+1$).

No particular choice of $p$ is needed for this method, and anything from $p=1$ (i.e.~coordinate search) to $p=n$ (i.e.~full space search) is allowed.
However, unsurprisingly, we shall see that larger values of $p$ give better performance in terms of evaluations, except for the very low-budget phase, where smaller values of $p$ benefit from a lower initialization cost.
Hence, we expect that our approach with small $p$ is useful when the $\bigO(mn^2+n^3)$ per-iteration linear algebra cost of DFO-LS is too great, and reducing the linear algebra cost is worth (possibly) needing more objective evaluations to achieve a given accuracy.
As a result, $p$ should in general be set as large as possible, given the linear algebra costs the user is willing to bear.

In \tabref{tab_linalg_comparison}, we compare the linear algebra costs of DFO-LS and DFBGN.
The overall per-iteration cost of DFO-LS is $\bigO(mn^2+n^3)$ and the cost of DFBGN is $\bigO(mp^2+np^2+p^3)$, depending on the choice of $p\in\{1,\ldots,n\}$.
The key benefit is that our dependency on the underlying problem dimension $n$ decreases from cubic in DFO-LS to linear in DFBGN (provided $p\ll n$).
We also note that both methods have linear cost in the number of residuals $m$, but with a factor that is significantly smaller in DFBGN than in DFO-LS---$\bigO(p^2)$ compared to $\bigO(n^2)$.

\begin{table}[tb]
	\centering
	\footnotesize{
	\begin{tabular}{lccl}
		\hline\noalign{\smallskip}
		Algorithm phase & DFO-LS & DFBGN & Comment \\ \noalign{\smallskip}\hline\noalign{\smallskip}
		Form $\hat{\bem}_k$ \eqref{eq_r_model} & $\bigO(n^3+mn^2)$ & $\bigO(np^2+mp^2)$ & Factorization plus linear solves \\
		Form $\hat{m}_k$ \eqref{eq_f_model} & $\bigO(mn^2)$ & $\bigO(mp^2)$ & Form $\hat{J}_k^T\hat{J}_k$ \\
		Trust-region subproblem & $\bigO(n^2)$--$\bigO(n^3)$ & $\bigO(p^2)$--$\bigO(p^3)$ & Depending on \# CG iterations* \\
		Calculate $\bs_k=Q_k\hat{\bs}_k$ & --- & $\bigO(np)$ & \\
		Form new step $\bx_k+\bs_k$ & $\bigO(n)$ & $\bigO(n)$ & \\ 
		Choose point to replace & $\bigO(n^3)$ & --- & Compute Lagrange polynomials \\ 
		Model improvement & $\bigO(n^3)$ & --- & Recompute Lagrange polynomials \\ 
		Choose points to remove & --- & $\bigO(p^3+np)$ & See \algref{alg_dfbgn_remove_points} \\
		Generate new directions & --- & $\bigO(np^2)$ & See \algref{alg_dfbgn_add_points} \\ \noalign{\smallskip}\hline\noalign{\smallskip}
		\textit{Total} & $\bigO(mn^2+n^3)$ & $\bigO(mp^2+np^2+p^3)$ & \\
		\noalign{\smallskip}\hline\noalign{\smallskip}
	\end{tabular}}
	\caption{Comparison of per-iteration linear algebra costs of DFO-LS and DFBGN (\algref{alg_block_dfols}) with subspace dimension $p\in\{1,\ldots,n\}$. *Note that the trust-region subproblem is solved using a truncated CG method \cite[Chapter 7.5.1]{Conn2000} originally from \cite{Powell2009} in both DFO-LS and DFBGN.}
	\label{tab_linalg_comparison}
\end{table}

\begin{remark}
	In every iteration we must compute the QR factorization \eqref{eq_qr_decomp}.
	However, we note, similar to \cite[Section 4.2]{Cartis2019a}, that adding, removing and changing interpolation points all induce simple changes to $\hat{W}_k^T$ (adding or removing columns, and low-rank updates).
	This means that \eqref{eq_qr_decomp} can be computed with cost $\bigO(np)$ per iteration using the updating methods in \cite[Section 12.5]{Golub1996}.
	In our implementation, however, we do not do this, as we find that these updates introduce errors\footnote{Leading to $\hat{W}_k^T\neq Q_k R_k$, not relating to $Q_k$ orthogonal or $R_k$ upper-triangular.} that accumulate at every iteration and reduce the accuracy of the resulting interpolation models.
	To maintain the numerical performance of our method, we need to recompute \eqref{eq_qr_decomp} from scratch regularly (e.g.~every 10 iterations), and so would not see the $\bigO(np)$ per-iteration cost, on average.
\end{remark}

\begin{remark}
	The default parameter choices for DFBGN are the same as DFO-LS, namely: $\Delta_{\rm max}=10^{10}$, $\Delta_{\rm end}=10^{-8}$, $\gamma_{\rm dec}=0.5$, $\gamma_{\rm inc}=2$, $\overline{\gamma}_{\rm inc}=4$, $\eta_1=0.1$, and $\eta_2=0.7$.
	DFBGN also uses the same default choice $\Delta_0=0.1\max(\|\bx_0\|_{\infty},1)$.
	The default choice of $p_{\rm drop}$ is discussed in \secref{sec_scalability_practicality}.
\end{remark}

\paragraph{Adaptive Choice of $\bm{p}$}
One approach that we have considered is to allow $p$ to vary between iterations of DFBGN, rather than being constant throughout.
Instead of adding $p_{\rm drop}$ new points at the end of each iteration (line \ref{ln_new_dirns}), we implement a variable $p$ by adding at least one new point to the interpolation set, continuing until some criterion is met.
This criterion is designed to allow $p$ small when such a $p$ allows us to make reasonable progress, but to grow $p$ up to $p\approx n$ when necessary.

We have tested several possible criteria---comparing some combination of model gradient and Hessian, trust-region radius, trust-region step length, and predicted decrease from the trust-region step---and found the most effective to be comparing the model gradient and Hessian with the trust-region radius.
Specifically, we continue adding new directions until (c.f.~\lemref{lem_epsilon_g} and \cite[Lemma 3.22]{Cartis2019a})
\be \frac{\|\bg_k\|}{\max(\|H_k\|,1)} \geq \alpha \Delta_k, \ee
for some $\alpha>0$ (we use $\alpha=0.2(n-p)/n$ for an interpolation set with $p+1$ points).
However, our numerical testing has shown that DFBGN with $p$ fixed outperforms this approach for all budget and accuracy levels, on both medium- and large-scale problems, and so we do not consider it further here.
We delegate further study of this approach to future work, to see if alternative adaptive choices for $p$ can be beneficial.

\subsection{Interpolation Set Management} \label{sec_dfbgn_interp_set}
We now provide more details about how we manage the interpolation set in DFBGN. 
Specifically, we discuss how points are chosen for removal from $Y_k$, and how new interpolations points are calculated.

\subsubsection{Geometry Management} \label{sec_dfbgn_geom_management}
In the description of DFBGN, there are no explicit mechanisms to ensure that the interpolation set is well-poised.
DFBGN ensures that the interpolation set has good geometry through two mechanisms:
\begin{itemize}
	\item We use a geometry-aware mechanism for removing points, based on \cite{Powell2009,Cartis2019a}, which requires the computation of Lagrange polynomials.
	This mechanism is given in \algref{alg_dfbgn_remove_points}, and is called in lines \ref{ln_drop1} and \ref{ln_drop2} of DFBGN, as well as to select a point to replace in line \ref{ln_replace}; and
	\item Adding new directions that are orthogonal to existing directions, and of length $\Delta_k$, means adding these new points never causes the interpolation set to have poor poisedness.
\end{itemize}
Together, these two mechanisms mean that any points causing poor poisedness are quickly removed, and replaced by high-quality interpolation points (orthogonal to existing directions, and within distance $\Delta_k$ of the current iterate).

\begin{algorithm}[t]
	\footnotesize{
	\begin{algorithmic}[1]
		\Require Interpolation set $\{\bx_{k+1},\by_1,\ldots,\by_p\}$ with current iterate $\bx_{k+1}$, trust-region radius $\Delta_{k+1}>0$ and number of points to remove $p_{\rm drop}\in\{1,\ldots,p\}$. 
		\vspace{0.5em}
		\State Compute the (linear) Lagrange polynomials for $\{\bx_{k+1},\by_1,\ldots,\by_p\}$ in the same way as \eqref{eq_reduced_interp_system}.
		\State For $t=1,\ldots,p$ (i.e.~all interpolation points except $\bx_{k+1}$), compute
		\be \theta_t \defeq \max_{\bx\in B(\bx_{k+1},\Delta_{k+1})} |\ell_t(\bx)|\cdot\max\left(\frac{\|\by_t-\bx_{k+1}\|^4}{\Delta_{k+1}^4}, 1\right). \label{eq_drop_thresh} \ee
		\State Remove the $p_{\rm drop}$ interpolation points with the largest values of $\theta_t$.
	\end{algorithmic}
	} 
	\caption{Mechanism for removing points from the interpolation set in DFBGN.}
	\label{alg_dfbgn_remove_points}
\end{algorithm}

The linear algebra cost of \algref{alg_dfbgn_remove_points} is $\bigO(p^3)$ to compute $p$ Lagrange polynomials with cost $\bigO(p^2)$ each (since we already have a factorization of $\hat{W}_k^T$).
Then for each $t$ we must evaluate $\theta_t$ \eqref{eq_drop_thresh}, with cost $\bigO(p)$ to maximize $\ell_t(\bx)$ (since $\ell_t$ is linear and varies only in directions $\Y_k$), and $\bigO(n)$ to calculate $\|\by_t-\bx_{k+1}\|$.
This gives a total cost of $\bigO(p^3+np)$.\footnote{We could instead compute $\|\by_t-\bx_{k+1}\|$ by taking the norm of the $t$-th column of $R_k$, provided we have the factorization \eqref{eq_qr_decomp}, for cost $\bigO(p)$ for each $t$. This does not affect the overall conclusion of \tabref{tab_linalg_comparison}.}

\begin{figure}[t]
	\centering
	\begin{subfigure}[b]{0.48\textwidth}
		\includegraphics[width=\textwidth]{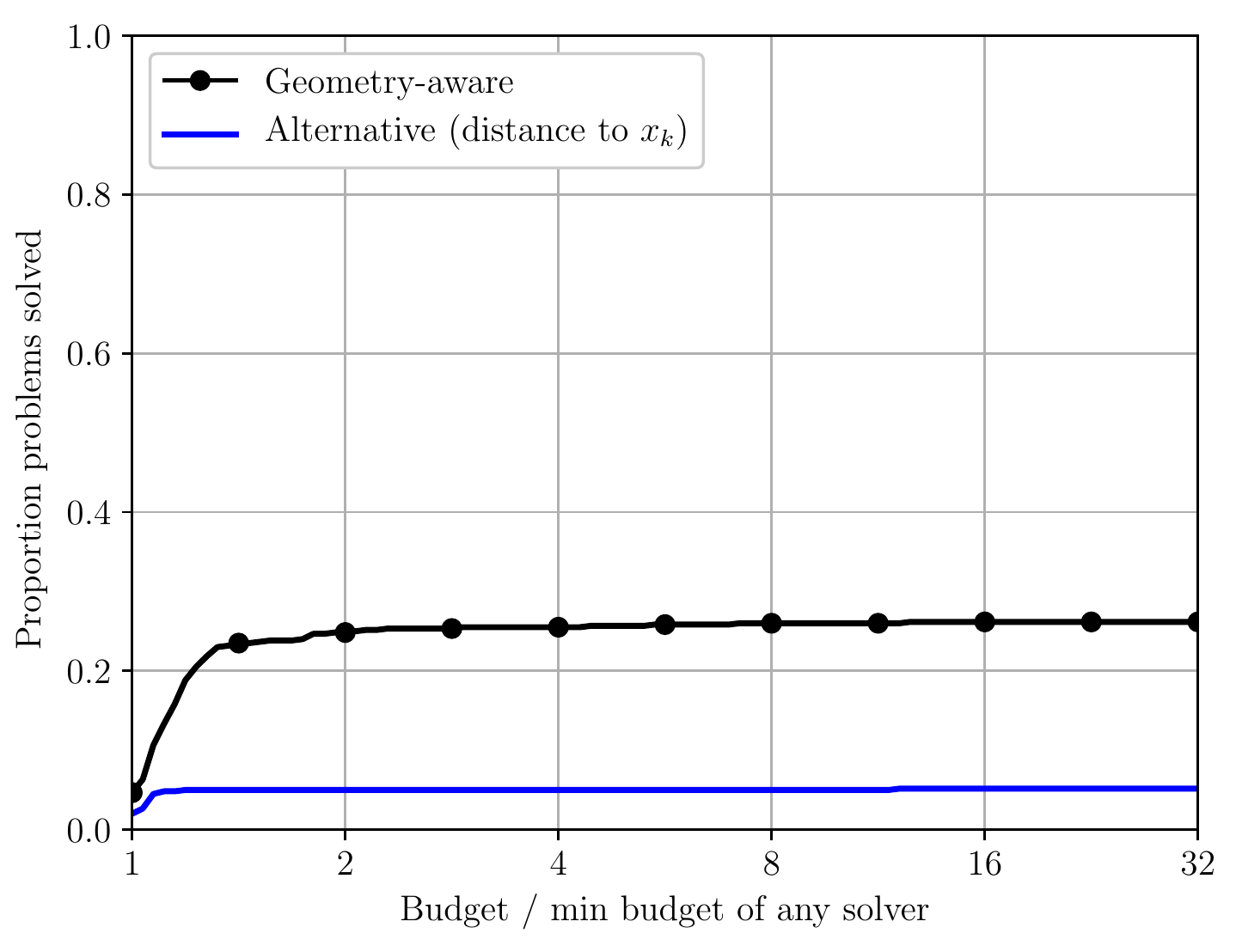}
		\caption{DFBGN with $p=n/10$}
	\end{subfigure}
	~
	\begin{subfigure}[b]{0.48\textwidth}
		\includegraphics[width=\textwidth]{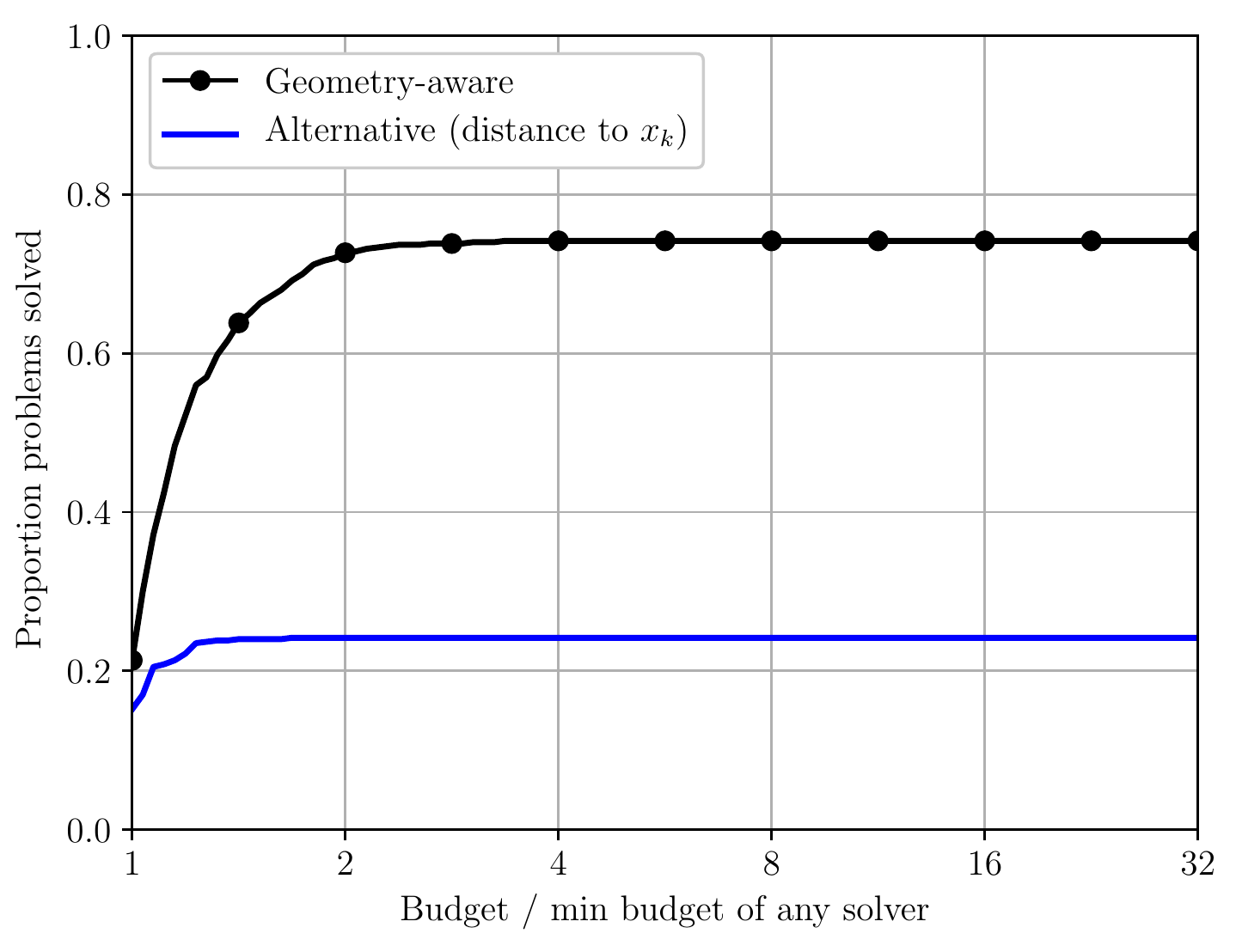}
		\caption{DFBGN with $p=n$}
	\end{subfigure}
	\caption{Performance profiles (in evaluations) for DFBGN when $p=n/10$ and $p=n$, comparing removing points with the geometry-aware \algref{alg_dfbgn_remove_points} and without Lagrange polynomials (by distance to the current iterate). We use accuracy level $\tau=10^{-5}$, and results are an average of 10 runs, each with a budget of $100(n+1)$ evaluations. The problem collection is (CR). See \secref{sec_numerics_framework} for details on the testing framework.}
	\label{fig_geom_motivation}
\end{figure}

\paragraph{Alternative Point Removal Mechanism}
Instead of \algref{alg_dfbgn_remove_points}, we could have used a simpler mechanism for removing points, such as removing the points furthest from the current iterate (with total cost\footnote{As above, if we have \eqref{eq_qr_decomp}, we could calculate all distances to $\bx_{k+1}$ using columns of $R_k$, with total cost $\bigO(p^2)$.} $\bigO(np)$).
However, this leads to a substantial performance penalty.
In \figref{fig_geom_motivation}, we compare these two approaches for selecting points to be removed, namely \algref{alg_dfbgn_remove_points} and distance to $\bx_{k+1}$, on the (CR) test set with $p=n/10$ and $p=n$ (using the default value of $p_{\rm drop}$, as detailed in \secref{sec_scalability_practicality}).
For more details on the numerical testing framework, see \secref{sec_numerics_framework} below.
We see that the geometry-aware criterion \eqref{eq_drop_thresh} gives substantially better performance than the cheaper criterion.

\subsubsection{Generation of New Directions} \label{sec_dfbgn_new_directions}
We now detail how new directions $\b{d}_1,\ldots,\b{d}_q$ are created in line \ref{ln_new_dirns} of DFBGN (\algref{alg_block_dfols}).
The same approach is suitable for generating the initial directions $\bs_1,\ldots,\bs_p$ in line \ref{ln_dfbgn_init} of DFBGN, using $\t{A}=A$ below (i.e.~no $Q$ required).

\begin{algorithm}[t]
	\footnotesize{
	\begin{algorithmic}[1]
		\Require Orthonormal basis for current subspace $Q\in\R^{n\times p_1}$ (optional), number of new directions $q\leq n-p_1$. 
		\vspace{0.5em}
		\State Generate $A\in\R^{n\times q}$ with i.i.d.~standard normal entries. 
		\State If $Q$ is specified, calculate $\t{A}=A-QQ^T A$, otherwise set $\t{A}=A$. 
		\State Perform the QR factorization $\t{Q}\t{R}=\t{A}$ and return $\b{d}_1,\ldots,\b{d}_q$ as the columns of $\t{Q}$.
	\end{algorithmic}
	} 
	\caption{Mechanism for generating new directions in DFBGN.}
	\label{alg_dfbgn_add_points}
\end{algorithm}

Suppose our current subspace is defined by the orthonormal columns of $Q\in\R^{n\times p_1}$, and we wish to generate $q$ new orthonormal vectors that are also orthogonal to the columns of $Q$ (with $p_1+q\leq n$). 
When called in line \ref{ln_new_dirns} of DFBGN, we will have $p_1=p-p_{\rm drop}$ and $q=p_{\rm drop}$.
We use the approach in \algref{alg_dfbgn_add_points}.
From the QR factorization, the columns of $\t{Q}$ are orthonormal, and if $\t{A}$ is full rank (which occurs with probability 1; see \lemref{lem_full_rank_prob1} below) then we also have $\col(\t{Q}) = \col(\t{A})$.
So, to confirm the columns of $\t{Q}$ are orthogonal to $Q$, we only need to check that the columns of $\t{A}$ are orthogonal to $Q$.
Let $\b{\t{a}}_i$ be the $i$-th column of $\t{A}$ and $\b{q}_j$ be the $j$-th column of $Q$.
Then, if $\b{a}_i$ is the $i$-th column of $A$, we have
\be \b{\t{a}}_i^T\b{q}_j = \b{a}_i^T(I-QQ^T)\b{q}_j = \b{a}_i^T(\b{q}_j-Q\bee_j) = 0, \ee
as required.

The cost of \algref{alg_dfbgn_add_points} is $\bigO(nq)$ to generate $A$, $\bigO(np_1 q)$ to form $\t{A}$ and $\bigO(nq^2)$ for the QR factorization.
Since $p_1,q\leq p$ (since $p_1$ is the number of directions remaining in the interpolation set and $q$ is the number of new directions to be added), the whole process has cost at most $\bigO(np^2)$.
This bound is tight, up to constant factors, as we could take $p_1=q=p/2$, for instance.

\begin{lemma} \label{lem_full_rank_prob1}
	The matrix $\t{A}$ has full column rank with probability 1.
\end{lemma}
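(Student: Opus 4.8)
The plan is to interpret $\t{A} = (I - QQ^T)A = PA$, where $P := I - QQ^T$ is the orthogonal projector onto $\col(Q)^\perp$, a subspace of $\R^n$ of dimension $n - p_1$. Since $q \leq n - p_1$ by hypothesis, it is at least plausible that $PA$ can have full column rank $q$; the goal is to show this happens with probability $1$ when the $nq$ entries of $A$ are i.i.d.\ $N(0,1)$. The key observation is that $\t{A}$ depends \emph{linearly} on the entries of $A$ (the map $A \mapsto (I-QQ^T)A$ is linear), so every $q\times q$ minor of $\t{A}$ is a polynomial in those $nq$ entries. Hence the ``bad'' set $\{A \in \R^{n\times q} : \operatorname{rank}\t{A} < q\}$ is the common zero locus of finitely many polynomials, i.e.\ an algebraic variety in $\R^{nq}$.

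The main step is then to show this variety is \emph{proper} (not all of $\R^{nq}$), which amounts to exhibiting a single $A^\ast$ with $(I-QQ^T)A^\ast$ of full column rank. For this I would use that $\col(Q)^\perp$ has dimension $n - p_1 \geq q$: choose any orthonormal vectors $\b{u}_1,\ldots,\b{u}_q \in \col(Q)^\perp$, and let $A^\ast$ have these as its columns. Then $(I-QQ^T)\b{u}_j = \b{u}_j$ for each $j$, so $\t{A}^\ast = A^\ast$ has orthonormal columns and thus rank $q$. Consequently at least one $q\times q$ minor of $\t{A}$ is a not-identically-zero polynomial, so the bad variety is contained in a proper algebraic subset of $\R^{nq}$, which has Lebesgue measure zero. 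Since the entries of $A$ are jointly absolutely continuous with respect to Lebesgue measure, the bad event has probability zero, which is the claim.

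There is essentially no hard obstacle here: the only point that genuinely needs the hypotheses is the dimension count $q \leq n - p_1$ used to produce $A^\ast$, and the rest is the standard ``a polynomial that is not identically zero vanishes on a null set'' argument. As an alternative one could argue inductively on the columns: conditioning on $\b{a}_1,\ldots,\b{a}_{i-1}$, with probability $1$ the projections $P\b{a}_1,\ldots,P\b{a}_{i-1}$ are independent and span a proper subspace $V \subsetneq \col(Q)^\perp$ (as $i-1 < q \le n-p_1$), and $\{\bx : P\bx \in V\} = V \oplus \col(Q)$ is a subspace of $\R^n$ of dimension $< n$, hence Lebesgue-null, so $\b{a}_i$ avoids it almost surely; this also needs only the same dimension inequality.
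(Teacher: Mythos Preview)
Your proof is correct and takes a genuinely different route from the paper. The paper argues columnwise: citing a standard result about Gaussian matrices, it notes that with probability~1 the columns $\b{a}_i$ of $A$ are linearly independent and no $\b{a}_i$ lies in $\col(Q)$; then, given a relation $\sum_i c_i \tilde{\b{a}}_i = \b{0}$, it rewrites this as $\sum_i c_i \b{a}_i \in \col(Q)$ and concludes that all $c_i$ vanish. Your argument instead works globally: the rank-deficient set is the common zero locus of the $q\times q$ minors of $(I-QQ^T)A$, each a polynomial in the entries of $A$, and you show this variety is proper by exhibiting a single $A^\ast$ (with orthonormal columns chosen from $\col(Q)^\perp$, using $q\leq n-p_1$) for which $\tilde{A}^\ast=A^\ast$ has rank $q$; absolute continuity of the Gaussian finishes. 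Your approach is self-contained, needing only the standard ``a nonzero polynomial vanishes on a Lebesgue-null set'' fact, and makes the role of the dimension hypothesis $q\leq n-p_1$ fully explicit. The paper's approach avoids the algebraic-variety language but leans on an external reference, and its key step (that $\sum_i c_i \b{a}_i \in \col(Q)$ forces the sum to vanish) tacitly uses that $\b{a}_1,\ldots,\b{a}_q$ together with the columns of $Q$ are almost surely linearly independent---which is precisely what your inductive alternative spells out carefully.
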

\begin{proof}
	Let $\b{a}_i$ and $\b{\t{a}}_i$ be the $i$-th columns of $A$ and $\t{A}$ respectively.
	From \cite[Proposition 7.1]{Eaton2007}, $A$ has full column rank with probability 1, and each $\b{a}_i\notin\col(Q)$ with probability 1.
	Now suppose we have constants $c_1,\ldots,c_q$ so that $\sum_{i=1}^{q}c_i\b{\t{a}}_i=\b{0}$.
	Then since $\t{\b{a}}_i=\b{a}_i-QQ^T\b{a}_i$, we have
	\be \sum_{i=1}^{q}c_i\b{a}_i = \sum_{i=1}^{q}c_i QQ^T\b{a}_i. \ee
	The right-hand side is in $\col(Q)$, so since $\b{a}_i\notin\col(Q)$, we must have $\sum_{i=1}^{q}c_i\b{a}_i=\b{0}$.
	Thus $c_1=\cdots=c_q=0$ since $A$ has full column rank, and so $\t{A}$ has full column rank.
\end{proof}

\subsection{Selecting an Appropriate Value of $\bm{p_{\rm drop}}$} \label{sec_scalability_practicality}
An important component of DFBGN that we have not yet specified is how many points to remove from the interpolation set at each iteration, $p_{\rm drop}\in\{1,\ldots,p\}$.

On one hand, a large $p_{\rm drop}$ enables us to change the subspace by a large amount between iterations, ensuring we explore the whole of $\R^n$ quickly, rather than searching in unproductive subspaces for many iterations.
However, a small $p_{\rm drop}$ means we require few objective evaluations per iteration, and so are more likely to use our evaluation budget efficiently.
We consider two choices of $p_{\rm drop}$, aimed at each of these possible benefits: $p_{\rm drop}=p/10$ to change subspaces quickly, and $p_{\rm drop}=1$ (the minimum possible value) to use few objective evaluations.

Another approach that we consider is a compromise between these two choices.
We note that having $p_{\rm drop}=1$ is useful to make progress with few evaluations, so we use this value while we are making progress---we consider this to occur when we have a successful iteration (i.e.~$\rho_k\geq\eta_1$).
When we are not progressing (i.e.~unsuccessful steps with $\rho_k<\eta_1$), we use the larger value $p_{\rm drop}=p/10$.

\begin{figure}[t]
	\centering
	\begin{subfigure}[b]{0.48\textwidth}
		\includegraphics[width=\textwidth]{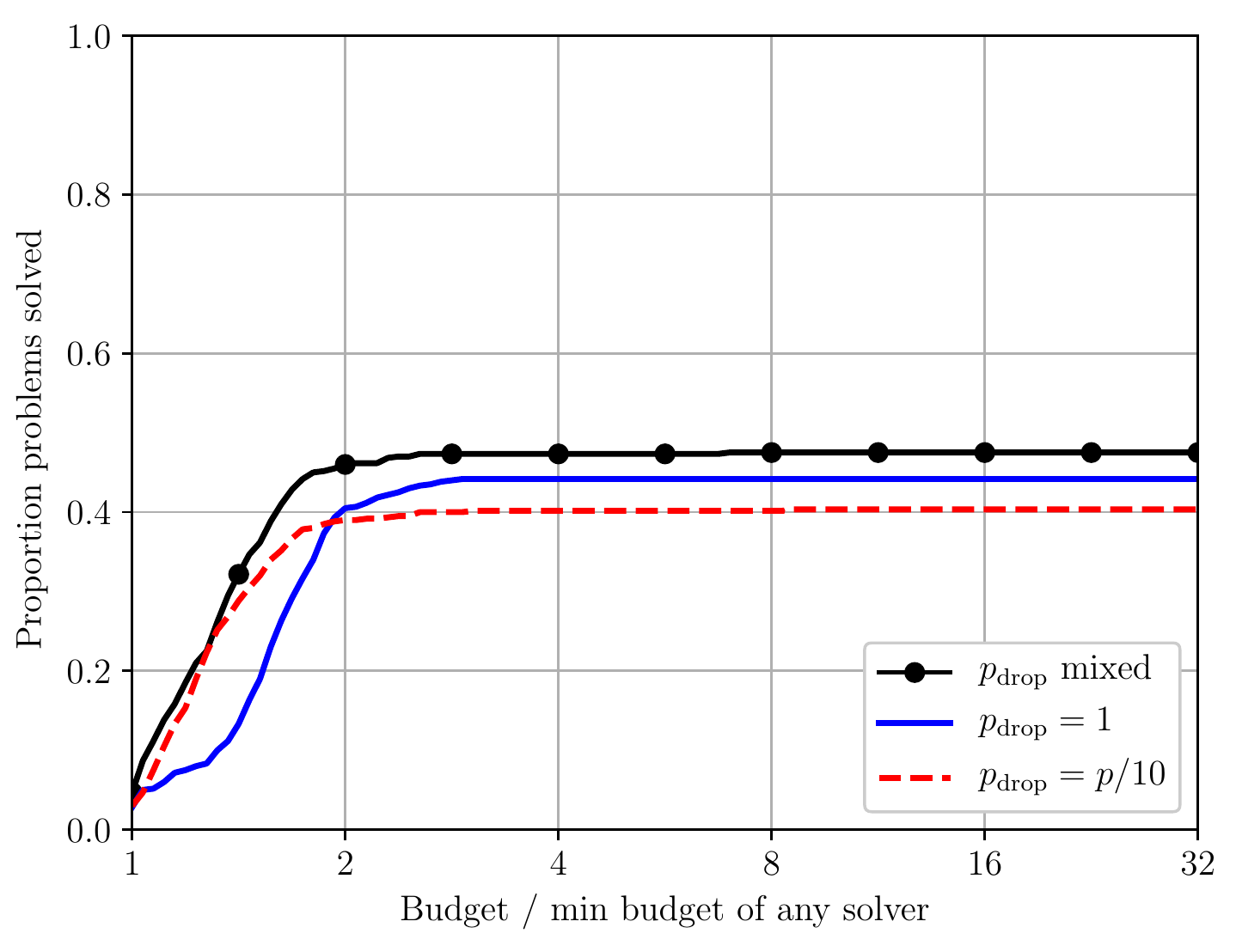}
		\caption{DFBGN with $p=n/2$}
	\end{subfigure}
	~
	\begin{subfigure}[b]{0.48\textwidth}
		\includegraphics[width=\textwidth]{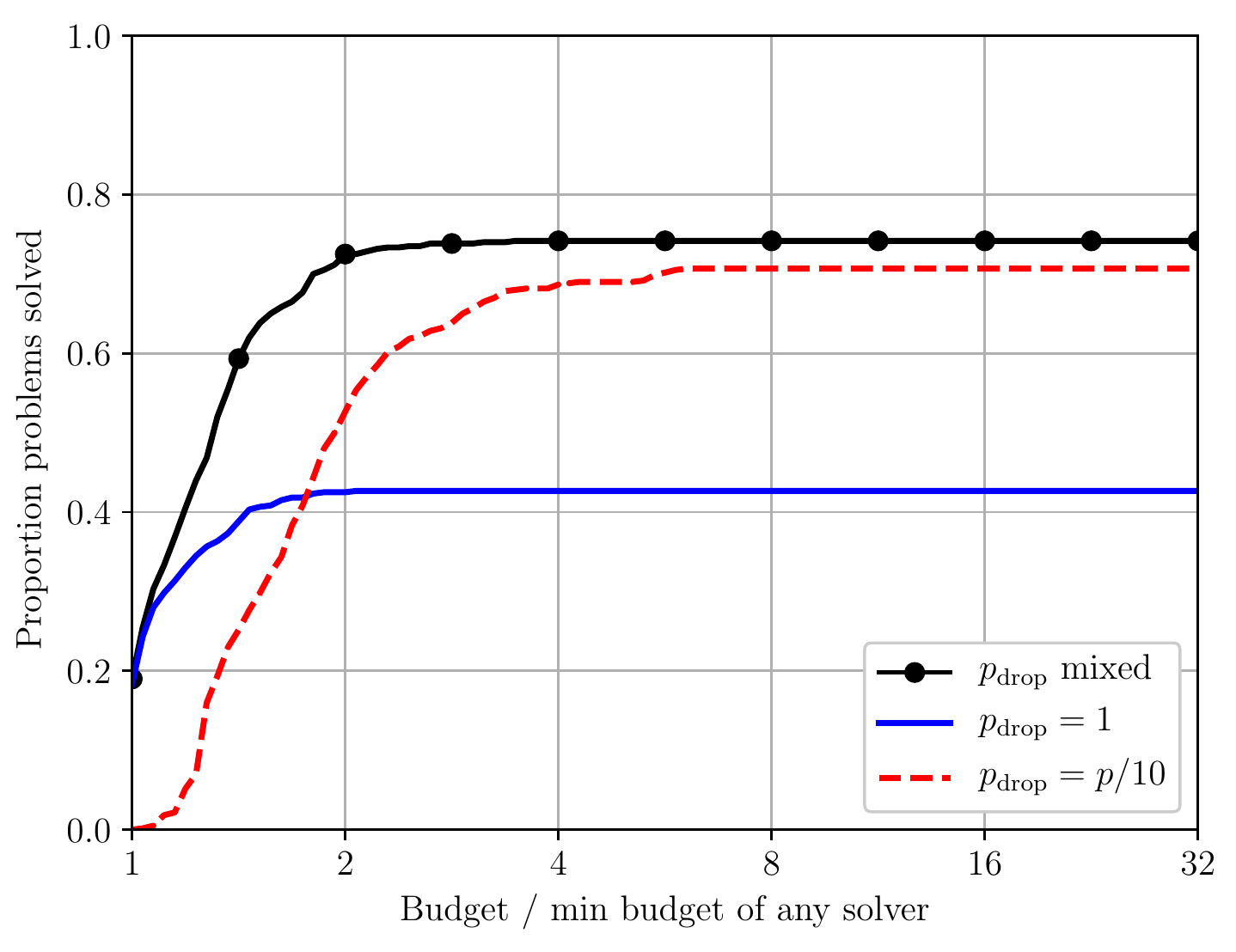}
		\caption{DFBGN with $p=n$}
	\end{subfigure}
	\caption{Performance profiles (in evaluations) comparing different choices of $p_{\rm drop}$, for DFBGN with $p=n/2$ and $p=n$, with accuracy level $\tau=10^{-5}$. The choice `$p_{\rm drop}$ mixed' uses $p_{\rm drop}=1$ for successful iterations and $p_{\rm drop}=p/10$ for unsuccessful iterations. Results an average of 10 runs, each with a budget of $100(n+1)$ evaluations. The problem collection is (CR). See \secref{sec_numerics_framework} for details on the testing framework.}
	\label{fig_drop_mixed_motivation}
\end{figure}

We compare these three approaches on the (CR) problem collection with a budget of $100(n+1)$ evaluations in \figref{fig_drop_mixed_motivation}.
Since these different choices of $p_{\rm drop}$ are similar when $p$ is small, we show results for subspace dimensions $p=n/2$ and $p=n$.
We first see that, even with $p=n/2$, the three approaches all perform similarly. 
However, for $p=n$ the compromise choice $p_{\rm drop}\in\{1,p/10\}$ performs better than the two constant-$p$ approaches.
In addition, $p_{\rm drop}=1$ outperforms $p_{\rm drop}=p/10$ for small performance ratios, but is less robust and solves fewer problems overall.

Given these results, in DFBGN we use the compromise choice as the default mechanism: $p_{\rm drop}=1$ on successful iterations and $p_{\rm drop}=p/10$ on unsuccessful iterations.

\paragraph{Relationship to model-improvement phases}
The \texttt{CHECK\_MODEL} flag in RSDFO-GN is important for ensuring we do not reduce $\Delta_k$ too quickly without first ensuring the quality of the interpolation model.\footnote{This is related to ensuring $\Delta_k$ does not get too small compared to $\|\hat{\bg}_k\|$ via \lemref{lem_eventually_successful}.}
For a similar purpose, DFO-LS incorporates a second trust-region radius which also is involved with ensuring $\Delta_k$ does not decrease too quickly \cite{Cartis2018}.
In DFBGN, as described in \secref{sec_dfbgn_geom_management}, we maintain the geometry of the interpolation set by replacing poorly-located points with orthogonal directions around the current iterate; in practice this ensures the quality of the interpolation set.
However, the choice of $p_{\rm drop}$ has a large impact on causing $\Delta_k$ to shrink too quickly.

In many cases, DFBGN may reach a point where its model is not accurate and we start to have unsuccessful iterations.
To fix this (and continue making progress), we need to introduce several new interpolation points to produce a high-quality model.
If $p_{\rm drop}$ is small, this may take many unsuccessful iterations, causing $\Delta_k$ to decrease quickly.

\begin{figure}[t]
	\centering
	\begin{subfigure}[b]{0.48\textwidth}
		\includegraphics[width=\textwidth]{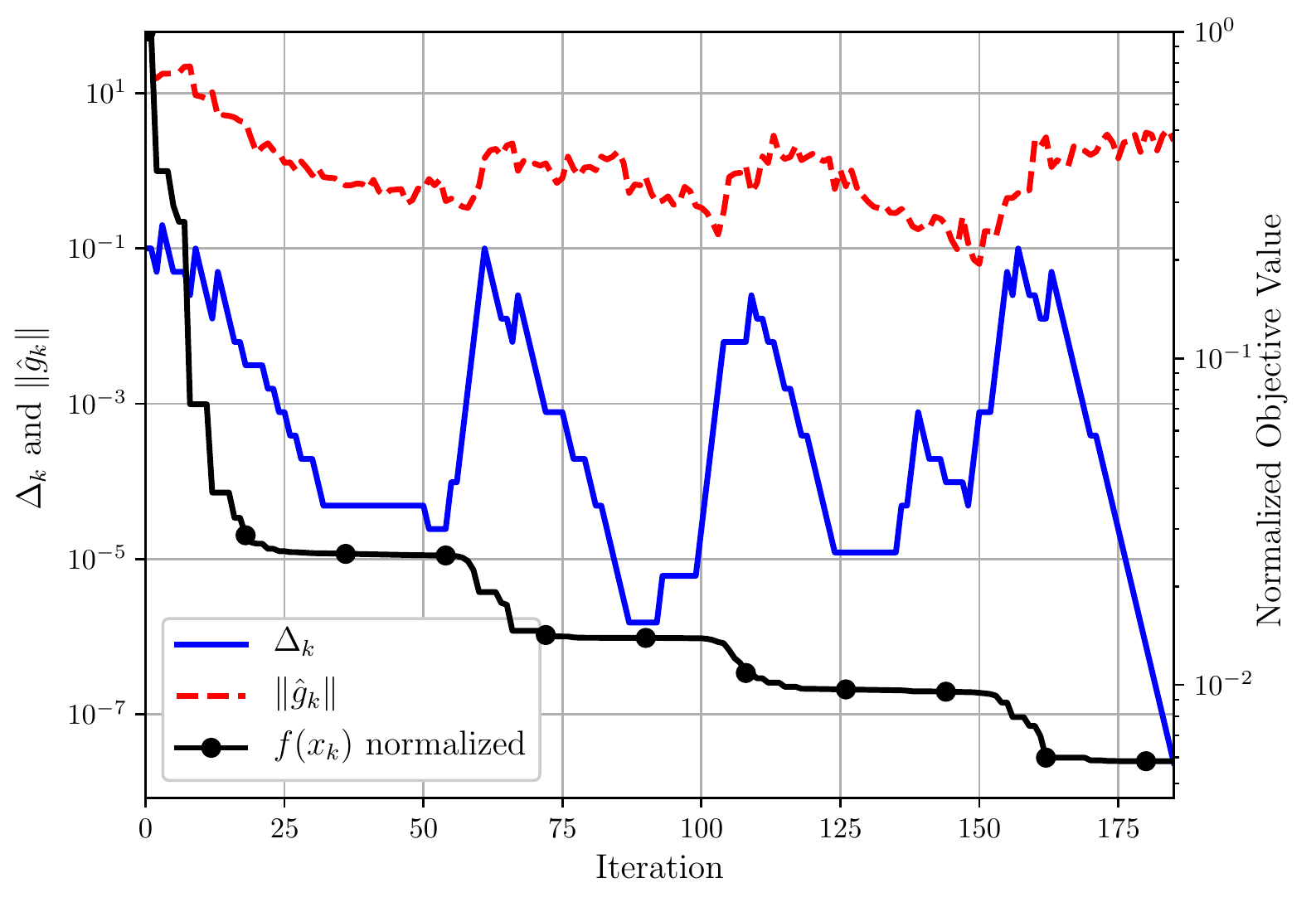}
		\caption{{\sc drcavty1}, $p_{\rm drop}=1$}
	\end{subfigure}
	~
	\begin{subfigure}[b]{0.48\textwidth}
		\includegraphics[width=\textwidth]{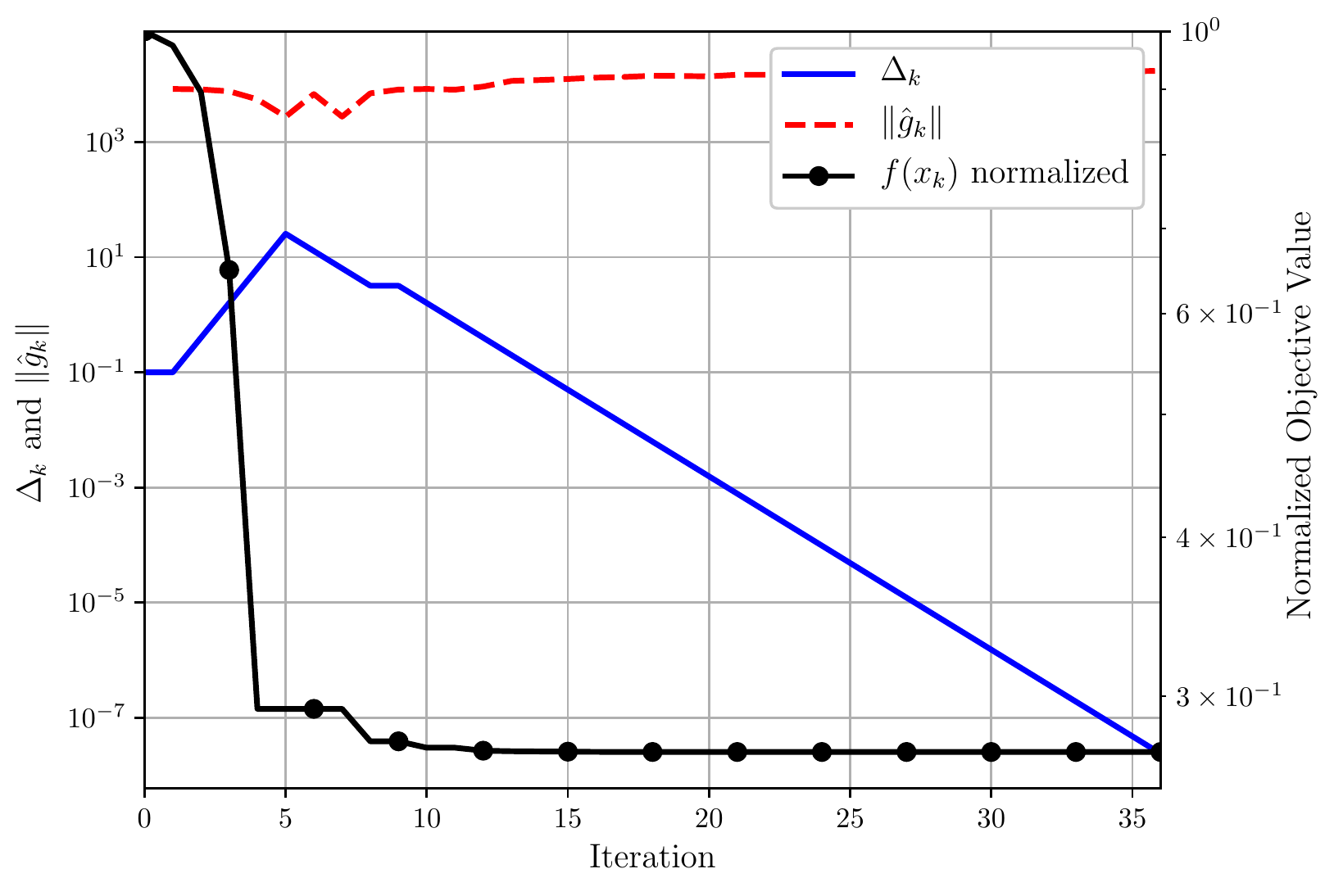}
		\caption{{\sc luksan13}, $p_{\rm drop}=1$}
	\end{subfigure}
	\caption{Comparison of $\Delta_k$, $\|\hat{\bg}_k\|$ (left $y$-axis) and normalized objective value (right $y$-axis) for DFBGN with $p=n$ and $p_{\rm drop}=1$. The two problems are taken from the (CR) collection. See \secref{sec_numerics_framework} for details on the testing framework.}
	\label{fig_drop_mixed_motivation1}
\end{figure}

The result of having $p_{\rm drop}$ small is seen in \figref{fig_drop_mixed_motivation1}.
Here, we show $\Delta_k$, $\|\hat{\bg}_k\|$ and $f(\bx_k)$ for DFBGN with $p=n$ and $p_{\rm drop}=1$ for two problems from the (CR) collection.
Both problems show that $\Delta_k$ can quickly shrink to be much smaller than $\|\hat{\bg}_k\|$ before reaching optimality.
In the case of {\sc drcavty1}, we see multiple instances where, after several unsuccessful iterations, we recover a high-quality model and continue making progress (causing $\Delta_k$ to increase again); this manifests itself as large oscillations in $\Delta_k$ with comparatively little change in $\|\hat{\bg}_k\|$.
Ultimately, as we terminate on $\Delta_k\leq\Delta_{\rm end}=10^{-8}$, DFBGN quits without solving the problem (reaching accuracy $\tau\approx 6\times10^{-3}$).
A more extreme version of this behaviour is seen for problem {\sc luksan13}, where we terminate on small $\Delta_k$ in the first sequence of unsuccessful iterations---DFBGN does not allow enough time to recover a high-quality model and terminates after achieving accuracy $\tau\approx 0.3$.

\begin{figure}[t]
	\centering
	\begin{subfigure}[b]{0.48\textwidth}
		\includegraphics[width=\textwidth]{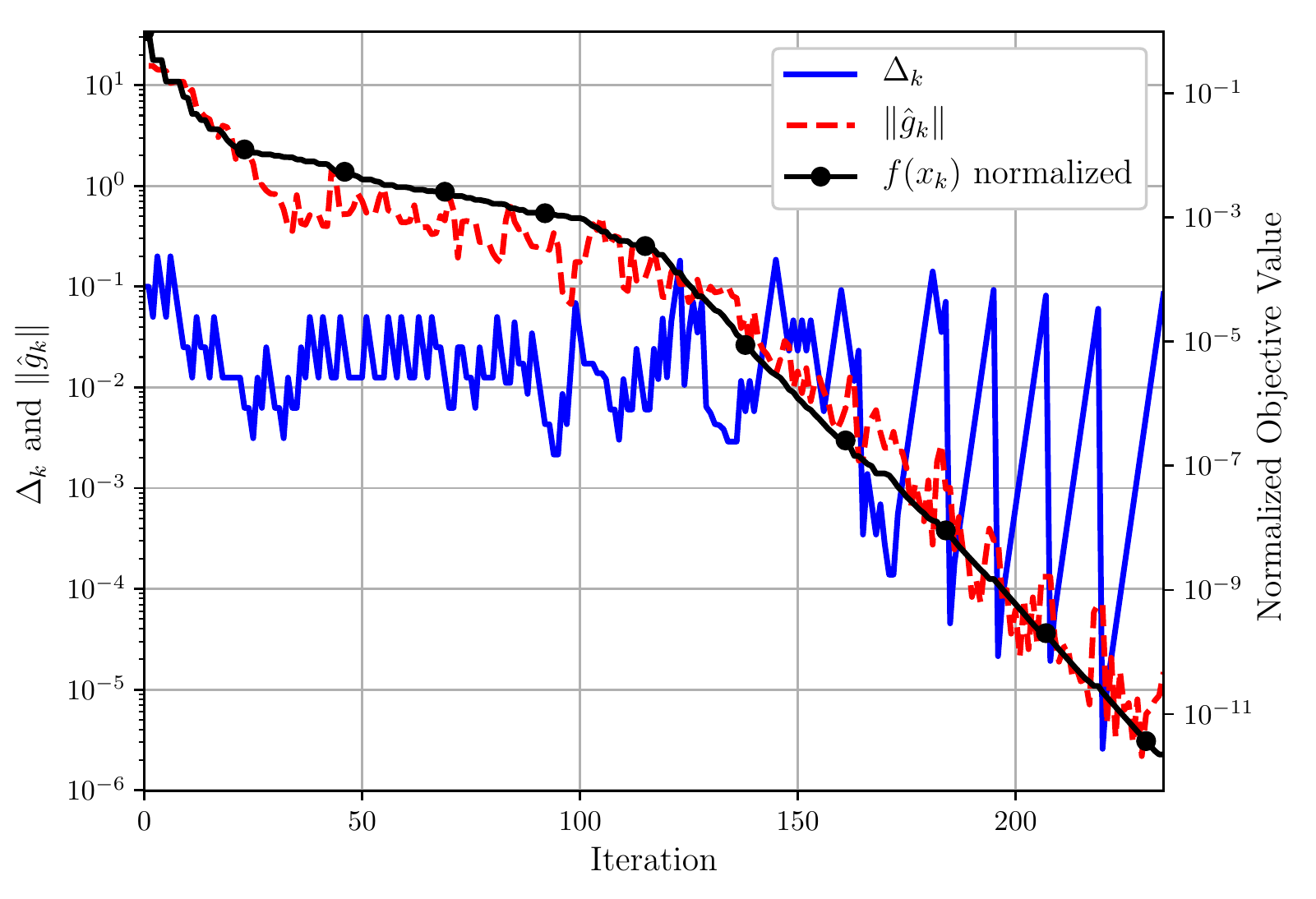}
		\caption{{\sc drcavty1}, $p_{\rm drop}$ mixed}
	\end{subfigure}
	~
	\begin{subfigure}[b]{0.48\textwidth}
		\includegraphics[width=\textwidth]{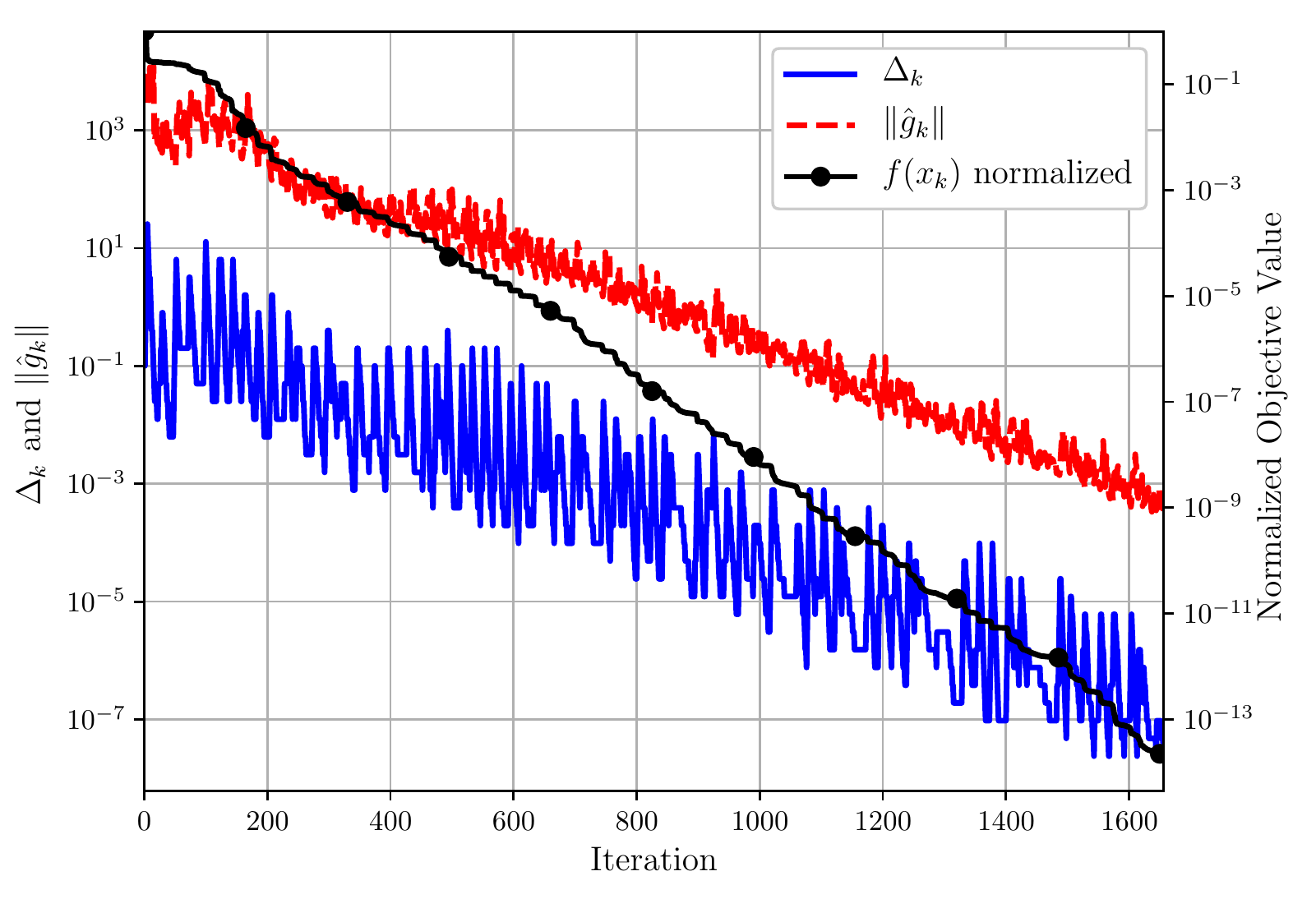}
		\caption{{\sc luksan13}, $p_{\rm drop}$ mixed}
	\end{subfigure}
	\caption{Comparison of $\Delta_k$, $\|\hat{\bg}_k\|$ (left $y$-axis) and normalized objective value (right $y$-axis) for DFBGN with $p=n$ and the default $p_{\rm drop}\in\{1,p/10\}$. The two problems are taken from the (CR) collection. See \secref{sec_numerics_framework} for details on the testing framework.}
	\label{fig_drop_mixed_motivation2}
\end{figure}

This effect is mitigated by our default choice of $p_{\rm drop}\in\{1,p/10\}$.
By using a larger $p_{\rm drop}$ on unsuccessful iterations, when our model is performing poorly, our interpolation set is changed quickly.
This results in DFBGN recovering a high-quality model after a smaller decrease in $\Delta_k$.
To demonstrate this, in \figref{fig_drop_mixed_motivation2} we show the results of DFBGN with this $p_{\rm drop}$ for the same problems as \figref{fig_drop_mixed_motivation1} above.
In both cases, we still see oscillations in $\Delta_k$, but their magnitude is substantially reduced---it takes fewer iterations to get successful steps, and $\Delta_k$ stays well above $\Delta_{\rm end}$.
This leads to both problems being solved to high accuracy.

In \figref{fig_drop_mixed_motivation2}, we also see that, as we approach the solution, $\|\hat{\bg}_k\|$ and $\Delta_k$ decrease at the same rate, as we would hope.
For {\sc drcavty1} after iteration 150, we also see the phenomenon described above, where $\Delta_k$ can become much larger than $\|\hat{\bg}_k\|$ due to many successful iterations, before an unsuccessful iteration with $\|\bs_k\|$ small means that $\Delta_k$ returns to the level of $\|\hat{\bg}_k\|$ regularly.

\paragraph{Alternative Mechanism for Recovering High-Quality Models}
An alternative approach for avoiding unnecessary decreases in $\Delta_k$ while the interpolation model quality is improved is to simply decrease $\Delta_k$ more slowly on unsuccessful iterations.
This corresponds to setting $\gamma_{\rm dec}$ to be closer to 1, which is the default choice of DFO-LS for noisy problems (see \cite[Section 3.1]{Cartis2018}), and aligns with our theoretical requirements on the trust-region parameters (\thmref{thm_high_prob_complexity}).

In \figref{fig_noisy_comparison}, we compare the DFBGN default choices, of $p_{\rm drop}\in\{1,p/10\}$ and $\gamma_{\rm dec}=0.5$, with $p_{\rm drop}=1$ and $\gamma_{\rm dec}\in\{0.5,0.98\}$ on the (CR) problem collection.
For small values of $p$ (where the different choices of $p_{\rm drop}$ are essentially identical), the choice of $\gamma_{\rm dec}$ has almost no impact on the performance of DFBGN.
For larger values of $p$, using $\gamma_{\rm dec}=0.98$ with $p_{\rm drop}=1$ performs comparably well to the DFBGN default ($\gamma_{\rm dec}=0.5$ with $p_{\rm drop}\in\{1,p/10\}$).
However, we opt for keeping $\gamma_{\rm dec}=0.5$, to allow us to use the larger value for noisy problems (just as in DFO-LS), and to reduce the risk of overfitting our trust-region parameters to a particular problem collection.

\begin{figure}[t]
	\centering
	\begin{subfigure}[b]{0.48\textwidth}
		\includegraphics[width=\textwidth]{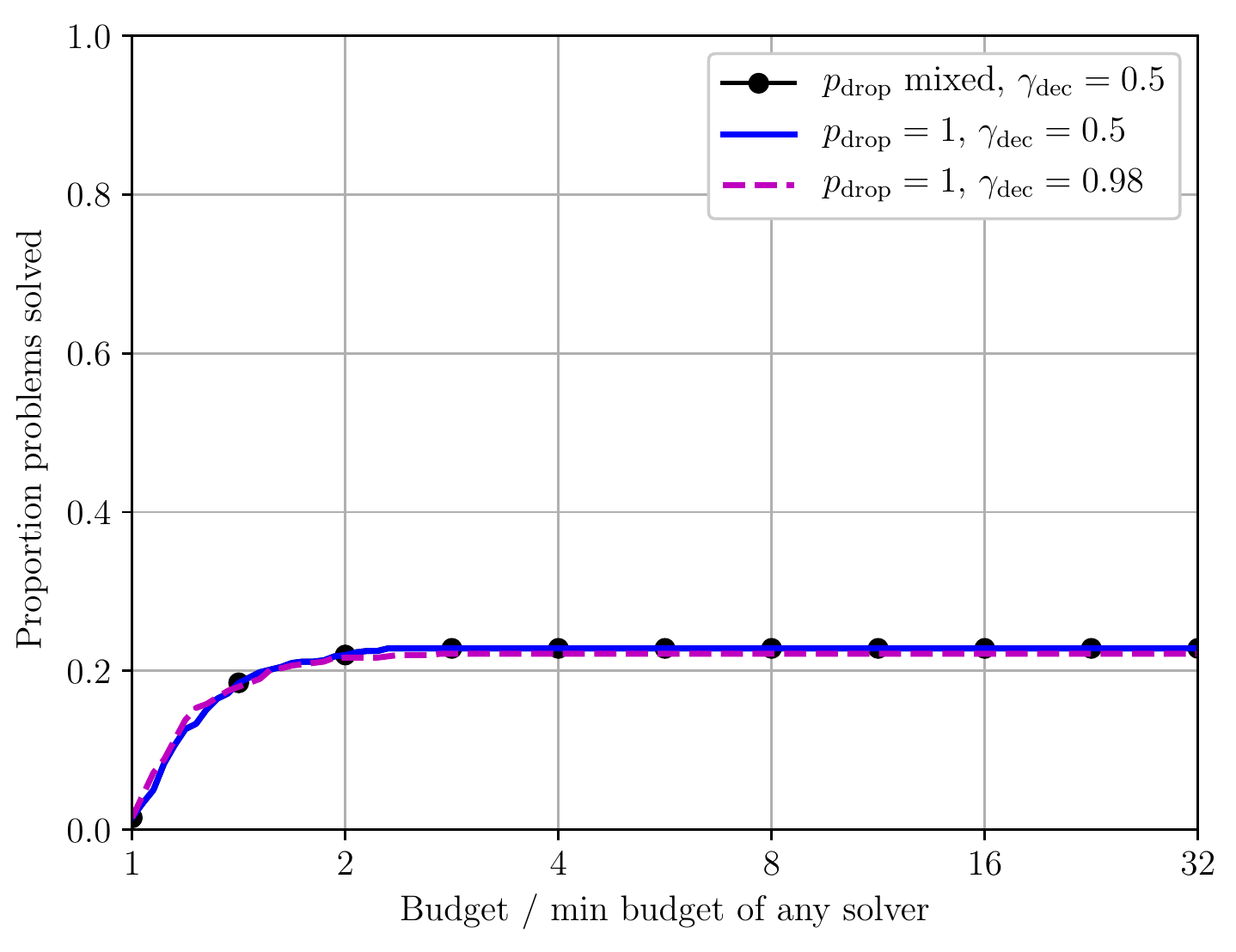}
		\caption{DFBGN with $p=n/100$}
	\end{subfigure}
	~
	\begin{subfigure}[b]{0.48\textwidth}
		\includegraphics[width=\textwidth]{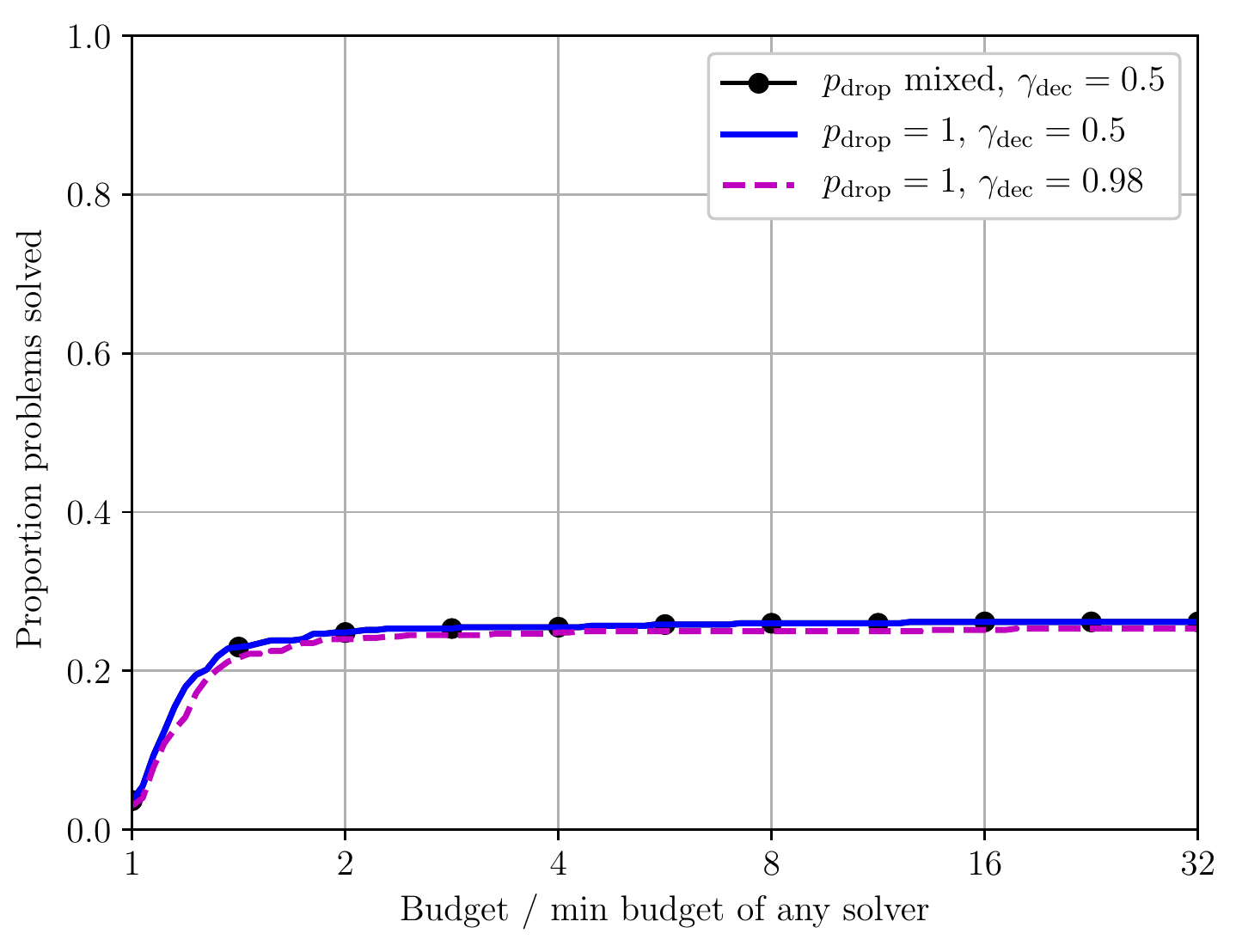}
		\caption{DFBGN with $p=n/10$}
	\end{subfigure}
	\\
	\begin{subfigure}[b]{0.48\textwidth}
		\includegraphics[width=\textwidth]{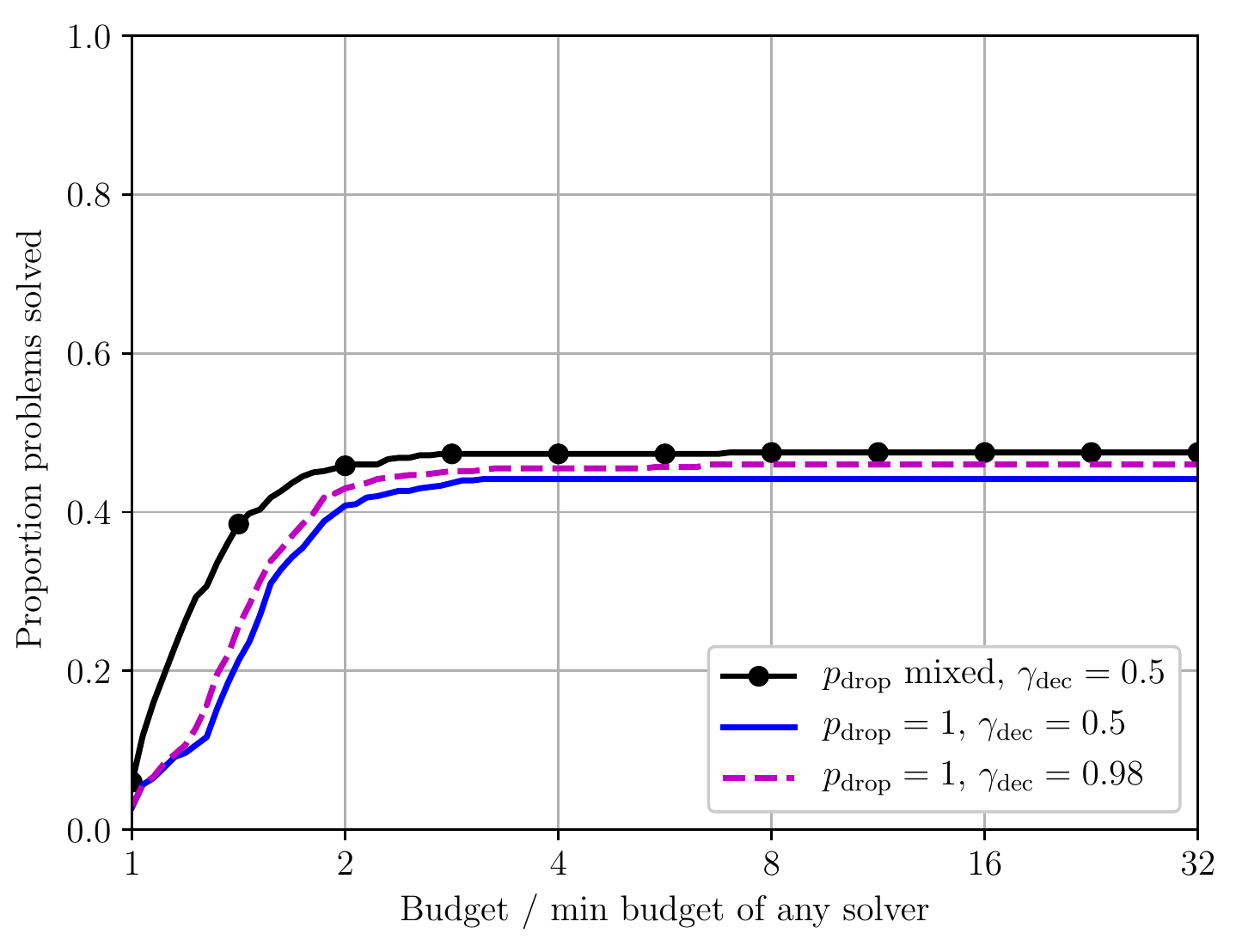}
		\caption{DFBGN with $p=n/2$}
	\end{subfigure}
	~
	\begin{subfigure}[b]{0.48\textwidth}
		\includegraphics[width=\textwidth]{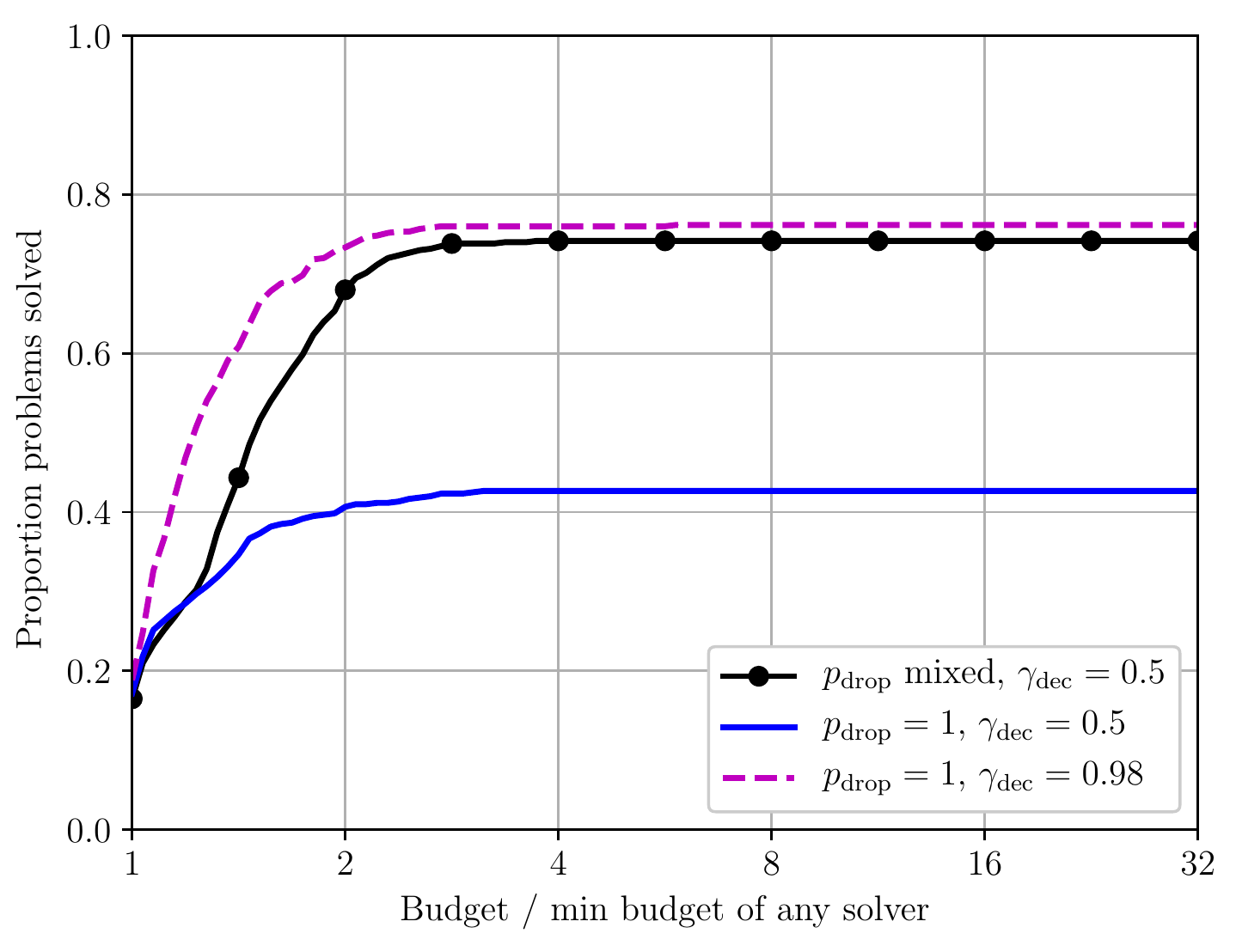}
		\caption{DFBGN with $p=n$}
	\end{subfigure}
	\caption{Performance profiles (in evaluations) comparing different choices of $p_{\rm drop}$ and $\gamma_{\rm dec}$ for DFBGN, at accuracy level $\tau=10^{-5}$. The choice `$p_{\rm drop}$ mixed' uses $p_{\rm drop}=1$ for successful iterations and $p_{\rm drop}=p/10$ for unsuccessful iterations. Results an average of 10 runs, each with a budget of $100(n+1)$ evaluations. The problem collection is (CR).  See \secref{sec_numerics_framework} for details on the testing framework.}
	\label{fig_noisy_comparison}
\end{figure}

\section{Numerical Results} \label{sec_numerics}
In this section we compare the performance of DFBGN (\algref{alg_block_dfols}) to that of DFO-LS.
We note that that DFO-LS has been shown to have state-of-the-art performance compared to other solvers in \cite{Cartis2018}.
As described in \secref{sec_dfbgn_full_algo}, the implementation of DFBGN is based on the decision to reduce the linear algebra cost of the algorithm at the expense of more objective evaluations per iteration.
However, we still maintain the goal of DFBGN achieving (close to) state-of-the-art performance when it is run as a `full space' method (i.e.~$p=n$).
Here, we will investigate this tradeoff in practice.

\subsection{Testing Framework} \label{sec_numerics_framework}
In our testing, we will compare a Python implementation of DFBGN (\algref{alg_block_dfols}) against DFO-LS version 1.0.2 (also implemented in Python).
The implementation of DFBGN is available on Github.\footnote{See \url{https://github.com/numericalalgorithmsgroup/dfbgn}. Results here use version 0.1.}
We will consider both the standard version of DFO-LS, and one where we use a reduced initialization cost of $n/100$ evaluations (c.f.~\remref{rem_dfols_growing}).
This will allow us to compare both the overall performance of DFBGN and its performance with small budgets (since DFBGN also has a reduced initialization cost of $p+1$ evaluations).
We compare these against DFBGN with the choices $p\in\{n/100, n/10, n/2, n\}$ and the adaptive choice of $p_{\rm drop}\in\{1,p/10\}$ (\secref{sec_scalability_practicality}).
All default settings are used for both solvers, and since both are randomized (DFO-LS uses random initial directions only, and DFBGN is randomized through \algref{alg_dfbgn_add_points}), we run 10 instances of each problem under all solver configurations.

\paragraph{Test Problems}
We will consider two collections of nonlinear least-squares test problems, both taken from the CUTEst collection \cite{Gould2015}.
The first, denoted (CR), is a collection of 60 medium-scale problems (with $25\leq n\leq 110$ and $n\leq m \leq 400$).
Full details of the (CR) collection may be found in \cite[Table 3]{Cartis2019a}.
The second, denoted (CR-large), is a collection of 28 large-scale problems (with $1000 \leq n \leq 5000$ and $n\leq m \leq 9998$).
This collection is a subset of problems from (CR), with their dimension increased substantially.
Full details of the (CR-large) collection are given in \appref{app_cr_large_problems}.
Note that the 12 hour runtime limit was only relevant for (CR-large) in all cases.

\paragraph{Measuring Solver Performance}
For every problem, we allow all solvers a budget of at most $100(n+1)$ objective evaluations (i.e.~evaluations of the full vector $\br(\bx)$).
This dimension-dependent choice may be understood as equivalent to 100 evaluations of $\br(\bx)$ and the Jacobian $J(\bx)$ via finite differencing.
However, given the importance of linear algebra cost to our comparisons, we allow each solver a maximum runtime of 12 hours for each instance of each problem.\footnote{Since all problems are implemented in Fortran via CUTEst, the cost of objective evaluations for this testing is minimal.}
For each solver $S$, each problem instance $P$, and accuracy level $\tau\in(0,1)$, we calculate
\begin{align}
	N(S,P,\tau) &\defeq \text{\# evaluations of $\br(\bx)$ required to find a point $\bx$ with} \nonumber \\
	&\qquad\qquad f(\bx) \leq f(\bx^*) + \tau (f(\bx_0)-f(\bx^*)), \label{eq_thresh_measure}
\end{align}
where $f(\bx^*)$ is an estimate of the minimum of $f$ as listed in \cite[Table 3]{Cartis2019a} for (CR) and \appref{app_cr_large_problems} for (CR-large).
If this objective decrease is not achieved by a solver before its budget or runtime limit is hit, we set $N(S,P,\tau)=\infty$.
We then compare solver performances on a problem collection $\mathcal{P}$ by plotting either data profiles \cite{More2009}
\be d_{S,\tau} (\alpha) \defeq \frac{1}{|\mathcal{P}|}\left|\{P\in\mathcal{P} : N(S,P,\tau) \leq \alpha (n_P+1)\}\right|, \ee
where $n_P$ is the dimension of problem instance $P$ and $\alpha\in[0,100]$ is an evaluation budget (in ``gradients'', or multiples of $n+1$), or performance profiles \cite{Dolan2002}
\be \pi_{S,\tau} (\alpha) \defeq \frac{1}{|\mathcal{P}|}\left|\{P\in\mathcal{P} : N(S,P,\tau) \leq \alpha N_{\min}(P,\tau)\}\right|, \ee
where $N_{\min}(P,\tau)$ is the minimum value of $N(S,P,\tau)$ for any solver $S$, and $\alpha\geq 1$ is a performance ratio.
In some instances, we will plot profiles based on runtime rather than objective evaluations.
For this, we simply replace ``number of evaluations of $\br(\bx)$'' with ``runtime'' in \eqref{eq_thresh_measure}.

When we plot the objective reduction achieved by a given solver, we normalize the objective value to be in $[0,1]$ by plotting
\be \frac{f(\bx)-f(\bx^*)}{f(\bx_0)-f(\bx^*)}, \ee
which corresponds to the best $\tau$ achieved in \eqref{eq_thresh_measure} after a given number of evaluations (again measured in ``gradients'') or runtime.

\subsection{Results Based on Evaluations} \label{sec_dfbgn_results_evals}
We begin our comparisons by considering the performance of DFO-LS and DFBGN when measured in terms of evaluations.

\begin{figure}[t]
	\centering
	\begin{subfigure}[b]{0.48\textwidth}
		\includegraphics[width=\textwidth]{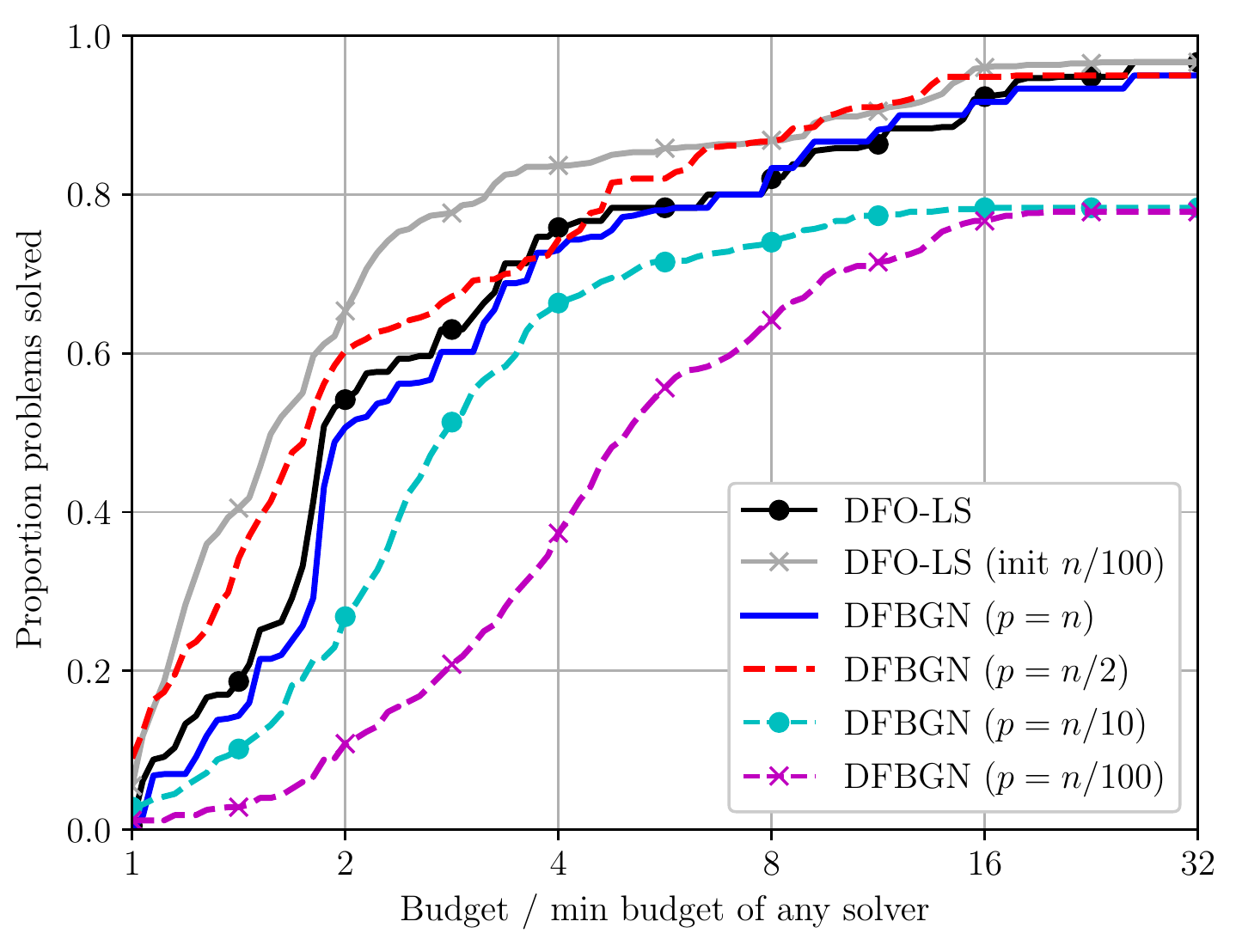}
		\caption{$\tau=0.5$}
	\end{subfigure}
	~
	\begin{subfigure}[b]{0.48\textwidth}
		\includegraphics[width=\textwidth]{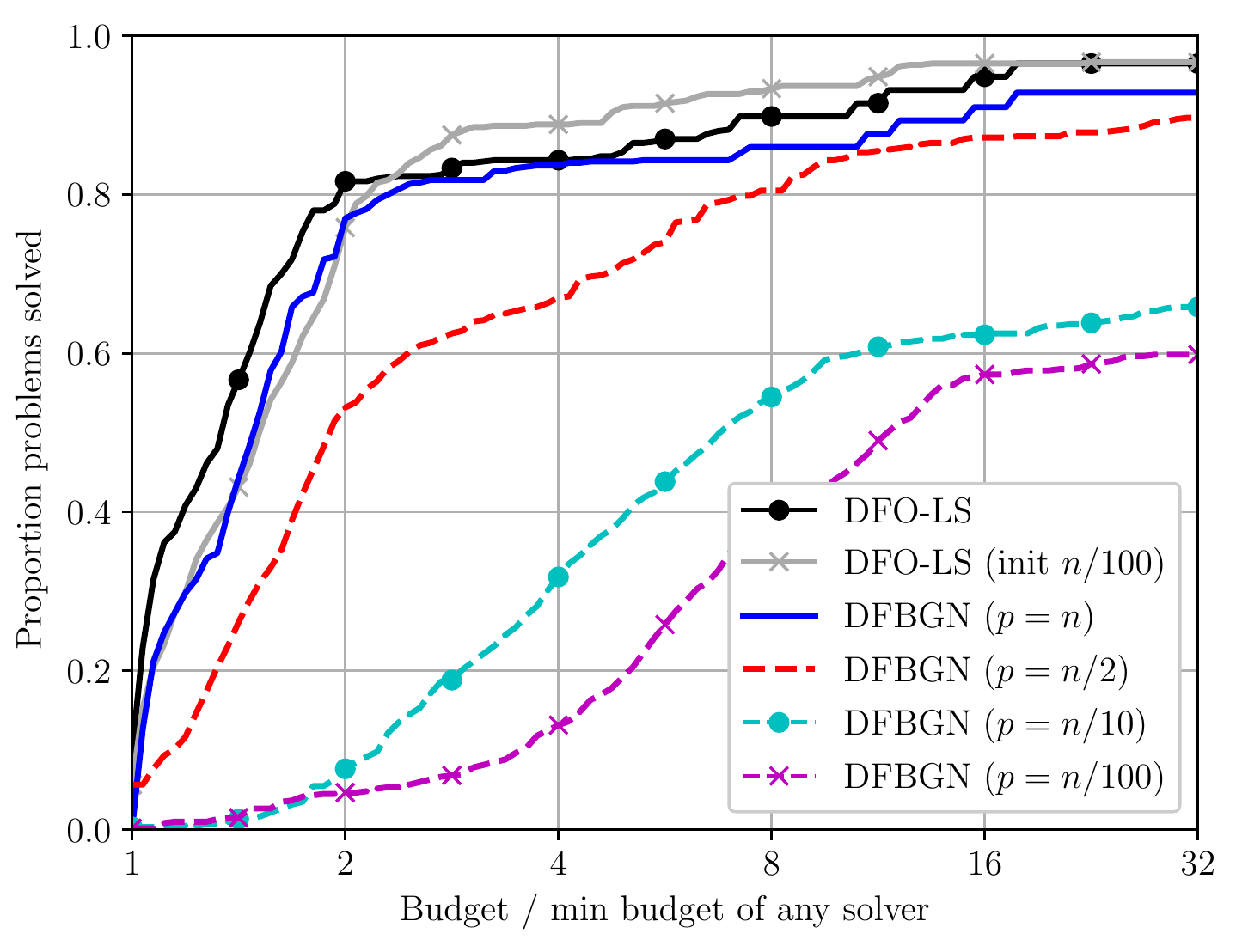}
		\caption{$\tau=10^{-1}$}
	\end{subfigure}
	\\
	\begin{subfigure}[b]{0.48\textwidth}
		\includegraphics[width=\textwidth]{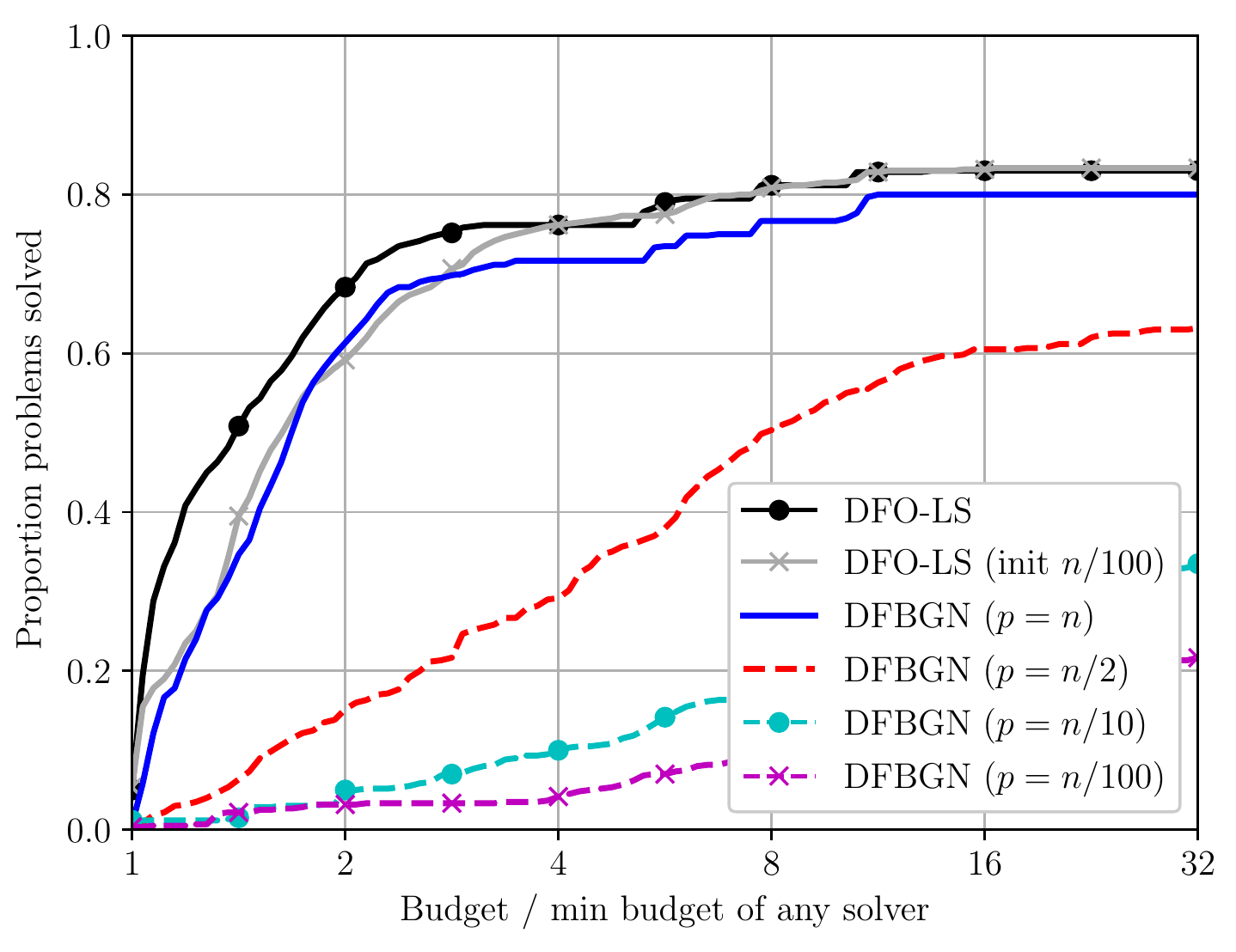}
		\caption{$\tau=10^{-3}$}
	\end{subfigure}
	~
	\begin{subfigure}[b]{0.48\textwidth}
		\includegraphics[width=\textwidth]{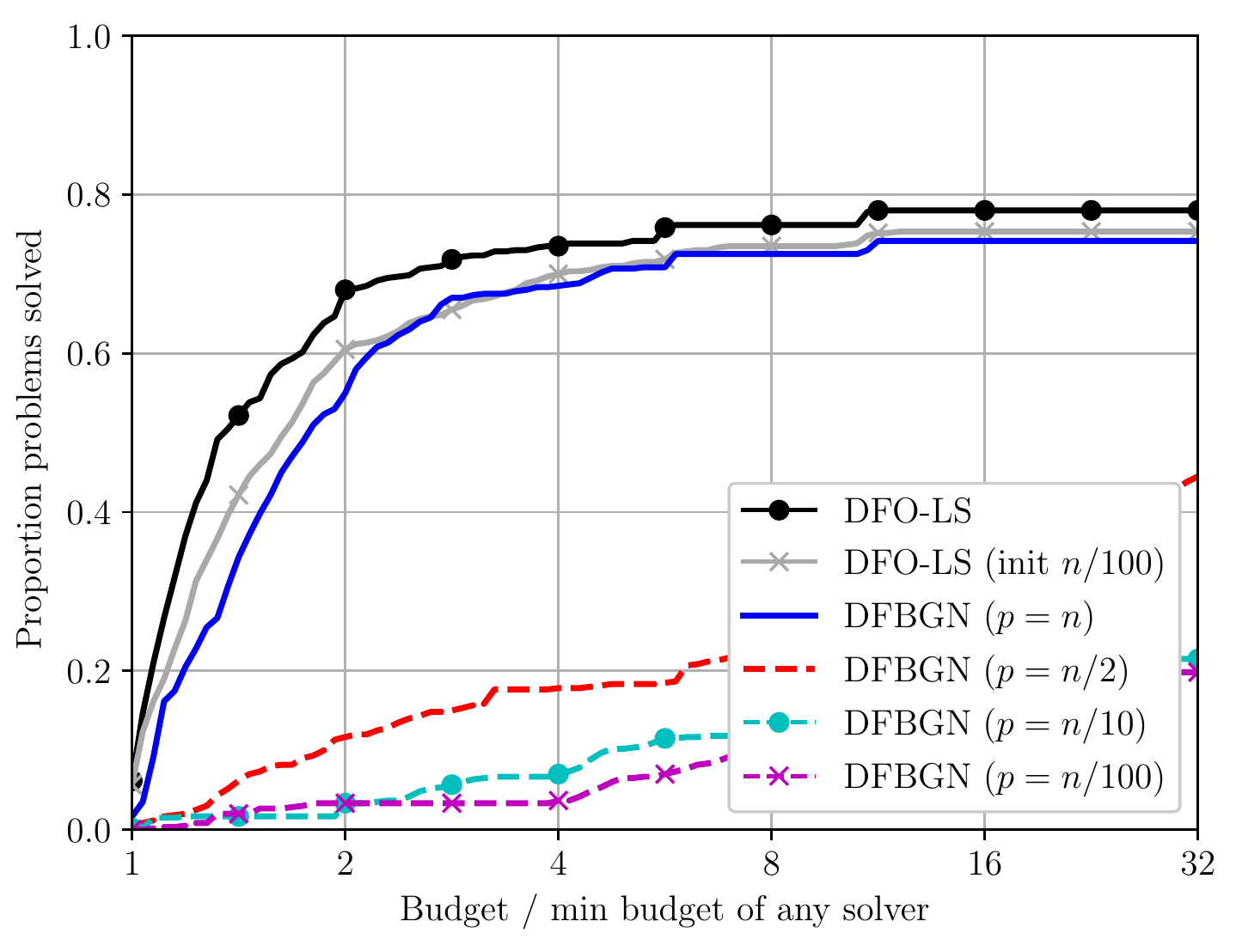}
		\caption{$\tau=10^{-5}$}
	\end{subfigure}
	\caption{Performance profiles (in evaluations) comparing DFO-LS (with and without reduced initialization cost) with DFBGN (various $p$ choices) for different accuracy levels. Results are an average of 10 runs for each problem, with a budget of $100(n+1)$ evaluations and a 12 hour runtime limit per instance. The problem collection is (CR).}
	\label{fig_dfbgn_cutest100}
\end{figure}

\paragraph{Medium-Scale Problems (CR)}
First, in \figref{fig_dfbgn_cutest100}, we show the results for different accuracy levels for the (CR) problem collection (with $n\approx 100$).
For the lowest accuracy level $\tau=0.5$, DFO-LS with reduced initialization cost is the best-performing solver, followed by DFBGN with $p=n/2$.
These correspond to methods with lower initialization costs than DFO-LS and DFBGN with $p=n$, so this is likely a large driver behind their performance. 
DFBGN with full space size $p=n$ performs similarly to DFO-LS, and DFBGN with $p=n/10$ and $p=n/100$ perform worst (as they are optimizing in a very small subspace at each iteration).

However, as we look at higher accuracy levels, we see that DFO-LS (with and without reduced initialization cost) performs best, and the DFBGN methods perform worse.
The performance gap is more noticeable for small values of $p$.
As expected, this means that DFBGN requires more evaluations to achieve these levels of accuracy, and benefits from being allowed to use a larger $p$.
Notably, DFBGN with $p=n$ has only a slight performance loss compared to DFO-LS, even though it uses $p/10$ evaluations on unsuccessful iterations (rather than 1--2 for DFO-LS); this indicates that our choice of $p_{\rm drop}$ provides a suitable compromise between solver robustness and evaluation efficiency.

\begin{figure}[t]
	\centering
	\begin{subfigure}[b]{0.48\textwidth}
		\includegraphics[width=\textwidth]{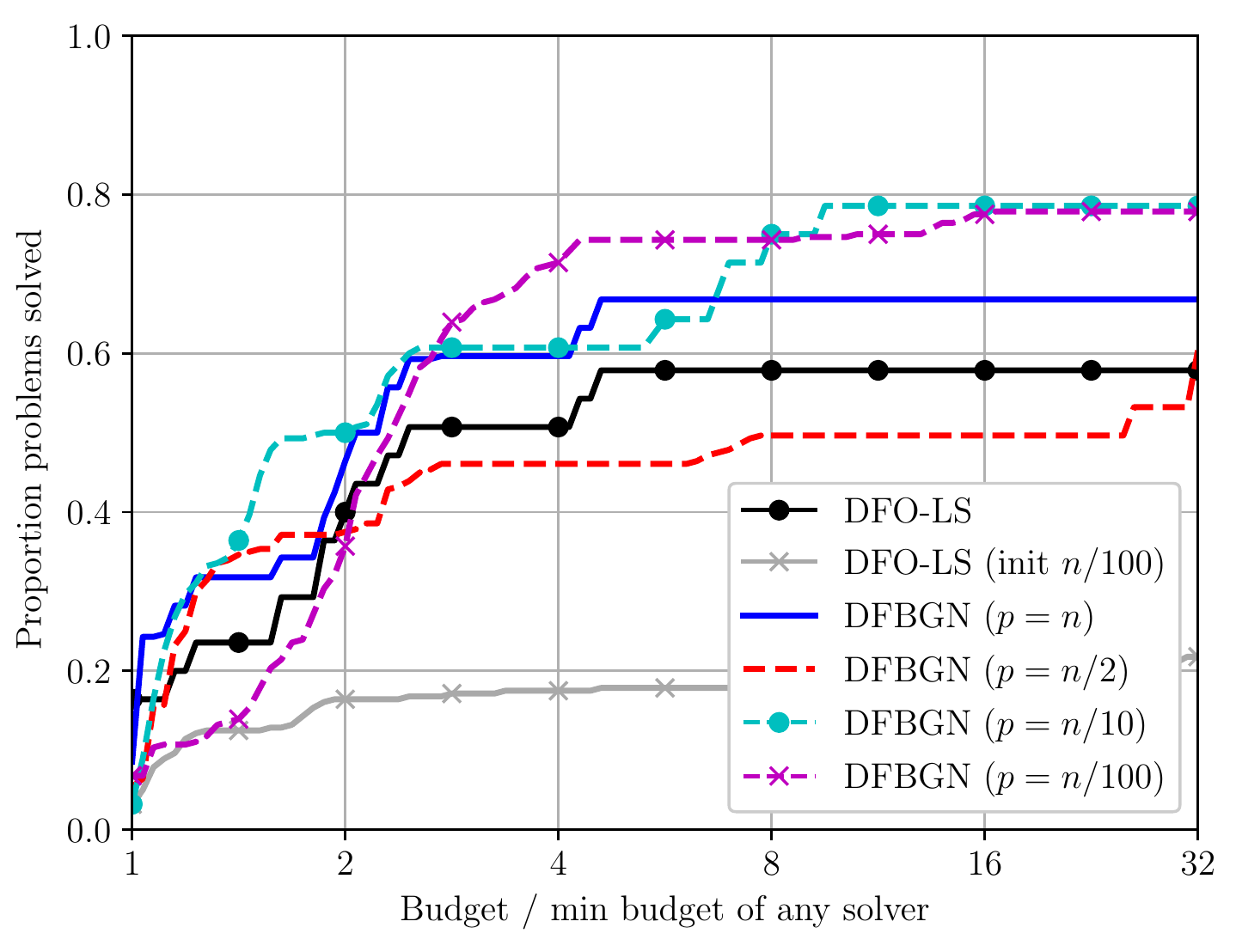}
		\caption{$\tau=0.5$}
	\end{subfigure}
	~
	\begin{subfigure}[b]{0.48\textwidth}
		\includegraphics[width=\textwidth]{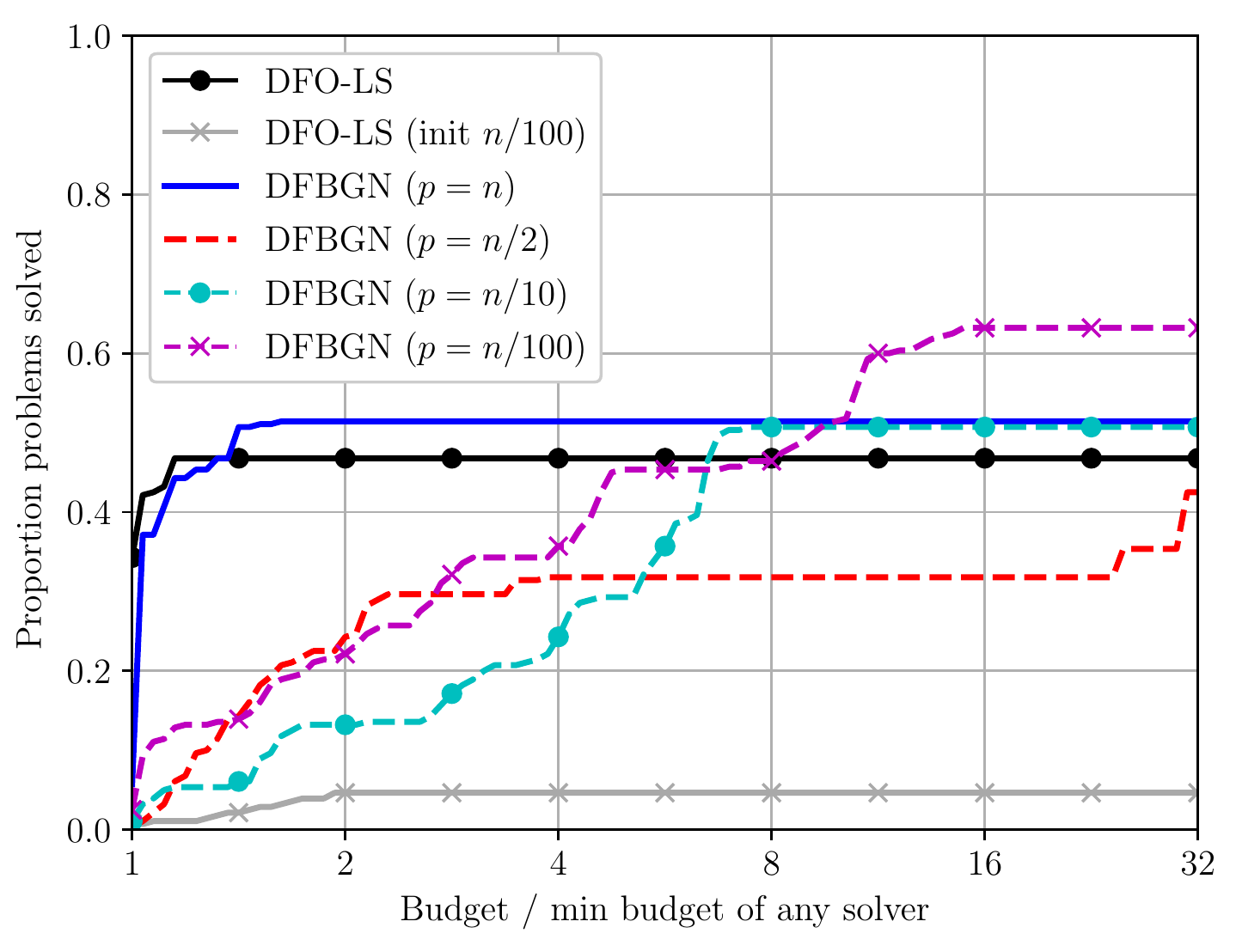}
		\caption{$\tau=10^{-1}$}
	\end{subfigure}
	\\
	\begin{subfigure}[b]{0.48\textwidth}
		\includegraphics[width=\textwidth]{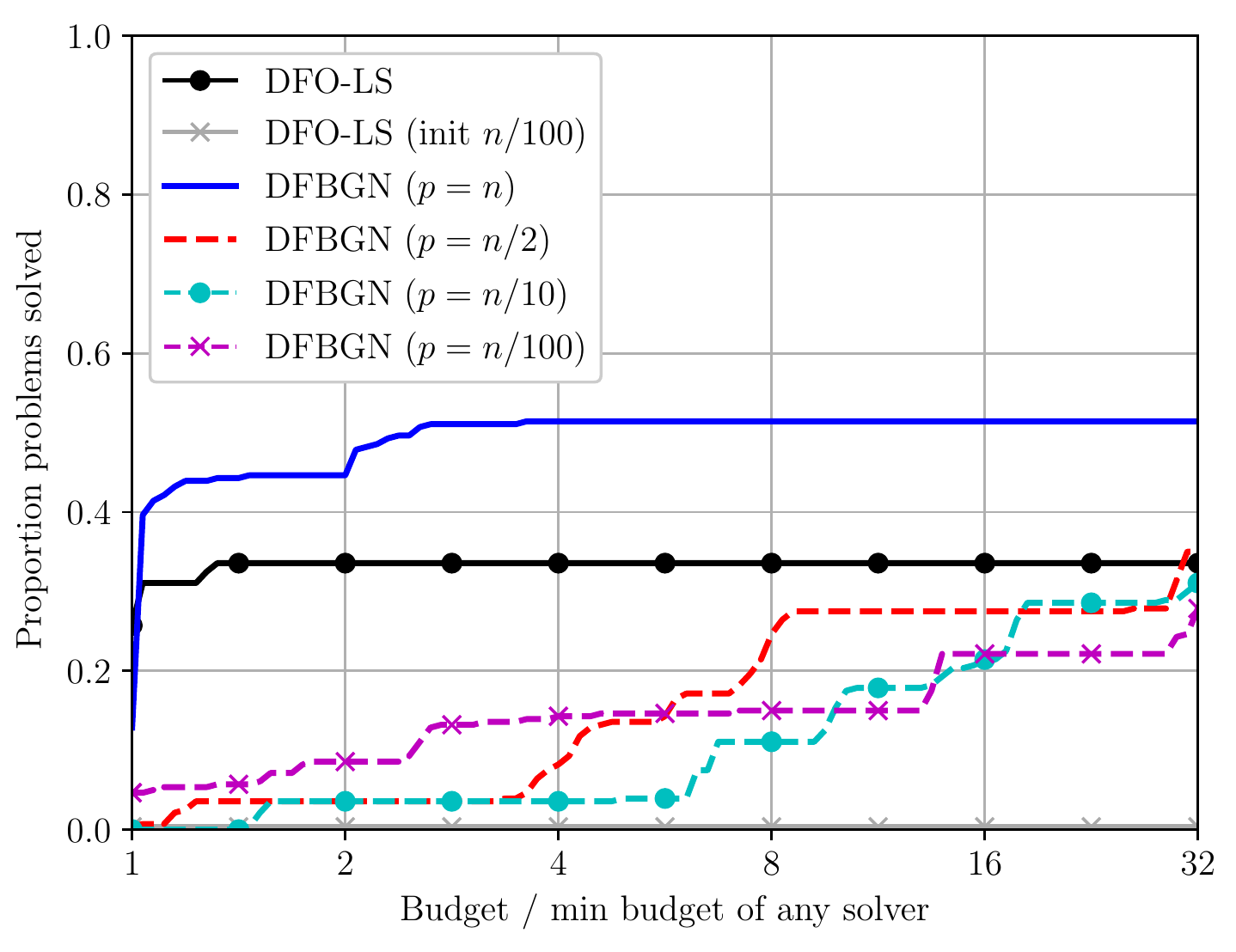}
		\caption{$\tau=10^{-3}$}
	\end{subfigure}
	~
	\begin{subfigure}[b]{0.48\textwidth}
		\includegraphics[width=\textwidth]{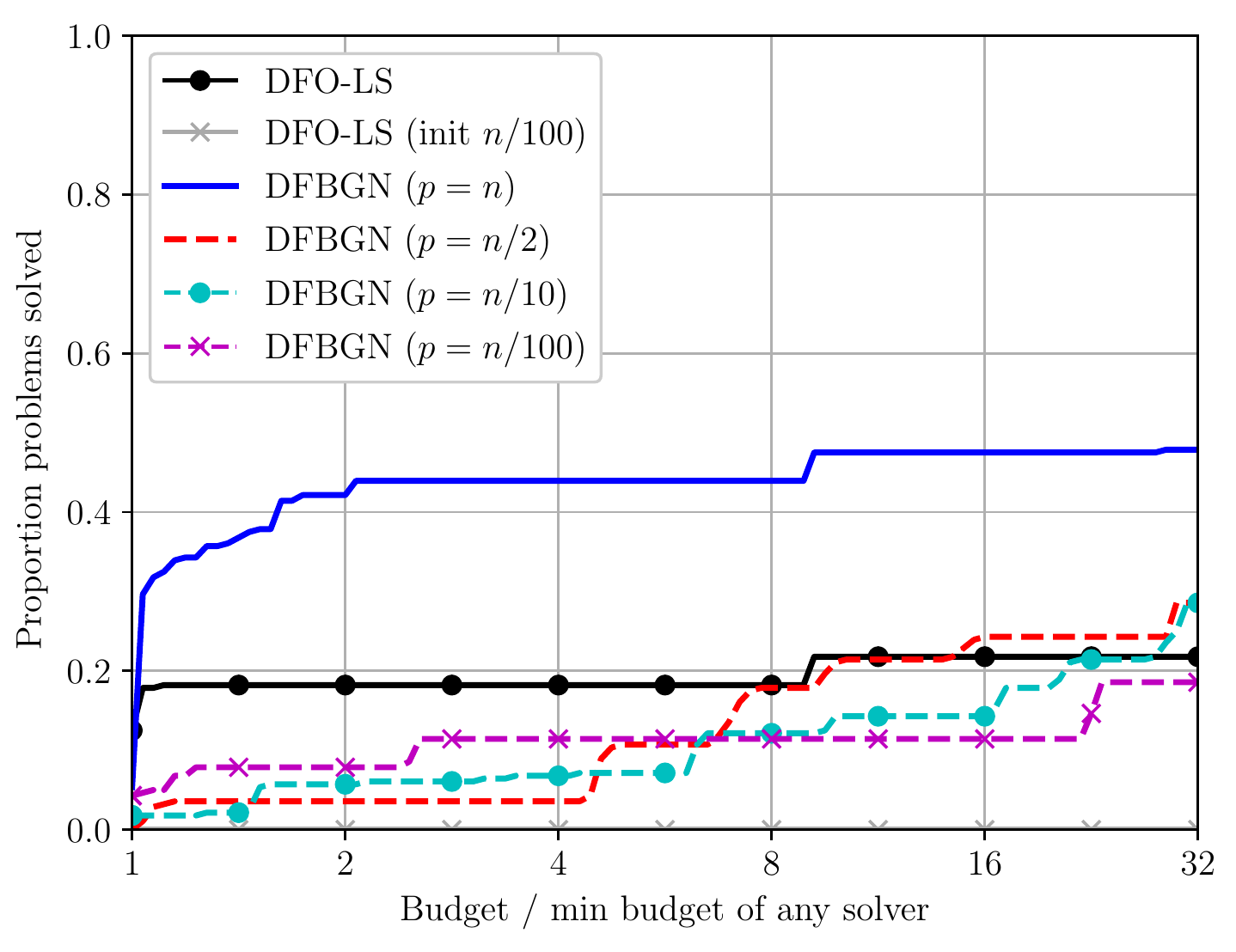}
		\caption{$\tau=10^{-5}$}
	\end{subfigure}
	\caption{Performance profiles (in evaluations) comparing DFO-LS (with and without reduced initialization cost) with DFBGN (various $p$ choices) for different accuracy levels. Results are an average of 10 runs for each problem, with a budget of $100(n+1)$ evaluations and a 12 hour runtime limit per instance. The problem collection is (CR-large).}
	\label{fig_dfbgn_cutest1000}
\end{figure}

\paragraph{Large-Scale Problems (CR-large)}
Next, in \figref{fig_dfbgn_cutest1000}, we show the same plots but for the (CR-large) problem collection, with $n\approx 1000$.
Compared to \figref{fig_dfbgn_cutest100}, the situation is quite different.

At the lowest accuracy level, $\tau=0.5$, DFBGN with small subspaces ($p=n/10$ and $p=n/100$) gives the best-performing solvers, followed by the full-space solvers (DFO-LS and DFBGN with $p=n$).
For higher accuracy levels, the performance of DFBGN with small $p$ deteriorates compared with the full-space methods.
DFBGN with $p=n/2$ is the worst-performing DFBGN variant at low accuracy levels, and performs similar to DFBGN with small $p$ at high accuracy levels.
DFO-LS with reduced initialization cost is the worst-performing solver for this dataset.

Unlike the medium-scale results above, we no longer have a clear trend in the performance of DFBGN as we vary $p$.
Instead, we have a combination of two factors coming into play, which have opposite impacts on the performance of DFBGN as we vary $p$.
On one hand, we have the number of evaluations required for DFBGN (with a given $p$) to reach the desired accuracy level.
On the other hand, we have the number of iterations that DFBGN can perform before reaching the 12 hour runtime limit.

DFBGN with small $p$ requires more evaluations to reach a given level of accuracy (as seen with the medium-scale results), but can perform many evaluations before timing out due to its low per-iteration linear algebra cost.
This is reflected in it solving many problems to low accuracy, but few problems to high accuracy.
By contrast, DFBGN with $p=n$ is allowed to perform fewer iterations before timing out (and hence see fewer evaluations), but requires many fewer evaluations to solve problems, particularly for high accuracy.
This manifests in its good performance for low and high accuracy levels.
The middle ground, DFBGN with $p=n/2$, has its performance negatively impacted by both issues: it requires many fewer evaluations to solve problems than $p=n$ (especially for high accuracy), but also has a relatively high per-iteration linear algebra cost and times out compared to small $p$.

Both variants of DFO-LS show worse performance here than for the medium-scale problems.
This is because, as suggested by the analysis in \tabref{tab_linalg_comparison}, they are both affected by the runtime limit.
DFO-LS with reduced initialization cost is particularly affected, because of the high cost of the SVD (of the full $m\times n$ Jacobian) at each iteration for these problems.
We note that this cost is only noticeable for these large-scale problems, and this variant of DFO-LS is still useful for small- and medium-scale problems, as discussed in \cite{Cartis2018}.

\begin{table}[t]
	\centering
	\footnotesize{
	\begin{tabular}{lc}
		\hline\noalign{\smallskip}
		Solver & \% timeout \\ \noalign{\smallskip}\hline\noalign{\smallskip}
		DFO-LS & 92.5\% \\
		DFO-LS (init $n/100$) & 97.9\% \\ \noalign{\smallskip}\hline\noalign{\smallskip}
		DFBGN ($p=n/100$) & 34.6\% \\
		DFBGN ($p=n/10$) & 73.9\% \\
		DFBGN ($p=n/2$) & 81.8\% \\
		DFBGN ($p=n$) & 66.4\% \\
		\noalign{\smallskip}\hline\noalign{\smallskip}
	\end{tabular}}
	\caption{Proportion of problem instances from (CR-large) for which each solver terminated on the maximum 12 hour runtime.}
	\label{tab_timeout_comparison}
\end{table}

We can verify the impact of the timeout on DFO-LS and DFBGN by considering the proportion of problem instances for (CR-large) for which the solver terminated because of the timeout.
These results are presented in \tabref{tab_timeout_comparison}.
DFO-LS reaches the 12 hour maximum much more frequently than DFBGN: over 90\% rather than 35\% for DFBGN with $p=n/100$ or 66\% for DFBGN with $p=n$ (see \remref{rem_dfbgn_n_v_dfols} below). 
For DFBGN with different values of $p$, we see the same behaviour as in \figref{fig_dfbgn_cutest1000}.
That is, DFBGN with small $p$ times out the least frequently, as its low per-iteration runtime means it performs enough iterations to terminate naturally.
For DFBGN with $p=n$, we time out more frequently (due to the high per-iteration runtime), but not as often as with $p=n/2$, as the its superior budget performance for high accuracy levels means it fully solves more problems, even with comparatively fewer iterations.
We note that \tabref{tab_timeout_comparison} does not measure what accuracy level was achieved before the timeout, which is better captured in the performance profiles \figref{fig_dfbgn_cutest1000}.

\begin{remark} \label{rem_dfbgn_n_v_dfols}
	DFBGN with $p=n$ has a similar per-iteration linear algebra cost to DFO-LS.
	Hence it can perform a similar number of iterations before reaching the runtime limit.
	However, DFBGN performs more objective evaluations per iteration, because of the choice of $p_{\rm drop}$.
	Since DFBGN with $p=n$ has a similar performance to DFO-LS when measured on budget (as seen in \figref{fig_dfbgn_cutest100}), this means that it has a superior performance when measured by runtime.
	Additionally, if multiple objective evaluations can be run in parallel, then DFBGN would also be able to benefit from this, unlike DFO-LS.
\end{remark}

\begin{remark}
	For completeness, in \appref{app_dfbgn_extra_numerics} we compare DFBGN with DFO-LS on the low-dimensional collection of test problems from Mor\'e and Wild \cite{More2009}.
	We do not include this discussion here as these problems are low-dimensional, which is not the main use case for DFBGN.
\end{remark}

\begin{figure}[t]
	\centering
	\begin{subfigure}[b]{0.48\textwidth}
		\includegraphics[width=\textwidth]{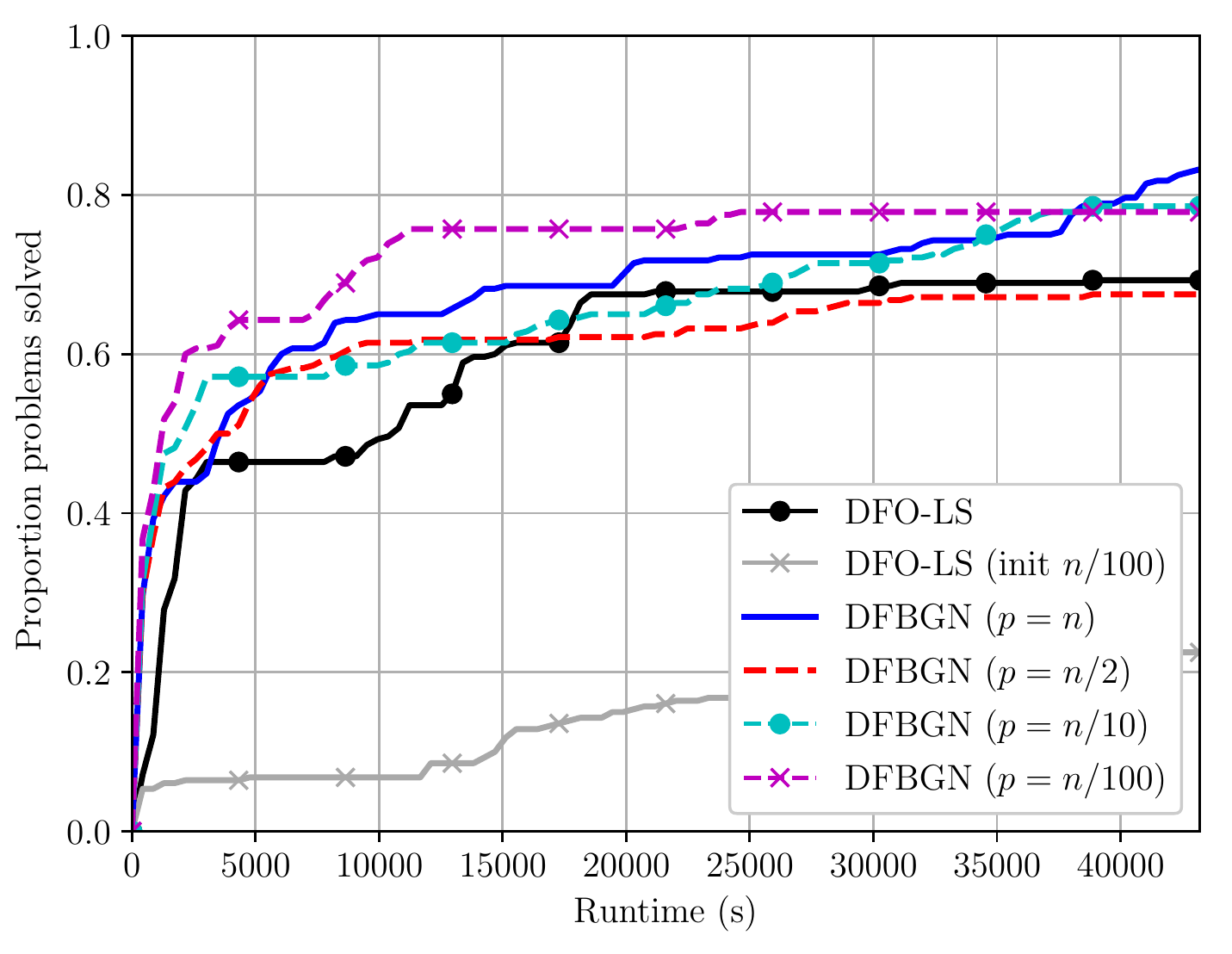}
		\caption{$\tau=0.5$}
	\end{subfigure}
	~
	\begin{subfigure}[b]{0.48\textwidth}
		\includegraphics[width=\textwidth]{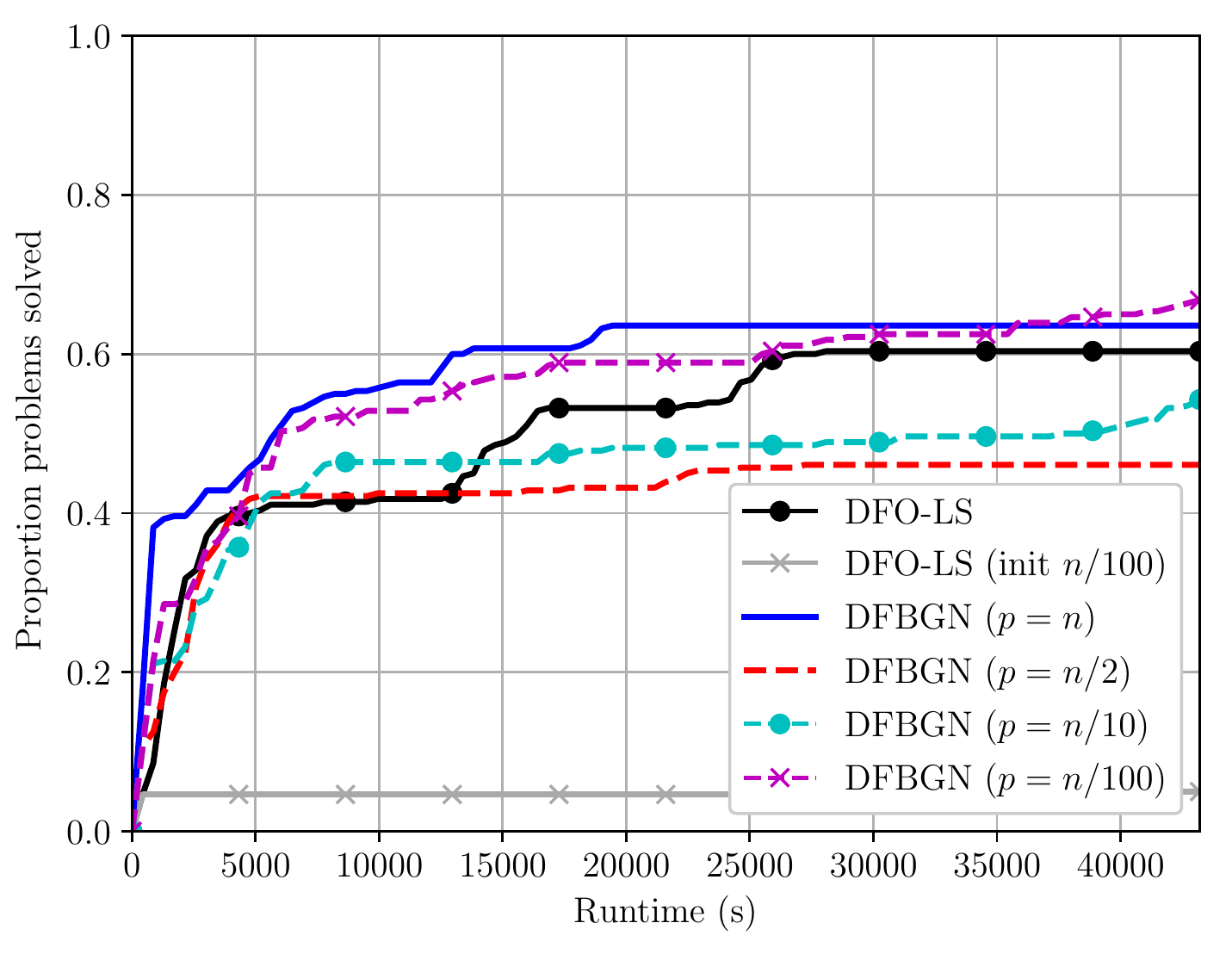}
		\caption{$\tau=10^{-1}$}
	\end{subfigure}
	\\
	\begin{subfigure}[b]{0.48\textwidth}
		\includegraphics[width=\textwidth]{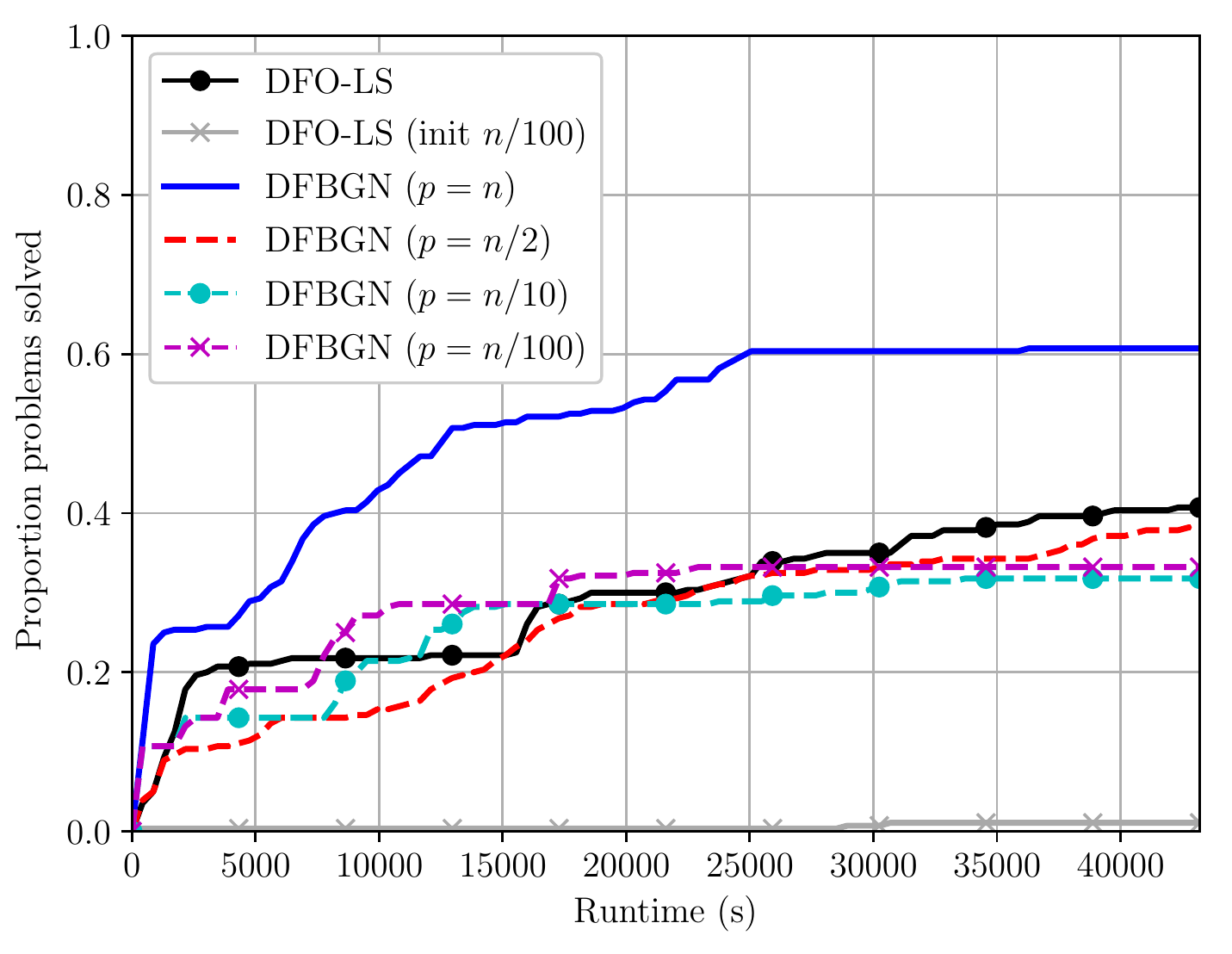}
		\caption{$\tau=10^{-3}$}
	\end{subfigure}
	~
	\begin{subfigure}[b]{0.48\textwidth}
		\includegraphics[width=\textwidth]{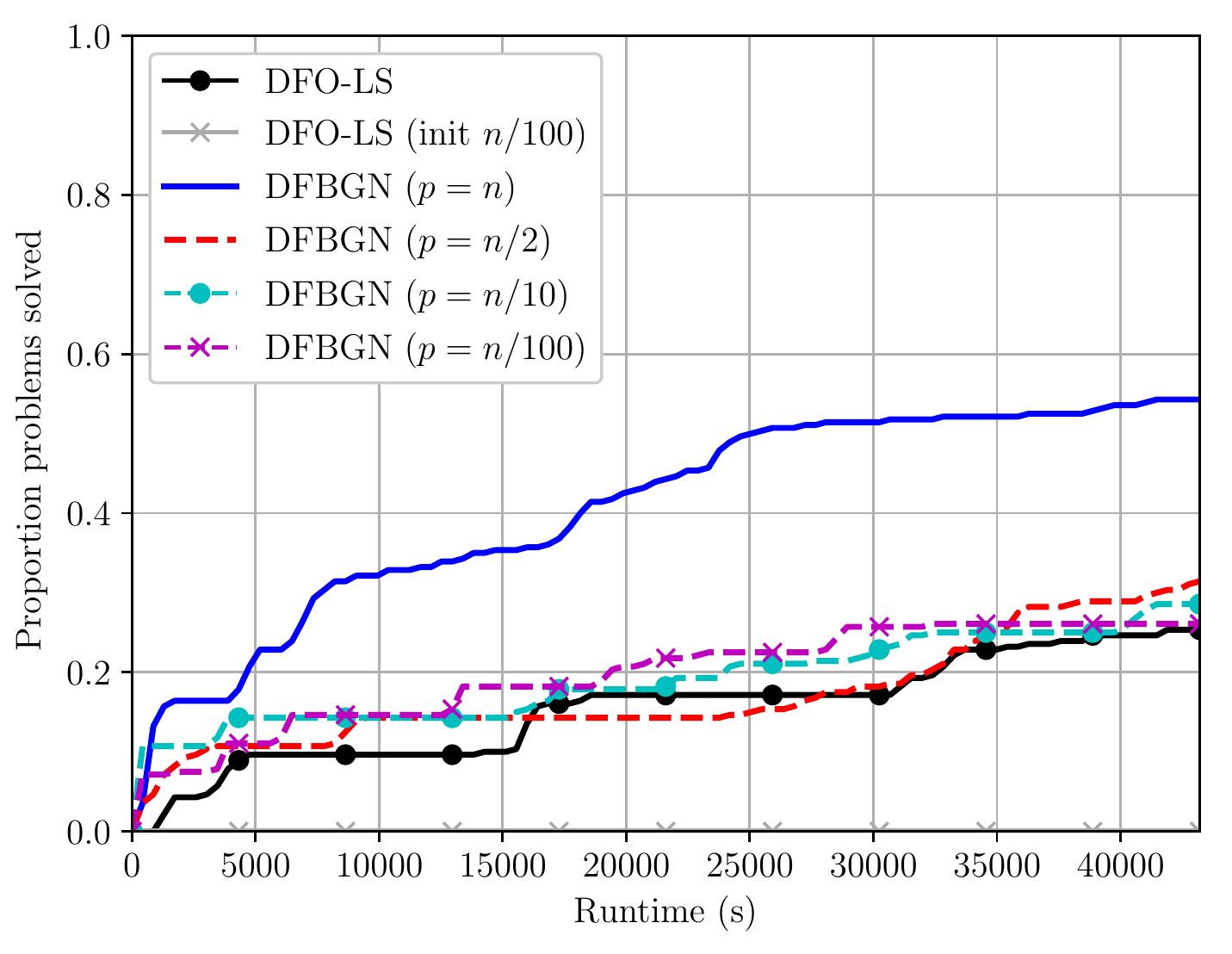}
		\caption{$\tau=10^{-5}$}
	\end{subfigure}
	\caption{Data profiles comparing the runtime of DFO-LS (with and without reduced initialization cost) with DFBGN (various $p$ choices) for different accuracy levels. Results are an average of 10 runs for each problem, with a budget of $100(n+1)$ evaluations and a 12 hour runtime limit per instance. The problem collection is (CR-large).}
	\label{fig_dfbgn_cutest1000_runtime}
\end{figure}

\subsection{Results Based on Runtime} \label{sec_dfbgn_results_runtime}
\enlargethispage{1em}
We have seen above that DFBGN performs well compared to DFO-LS on the (CR-large) problem collection, as the 12 hour timeout causes DFO-LS to terminate after relatively few objective evaluations.
In \figref{fig_dfbgn_cutest1000_runtime}, we show the same comparison for (CR-large) as in \figref{fig_dfbgn_cutest1000}, but showing data profiles of problems solved versus runtime (rather than evaluations).
Here, all DFBGN variants perform similar to or better than DFO-LS for low accuracy levels, since DFBGN has a lower per-iteration runtime than DFO-LS, and this is the regime where DFBGN performs best (on budget).
For high accuracy levels, DFBGN with $p=n$ is the best-performing solver, as it uses large enough subspaces to solve many problems to high accuracy.
By contrast, both DFBGN with small $p$ and DFO-LS perform similarly at high accuracy levels---the impact of the timeout on DFO-LS roughly matches the reduced robustness of DFBGN with small $p$ at these accuracy levels.
Again, as we observed above, DFO-LS with reduced initialization cost is the worst-performing solver, due to the high cost of the SVD at each iteration. 

\begin{figure}[t]
	\centering
	\begin{subfigure}[b]{0.48\textwidth}
		\includegraphics[width=\textwidth]{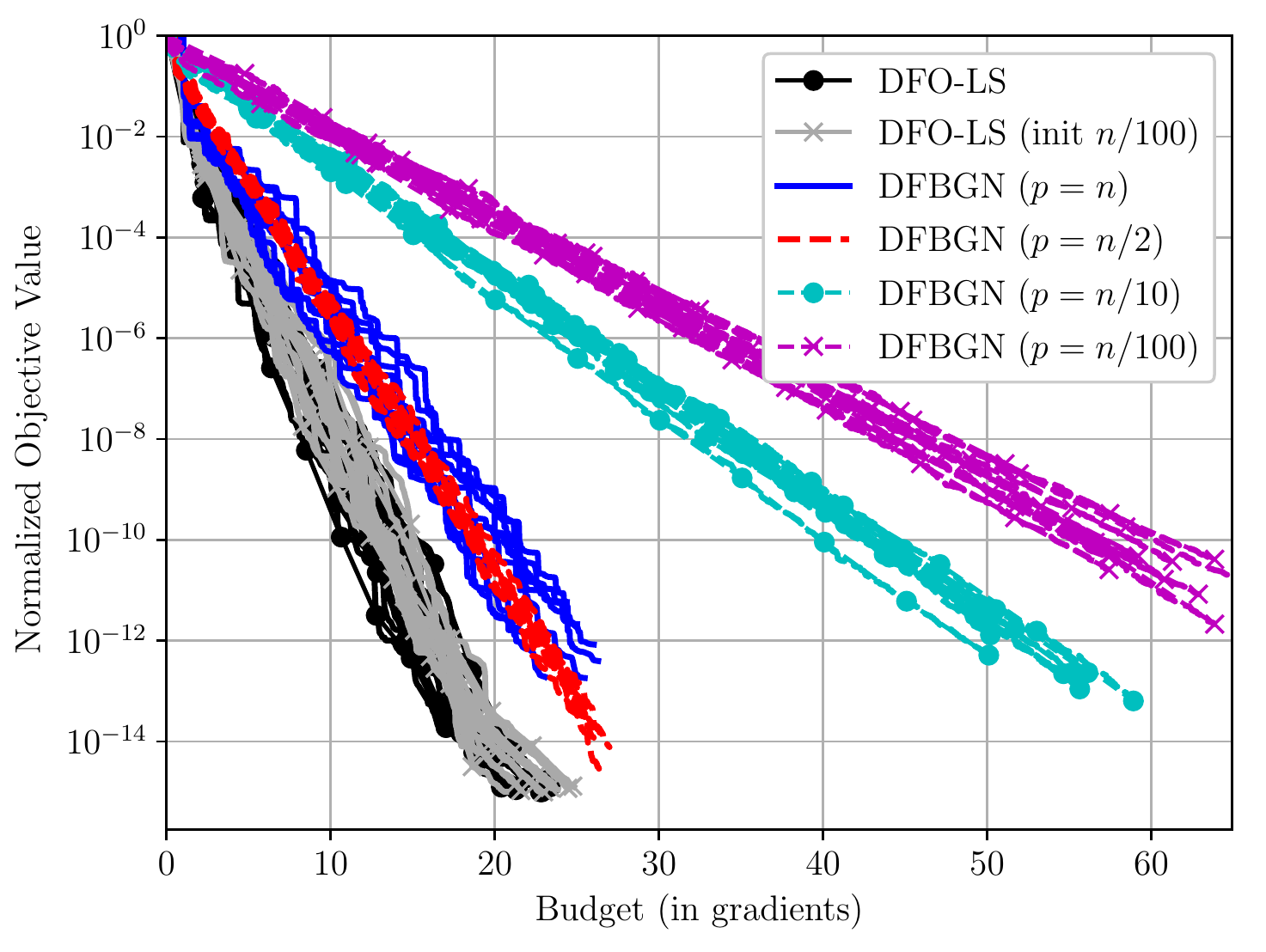}
		\caption{$n=100$, objective vs.~budget}
	\end{subfigure}
	~
	\begin{subfigure}[b]{0.48\textwidth}
		\includegraphics[width=\textwidth]{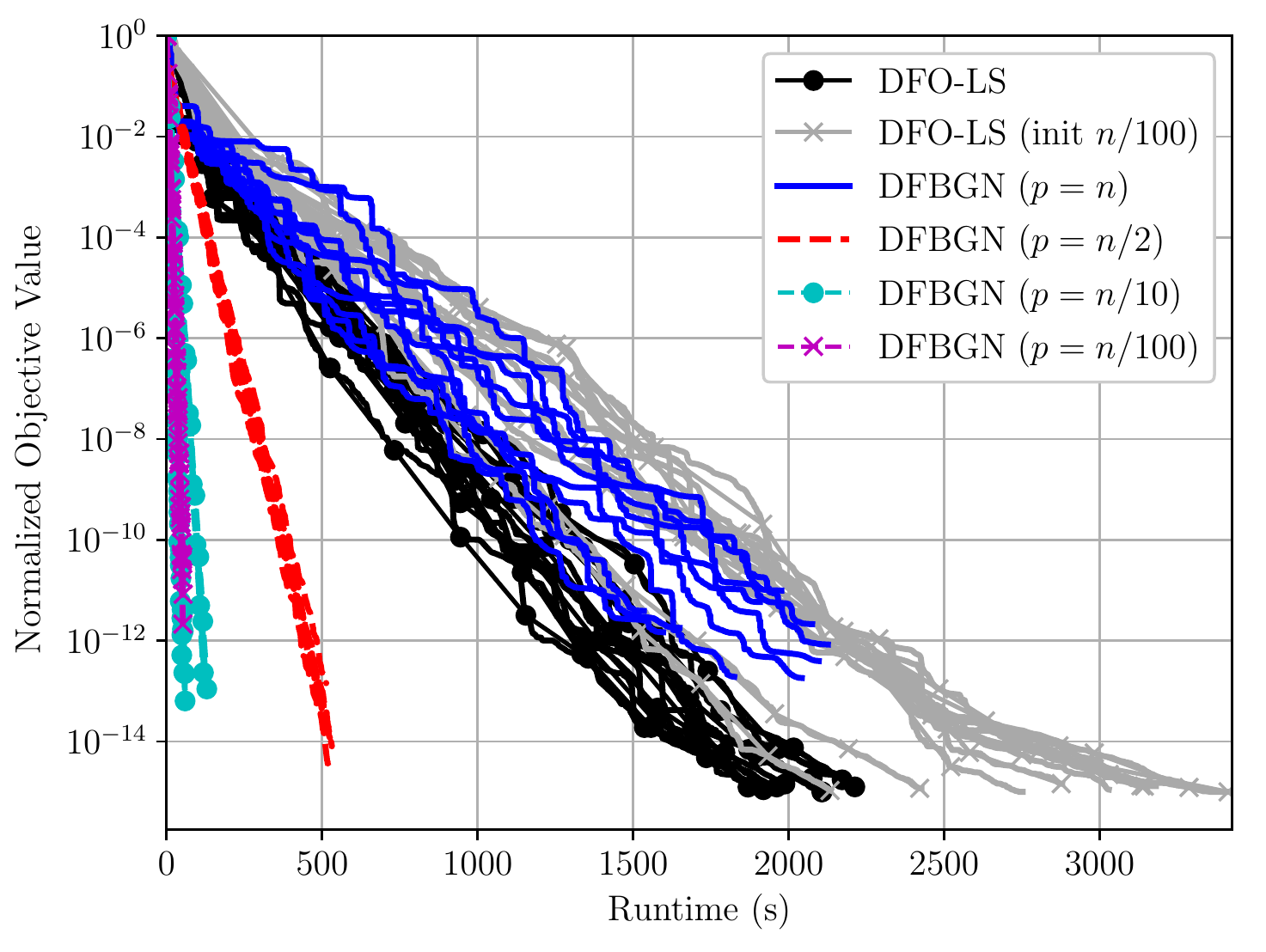}
		\caption{$n=100$, objective vs.~runtime}
	\end{subfigure}
	\\
	\begin{subfigure}[b]{0.48\textwidth}
		\includegraphics[width=\textwidth]{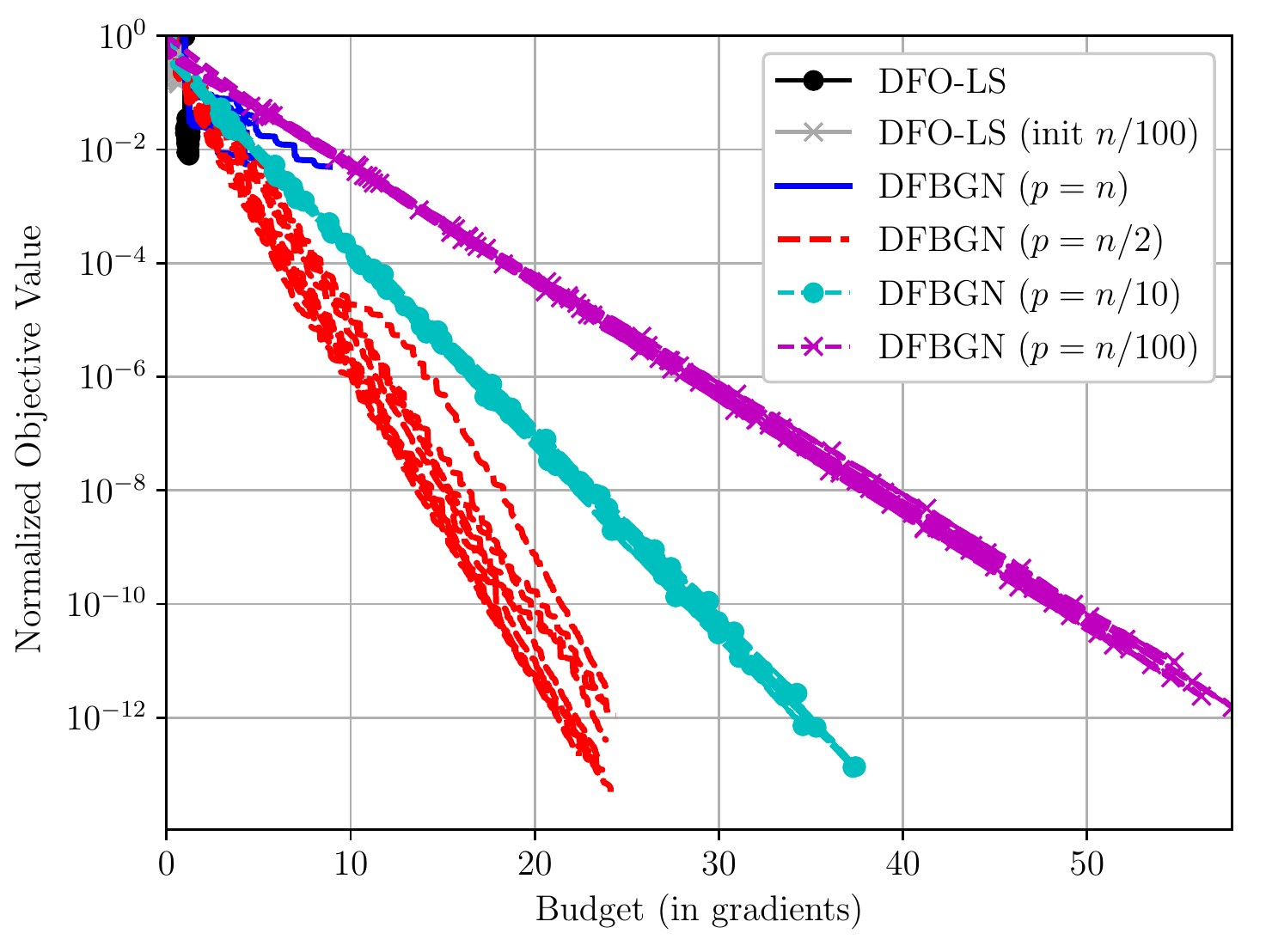}
		\caption{$n=1000$, objective vs.~budget}
	\end{subfigure}
	~
	\begin{subfigure}[b]{0.48\textwidth}
		\includegraphics[width=\textwidth]{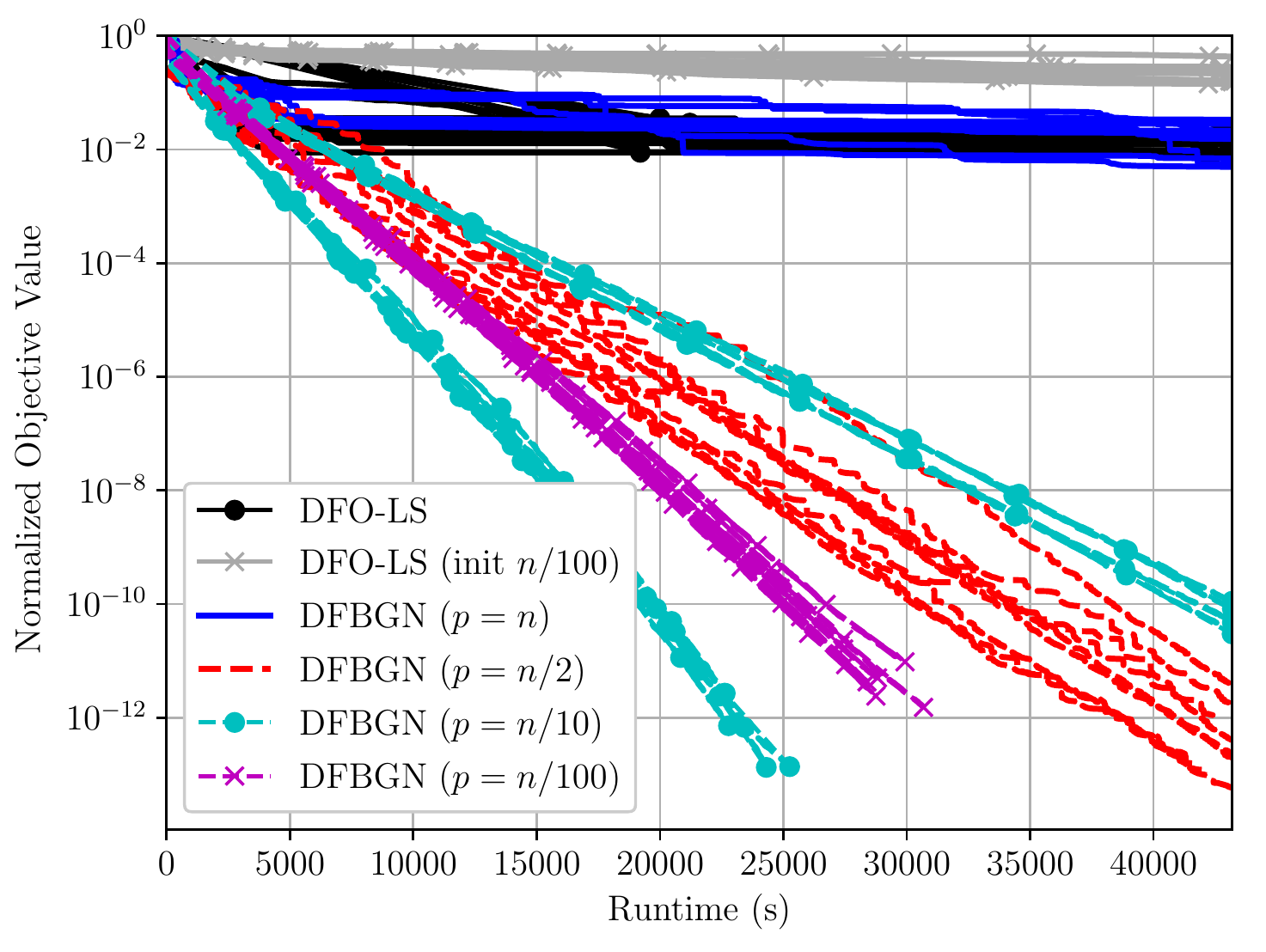}
		\caption{$n=1000$, objective vs.~runtime}
	\end{subfigure}
	\\
	\begin{subfigure}[b]{0.48\textwidth}
		\includegraphics[width=\textwidth]{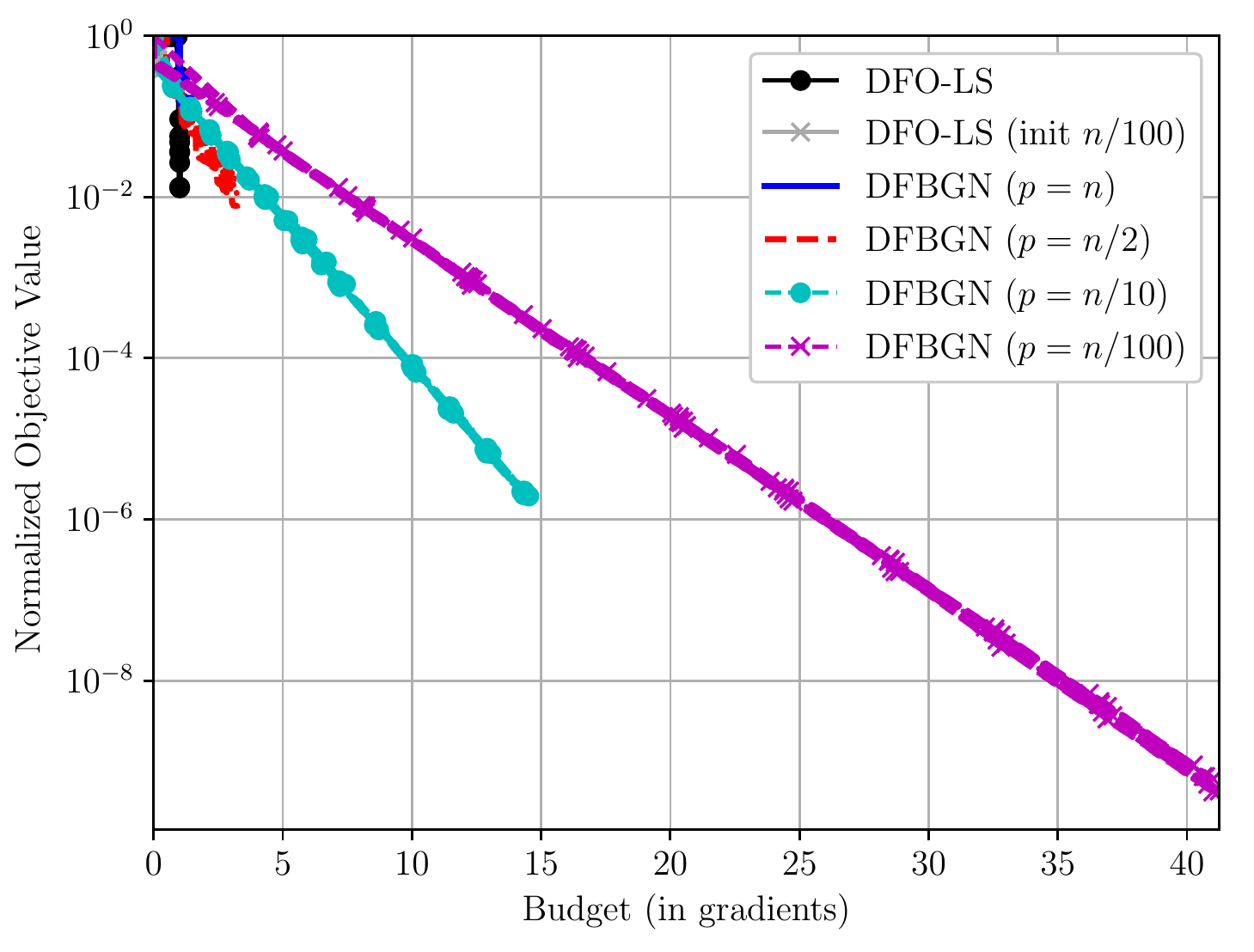}
		\caption{$n=2000$, objective vs.~budget}
	\end{subfigure}
	~
	\begin{subfigure}[b]{0.48\textwidth}
		\includegraphics[width=\textwidth]{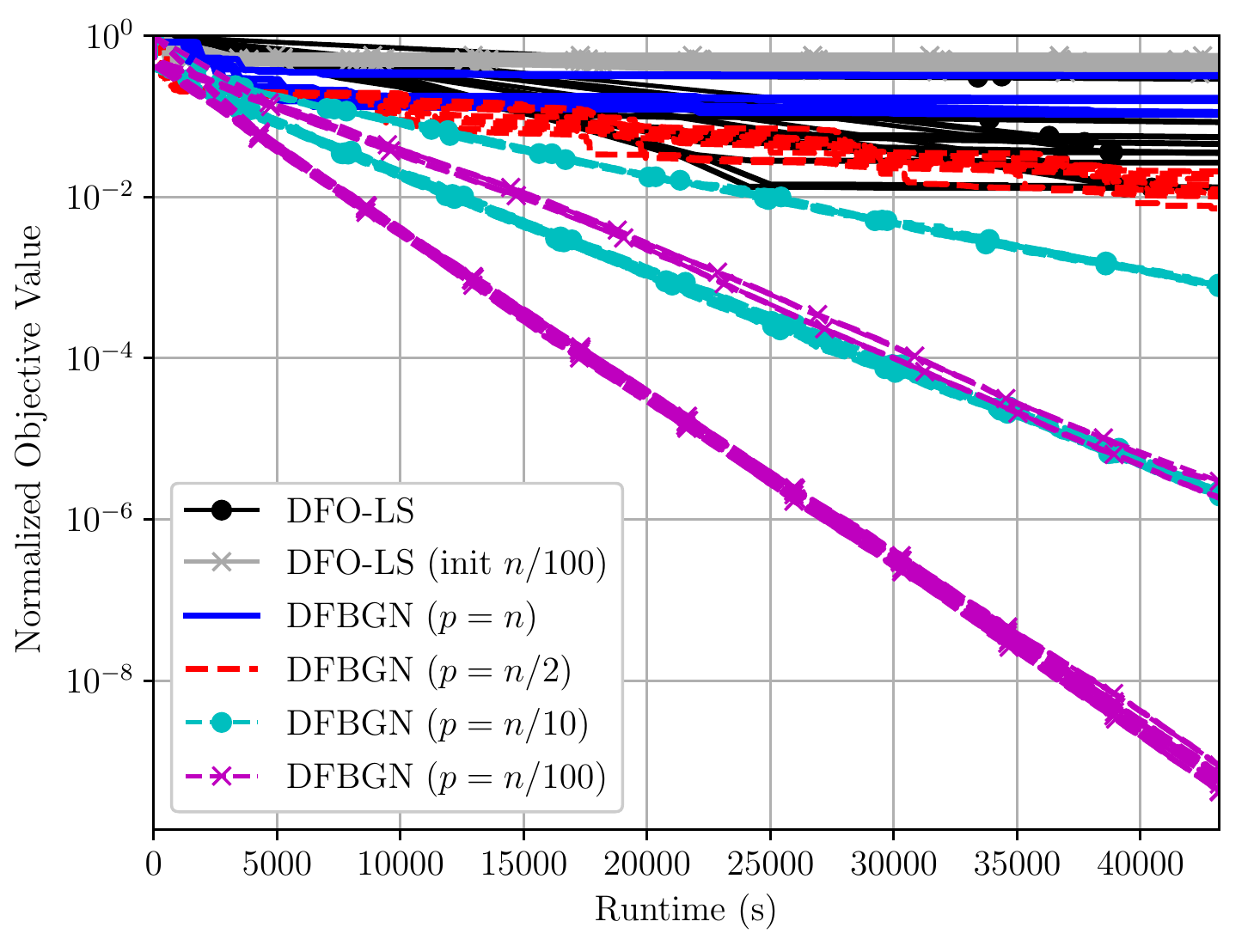}
		\caption{$n=2000$, objective vs.~runtime}
	\end{subfigure}
	\caption{Normalized objective value (versus evaluations and runtime) for 10 runs of DFO-LS and DFBGN on CUTEst problem {\sc arwhdne}. These results use a budget of $100(n+1)$ evaluations and a 12 hour runtime limit per instance.}
	\label{fig_dfbgn_arwhdne}
\end{figure}

\clearpage

To further see the impact of this issue, we now consider how the solvers perform for a variable-dimension test problem, as we increase the underlying dimension.
We run each solver, with the same settings as above, on the CUTEst problem {\sc arwhdne} for different choices of problem dimension $n$.\footnote{This problem appears in the collections (CR) and (CR-large), with $n=100$ and $n=1000$ respectively.}
In \figref{fig_dfbgn_arwhdne} we plot the objective reduction for each solver against budget and runtime for DFO-LS and DFBGN, showing $n=100$, $n=1000$ and $n=2000$.

We see that, when measured on evaluations, both DFO-LS variants achieve the fastest objective reduction, and that DFBGN with small $p$ achieves the slowest objective reduction.
This is in line with our results from \secref{sec_dfbgn_results_evals}.
However, when we instead consider objective decrease against runtime, we see that DFBGN with small $p$ gives the fastest decrease---the larger number of iterations needed by these variants (as seen by the larger number of evaluations) is offset by the substantially reduced per-iteration linear algebra cost.
When viewed against runtime, both DFO-LS variants can only achieve a small objective decrease in the allowed 12 hours, even though they are showing fast decrease against budget, and would achieve higher accuracy than DFBGN if the linear algebra cost were negligible.

\begin{figure}[t]
	\centering
	\begin{subfigure}[b]{0.48\textwidth}
		\includegraphics[width=\textwidth]{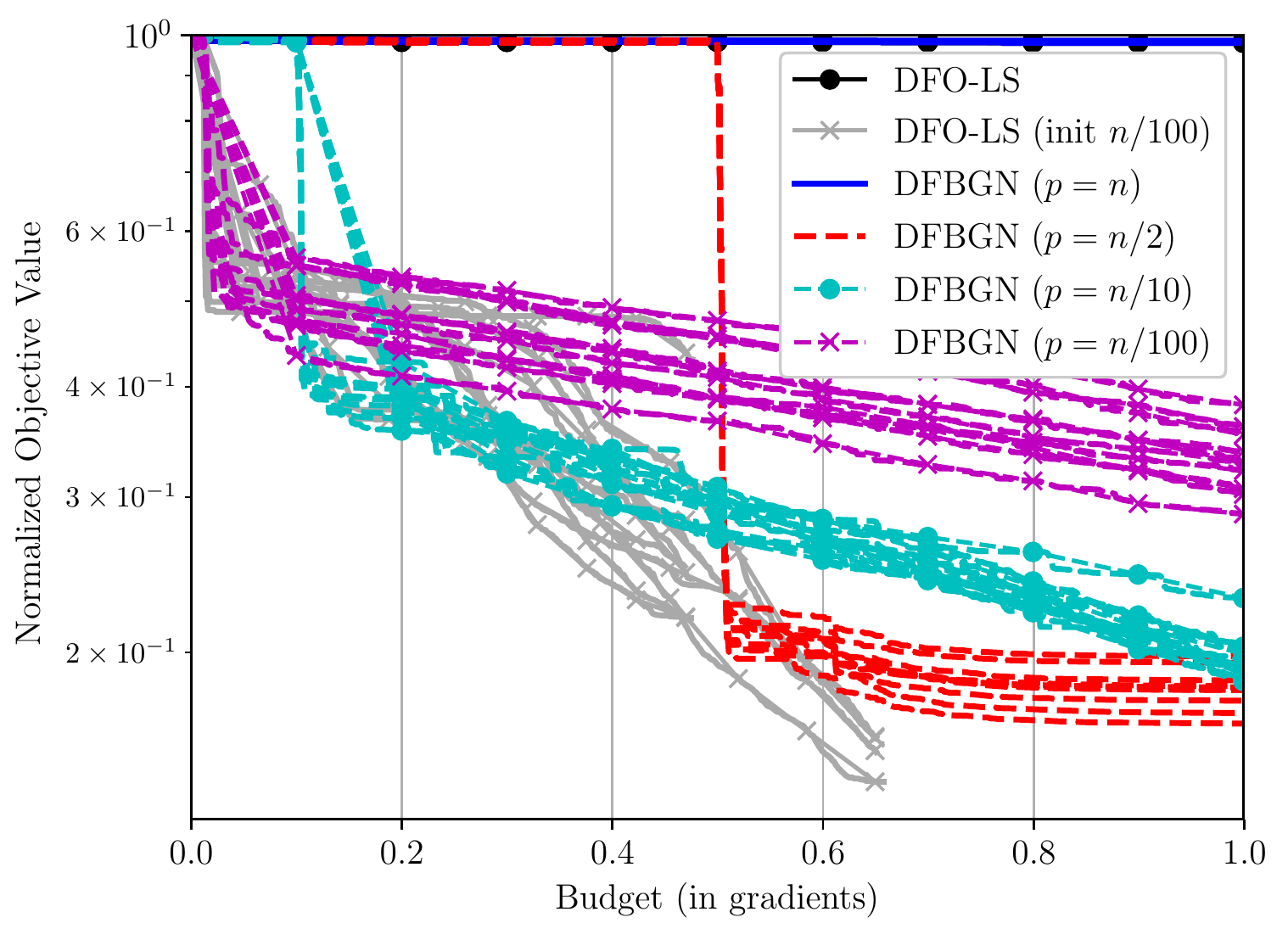}
		\caption{{\sc arwhdne}, $n=1000$}
	\end{subfigure}
	~
	\begin{subfigure}[b]{0.48\textwidth}
		\includegraphics[width=\textwidth]{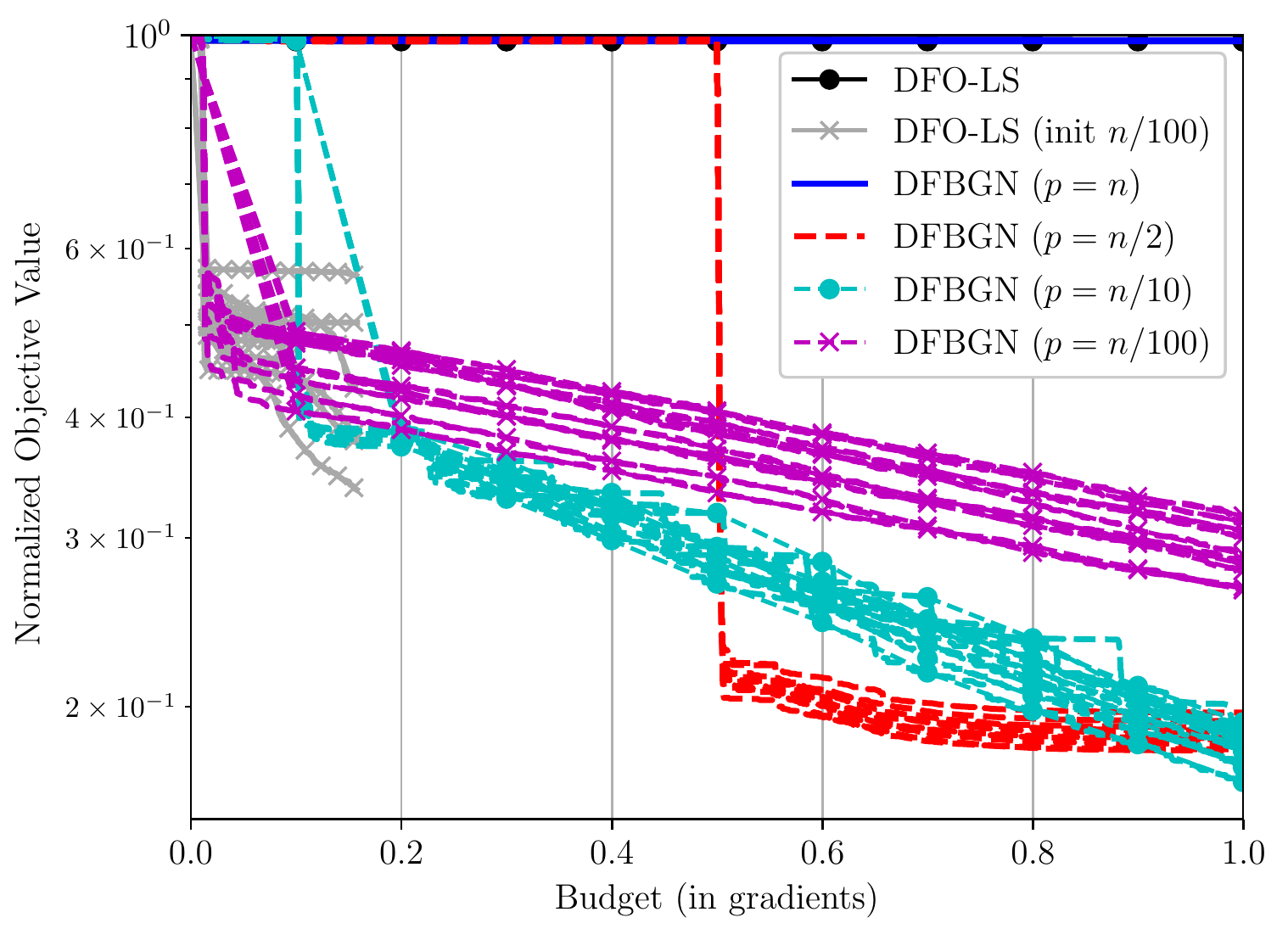}
		\caption{{\sc arwhdne}, $n=2000$}
	\end{subfigure}
	\\
	\begin{subfigure}[b]{0.48\textwidth}
		\includegraphics[width=\textwidth]{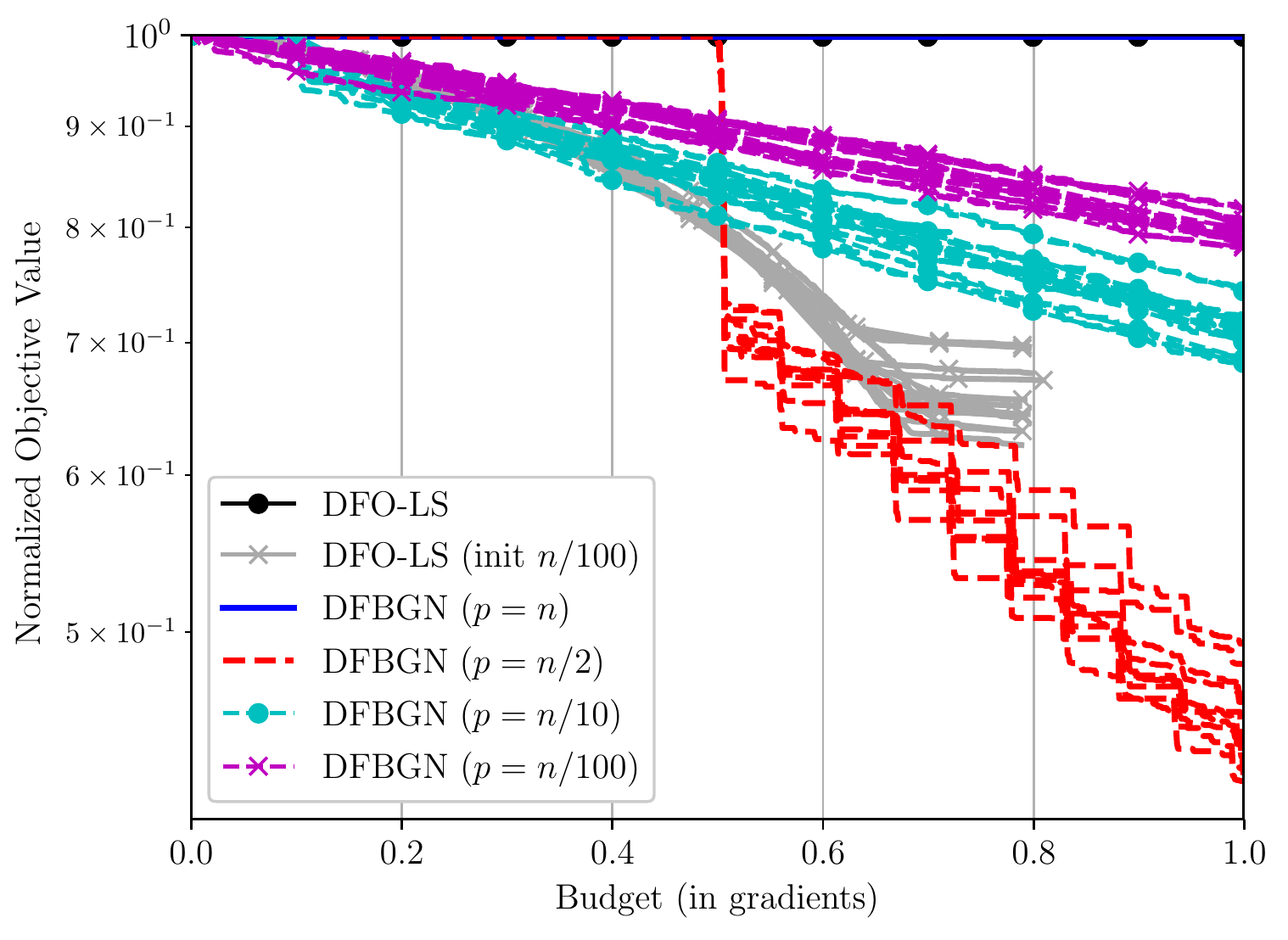}
		\caption{{\sc chandheq}, $n=1000$}
	\end{subfigure}
	~
	\begin{subfigure}[b]{0.48\textwidth}
		\includegraphics[width=\textwidth]{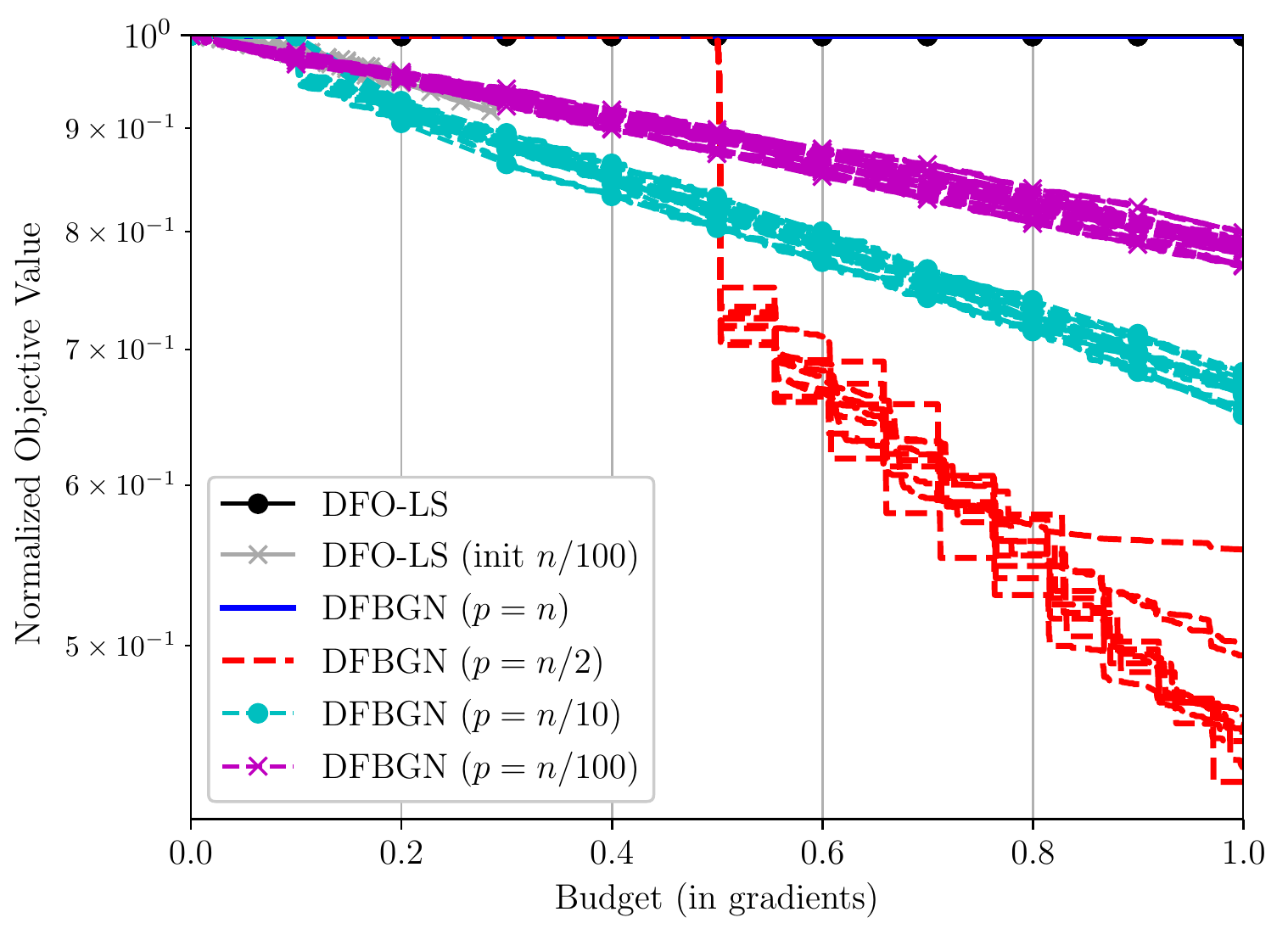}
		\caption{{\sc chandheq}, $n=2000$}
	\end{subfigure}
	\\
	\begin{subfigure}[b]{0.48\textwidth}
		\includegraphics[width=\textwidth]{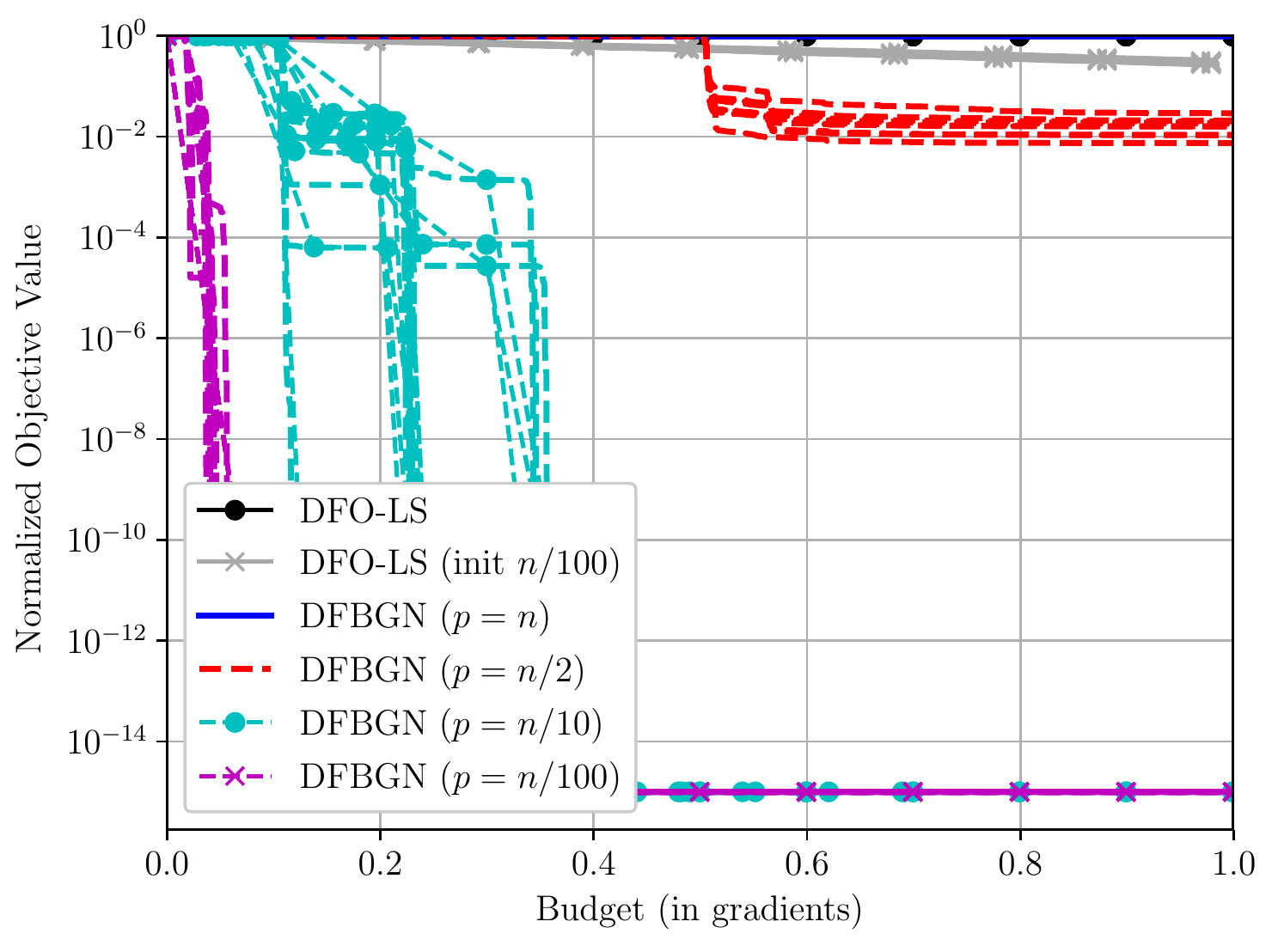}
		\caption{{\sc vardimne}, $n=1000$}
	\end{subfigure}
	~
	\begin{subfigure}[b]{0.48\textwidth}
		\includegraphics[width=\textwidth]{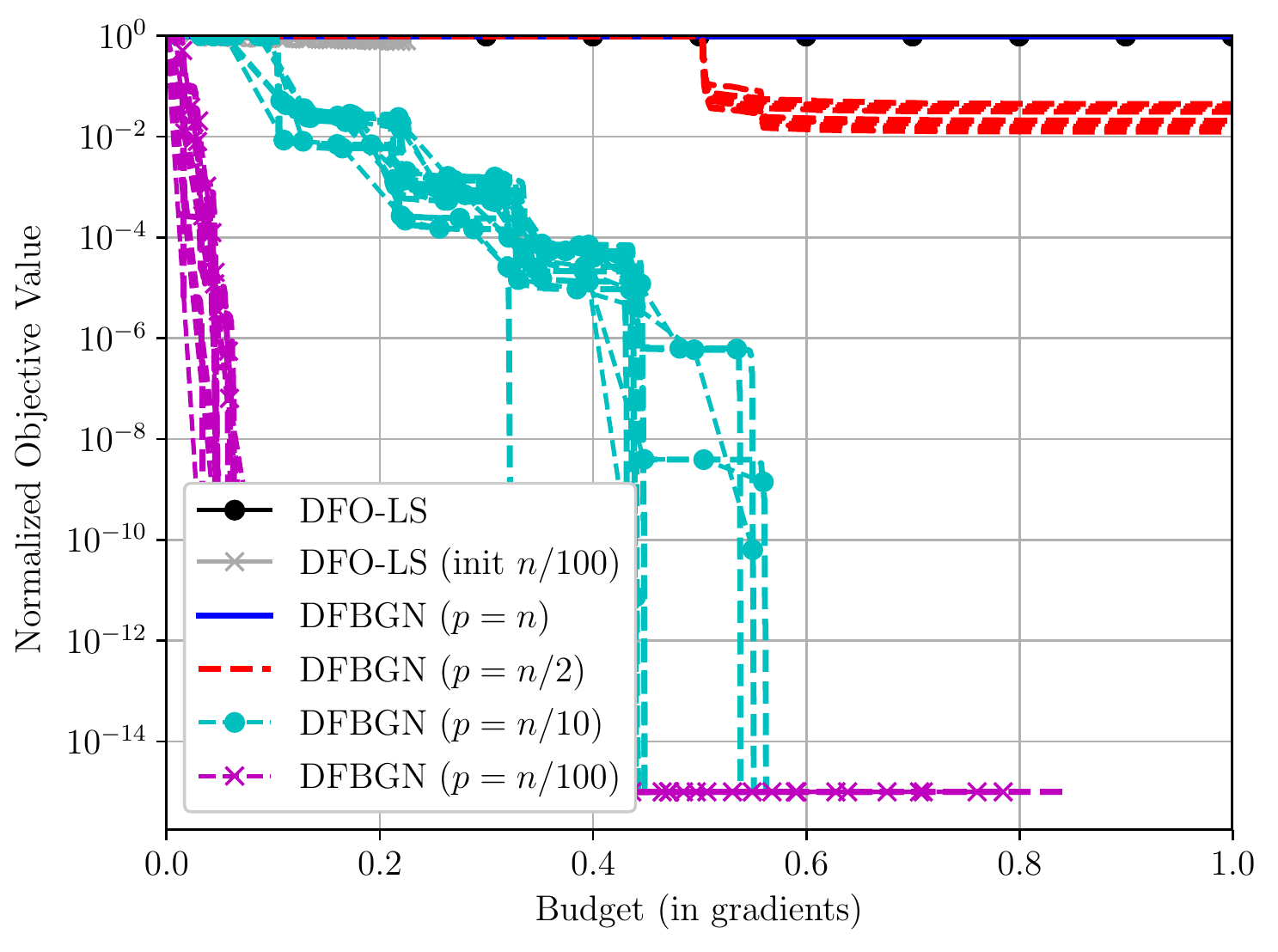}
		\caption{{\sc vardimne}, $n=2000$}
	\end{subfigure}
	\caption{Normalized objective value (versus evaluations) for 10 runs of DFO-LS and DFBGN on different CUTEst problems (all with $n=1000$ and $n=2000$). These results use a budget of $n+1$ evaluations and a 12 hour runtime limit per instance.}
	\label{fig_dfbgn_smallbudget}
\end{figure}

\subsection{Results for Small Budgets}
Another benefit of DFBGN is that it has a small initialization cost of $p+1$ evaluations.
When $n$ is large, it is more likely for a user to be limited by a budget of fewer than $n$ evaluations.
Here, we examine how DFBGN compares for small budgets to DFO-LS with reduced initialization cost.

We recall from \remref{rem_dfols_growing} that DFO-LS with reduced initialization cost progressively increases the dimension of the subspace of its interpolation model, until it reaches the whole space $\R^n$ (after approximately $n+1$ evaluations), while in DFBGN we restrict the dimension at all iterations.

In \figref{fig_dfbgn_smallbudget} we consider three variable-dimensional CUTEst problems from (CR) and (CR-large), all using $n=1000$ and $n=2000$.
We show the objective decrease against budget for 10 runs of each solver, restricted to a maximum of $n+1$ evaluations.
We see that the smaller $p$ used in DFBGN, the faster DFBGN is able to make progress (due to the lower number of initial evaluations).
However, this is offset by the faster objective decrease achieved by larger $p$ values (after the higher initialization cost)---if the user can afford a larger $p$, both in terms of linear algebra and initial evaluations, then this is usually a better option.
An exception to this is the problem {\sc vardimne}, where its simple structure means DFBGN with small $p$ solves the problem to very high accuracy with very few evaluations, substantially outperforming both DFBGN with larger $p$, and DFO-LS with reduced initialization cost.

In \figref{fig_dfbgn_smallbudget} we also show the decrease for DFO-LS with full initialization cost and DFBGN with $p=n$, but they use the full budget on initialization, and so make no progress.
However, in addition, we show DFO-LS with a reduced initialization cost of $n/100$ evaluations.
This variant performs well, in most cases matching the decrease of DFBGN with $p=n/100$ initially, but achieving a faster objective reduction against budget---this matches with our previous observations.
However, the extra cost of the linear algebra means that DFO-LS with reduced initialization does not end up using the full budget, instead hitting the 12 hour timeout.
This is most clear when comparing the results for $n=1000$ with $n=2000$, where DFO-LS with reduced initialization cost begins by achieving a similar decrease in both cases, but hits the timeout more quickly with $n=2000$, and so terminates after fewer objective evaluations (with a corresponding smaller objective decrease).

We analyze this more systematically in \figref{fig_dfbgn_profiles_smallbudget}, where we show data profiles (measured on budget) of DFBGN and DFO-LS on the (CR-large) problem collection, for low accuracy and small budgets.
These results verify our conclusions: DFBGN with small $p$ can make progress on many problems with a very short budget (fewer than $n+1$ evaluations), and outperform DFO-LS with reduced initialization cost due to its slow runtime.
However, once we reach a budget of more than $n+1$ evaluations, then DFO-LS and DFBGN with $p=n$ become the best-performing solvers (when measuring on evaluations only).
They are also able to achieve a higher level of accuracy compared to DFBGN with small $p$. 

\clearpage

\begin{figure}[tbh]
	\centering
	\begin{subfigure}[b]{0.48\textwidth}
		\includegraphics[width=\textwidth]{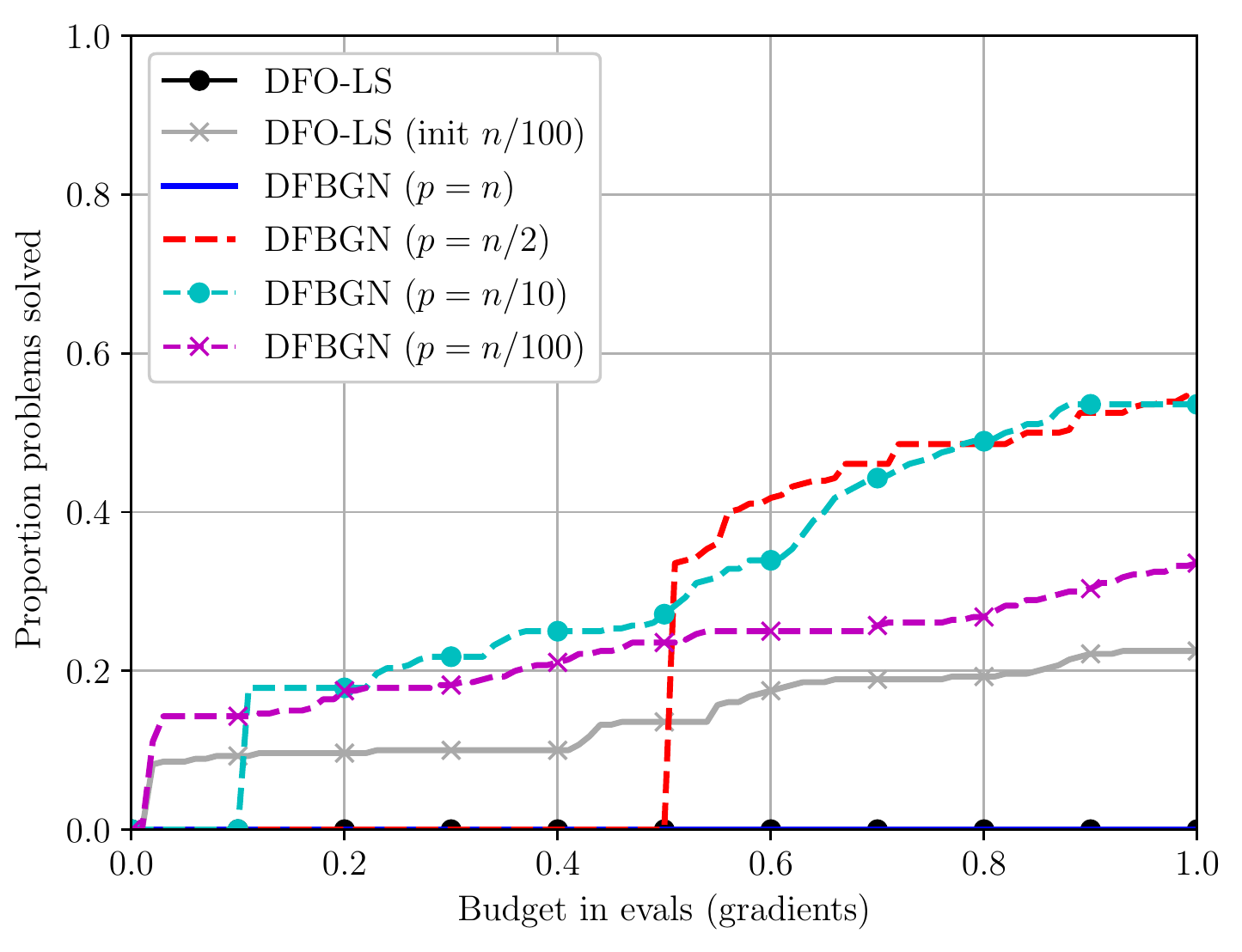}
		\caption{$\tau=0.5$, budget $n+1$ evaluations}
	\end{subfigure}
	~
	\begin{subfigure}[b]{0.48\textwidth}
		\includegraphics[width=\textwidth]{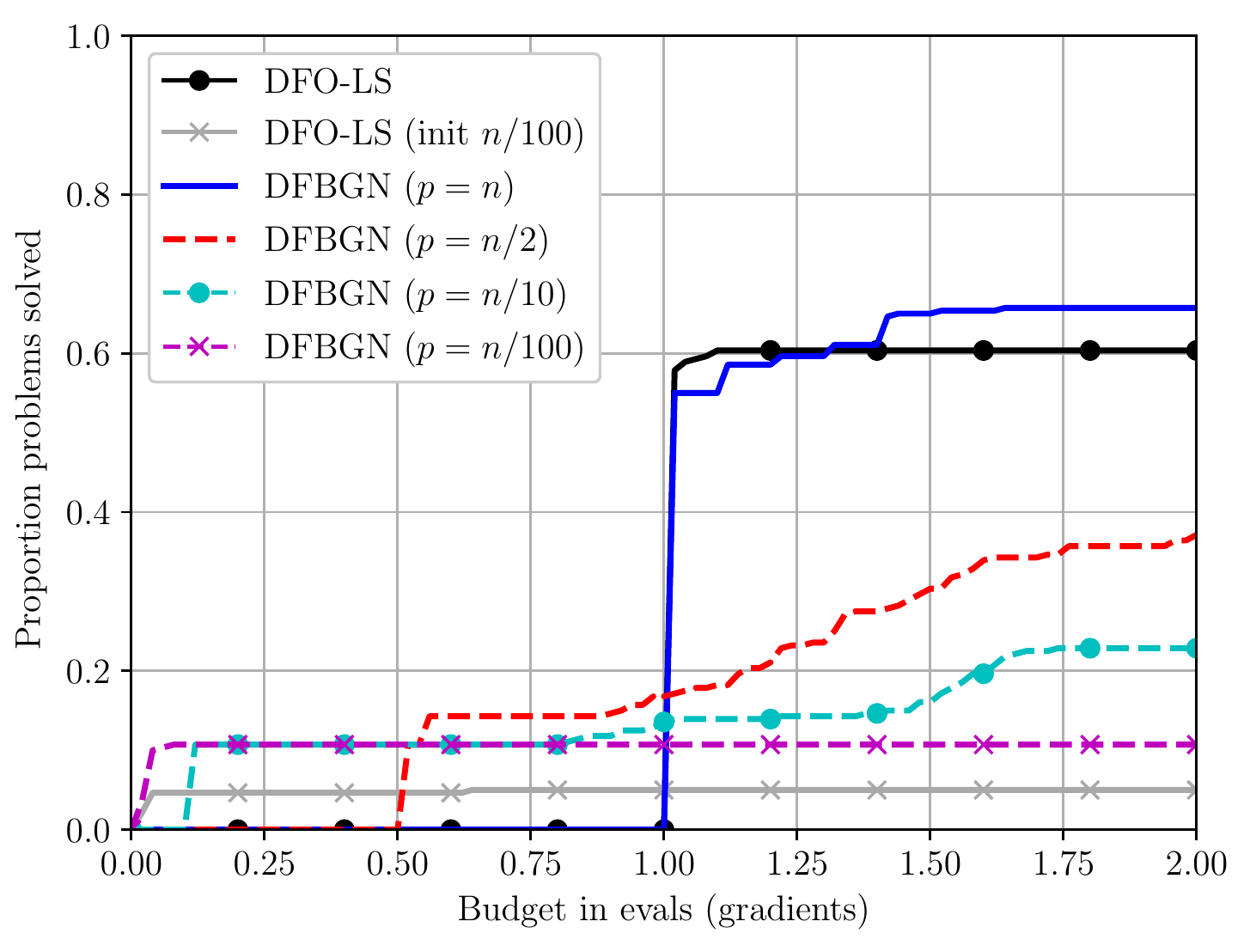}
		\caption{$\tau=0.1$, budget $2(n+1)$ evaluations}
	\end{subfigure}
	\caption{Data profiles (in evaluations) comparing DFO-LS (with and without reduced initialization cost) with DFBGN (various $p$ choices) for different accuracy levels and budgets. Results are an average of 10 runs for each problem, with a budget of $n+1$ or $2(n+1)$ evaluations and a 12 hour runtime limit per instance. The problem collection is (CR-large).}
	\label{fig_dfbgn_profiles_smallbudget}
\end{figure}

\begin{figure}[t]
	\centering
	\begin{subfigure}[b]{0.48\textwidth}
		\includegraphics[width=\textwidth]{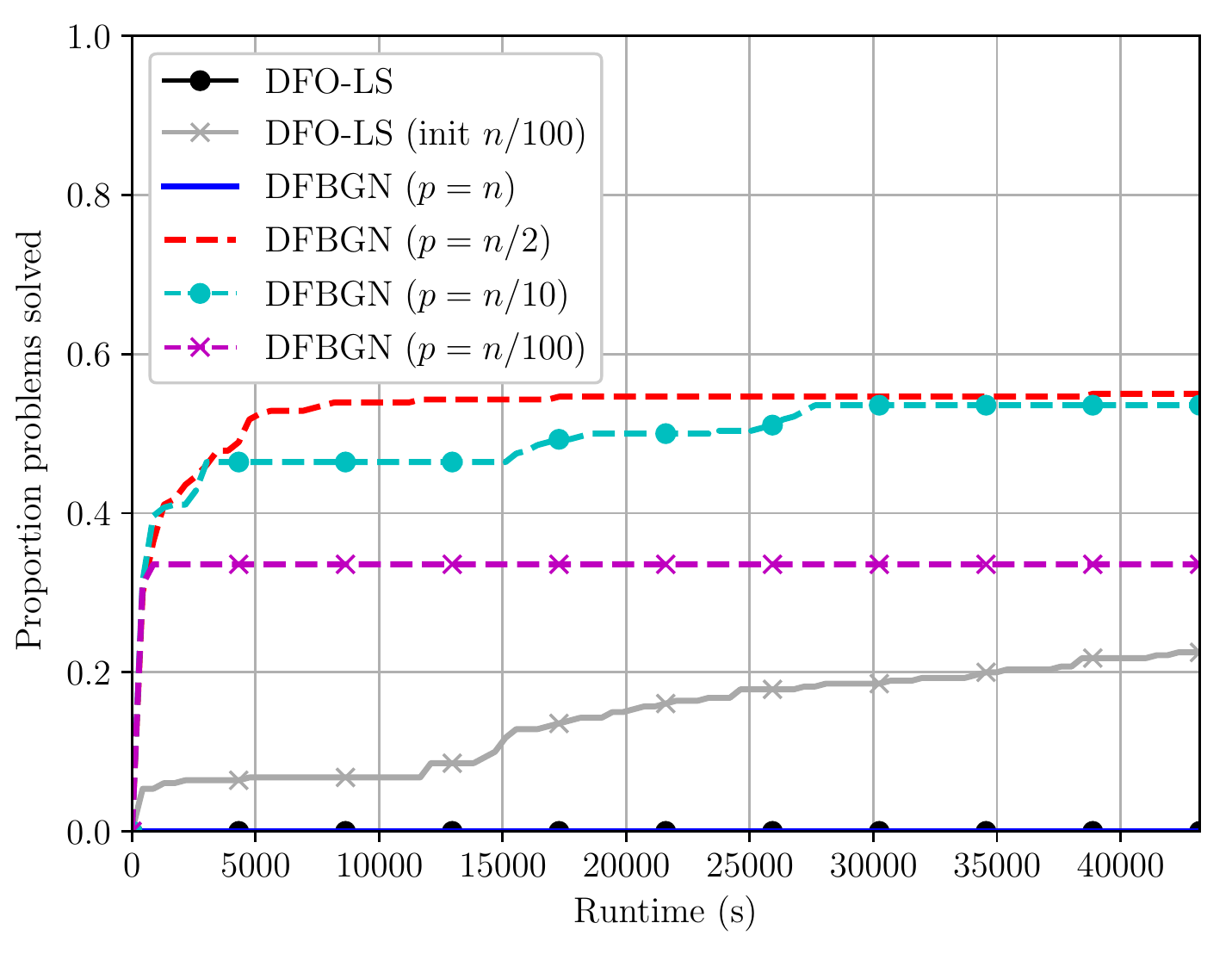}
		\caption{$\tau=0.5$, $n+1$ evaluations (vs.~runtime)}
	\end{subfigure}
	~
	\begin{subfigure}[b]{0.48\textwidth}
		\includegraphics[width=\textwidth]{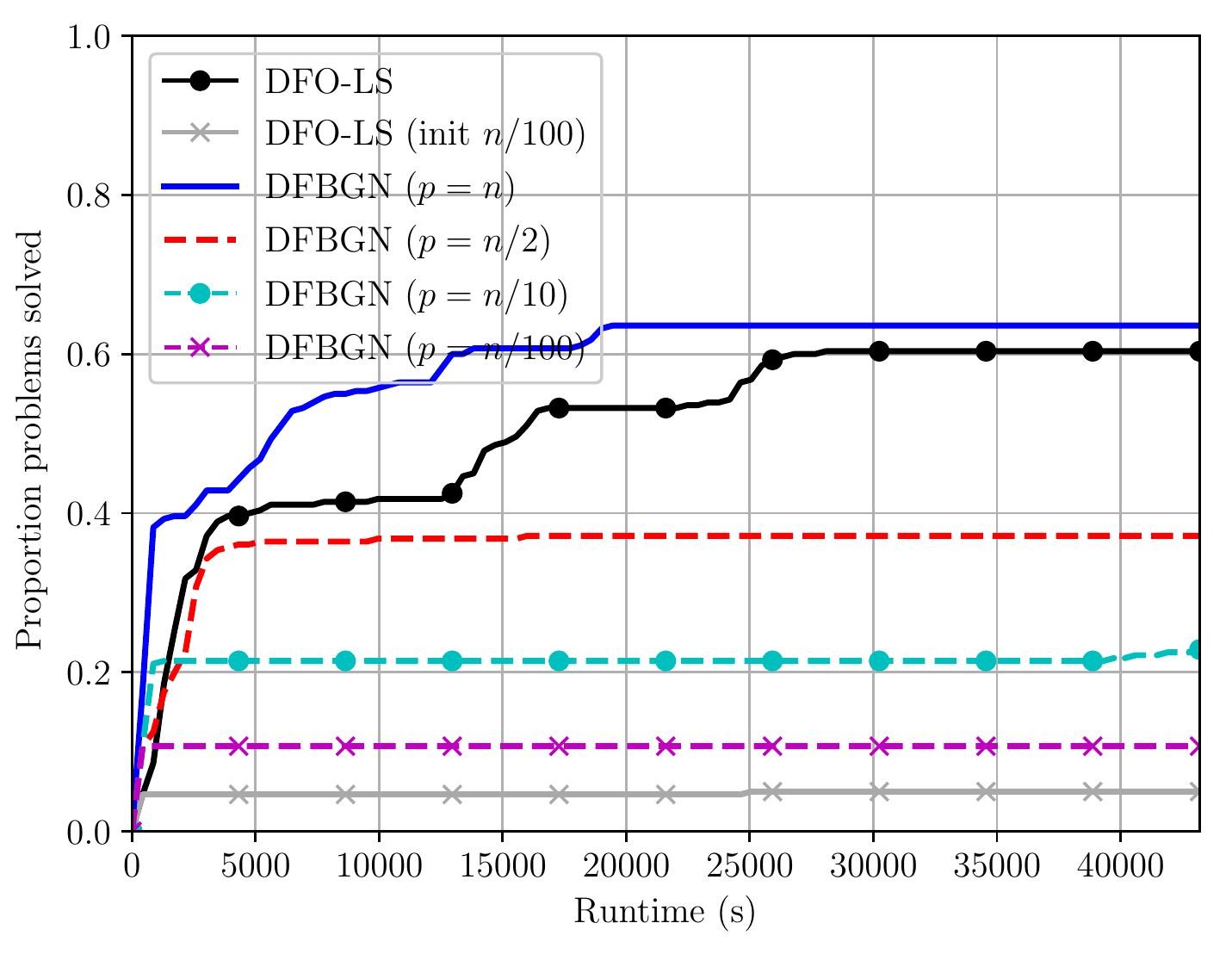}
		\caption{$\tau=0.1$, $2(n+1)$ evaluations (vs.~runtime)}
	\end{subfigure}
	\caption{Data profiles (in runtime) comparing DFO-LS (with and without reduced initialization cost) with DFBGN (various $p$ choices) for different accuracy levels and budgets. Results are an average of 10 runs for each problem, with a budget of $n+1$ or $2(n+1)$ evaluations and a 12 hour runtime limit per instance. The problem collection is (CR-large).}
	\label{fig_dfbgn_profiles_smallbudget_runtime}
\end{figure}

Lastly, in \figref{fig_dfbgn_profiles_smallbudget_runtime} we show the same results as \figref{fig_dfbgn_profiles_smallbudget}, but showing profiles measured on runtime.
We note that we are only measuring the linear algebra costs, as the cost of objective evaluation for our problems is negligible.
Here, the benefits of DFBGN with small $p$ are not seen.
This is because the problems that can be solved by DFBGN with small $p$ using very few evaluations are likely easier, and so can likely be solved by DFBGN with large $p$ in few iterations.
Thus, the runtime requirements for DFBGN with large $p$ to solve the problem are not large---even though they have a higher per-iteration cost, the number of iterations is small.
In this setting, therefore, the benefit of DFBGN with small $p$ is not lower linear algebra costs, but fewer evaluations---which is likely to be the more relevant issue in this small-budget regime.

\section{Conclusions and Future Work}
The development of scalable derivative-free optimization algorithms is an active area of research with many applications.
In model-based DFO, the high per-iteration linear algebra cost associated (primarily) with interpolation model creation and point management is a barrier to its utility for large-scale problems.
To address this, we introduce three model-based DFO algorithms for large-scale problems.

First, RSDFO is a general framework for model-based DFO based on model construction and minimization in random subspaces, and is suitable for general smooth nonconvex objectives.
This is specialized to nonlinear least-squares problems in RSDFO-GN, a version of RSDFO based on Gauss-Newton interpolation models built in subspaces.
Lastly, we introduce DFBGN, a practical implementation of RSDFO-GN. 
In all cases, the scalability of these methods arises from the construction and minimization of models in $p$-dimensional subspaces of the ambient space $\R^n$.
The subspace dimension can be specified by the user to reflect the computational resources available for linear algebra calculations.

We prove high-probability worst-case complexity bounds for RSDFO, and show that RSDFO-GN inherits the same bounds.
In terms of selecting the subspace dimension, we show that by using matrices based on Johnson-Lindenstrauss transformations, we can choose $p$ to be independent of the ambient dimension $n$.
Our analysis extends to DFO the techniques in \cite{Fowkes2020,Shao2021}, and yields similar results to probabilistic direct search \cite{Gratton2015} and standard model-based DFO \cite{Garmanjani2016,Cartis2019a}.
Our results also imply almost-sure global convergence to first-order stationary points.

Our practical implementation of RSDFO-GN, DFBGN, has very low computational requirements: asymptotically, linear in the ambient dimension rather than cubic for standard model-based DFO.
After extensive algorithm development described here, our implementation is simple and combines several techniques for modifying the interpolation set which allows it to still make progress with few objective evaluations (an important consideration for DFO techniques).
A Python version of DFBGN is available on Github.\footnote{\url{https://github.com/numericalalgorithmsgroup/dfbgn}}

For medium-scale problems, DFBGN operating in the full ambient space ($p=n$) has similar performance to DFO-LS \cite{Cartis2018} when measured by objective evaluations, validating the techniques introduced in the practical implementation.
However, DFBGN (with any choice of subspace dimension) has substantially faster runtime, which means it is much more effective than DFO-LS at solving large-scale problems from CUTEst, even when working in a very low-dimensional subspace.
Further, in the case of expensive objective evaluations, working a subspace means that DFBGN can make progress with very few evaluations, many fewer than the $n+1$ needed for standard methods to build their initial model.
Overall, the implementation of DFBGN is suitable for large-scale problems both when objective evaluations are cheap (and linear algebra costs dominate) or when evaluations are expensive (and the initialization cost of standard methods is impractical).

Future work will focus on extending the ideas from the implementation DFBGN to the case of general objectives with quadratic models. 
This will bring the available software in line with the theoretical guarantees for RSDFO.
We note that model-based DFO for nonlinear least-squares problems has been adapted to include sketching methods, which use randomization to reduce the number of residuals considered at each iteration \cite{Cartis2020}.
We also delegate to future work the development of techniques for nonlinear least-squares problems which combine sketching (i.e.~dimensionality reduction in the observation space) with our subspace approach (i.e.~dimensionality reduction in variable space), and further study of methods for adaptively selecting a subspace dimension (c.f.~\secref{sec_dfbgn_full_algo}).

\subsection{Acknowledgements}
The authors would like to acknowledge the use of the University of Oxford Advanced Research Computing (ARC) facility in carrying out this work.\footnote{\url{http://dx.doi.org/10.5281/zenodo.22558}}

\addcontentsline{toc}{section}{References} 
\bibliographystyle{siam}
\bibliography{refs} 

\clearpage
\appendix

\section{Extended Results for DFBGN} \label{app_dfbgn_extra_numerics}
In \figref{fig_dfbgn_mw}, we show performance profiles comparing DFBGN with DFO-LS on the (MW) problem collection.
Since these problems are low-dimensional ($n\leq 12$), they do not represent the setting for which DFBGN is designed, however we include them here for completeness.

Similar to \figref{fig_dfbgn_cutest100}, we see that DFBGN performs better (in terms of evaluations) the larger the subspace size $p$, with the performance with $p=n$ similar to DFO-LS.
For very low accuracy $\tau=0.5$, DFBGN with $p<n$ sometimes outperforms DFO-LS (with the full initialization cost), but not DFO-LS with reduced initialization cost.
\vspace{-1em}

\begin{figure}[H]
	\centering
	\begin{subfigure}[b]{0.43\textwidth}
		\includegraphics[width=\textwidth]{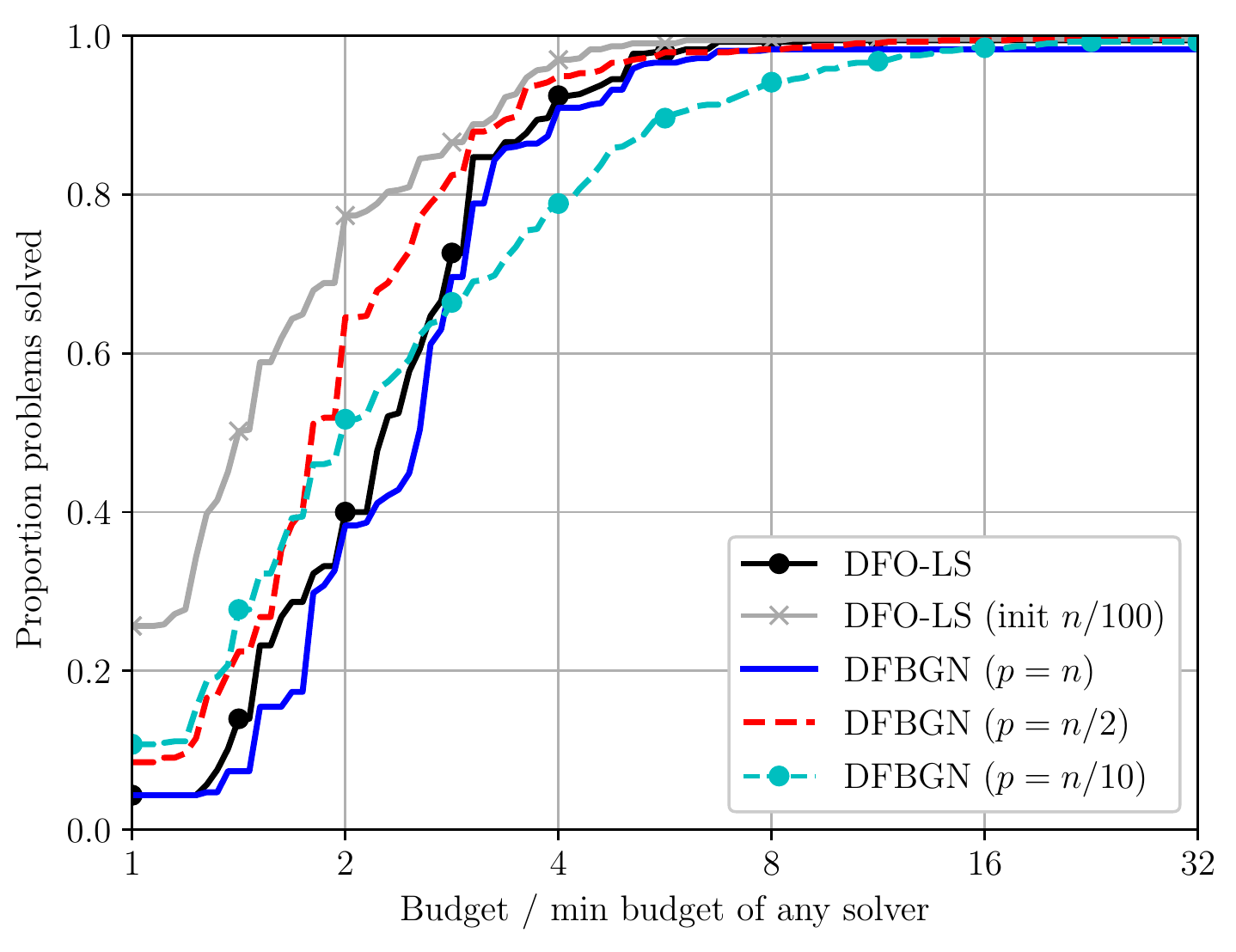}
		\caption{$\tau=0.5$}
	\end{subfigure}
	~
	\begin{subfigure}[b]{0.43\textwidth}
		\includegraphics[width=\textwidth]{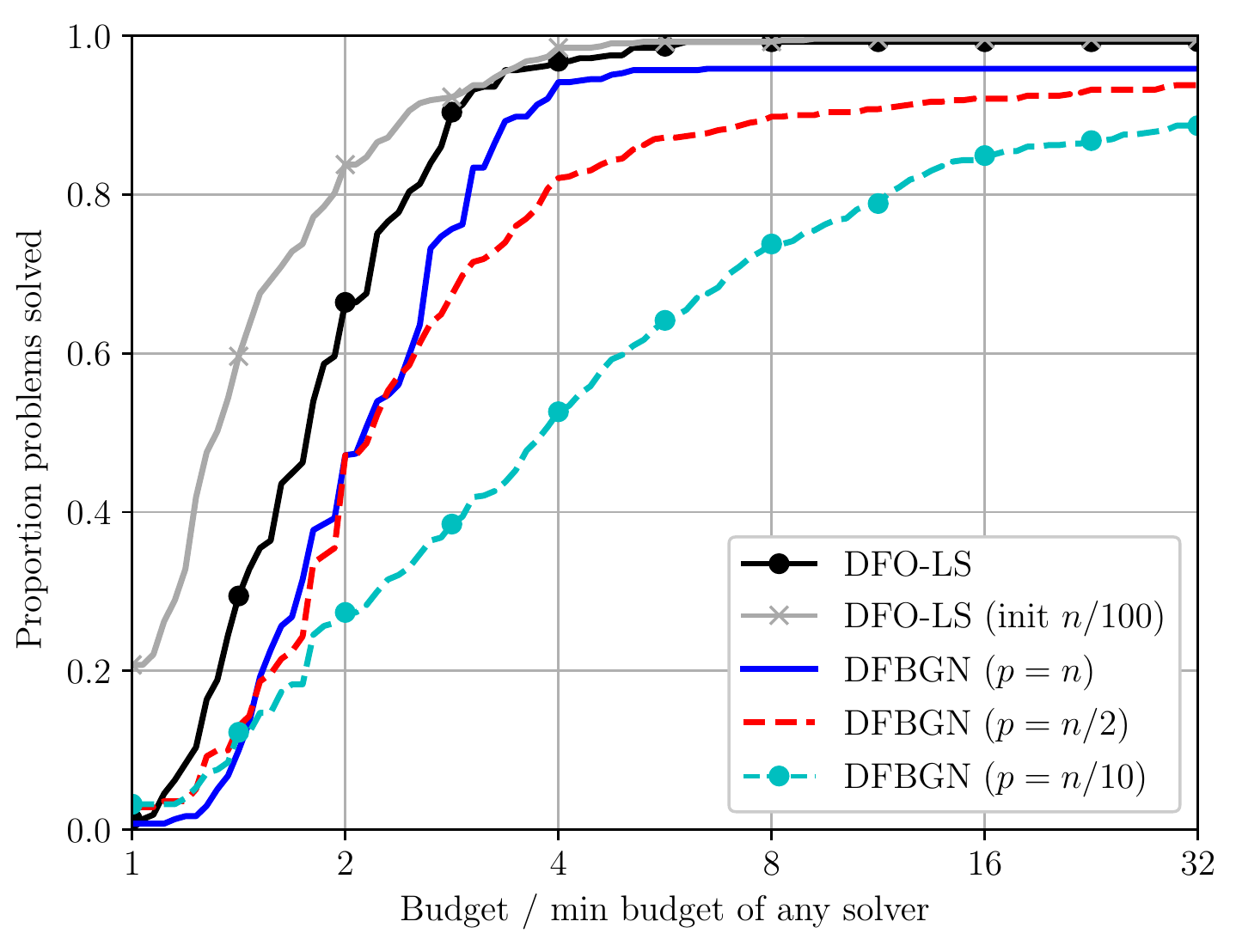}
		\caption{$\tau=10^{-1}$}
	\end{subfigure}
	\\
	\begin{subfigure}[b]{0.43\textwidth}
		\includegraphics[width=\textwidth]{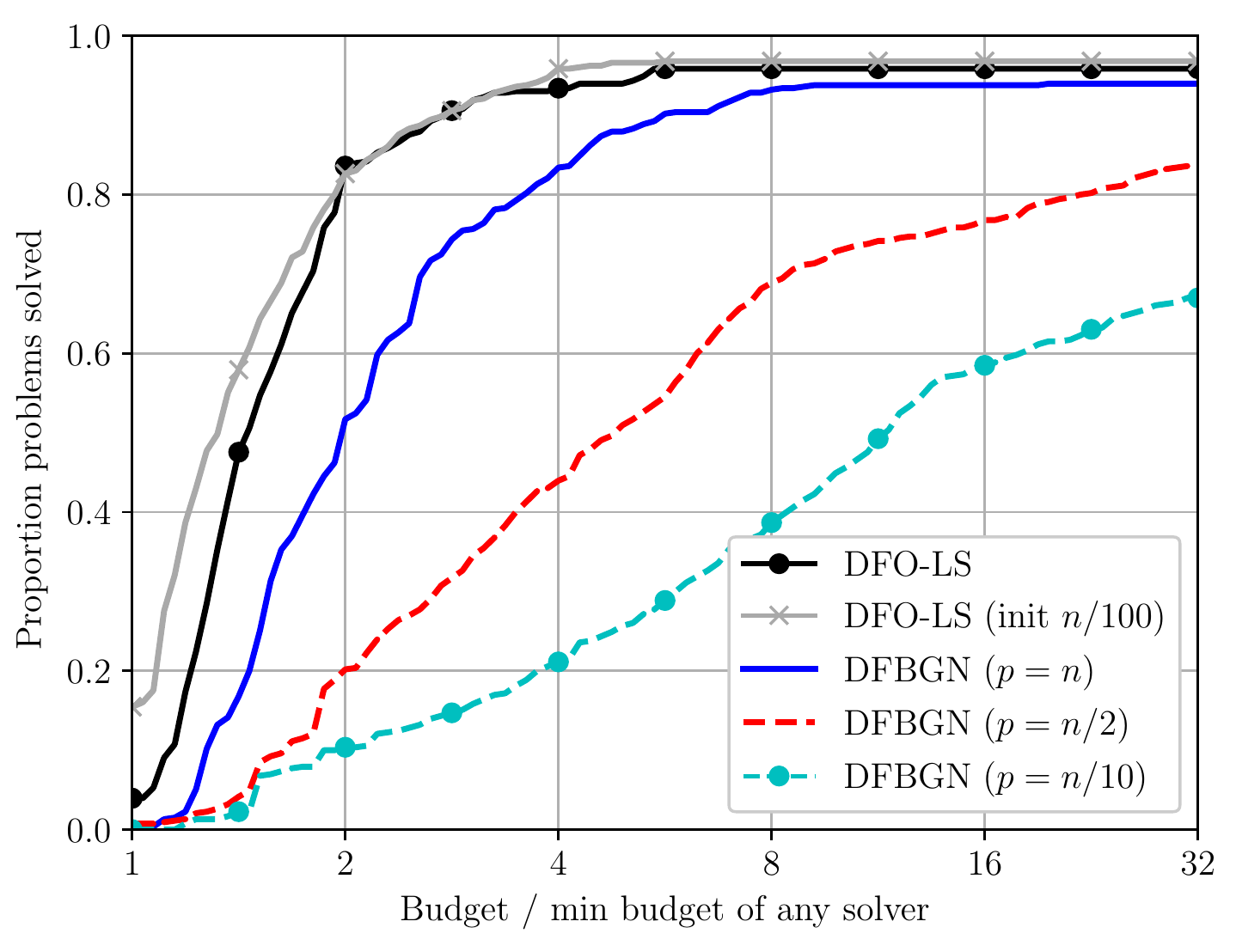}
		\caption{$\tau=10^{-3}$}
	\end{subfigure}
	~
	\begin{subfigure}[b]{0.43\textwidth}
		\includegraphics[width=\textwidth]{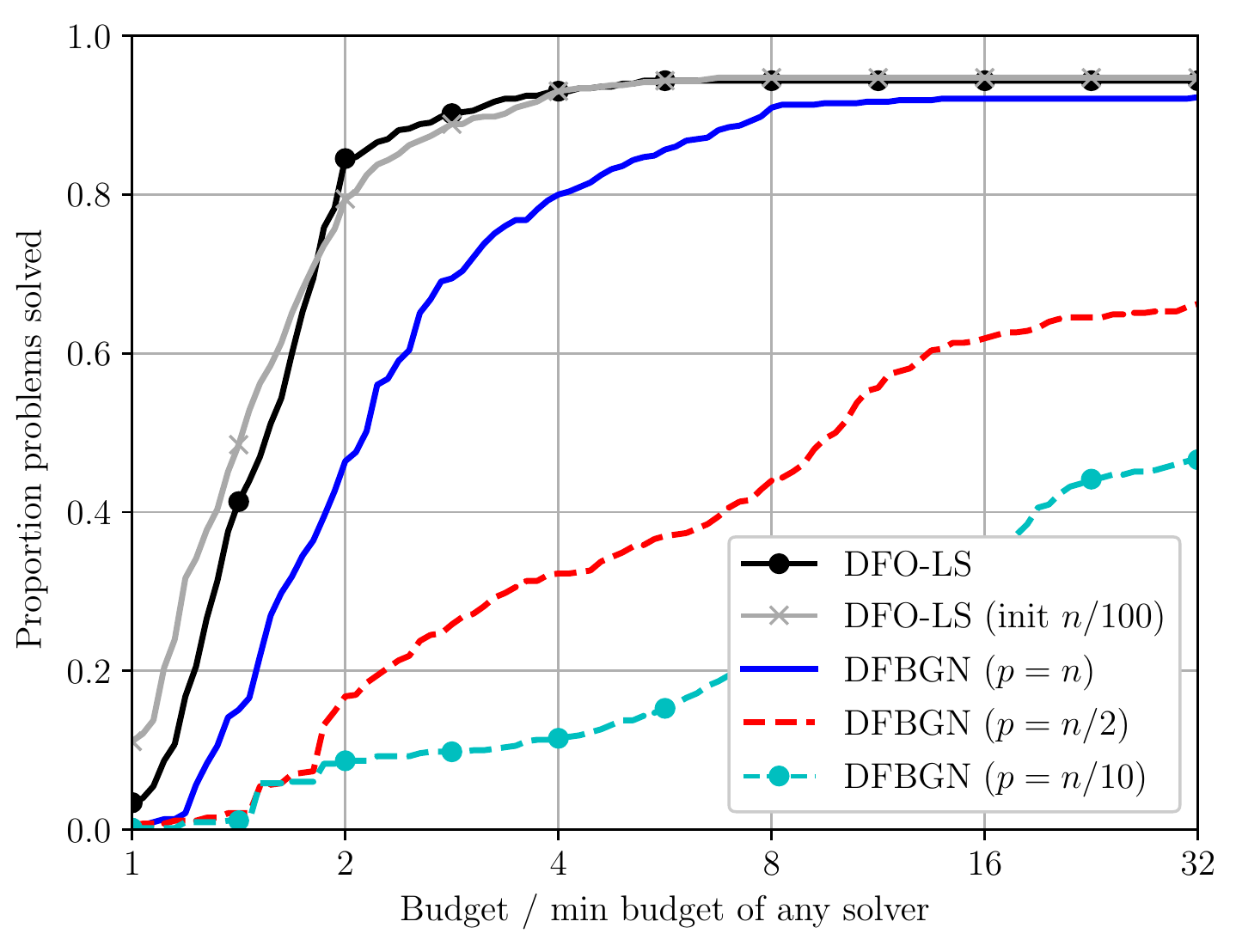}
		\caption{$\tau=10^{-5}$}
	\end{subfigure}
	\caption{Performance profiles (in evaluations) comparing DFO-LS (with and without reduced initialization cost) with DFBGN (various $p$ choices) for different accuracy levels. Results are an average of 10 runs for each problem, with a budget of $100(n+1)$ evaluations and a 12 hour runtime limit per instance. The problem collection is (MW).}
	\label{fig_dfbgn_mw}
\end{figure}

\section{Large-Scale Test Problems (CR-large)} \label{app_cr_large_problems}
\begin{table}[H]
	\centering
	\small{
	\begin{tabular}{rlccccc} 
		\hline\noalign{\smallskip}
		\# & Problem & $n$ & $m$ & $2f(\b{x}_0)$ & $2f(\bx^*)$ & Parameters \\ \noalign{\smallskip}\hline\noalign{\smallskip}
		1 & ARGLALE & 2000 & 4000 & 10000 & 2000 & $(N,M)=(2000,4000)$ \\
		2 & ARGLBLE & 2000 & 4000 & $8.545072\times 10^{22}$ & 999.6250 & $(N,M)=(2000,4000)$ \\
		3 & ARGTRIG & 1000 & 1000 & 333.0006 & 0 & $N=1000$ \\
		4 & ARTIF & 5000 & 5000 & 1827.355 & 0 & $N=5000$ \\
		5 & ARWHDNE & 5000 & 9998 & 24995 & 1396.793 & $N=5000$ \\
		6 & BDVALUES & 1000 & 1000 & $1.996774\times10^{4}$ & 0 & $NDP=1002$ \\
		7 & BRATU2D & 4900 & 4900 & $3.085195\times 10^{-3}$ & 0 & $P=72$ \\
		8 & BRATU2DT & 4900 & 4900 & $8.937521\times 10^{-3}$ & $7.078014\times 10^{-11}$ & $P=72$ \\
		9 & BRATU3D & 3375 & 3375 & 2.386977 & 0 & $P=17$ \\
		10 & BROWNALE & 1000 & 1000 & $2.502498\times 10^{8}$ & 0 & $N=1000$ \\
		11 & BROYDN3D & 1000 & 1000 & 1011 & 0 & $N=1000$ \\
		12 & BROYDNBD & 5000 & 5000 & 124904 & 0 & $N=5000$ \\
		13 & CBRATU2D & 2888 & 2888 & $1.560446\times 10^{-2}$ & 0 & $P=40$ \\
		14 & CHANDHEQ & 1000 & 1000 & 69.41682 & 0 & $N=1000$ \\
		15 & EIGENB & 2550 & 2550 & 99 & 0 & $N=50$ \\
		16 & FREURONE & 5000 & 9998 & $5.0485565\times 10^{6}$ & $6.081592\times 10^{5}$ & $N=5000$ \\
		17 & INTEGREQ & 1000 & 1000 & 5.678349 & 0 & $N=1000$ \\
		18 & MOREBVNE & 1000 & 1000 & $3.961509\times 10^{-6}$ & 0 & $N=1000$ \\
		19 & MSQRTA & 4900 & 4900 & $7.975592\times 10^{4}$ & 0 & $P=70$ \\
		20 & MSQRTB & 1024 & 1024 & 7926.444 & 0 & $P=32$ \\
		21 & OSCIGRNE & 1000 & 1000 & $6.120720\times 10^{8}$ & 0 & $N=1000$ \\
		22 & PENLT1NE & 1000 & 1001 & $1.114448\times 10^{17}$ & $9.686272\times 10^{-8}$ & $N=1000$ \\
		23 & POWELLSE & 1000 & 1000 & 418750 & 0 & $N=1000$ \\
		24 & SEMICN2U & 1000 & 1000 & $1.960620\times 10^{4}$ & 0 & $(N,LN)=(1000,900)$ \\
		25 & SPMSQRT & 1000 & 1664 & 797.0033 & 0 & $M=334$ \\
		26 & VARDIMNE & 1000 & 1002 & $1.241994\times 10^{22}$ & 0 & $N=1000$ \\
		27 & YATP1SQ & 2600 & 2600 & $5.184111\times 10^{7}$ & 0 & $N=50$ \\
		28 & YATP2SQ & 2600 & 2600 & $2.246192\times 10^{7}$ & 0 & $N=50$ \\
		\noalign{\smallskip}\hline
	\end{tabular}
	} 
	\caption{Details of large-scale test problems from the CUTEst test set (showing $2f(\b{x}_0)$ and $2f(\bx^*)$ as the implementations of DFO-LS and DFBGN do not have the $1/2$ constant factor in \eqref{eq_ls_definition}). The set of problems are taken from those in \cite[Table 3]{Cartis2019a}; the relevant parameters yielding the given $(n,m)$ are provided. The value of $n$ shown excludes fixed variables.}
	\label{tab_cutest_large_problems}
\end{table}

\end{document}